\let\oldtocsection=\tocsection
\let\oldtocsubsection=\tocsubsection
\renewcommand{\tocsection}[2]{\hspace{0em}\oldtocsection{#1}{#2}}
\renewcommand{\tocsubsection}[2]{\hspace{1em}\oldtocsubsection{#1}{#2}}
\newcommand{\veca}{{\vec{a}}}
\newcommand{\R}{\mathbb{R}}
\newcommand{\bdy}{\partial}
\newcommand{\hooksymp}{\overset{s}\hookrightarrow}
\newcommand{\eps}{\varepsilon}
\newcommand{\ham}{\op{Ham}}
\newcommand{\op}[1]{{\operatorname{#1}}}
\newcommand{\ovl}{\overline}
\newcommand{\setm}{\;\setminus\;}
\newcommand{\NI}{{\noindent}}
\newcommand{\wh}{\widehat}
\newcommand{\calJ}{\mathcal{J}}
\newcommand{\nil}{\varnothing}
\newcommand{\ga}{\gamma}
\newcommand{\calM}{\mathcal{M}}
\newcommand{\Ga}{\Gamma}
\newcommand{\cz}{\op{CZ}}
\newcommand{\ind}{\op{ind}}
\newcommand{\calA}{\mathcal{A}}
\newcommand{\vecb}{{\vec{b}}}
\newcommand{\stab}{\op{stab}}
\newcommand{\uvl}{\underline}
\newcommand{\sss}{\vspace{2.5 mm}}
\newcommand{\Z}{\mathbb{Z}}
\newcommand{\sht}{\mathfrak{s}}
\newcommand{\lc}{\lceil}
\newcommand{\rc}{\rceil}
\newcommand{\lng}{\mathfrak{l}}
\newcommand{\Q}{\mathbb{Q}}
\newcommand{\pqp}{(p/q)^+}
\newcommand{\pqm}{(p/q)^-}
\newcommand{\CP}{\mathbb{CP}}
\newcommand{\calB}{\mathcal{B}}
\newcommand{\te}{{t_e}}
\newcommand{\wt}{\widetilde}
\newcommand{\lf}{\lfloor}
\newcommand{\rf}{\rfloor}
\newcommand{\C}{\mathbb{C}}
\newcommand{\ra}{\rightarrow}
\newcommand{\Tcount}{\mathbf{T}}
\newcommand{\MS}{{\medskip}}
\newcommand{\la}{\lambda}
\newcommand{\orb}{\mathfrak{o}}
\newcommand{\std}{\op{std}}
\newcommand{\res}{\op{Res}}
\newcommand{\Ddiv}{\mathbf{D}}
\newcommand{\vecD}{{\vec{\Ddiv}}}
\newcommand{\T}{\mathcal{T}}
\newcommand{\vecm}{{\vec{m}}}
\newcommand{\pt}{\op{pt}}
\renewcommand{\lll}{\Langle}
\newcommand{\rrr}{\Rangle}
\newcommand{\vecv}{{\vec{v}}}
\newcommand{\sk}{\op{sk}}
\newcommand{\hor}{\op{hor}}
\newcommand{\ver}{\op{ver}}
\newcommand{\CC}{\mathcal{C}}
\newcommand{\po}{x_0}
\newcommand{\qo}{y_0}
\newcommand{\pp}{x}
\newcommand{\Da}{\Delta}
\newcommand{\perf}{\op{Perf}}
\newcommand{\perfbar}{\uvl{\perf}}
\newcommand{\bl}{{\op{Bl}}}
\newcommand{\mult}{{\op{mult}}}
\newcommand{\intE}{{\mathring{E}}}
\newcommand{\En}{\mathcal{E}}
\newcommand{\simp}{\nu}
\newcommand{\nn}{\nonumber}
\newcommand{\MM}{{\mathbb{M}}}
\newcommand{\D}{\mathbb{D}}
\newcommand{\Dpunc}{\D_\circ}
\newcommand{\mr}{\mathring}
\newcommand{\exc}{\mathbb{E}}
\newcommand{\F}{\mathbb{F}}
\newcommand{\bx}{\op{Box}}
\newcommand{\rect}{{\op{Rect}}}
\newcommand{\weight}{{\mathcal{W}}}
\tikzset{node distance=3cm, auto}
\def\@secnumfont{\bfseries}
\def\section{\@startsection{section}{1}%
  \z@{.7\linespacing\@plus\linespacing}{.5\linespacing}%
  {\normalfont\Large\bfseries}}
  \def\acksection{\@startsection{subsubsection}{1}%
  \z@{.7\linespacing\@plus\linespacing}{.5\linespacing}%
  {\normalfont\Large\bfseries}}
\def\subsection{\@startsection{subsection}{2}%
  \z@{.75\linespacing\@plus.7\linespacing}{-.5em}%
  {\normalfont\large\bfseries}}
\def\subsubsection{\@startsection{subsubsection}{3}%
  \z@{.75\linespacing\@plus.7\linespacing}{-.5em}%
  {\normalfont\bfseries}}
\newtheorem{thm}{Theorem}[subsection]
\newtheorem{thmlet}{Theorem}
\newtheorem{lemma}[thm]{Lemma}
\newtheorem{prop}[thm]{Proposition}
\newtheorem{proplet}[thmlet]{Proposition}
\newtheorem{cor}[thm]{Corollary}
\newtheorem{corlet}[thmlet]{Corollary}
\newtheorem{notation}[thm]{Notation}
\newtheorem{claim}[thm]{Claim}
\newtheorem{assumptionlet}{Assumption}
\newtheorem*{assumptionstar}{Assumption (*)}
\newtheorem{conditionlet}{Condition}
\newtheorem{definition}[thm]{Definition}
\theoremstyle{remark}
\numberwithin{equation}{subsection} 
\newtheoremstyle{customremark}
{8pt}
{8pt}
{}
{}
{\bfseries}
{.}
{.5em}
{}
\theoremstyle{customremark}
\newtheorem{rmk_no_diamond}[thm]{Remark}
\newenvironment{rmk}{\begin{rmk_no_diamond} } {\hfill$\Diamond$ \end{rmk_no_diamond}}
\newtheorem{example_no_diamond}[thm]{Example}
\newenvironment{example}{\begin{example_no_diamond} } {\hfill$\Diamond$ \end{example_no_diamond}}
\begin{document}

\makeatother

\date{\today}

\title{Ellipsoidal superpotentials and singular curve counts}

\date{\today}

\begin{abstract}
Given a closed symplectic manifold, we construct invariants which count (a) closed rational pseudoholomorphic curves with prescribed cusp singularities and (b) punctured rational pseudoholomorphic curves with ellipsoidal negative ends.
We prove an explicit equivalence between these two frameworks, which in particular gives a new geometric interpretation of various counts in symplectic field theory.
We show that these invariants encode important information about singular symplectic curves and stable symplectic embedding obstructions. We also prove a correspondence theorem between rigid unicuspidal curves and perfect exceptional classes, which we illustrate by classifying rigid unicuspidal (symplectic or algebraic) curves in the first Hirzebruch surface.
\end{abstract}

\author{Dusa McDuff and Kyler Siegel}
\thanks{K.S. is partially supported by NSF grant DMS-2105578}

\maketitle

\tableofcontents

\section{Introduction}

\subsection{Motivation}

In algebraic and symplectic geometry one often considers curves which satisfy various types of geometric constraints. For instance, Gromov--Witten invariants count curves passing through one or more cycles representing chosen homology classes, and relative Gromov--Witten invariants further specify tangency conditions with a chosen divisor.
For certain applications, a distinguished role is played by constraints which are local near points, as these make no assumption on the ambient topology.
For example, curves with several generic point constraints are the subjects of the celebrated formulas of Kontsevich \cite{kontsevich1994gromov,KV} and Caporaso--Harris \cite{CaH}, and they form the basis of the $U$-map in embedded contact homology used to define the ECH capacities \cite{Hutchings_quantitative_ECH}.

More recently, curves with local tangency constraints have been used to put various obstructions on Lagrangain submanifolds and symplectic embeddings -- see e.g. \cite{CM1,CM2,tonk,McDuffSiegel_counting,mcduff2021symplectic}.
The local tangency constraint can be formulated in terms of a generic local divisor and it essentially amounts to specifying the $m$-jet of a curve at a marked point, for some $m \in \Z_{\geq 0}$.
In particular, for $m=0$ this reduces to an ordinary point constraint.

In this paper we introduce a family of local geometric constraints which naturally generalize local tangency constraints and are related to singularities of algebraic curves.
The resulting curve counts can be interpreted in several ways:
\begin{enumerate}[label=(\alph*)]
  \item\label{enum:multi} as closed curves satisfying local multidirectional tangency constraints
  \item\label{enum:cusp} as closed curves with prescribed cusp singularities
  \item\label{enum:punc} as punctured curves which are negatively asymptotic (in the sense of symplectic field theory) to Reeb orbits in ellipsoids
  \item\label{enum:relGW} as relative Gromov--Witten invariants with respect to certain nongeneric chains of divisors.
\end{enumerate}
Interpretation \ref{enum:multi} leads to a natural definition of these counts in the spirit of symplectic Gromov--Witten theory, while \ref{enum:cusp} will relate these to classical existence problems about singular algebraic curves, particularly those with one $(p,q)$ cusp singularity (this is locally modeled on $\{x^p+y^q = 0\} \subset \C^2$).
Interpretation \ref{enum:punc} embeds these counts into the framework of SFT and will allow us to use them to obstruct (stabilized) symplectic embeddings of ellipsoids.
Finally, \ref{enum:relGW} is related most directly to \ref{enum:cusp} via embedded resolution of curve singularities.

The main goals of this paper are:
\begin{itemize}
  \item to give rigorous self-contained definitions of these counts (under suitable assumptions)
  \item to formalize the above interpretations (a)-(d) into precise equivalences between invariants
  \item to discuss applications to symplectic embeddings and classifying singular (algebraic or symplectic) curves.
\end{itemize}
We will restrict to genus zero and primarily focus on constraints of an essentially (real) four-dimensional nature, although we allow target spaces of any dimension. 
For the remainder of this introduction we outline our main results in more detail.

\subsection{Ellipsoidal superpotentials}\label{subsec:ell_sup_intro}

We begin by discussing punctured curves with ellipsoidal negative ends. Symplectic field theory packages punctured curve counts in symplectizations and completed symplectic cobordisms into algebraic invariants which are known to provide powerful symplectic embedding obstructions.
Of particular importance for this paper are cobordisms of the form $M_\veca$, given by excising from a closed symplectic manifold $M$ a (rescaling of) the ellipsoid $E(\veca)$ with area factors $\veca = (a_1,\dots,a_n) \in \R_{>0}^n$ (see \S\ref{sec:robust}).
Given a homology class $A \in H_2(M)$, we denote by $\Tcount_{M,A}^\veca$ the SFT count of index zero planes in the symplectic completion $\wh{M}_\veca$ of $M_\veca$ with one negative end asymptotic to a Reeb orbit in $\bdy E(\veca)$.
In \cite{SDEP} this count is called the {\bf ellipsoidal superpotential}.

In general $\Tcount_{M,A}^\veca$ is a virtual count of pseudoholomorphic buildings in a compactified moduli space and takes values in $\Q$.
However, we will show that in favorable situations the count $\Tcount_{M,A}^\veca$ can be defined by classical pseudoholomorphic curve techniques and takes values in $\Z$. 
In order to give a precise formulation let us first introduce some relevant notation. For $\veca = (a_1,\dots,a_n) \in \R_{>0}^n$ rationally independent\footnote{That is, $a_1,\dots,a_n$ are linearly independent over $\Q$. We will frequently make this assumption out of convenience in order to ensure nondegenerate Reeb dynamics in $\bdy E(\veca)$.},
let $\orb^\veca_1,\orb^\veca_2,\orb^\veca_3,\dots$ denote the Reeb orbits in $\bdy E(\veca)$ in order of increasing action. In particular, the action of $\orb^\veca_k$ is given by 
$\calA(\orb^\veca_k) = \MM_k^\veca$,
where $\MM^\veca_k$ denotes the $k$th smallest positive integer multiple of one of $a_1,\dots,a_n$.\footnote{In general $\calA(\ga)$ will denote the action of a Reeb orbit $\ga$, and we use the notation $\MM_k^\veca$ when we wish to emphasize its combinatorial nature in the case of ellipsoids.}
Recall that each Reeb orbit in $\bdy E(\veca)$ is an iterate of one of the simple orbits $\simp_1,\dots,\simp_n$, where $\simp_i$ denotes the intersection of $\bdy E(\veca)$ with the $i$th complex axis.
We will denote the covering multiplicity of any Reeb orbit $\ga$ by $\mult(\ga)$.

Let $\calJ(M_\veca)$ denote the set of SFT admissible almost complex structures on $\wh{M}_\veca$ (see \S\ref{subsec:moduli_spaces}).
Given $J \in \calJ(M_\veca)$, let $\calM_{M_\veca,A}^J(\orb^\veca_{c_1(A)-1})$ denote the moduli space of $J$-holomorphic planes\footnote{These need not be embedded.} $u: \C \ra \wh{M}_\veca$
which lie in the homology class $A$ and are negatively asymptotic to $\orb^\veca_{c_1(A)-1}$.
Here $c_1(A) \in \Z$ denotes the first Chern number of $A$, and $\orb^\veca_{c_1(A)-1}$ is precisely the negative asymptotic Reeb orbit needed for this moduli space to have index zero.

The following theorem gives precise conditions under which the ellipsoidal superpotential is classically defined and integer-valued, in which case we get obstructions for stabilized symplectic embeddings of ellipsoids.
\begin{thmlet}[specialization of Theorem~\ref{thm:robust_counts} and Corollary~\ref{cor:stab_obs}]\label{thmlet:ell_sup}
Let $M^{2n}$ be a semipositive closed symplectic manifold, $A \in H_2(M)$ a homology class, and $\veca = (a_1,\dots,a_n)$ a rationally independent tuple satisfying Assumption (*), and such that $A$ and $\orb^\veca_{c_1(A)-1}$ have no common divisibility.\footnote{That is, there is no $\kappa \in \Z_{\geq 2}$ such that $\orb^\veca_{c_1(A)-1}$ is a $\kappa$-fold cover of another Reeb orbit and we have $A = \kappa B$ for some $B \in H_2(M)$. This holds for example if $c_1(A)$ and $\mult(\orb^\veca_{c_1(A)-1})$ are relatively prime.}
Then:
\begin{enumerate}[label=(\alph*)]
  \item For generic $J \in \calJ(M_\veca)$, the moduli space $\calM_{M_\veca,A}^J(\orb^\veca_{c_1(A)-1})$ is finite and regular, and its signed count is independent of the choice of generic $J$.
  \item Suppose that this count is nonzero, and further that $M \times \C^N$ is semipositive for some $N \in \Z_{\geq 0}$.
Then given any symplectic embedding $E(c \veca) \times \C^N \hooksymp M \times \C^N$ we must have 
$c \leq \frac{[\omega_M]\cdot A}{\MM^\veca_{c_1(A)-1}}$. 
\end{enumerate} 
\end{thmlet}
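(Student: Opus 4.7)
The plan has two parts mirroring the statement: part (a) combines somewhere-injectivity with the standard SFT transversality and compactness package, while part (b) deduces the embedding obstruction via a neck-stretching argument applied to a nonzero count produced by (a).

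For (a), any plane $u \in \calM^J_{M_\veca,A}(\orb^\veca_{c_1(A)-1})$ that factored through a branched cover $\C \to \C$ of degree $\kappa \geq 2$ would force $A = \kappa B$ for some $B \in H_2(M)$ and $\orb^\veca_{c_1(A)-1}$ to be a $\kappa$-fold iterate, violating the no-common-divisibility hypothesis. Hence every curve in the moduli space is somewhere injective, and for a comeager set of $J \in \calJ(M_\veca)$ the SFT transversality package gives a smooth $0$-manifold, the Fredholm index being zero by the specific choice of asymptotic orbit. For compactness, the SFT limit of a sequence $u_\nu$ in this moduli space is a holomorphic building consisting of (possibly disconnected) top-level components in $\wh{M}_\veca$ together with symplectization levels in $\R \times \bdy E(\veca)$ glued along matched asymptotic orbits. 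Semipositivity of $M$ eliminates the usual negative-index sphere bubbles in the top level, while Assumption~(*) controls the Conley--Zehnder indices of the short Reeb orbits that could appear at the breaks. Since the total index is zero, every component must have nonnegative index and hence exactly index zero, and the minimality of $\calA(\orb^\veca_{c_1(A)-1}) = \MM^\veca_{c_1(A)-1}$ in the orbit spectrum then forces all symplectization levels to be trivial cylinders and only a single top-level component to be nonconstant. The limit therefore lies in $\calM^J_{M_\veca,A}(\orb^\veca_{c_1(A)-1})$, which is thus compact and finite. The parametrized version of the same argument applied to a generic path of $J$'s produces a compact oriented $1$-cobordism and proves the signed count is independent of the generic $J$.

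For (b), suppose there is a symplectic embedding $\iota: E(c\veca) \times \C^N \hooksymp M \times \C^N$. Applying (a) to the semipositive manifold $M \times \C^N$ yields a well-defined integer superpotential for the class $A$ (viewed in $H_2(M \times \C^N) = H_2(M)$) and the orbit $\orb^\veca_{c_1(A)-1}$; a neck-stretching at infinity in the $\C^N$ factor (no holomorphic curves can escape to infinity there since $\C^N$ is exact) identifies this stabilized count with $\Tcount^\veca_{M,A}$, hence it is nonzero. Now neck-stretch in $M \times \C^N$ along a suitable contact-type hypersurface obtained from $\iota(\bdy E(c\veca) \times \C^N)$: a sequence of index zero planes converges to a holomorphic building whose lowest nonconstant component is a plane in the completion of the embedded ellipsoid region asymptotic to some Reeb orbit $\ga$ on $\bdy E(c\veca)$. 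Because $\bdy E(c\veca)$ is the $c$-rescaling of $\bdy E(\veca)$ we have $\calA(\ga) \geq c \cdot \MM^\veca_{c_1(A)-1}$, while Stokes' theorem applied to the full building gives $\calA(\ga) \leq [\om_M] \cdot A$. Rearranging yields the claimed inequality $c \leq [\om_M]\cdot A / \MM^\veca_{c_1(A)-1}$.

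The main technical obstacle is the compactness analysis in (a): in dimensions greater than four, semipositivity alone only controls top-level sphere bubbles, and one must use Assumption~(*) together with the extremality of $\orb^\veca_{c_1(A)-1}$ among low-action orbits to prevent multi-level SFT degenerations in $\R \times \bdy E(\veca)$. A secondary delicate point in (b) is the stabilization identification of $\Tcount^\veca$ in $M$ with $\Tcount^\veca$ in $M \times \C^N$; the noncompactness of $\C^N$ means the neck-stretching at infinity must be justified carefully, but this is standard in SFT for exact factors.
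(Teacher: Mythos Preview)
Your outline for part (a) is essentially the paper's approach, though one point is glossed over: components of a limiting building in the top level $\wh{M}_\veca$ need not be simple, and semipositivity alone does not control multiply covered \emph{punctured} curves there. The paper handles this via a separate lemma (its Condition~(a)) which uses Assumption~(*) to show that any such component, after capping all but one negative end by formal planes in $\bdy E(\veca)$, still has nonnegative index. This is where the full strength of the nonstrict inequality $\MM^\veca_{i_1}+\cdots+\MM^\veca_{i_k} \leq \MM^\veca_{i_1+\cdots+i_k+k-1}$ enters, not only its strict case at $c_1(A)-1$.

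Part (b) has a genuine gap. You propose to apply (a) to $M \times \C^N$, but $M \times \C^N$ is not closed, so the compactness argument in (a) does not even get off the ground. More seriously, $\iota(\bdy E(c\veca) \times \C^N)$ is a noncompact hypersurface, so there is no standard SFT neck-stretching along it; your assertion that the bottom piece is a plane asymptotic to some $\ga$ with $\calA(\ga) \geq c\,\MM^\veca_{c_1(A)-1}$ is also unjustified. The paper's route avoids all of this. One first restricts the given embedding to $E(c\veca,c\vecb) \subset E(c\veca) \times \C^N$ for $\vecb$ with entries exceeding $\MM^\veca_{c_1(A)-1}$, and then compactifies the target to $M \times Q$ for a closed $Q$. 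The stabilization identity $\Tcount_{M \times Q,\,A \times [\pt]}^{(\veca,\vecb)} = \Tcount_{M,A}^\veca$ is proved not by any neck-stretching at infinity but by choosing a Donaldson flag $\{\qo\} = Q_0 \subset \cdots \subset Q_N = Q$ and using positivity of intersections to force all curves into $M \times \{\qo\}$. Finally, no further degeneration is needed for the embedding bound: robustness means the count is independent of \emph{which} ellipsoid embedding is excised, so one simply takes $M_\veca$ to be $(M\times Q) \setminus \iota(\intE(c\veca,c\vecb))$ itself, and nonnegativity of the energy of any curve in the nonempty moduli space gives $[\omega_M]\cdot A - c\,\MM^\veca_{c_1(A)-1} \geq 0$ directly.
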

\NI Here $[\omega_M] \cdot A$ denotes the area of $A$ with respect to the symplectic form $\omega_M$ on $M$. 
Assumption (*) is our key numerical condition on $\veca$ and $c_1(A)$, formulated in \S\ref{subsec:main_rob_res}:
\begin{assumptionstar}[Assumptions \ref{assump:A} and \ref{assump:B}]\label{assump:star} 
Assume that for any $k \in \Z_{\geq 2}$ and $i_1,\dots,i_k \in \Z_{\geq 1}$ satisfying $\sum\limits_{s=1}^k i_s + k-1 \leq c_1(A)-1$ we have 
\begin{align*}
\MM^\veca_{i_1} + \cdots + \MM^\veca_{i_k} \leq \MM^\veca_{i_1+\cdots+i_k+k-1},
\end{align*}
with strict inequality if $\sum\limits_{s=1}^k i_s + k-1 = c_1(A)-1$.
\end{assumptionstar}
\begin{notation}
For $M,A,\veca$ as in Theorem~\ref{thmlet:ell_sup}, the ellipsoidal superpotential is
\begin{align*}
\Tcount_{M,A}^\veca := \# \calM_{M_\veca,A}^J(\orb^\veca_{c_1(A)-1}) \in \Z
\end{align*}
for generic $J \in \calJ(M_\veca)$.
\end{notation}

Special cases of Theorem~\ref{thmlet:ell_sup} in the case $M = \CP^2$ have appeared in e.g. \cite{HK,CGH,Mint,Ghost}.
Theorem~\ref{thmlet:ell_sup} is applied in \cite{CuspsStairs} to the monotone toric surfaces from \cite{cristofaro2020infinite}, and in \cite{magill2022staircase} to one point blowups of $\CP^2$.
For the stabilized embedding obstructions in (b) it is essential that we count planes, as curves with higher genus or several negative ends do not typically behave well under stabilization (see \S\ref{subsec:stab_inv_I}).
Note that $M \times \C^N$ is semipositive e.g. whenever $M$ is monotone or $\dim_\R M = 4$ and $N=1$. 

\sss

 While Assumption (*) is somewhat mysterious in general, we have the following important specialization for which Assumption (*) always holds (see \S\ref{subsec:main_rob_res}).
Fix $p,q \in \Z_{\geq 1}$ with $p>q$ 
relatively prime satisfying $p+q = c_1(A)$, and put $\veca = (q,p\pm \delta,a_3,\dots,a_n)$ with $a_3,\dots,a_n > pq$ and $\delta > 0$ sufficiently small. In this case the Reeb orbit $\orb^\veca_{c_1(A)-1}$ is either $\simp_1^p$ or $\simp_2^q$, and in either case its action $\MM^\veca_{c_1(A)-1}$ is approximately $pq$.
In dimension four Assumption (*) is   closely related to the ECH partition conditions -- see Remark~\ref{rmk:ECH_partitions}.

\begin{rmk}
The obstruction in Theorem~\ref{thmlet:ell_sup}(b) actually follows from a stronger result (see \S\ref{sec:robust}) which obstructs embeddings of the form
$E(c\veca,c\vecb) \hooksymp M \times Q^{2N}$ for any $\vecb = (b_1,\dots,b_N)$  with $b_1,\dots,b_N > \MM^\veca_{c_1(A)-1}$ and $Q^{2N}$ any closed symplectic manifold such that $M \times Q$ is semipositive.
\end{rmk}

\begin{rmk}[on the assumptions in Theorem~\ref{thmlet:ell_sup}]
Semipositivity is a standard assumption in symplectic Gromov--Witten theory which is needed to rule out multiple covers of negative index -- see \cite{JHOL} and \S\ref{subsec:main_rob_res} below.
The non-divisibility assumption on $A$ and $\orb^\veca_{c_1(A)-1}$ is also needed is rule out multiple covers in the moduli space $\calM_{M_\veca,A}^J(\orb^\veca_{c_1(A)-1})$.

It is tempting to relax the Assumption (*) in Theorem~\ref{thmlet:ell_sup}, but this likely requires a more sophisticated transversality setup (and results in values in $\Q$ rather than $\Z$), due to the possibility of 
branched covers of trivial cylinders in $\R \times \bdy E(\veca)$ with nonpositive index (see Remark~\ref{rmk:highd_fail}). 
In fact, \cite[\S7]{SDEP} gives examples where $\Tcount_{M,A}^\veca$ cannot be an integer, e.g. $\Tcount_{\CP^2,5[L]}^{(1,8^+)} = 113/13$. 
Similar considerations will apply to Theorem~\ref{thmlet:multidir_counts} below.
\end{rmk}

\subsection{Multidirectional tangency constraints}\label{subsec:multi}

We next discuss closed curve counts with local multidirectional tangency constraints, denoted by $N_{M,A}\lll \CC^\vecm \pt \rrr$.
Roughly speaking, for $\vecm = (m_1,\dots,m_n)$ this constraint requires curves to pass through a specified point in $M^{2n}$ with contact order $m_i$ to the $i$th complex direction.
The most relevant case for us will be when $m_3 = \cdots = m_n = 1$, i.e. we only impose constraints in the first two complex directions.
For simplicity we restrict to curves carrying a single multidirectional tangency constraint, although this could be readily generalized to multiple constraints.

Given a symplectic manifold $M^{2n}$, we will say that a collection of smooth local symplectic divisors $\vecD = (\Ddiv_1,\dots,\Ddiv_n)$ through a point $\po \in M$ is {\bf spanning} if their tangent spaces at $\po$ span $T_{\po} M$. For example, the complex hyperplanes $\{z_i = 0\}$ give a set of spanning local divisors through $\vec{0} \in \C^n$.
We denote by $\calJ(M,\vecD)$ the space of all tame almost complex structures on $M$ which are integrable near $\po$ and preserve each of $\Ddiv_1,\dots,\Ddiv_n$.

Given $J \in \calJ(M,\vecD)$, we define $\calM_{M,A}^J\lll \CC_\vecD^\vecm \po \rrr$ to be the moduli space of $J$-holomorphic maps $u: \CP^1 \ra M$ such that $[u] = A$, $u([0:0:1]) = \po$, and $u$ has contact order at least $m_i$ with $\Ddiv_i$ at $[0:0:1]$ for $i = 1,\dots,n$.

\begin{thmlet}[specialization of Theorem~\ref{thm:N_robust}]\label{thmlet:multidir_counts}
Let $M^{2n}$ be a semipositive closed symplectic manifold, $A \in H_2(M)$ a homology class, $\vecD = (\Ddiv_1,\dots\Ddiv_n)$ a collection of spanning local divisors at a point $\po \in M$, 
and $\vecm = (p,q,1,\dots,1) \in \Z_{\geq 1}$ a tuple such that
$p+q = c_1(A)$ and $\gcd(p,q)=1$.
Then for generic $J \in \calJ(M,\vecD)$, the moduli space $\calM_{M,A}^J\lll \CC^{\vecm}_{\vecD}\po\rrr$ is finite and regular, and its signed count is independent of the choices of $\po,\vecD$, and generic $J$.
\end{thmlet}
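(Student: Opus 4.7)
The plan is to adapt the standard Gromov--Witten theoretic framework for local tangency-constrained curve counts (as in \cite{McDuffSiegel_counting}) to the multidirectional setting. First I would verify that the virtual dimension of $\calM_{M,A}^J\lll\CC_\vecD^\vecm\po\rrr$ is zero. Working in local coordinates at $\po$ in which $\Ddiv_i=\{z_i=0\}$, the condition that $u$ have contact order $\geq m_i$ with $\Ddiv_i$ at the marked point $0$ is equivalent to the $i$th component of $u$ vanishing to order $m_i$ at $0$, and the conditions $u_i(0)=0$ for all $i$ already encode $u(0)=\po$. Hence the total real codimension of the constraint is $2\sum_i m_i = 2(p+q+n-2) = 2(c_1(A)+n-2)$, and subtracting from the unparametrized pointed moduli-space dimension $2c_1(A)+2n-4$ yields virtual dimension $0$.

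Next I would establish transversality, which reduces to regularity of somewhere-injective curves together with exclusion of multiple covers. Suppose $u = v\circ \varphi$ with $v$ simple, $A = k[v]$ for some $k \geq 2$, and $\varphi \colon \CP^1 \to \CP^1$ a degree-$k$ branched cover with local ramification index $r\leq k$ at the marked point. In local coordinates one checks that $u$ has contact order $rm'_i$ with $\Ddiv_i$ at $0$, where $m'_i$ is the contact order of $v$ with $\Ddiv_i$ at $\varphi(0)$, so $v$ must lie in the moduli space of class $A/k$ with constraint $\vecm' = (\lceil p/r\rceil, \lceil q/r\rceil, 1, \dots, 1)$, of expected dimension $2\bigl((p+q)/k - \lceil p/r\rceil - \lceil q/r\rceil\bigr)$. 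A direct check shows this is strictly negative for every $1 \leq r \leq k$ with $k\geq 2$; the borderline case $r = k$ is exactly where $\gcd(p,q) = 1$ is essential, since otherwise $r\mid p$ and $r\mid q$ would force the expression to be $0$. Semipositivity then ensures this moduli space is empty for generic $J$, so $\calM_{M,A}^J\lll\CC_\vecD^\vecm\po\rrr$ contains only simple curves, and these are regular for generic $J$ by the standard Sard--Smale argument applied to $J$-variations supported away from $\po$ combined with the spanning property of $\vecD$ (which provides the freedom needed to cut out each jet condition transversally).

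For compactness I would apply Gromov's theorem. Any limit is a genus-zero stable map with a principal component carrying $\po$ and possibly a tree of sphere bubbles; the multidirectional contact orders distribute across this configuration in a manner controlled by the local intersection numbers of the limit with each $\Ddiv_i$. Using the index formula from Step~1 applied to each piece, and semipositivity to eliminate sphere classes of negative Chern number, every nontrivial bubble stratum has strictly negative virtual dimension and is therefore empty for generic $J$. Hence $\calM_{M,A}^J\lll\CC_\vecD^\vecm\po\rrr$ is compact, finite, and regular, and the signed count is well-defined.

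Finally, independence from the generic choices of $\po$, $\vecD$, and $J$ follows by a standard cobordism argument: the parameter space of admissible triples is connected, and for a generic one-parameter family the total moduli space is an oriented $1$-manifold whose boundary computes the difference of signed counts at the endpoints, with the same regularity and compactness analysis holding along the way. The main obstacle is the compactness step, where one must carefully enumerate and rule out all possible nodal degenerations of multidirectional-tangency configurations, including tracking how higher-order contact conditions can be absorbed into bubbles attached at the constrained point.
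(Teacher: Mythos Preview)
Your compactness argument has a genuine gap at precisely the point you flag as ``the main obstacle.'' When a curve carrying $\lll\CC_\vecD^{(p,q,1,\dots,1)}\po\rrr$ degenerates so that the constraint lands on a ghost component, the adjacent nonconstant components $Q_1,\dots,Q_k$ inherit constraints $\lll\CC_\vecD^{\vecm_i}\po\rrr$ with $\vecm_i=(p_i,q_i,m_3^i,\dots,m_n^i)$. The only distribution law available from local intersection numbers is the Cieliebak--Mohnke inequality $\sum_i m_s^i \geq m_s$ for each $s$ (Lemma~\ref{lem:CM_ineq}). Using this together with $m_s^i\geq 1$ for $s\geq 3$ gives
\[
\sum_{i=1}^k \ind(Q_i) \;=\; 2c_1(A) + k(2n-4) - 2\sum_{i,s} m_s^i \;\leq\; 2(p+q) + k(2n-4) - 2(p+q) - 2(n-2)k \;=\; 0,
\]
which is \emph{not} strictly negative, so the ghost stratum is not excluded by transversality alone; the paper's Example~\ref{ex:Q_cusp_deg} makes exactly this point. (By contrast, for the unidirectional constraint $\vecm=(m,1,\dots,1)$ the same bookkeeping yields $\leq 2-2k<0$, which is why the argument in \cite{McDuffSiegel_counting} succeeds there.) Your assertion that ``every nontrivial bubble stratum has strictly negative virtual dimension'' is therefore unsupported.

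The paper closes this gap with a genuinely new ingredient, the hidden-constraint inequality (Proposition~\ref{prop:hid_constr}): for \emph{every} $\veca\in\R_{>0}^n$ one has $\sum_i \min_s a_s m_s^i \geq \min_s a_s m_s$. This is proved by composing the degenerating curves near $\po$ with an explicit diffeomorphism $\Phi_\veca\colon \R\times\bdy E(\veca)\to \C^n\setminus\{\vec 0\}$ under which $J_\std$ becomes SFT-admissible, so that each constraint $\lll\CC^{\vecm_i}\rrr$ appears as a negative puncture asymptotic to a Reeb orbit of action $\min_s a_s m_s^i$; nonnegativity of $d\alpha$-energy and Stokes' theorem then give the inequality. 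Specializing $\veca$ appropriately and using $\gcd(p,q)=1$ yields $\sum_i(p_i+q_i)\geq p+q+1$, which supplies the missing unit of codimension and forces the ghost stratum to be empty. Your sketch contains no mechanism capable of producing this extra inequality, and without it the compactness step (and hence the cobordism argument for independence) does not go through.
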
 
\begin{notation}
For $M,A,\vecm$ as in Theorem~\ref{thmlet:multidir_counts}, we put 
\begin{align*}
N_{M,A}\lll \CC^\vecm \pt\rrr := \#\calM_{M,A}^J\lll \CC^{\vecm}_{\vecD}\po\rrr \in \Z
\end{align*}
for generic $J \in \calJ(M,\vecD)$.
\end{notation}

\begin{rmk}
In the special case $m_2 = \cdots = m_n = 1$, the constraint $\lll \CC_\vecD^\vecm \po \rrr$ reduces to the local tangency constraint denoted by $\lll \T_{\Ddiv_1}^{(m_1)}\po\rrr$ in \cite{McDuffSiegel_counting}.
\end{rmk}

\begin{rmk}\label{rmk:multi=cusp_intro}
Suppose $\dim_\R M = 4$, and we have $p+q =c_1(A)$ and $\gcd(p,q)=1$.
As we make precise in \S\ref{subsec:cusps_multi_sing}, the local multidirectional tangency constraint $\lll \CC^{(p,q)}\pt\rrr$ is akin to prescribing a $(p,q)$ cusp at $\po$ along with its maximal jet. 
In particular, for generic $J \in \calJ(M,\vecD)$, every curve $C \in \calM^J_{M,A}\lll \CC^{(p,q)}_{\vecD}\po\rrr$ has a $(p,q)$ cusp singularity. Conversely, if $C$ is a $J$-holomorphic curve with a $(p,q)$ cusp at $\po$, then we can find spanning local divisors $\vecD = (\Ddiv_1,\Ddiv_2)$ at $\po$ such that the constraint $\lll \CC_{\vecD}^{(p,q)}\po\rrr$ is satisfied. Incidentally, the choice of $\Ddiv_2$ is irrelevant if we assume $p > q$.
\end{rmk}

\begin{example}
 The constraint $\lll \CC^{(3,2)}_{\vecD}\po\rrr$ corresponds to having an ordinary cusp at a specified point $\po$ and with specified tangent line. Recall that there is indeed a well-defined tangent line at an ordinary cusp even though it is a singular point.
\end{example}

Let us briefly comment on the proof of Theorem~\ref{thmlet:multidir_counts}.
Generalizing \cite[Prop. 2.2.2]{McDuffSiegel_counting} for local tangency constraints, the basic strategy is to show that any bad degenerations (i.e. multiple covers or stable maps with more than one component) only appear in real codimension at least two.
In the case of local tangency constraints, the main difficulty comes from configurations involving ghost bubbles (i.e. the constraint is carried by a constant component), and these are handled by observing (via \cite[Lem. 7.2]{CM1}) that the nearby nonconstant components satisfy tangency constraints which ``remember'' the main constraint.
In the case of local multidirectional tangency constraints this approach apparently fails to produce enough codimension, but we show in \S\ref{subsec:hc} that such ghost degenerations carry additional ``hidden constraints'':
\begin{proplet}[specialization of Proposition~\ref{prop:hid_constr}]\label{proplet:hid_constr}
  Suppose that a curve carrying the constraint $\lll \CC_\vecD^{(p,q)}\po\rrr$ degenerates into curves carrying constraints $\lll \CC_\vecD^{(p_1,q_1)}\po\rrr,\dots,\lll \CC_\vecD^{(p_k,q_k)}\po\rrr$ respectively. 
Then we must have
\begin{align}\label{eq:hc_ineq_intro}
\sum_{i=1}^k \min(ap_i,bq_i) \geq \min(ap,bq)
\end{align}
for any choice of $a,b \in \R_{>0}$.
\end{proplet}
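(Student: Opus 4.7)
The plan is to reduce the inequality to a local statement at $\po$ about a weighted multiplicity invariant, and then appeal to upper semi-continuity of intersection multiplicity in Gromov limits. Since any $J \in \calJ(M, \vecD)$ is integrable on a neighborhood $U$ of $\po$, I may pick local holomorphic coordinates $(x, y)$ on $U$ with $\Ddiv_1 = \{x = 0\}$ and $\Ddiv_2 = \{y = 0\}$. For a reduced $J$-holomorphic plane curve germ $\gamma$ at $\po$ with irreducible branch decomposition $\gamma = \bigcup_j \gamma_j$, I introduce the weighted multiplicity
\[
\mu_{a, b}(\gamma, \po) := \sum_j \min\bigl(a \cdot (\gamma_j \cdot \Ddiv_1)_\po,\; b \cdot (\gamma_j \cdot \Ddiv_2)_\po\bigr).
\]
A Puiseux computation shows that a single branch with contact orders $(p', q')$ contributes exactly $\min(ap', bq')$, so any curve carrying the constraint $\lll \CC^{(p,q)}_\vecD \po \rrr$ has $\mu_{a,b} \geq \min(ap, bq)$, with equality in the generic case where it has a single branch at $\po$ with tight contact orders $(p, q)$.

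The core step is an upper semi-continuity statement: for a Gromov-convergent family $C_s \to C_0 = \bigcup_i C_0^i$ in $(U, J|_U)$ with every irreducible component of $C_0$ passing through $\po$, one has
\[
\mu_{a, b}(C_0, \po) \;\geq\; \limsup_{s \to 0}\, \mu_{a, b}(C_s, \po).
\]
For rational weights, writing $(a, b) = (a'/N, b'/N)$ with $a', b', N \in \Z_{\geq 1}$, I recognise $N \mu_{a, b}(\gamma, \po)$ as the local intersection multiplicity $(\gamma \cdot D)_\po$ with the weighted divisor $D := \{\alpha x^{a'} - \beta y^{b'} = 0\}$ for generic $\alpha, \beta \in \C^\times$. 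Upper semi-continuity then follows from the classical fact that local intersection multiplicity with a fixed analytic divisor can only jump up in a flat or Gromov limit of holomorphic plane curves; real weights $a, b \in \R_{>0}$ follow by density together with the continuity of $(a, b) \mapsto \mu_{a, b}$.

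Combining additivity of $\mu_{a, b}$ over components with the labelling convention that each $C_0^i$ carries the constraint $\lll \CC^{(p_i, q_i)}_\vecD \po \rrr$ tightly (that is, with contact orders exactly $(p_i, q_i)$, as is generic within the relevant moduli stratum) yields the chain
\[
\sum_{i=1}^k \min(a p_i, b q_i) \;=\; \sum_i \mu_{a, b}(C_0^i, \po) \;=\; \mu_{a, b}(C_0, \po) \;\geq\; \mu_{a, b}(C_s, \po) \;\geq\; \min(ap, bq),
\]
which is the desired inequality for each $(a, b) \in \R_{>0}^2$. The main obstacle is the upper semi-continuity step itself: one must translate from the symplectic Gromov compactness framework to the complex-analytic setting in order to apply classical local intersection theory, which is precisely what the integrability of $J$ on a neighborhood of $\po$ enables. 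A subsidiary subtlety is to interpret ``carrying constraint $\CC^{(p_i, q_i)}$'' as encoding the exact combinatorial contact type of each limit branch at $\po$, which is what makes the first equality in the display above meaningful.
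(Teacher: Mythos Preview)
Your approach is correct and takes a genuinely different route from the paper's proof.

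The paper argues via the diffeomorphism $\Phi_\veca:\R\times\bdy E(\veca)\to\C^n\setminus\{\vec 0\}$ from Proposition~\ref{prop:Phi}, which converts the local picture at $\po$ into a symplectization picture. A local branch strictly satisfying $\lll\CC^{\vecm}_\vecD\po\rrr$ becomes, under $\Phi_\veca^{-1}$, a punctured disk negatively asymptotic to the Reeb orbit $\orb^\veca_{-\vecm}$ of action $\min_s a_s m_s$. The desired inequality is then obtained by applying Stokes' theorem to the contact $1$-form $\alpha_\veca$ on a portion of the pulled-back curve $C_j$ lying between two $r$-slices, using nonnegativity of the $d\alpha_\veca$-energy.

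Your argument instead stays within complex-analytic intersection theory: for rational weights you recognize $N\mu_{a,b}$ of a branch as its intersection multiplicity with the singular divisor $D=\{\alpha x^{a'}-\beta y^{b'}=0\}$, and invoke upper semi-continuity of local intersection multiplicity with a fixed holomorphic hypersurface under Gromov limits. That last step is precisely the content of \cite[Lem.~7.2]{CM1} (Lemma~\ref{lem:CM_ineq} in the paper) applied to the function $f=\alpha x^{a'}-\beta y^{b'}$ rather than a coordinate function; the proof there goes through verbatim for any local holomorphic $f$ with $f(\po)=0$, but you should say this explicitly rather than calling it a classical fact about flat limits. One further small point worth making precise: you need genericity of $(\alpha,\beta)$ only for the finitely many limit branches at $\po$, since for the approximating curves $C_s$ the inequality $(C_s\cdot D)_\po\geq N\min(ap,bq)$ holds for \emph{all} $(\alpha,\beta)\in(\C^*)^2$; so there is no uniformity issue in $s$.

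As for what each approach buys: the paper's proof makes transparent the link between multidirectional tangencies and ellipsoidal ends that drives the rest of \S\ref{sec:multi}, and feeds directly into the formal-curve reformulation in Remark~\ref{rmk:formal_curve_from_cusp_deg}. Your argument is more self-contained, avoiding the $\Phi_\veca$ machinery entirely, and extends just as well to higher dimensions via $D=\{\sum_s\alpha_s z_s^{a_s'}=0\}$.
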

\NI A more precise statement which applies in all dimensions is given in \S\ref{subsec:hc}.
Assuming $\gcd(p,q) = 1$, \eqref{eq:hc_ineq_intro} readily implies the inequality $\sum_{i=1}^k (p_i+q_i) \geq p+q+1$, which is the main ingredient needed to show that bad degenerations have codimension at least two.

\subsection{Correspondence theorem}\label{subsec:corresp}

We now formulate a precise relationship between negative ellipsoidal ends and multidirectional tangency constraints.
Recall that $\wh{M}_\veca$ has a negative end modeled on $\R_{\leq 0} \times \bdy E(\eps \veca)$, and that a curve $C \in \calM_{M_\veca,A}^J(\orb^\veca_{c_1(A)-1})$ has a negative end asymptotic to the Reeb orbit $\orb^\veca_{c_1(A)-1}$ in (a rescaling of) $\bdy E(\veca)$.
We can view the image of a small loop around the puncture as an embedded loop in $\bdy E(\veca)$ which has a well-defined linking number $m_i \in \Z$ with the $(2n-3)$-sphere $\bdy E(\veca) \cap \{z_i = 0\}$ for $i = 1,\dots,n$. 
Using results from \cite[\S4]{SDEP}, for a careful choice of $J$ we can directly transform $C$ into a $J'$-holomorphic sphere in $M$ satisfying the constraint $\lll \CC_{\vecD}^\vecm \po\rrr$ for suitable $J',\vecD$.

Furthermore, the linking numbers $\vecm = (m_1,\dots,m_n)$ associated to the puncture of $C$ are generically given by $\Da^\veca_{c_1(A)-1}$, which is defined combinatorially as follows.
\begin{definition}\label{def:Da}
For rationally independent $\veca \in \R_{>0}^n$  and $k \in \Z_{\geq 1}$, let $\Da_k^\veca$ denote the tuple $(i_1,\dots,i_n) \in \Z_{\geq 1}$ which maximizes $\min\limits_{1 \leq s \leq n} a_s i_s$ subject to $\sum\limits_{s=1}^n i_s = n+k-1$.  
\end{definition}
\begin{example}
For $\veca = (2,3^+)$, we have 
\begin{center}
\begin{tabular}{c|c|c|c|c|c|c|c|c} 
 $k$  & 1 & 2 & 3 & 4 & 5 & 6 & 7 & 8\\ 
 \hline
 $\Da^\veca_k$ & (1,1) & (2,1) & (2,2) & (3,2) & (4,2) & (4,3) & (5,3) & (5,4)\\
\end{tabular}.
\end{center}
\end{example}

The lattice path $\Da^\veca = (\Da^\veca_1,\Da^\veca_2,\Da^\veca_3,\dots)$ in $\Z_{\geq 1}^n$ will play a central role in relating negative ellipsoidal ends with multidirectional tangencies. It is the natural analogue of the lattice path $\Ga^\veca$ from \cite[\S1.1]{SDEP} which instead relates to positive ends (in fact we have $\Da^\veca_k = \Ga^\veca_{k-1} + (1,\dots,1)$).

Note that for $a_3,\dots,a_n \gg a_1,a_2$ we have $\Da^\veca_k = (p,q,1,\dots,1)$ for some $p,q \in \Z_{\geq 1}$. In this case it is easy to check that $\orb^\veca_k$ is either $\simp_1^p$ or $\simp_2^q$, depending on whether $a_1 p$ or $a_2 q$ is smaller.

\begin{thmlet}[specialization of Theorem~\ref{thm:equivalence}]\label{thmlet:cusp=ell}
Let $M^{2n}$ be a semipositive symplectic manifold, $A \in H_2(M)$ a homology class, and $\veca = (a_1,\dots,a_n) \in \R_{>0}^n$ a rationally independent tuple with $a_3,\dots,a_n > pq$.
Put $(p,q,1\dots,1) := \Delta^\veca_{c_1(A)-1} =: \vecm$, and assume that $\gcd(p,q) = 1$.
Then we have $\Tcount_{M,A}^{\veca} = N_{M,A}\lll \CC^\vecm\pt \rrr$. 
\end{thmlet}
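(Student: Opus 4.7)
The plan is to establish a signed bijection between the two moduli spaces via a neck-stretching argument along the boundary of a small ellipsoid centered at the marked point $\po$. First I would fix a $J_0 \in \calJ(M,\vecD)$ together with a local biholomorphism sending a neighborhood of $\po$ to a neighborhood of $\vec{0} \in \C^n$ under which $\vecD = (\Ddiv_1,\dots,\Ddiv_n)$ corresponds to the complex coordinate hyperplanes. For $\eps > 0$ small enough that $E(\eps\veca)$ embeds in this neighborhood, I would introduce a one-parameter family $\{J_t\}_{t \in [0,\infty)}$ interpolating between $J_0$ and some SFT-admissible $J_\infty \in \calJ(M_\veca)$ by stretching the neck along $\bdy E(\eps\veca)$.

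By SFT compactness, a sequence of $J_{t_i}$-holomorphic spheres in $\calM_{M,A}^{J_{t_i}}\lll \CC_\vecD^\vecm \po \rrr$ with $t_i \to \infty$ converges to a pseudoholomorphic building. I would argue that the generic codimension-zero limit is a two-level building: a top-level plane $C^+ \in \calM_{M_\veca,A}^{J_\infty}(\ga)$ with negative end on some Reeb orbit $\ga \subset \bdy E(\veca)$, capped by a plane $C^-$ in $\wh{E}(\veca) \cong \C^n$ with positive end on $\ga$ passing through $\vec{0}$ with linking numbers $\vecm$ against the coordinate hyperplanes. The linking data together with $\vecm = \Da^\veca_{c_1(A)-1} = (p,q,1,\dots,1)$ and the hypothesis $a_3,\dots,a_n > pq$ force $\ga = \orb^\veca_{c_1(A)-1}$. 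By the explicit local analysis of \cite[\S4]{SDEP}, the coprimality $\gcd(p,q) = 1$ implies the cap $C^-$ is determined uniquely up to reparametrization and contributes $+1$ to the signed count.

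The main obstacle is ruling out non-generic limit buildings, of which there are three kinds. First, the top level could split into several nonconstant components each carrying a partial constraint $\lll \CC_\vecD^{\vecm_i}\po\rrr$; this is handled in codimension zero via the hidden constraint inequality of Proposition~\ref{proplet:hid_constr} combined with Assumption (*). Second, intermediate cobordism levels in the symplectization $\R \times \bdy E(\veca)$ could consist of branched covers of trivial cylinders with nonpositive index; Assumption (*) is precisely tailored to exclude these, with its strict inequality case handling the critical action $\MM^\veca_{c_1(A)-1}$. Third, exotic cap configurations in $\wh{E}(\veca)$ must be excluded, which the coprimality hypothesis together with the index and action bookkeeping forbids (any such cap would either violate the tangency orders or produce a nontrivial multiple cover ruled out by $\gcd(p,q)=1$).

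Once these degenerations are excluded, a standard gluing analysis identifies $\calM_{M,A}^{J_0}\lll \CC_\vecD^\vecm \po \rrr$ bijectively with $\calM_{M_\veca,A}^{J_\infty}(\orb^\veca_{c_1(A)-1})$, the cap moduli contributing trivially as a single point up to automorphism. A sign computation on the linearized Cauchy--Riemann operator at $C^-$ confirms that the cap contributes $+1$. Combined with the invariance of both integer counts under generic choices of $J$ supplied by Theorems~\ref{thmlet:ell_sup} and~\ref{thmlet:multidir_counts}, this yields the desired equality $\Tcount_{M,A}^\veca = N_{M,A}\lll \CC^\vecm \pt \rrr$.
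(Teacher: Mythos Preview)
Your approach via neck-stretching is genuinely different from the paper's, and in fact the paper explicitly mentions this strategy just before the proof of Theorem~\ref{thm:equivalence} and deliberately avoids it: ``One can imagine proving Theorem~\ref{thmlet:cusp=ell} by surrounding the constraint $\lll \CC^\vecm \pt \rrr$ with an ellipsoid of shape $\veca$ and stretching the neck. This would [require] a detailed understanding of curves with local multidirectional tangency constraints in $\wh{E}(\veca)$ and their gluings.'' Instead, the paper uses Proposition~\ref{prop:Q}: an explicit diffeomorphism $Q: \wh{M}_\veca \ra M \setminus \{\po\}$ (built from the map $\Phi_\veca$ of \cite[\S4]{SDEP}) such that $Q^*J \in \calJ(M_\veca)$ for a suitable $J$. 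This gives a \emph{direct} set-theoretic bijection between $J$-holomorphic spheres in $M$ satisfying the constraint and $Q^*J$-holomorphic planes in $\wh{M}_\veca$ with the correct negative asymptotic --- no SFT compactness, no cap moduli, no gluing. The only residual issue is that spheres with a strictly stronger constraint $\lll \CC^{\vecm'}_{\vecD}\po\rrr$ (i.e.\ $\vecm' \neq \vecm$ but $\orb^\veca_{-\vecm'} = \orb^\veca_{c_1(A)-1}$) would also map to planes with the same asymptotic; these are ruled out by observing that such $\vecm'$ have strictly negative index, so the corresponding moduli spaces are empty for generic $J$.

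The two steps you label as routine are precisely where the real work in your approach would lie, and they are not adequately handled. First, your assertion that the cap $C^-$ in $\wh{E}(\veca)$ is unique up to reparametrization and counts $+1$ is not supplied by \cite[\S4]{SDEP}: that reference gives the asymptotic behavior of a single local branch under $\Phi_\veca$, not a classification of all index-zero planes in $\wh{E}(\veca)$ with a prescribed positive end and a multidirectional tangency at $\vec{0}$. You would need to prove that the moduli space of such caps is a single regular point. Second, ``standard gluing analysis'' does not directly apply here: the tangency constraint lives at the marked point that becomes the puncture, so you must show that the glued sphere recovers the full constraint $\lll \CC_{\vecD}^{\vecm}\po\rrr$, which is a nontrivial statement about how the jet condition interacts with the gluing map. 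The paper's direct diffeomorphism approach buys exactly the ability to bypass both of these difficulties.
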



\begin{rmk}
Let $E_\sk^4 = E(a_1,a_2)$ with $a_2 \gg a_1$ denote the four-dimensional ``skinny ellipsoid'' (up to rescaling).
It is shown in \cite{McDuffSiegel_counting} that the local tangency constraint $\lll \T^{(m)}\pt\rrr$ is interchangeable with a negative end asymptotic to the Reeb orbit $\orb_k = \simp_1^k$ in $\bdy E_\sk^4$. 
Theorem~\ref{thmlet:cusp=ell} generalizes this to multidirectional tangency constraints and arbitrary ellipsoids.
\end{rmk}

\subsection{Singular symplectic and algebraic curves}

It is important to understand when $\Tcount_{M,A}^\veca$ is nonzero, as by Theorem~\ref{thmlet:ell_sup} this obstructs symplectic embeddings of the form $E(c\veca) \times \C^N \hooksymp M \times \C^N$. In fact, for $N \geq 1$, all such obstructions known to us are of this form, and at least in the case $M = \CP^2$ it is expected that these provide a complete set of obstructions.
The recent paper \cite{SDEP} provides tools to compute $\Tcount_{M,A}^\veca$ 
(and hence also $N_{M,A}\lll\CC^\vecm \pt \rrr$) 
by purely combinatorial methods, but many mysteries remain.

Using Theorem~\ref{thmlet:cusp=ell}, we recast this problem in \S\ref{subsec:cusps_multi_sing} in much more geometric terms, opening new avenues to prove nonvanishing results.
A {\bf $\mathbf{(p,q)}$-sesquicuspidal
 symplectic curve} in a symplectic four-manifold $M^4$ is a subset which has one singularity modeled on the $(p,q)$ cusp, and which is otherwise a positively immersed symplectic submanifold (see Definition~\ref{def:sesqsui_symp}).
One can show using the adjunction formula that the number of double points must be $\tfrac{1}{2}(2 + A\cdot A - c_1(A) - (p-1)(q-1))$.
In \S\ref{subsec:cusps_multi_sing} we prove:
\begin{thmlet}\label{thmlet:sing_symp_curve}
Fix $M^4$ a four-dimensional closed symplectic manifold, $A \in H_2(M)$ a homology class, and $p,q \in \Z_{\geq 1}$ with $\gcd(p,q) = 1$ and $p+q = c_1(A)$. Then we have $N_{M,A}\lll \CC^{(p,q)}\pt\rrr \in \Z_{\geq 0}$, with $N_{M,A}\lll \CC^{(p,q)}\pt\rrr > 0$ if and only if there exists a rational $(p,q)$-sesquicuspidal symplectic curve in $M$ lying in homology class $A$.
\end{thmlet}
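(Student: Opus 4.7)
The statement consists of three claims: integrality together with non-negativity of the count, and the two directions of the biconditional.

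Integrality follows immediately from Theorem~\ref{thmlet:multidir_counts}. For non-negativity, fix a generic $J \in \calJ(M,\vecD)$ so that the moduli space $\calM^J_{M,A}\lll \CC^{(p,q)}_\vecD \po\rrr$ is finite and regular. The hypothesis $\gcd(p,q)=1$ together with $p+q = c_1(A)$ rules out nontrivial multiple covers, so every $C$ in the moduli space is somewhere injective. Taking $J$ integrable near $\po$, the multidirectional tangency constraint becomes a complex-analytic condition on the linearized deformation space of $C$; combined with positivity of intersections and the complex orientation of the moduli space of simple $J$-curves in dimension four, each $C$ contributes $+1$ to the signed count, giving $N_{M,A}\lll \CC^{(p,q)}\pt\rrr \in \Z_{\geq 0}$.

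For $(\Rightarrow)$, pick a generic $J$ and an element $C$ of the nonempty moduli space. By Remark~\ref{rmk:multi=cusp_intro}, $C$ has a $(p,q)$-cusp at $\po$. Since $C$ is somewhere injective and $J$ is generic away from $\po$, the classical structure theory for $J$-holomorphic curves in symplectic four-manifolds ensures that all further singularities of $C$ are isolated transverse double points; by positivity of intersections these are positive nodes. Thus the image of $C$ is a rational $(p,q)$-sesquicuspidal symplectic curve in class $A$.

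For $(\Leftarrow)$, given a $(p,q)$-sesquicuspidal symplectic curve $C \subset M$ in class $A$, we pass to the ellipsoidal framework via Theorem~\ref{thmlet:cusp=ell}. Setting $\veca = (q,p+\delta)$ with $\delta>0$ small, one computes $\Delta^\veca_{c_1(A)-1} = (p,q)$, whence Theorem~\ref{thmlet:cusp=ell} gives $\Tcount^\veca_{M,A} = N_{M,A}\lll \CC^{(p,q)}\pt\rrr$. It therefore suffices to exhibit a $J_0$-holomorphic plane in $\wh M_\veca$ that contributes to $\Tcount^\veca_{M,A}$. A neighborhood of the $(p,q)$-cusp in $C$ is symplectomorphic to a neighborhood of the origin in the standard algebraic model $\{y^p = x^q\} \subset \C^2$, whose link in a small round sphere is the $(p,q)$-torus knot. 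Identifying this sphere with $\partial E(\veca)$, the torus knot is isotopic to the Reeb orbit $\orb^\veca_{c_1(A)-1}$. Excising a small ellipsoidal neighborhood of the cusp and passing to the symplectic completion produces a punctured plane $C^\circ$ in $\wh M_\veca$ negatively asymptotic to $\orb^\veca_{c_1(A)-1}$. An SFT-admissible $J_0$ matching the standard cylindrical model near the negative end and the integrable model near the cusp makes $C^\circ$ into the required holomorphic plane. By the first part, $C^\circ$ contributes $+1$ to $\Tcount^\veca_{M,A}$, establishing positivity.

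The main obstacle is the construction in $(\Leftarrow)$. It requires a symplectic normal form for a $(p,q)$-sesquicuspidal singularity (to identify a neighborhood of the cusp with the standard algebraic model), verification that the link of the cusp is isotopic in $\partial E(\veca)$ to the expected Reeb orbit $\orb^\veca_{c_1(A)-1}$, and construction of an SFT-admissible $J_0$ making $C^\circ$ holomorphic. A secondary technical point is the automatic-transversality/positivity input responsible for each curve contributing $+1$; in dimension four this is a standard consequence of the complex-analytic structure of the problem, but the presence of the cusp (or equivalently the asymptotic Reeb orbit in the ellipsoidal picture) requires care.
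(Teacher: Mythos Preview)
Your $(\Rightarrow)$ direction and the non-negativity claim match the paper's approach (the latter via automatic transversality in dimension four, which you correctly flag as the relevant input).

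For $(\Leftarrow)$ the paper takes a much more direct route that stays entirely in the closed-curve picture, avoiding the ellipsoidal framework altogether. Given the sesquicuspidal curve $C$, one chooses a tame $J$ which is integrable near the cusp point $\po$ and which makes $C$ holomorphic. One then uses a Newton--Puiseux analysis (the paper's Lemma~\ref{lem:cusp_and_multi_equiv}(b)) to find smooth local divisors $\vecD=(\Ddiv_1,\Ddiv_2)$ adapted to the cusp so that $C$ \emph{strictly} satisfies $\lll \CC^{(p,q)}_\vecD \po\rrr$. This places $C$ directly in the moduli space $\calM^J_{M,A}\lll \CC^{(p,q)}_\vecD\po\rrr$ for a (non-generic) $J$. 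Automatic transversality then shows $C$ is regular and persists under generic perturbation, giving the lower bound (Proposition~\ref{prop:sesqui_lower_bd}). This is shorter: no ellipsoid excision, no asymptotic analysis, and it uses Theorem~\ref{thmlet:cusp=ell} nowhere.

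Your detour through $\Tcount^\veca_{M,A}$ could in principle be made to work, but as written there is a genuine gap. The assertion that the $(p,q)$-torus knot link of the cusp is ``isotopic to the Reeb orbit $\orb^\veca_{c_1(A)-1}$'' is not correct: for $\veca=(q,p+\delta)$ that Reeb orbit is $\nu_1^p$, a $p$-fold cover of an unknotted circle, not a torus knot. What you actually need is an asymptotic statement, not an isotopy: that after passing to the cylindrical end via a suitable diffeomorphism, the punctured curve is negatively asymptotic to $\nu_1^p$. This is the content of the paper's Propositions~\ref{prop:Phi} and \ref{prop:Q}, which analyze the explicit diffeomorphism $\Phi_\veca$ and show the limiting orbit is $\orb^\veca_{-\vecm}$ for the strict tangency vector $\vecm$. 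Simply excising a neighborhood and observing the knot type does not give this. Once you invoke that machinery you still need automatic transversality to turn one non-generic plane into a positive count, so the saving over the direct approach disappears.
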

Since any singularity of an algebraic curve can be symplectically perturbed into a finite set of positive double points, we have:
\begin{corlet}\label{corlet:alg_curv}
If $M^4$ is a smooth projective surface containing 
an index zero irreducible rational algebraic curve in homology class $A$ with a $(p,q)$ cusp (and possibly other singularities), then we have 
  $\Tcount_{M,A}^{(p,q)} > 0$.
\end{corlet}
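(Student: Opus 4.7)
The plan is to chain Theorem~\ref{thmlet:cusp=ell} and Theorem~\ref{thmlet:sing_symp_curve} together, producing the required sesquicuspidal symplectic curve from the given algebraic curve by symplectically smoothing its non-$(p,q)$ singularities.

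First I would check the hypotheses of Theorem~\ref{thmlet:cusp=ell} with $\veca=(q,p+\delta)$ for $\delta>0$ small and irrational, so that $\veca$ is rationally independent and $\Da^\veca_{c_1(A)-1}=(p,q)$. Irreducibility of the cusp of an algebraic curve forces $\gcd(p,q)=1$. The ``index zero'' hypothesis, computed as the virtual dimension $2c_1(A)-2(p+q)$ of the moduli space of rational $J$-holomorphic curves in class $A$ carrying the constraint $\lll\CC^{(p,q)}\pt\rrr$ in a four-manifold, translates into $c_1(A)=p+q$. Semipositivity is automatic in dimension four, and the non-divisibility assumption reduces to $\gcd(p+q,p)=\gcd(p+q,q)=1$, which is immediate from $\gcd(p,q)=1$. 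Theorem~\ref{thmlet:cusp=ell} then gives
\[
\Tcount_{M,A}^{(p,q)}=N_{M,A}\lll\CC^{(p,q)}\pt\rrr,
\]
so it remains to show the right-hand side is strictly positive.

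By Theorem~\ref{thmlet:sing_symp_curve}, this reduces to the existence of a rational $(p,q)$-sesquicuspidal symplectic curve in $M$ in class $A$. To produce one, write the singular points of the given irreducible rational algebraic curve $C\subset M$ as the distinguished $(p,q)$ cusp together with further isolated plane curve singularities $s_1,\dots,s_k$ of delta invariants $\delta_1,\dots,\delta_k$. Choose pairwise disjoint Darboux balls around the $s_j$, avoiding the $(p,q)$ cusp. Inside each such ball I would replace $C$ with a symplectic smoothing of $s_j$ into exactly $\delta_j$ positive double points, agreeing with $C$ on the boundary sphere. The resulting subset $C'\subset M$ is then symplectic except at the $(p,q)$ cusp and at $\sum_j\delta_j$ positive double points, and still represents the class $A$. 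The adjunction formula applied to the rational curve $C$ yields
\[
\sum_{j=1}^k \delta_j=\tfrac{1}{2}(A\cdot A-c_1(A)+2)-\tfrac{1}{2}(p-1)(q-1),
\]
which is precisely the topological count of double points for a $(p,q)$-sesquicuspidal curve in class $A$. Hence $C'$ is the desired sesquicuspidal symplectic curve.

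The main technical obstacle is the symplectic smoothing of each $s_j$ inside its Darboux ball, matching the algebraic boundary data while producing only positive double points. This is by now standard: an explicit model is given by the Milnor fibre smoothing of $s_j$ inside $\C^2$ equipped with its standard K\"ahler form, together with a symplectic interpolation near the boundary. This is the only ingredient beyond what is already packaged inside Theorems~\ref{thmlet:cusp=ell} and~\ref{thmlet:sing_symp_curve}, and all bookkeeping on double point counts is forced by adjunction.
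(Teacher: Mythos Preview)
Your argument is correct and follows essentially the same route as the paper: symplectically perturb the extra singularities of the algebraic curve into positive double points to obtain a $(p,q)$-sesquicuspidal symplectic curve, invoke Theorem~\ref{thmlet:sing_symp_curve} to get $N_{M,A}\lll\CC^{(p,q)}\pt\rrr>0$, and then use Theorem~\ref{thmlet:cusp=ell} to translate this into $\Tcount_{M,A}^{(p,q)}>0$. Two minor remarks: the adjunction bookkeeping is unnecessary since Definition~\ref{def:sesqsui_symp} only asks for positive immersion away from the cusp (not a prescribed number of double points), and the local model you want is a nodal deformation rather than the (smooth) Milnor fibre, though the existence of such a deformation is indeed standard.
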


This allows us to study $\Tcount_{M,A}^\veca$
 (or equivalently $N_{M,A}\lll\CC^\vecm \pt \rrr$)
by constructing algebraic curves and potentially importing techniques from algebraic geometry.
As an illustration, Orevkov \cite{orevkov2002rational} constructed an infinite sequence of index zero unicuspidal rational algebraic curves in $\CP^2$, so by Corollary~\ref{corlet:alg_curv} there is a corresponding infinite sequence of nonvanishing values for $\Tcount_{\CP^2,d[L]}^{(p,q)}$, and these turn out to give precisely 
the
symplectic embedding obstructions at the outer corners of the Fibonacci staircase \cite{McDuff-Schlenk_embedding} via Theorem~\ref{thmlet:ell_sup}(b).
In the forthcoming work \cite{CuspsStairs} we generalize Orevkov's construction in two independent directions, by constructing new families of index zero rational sesquicuspidal algebraic plane curves and by replacing $\CP^2$ with various other toric surfaces.

\begin{rmk}
 Under the assumptions of Theorem~\ref{thmlet:sing_symp_curve}, one can similarly show that we have $\Tcount_{M,A}^{(p,q)} > 0$ if and only if the transverse $(p,q)$ torus knot $\mathbb{T}_{p,q} \subset S^3$ with maximal self-linking number has a genus zero positively immersed symplectic hat (in the sense of \cite{etnyre2020symplectic}) in $M$ in homology class $A$.
\end{rmk}

\subsection{Unicuspidal curves and perfect exceptional classes}

Lastly, we discuss applications to existence questions for singular curves.
While the previous subsection highlights the relevance of index zero sesquicuspidal curves in symplectic four-manifolds, we focus here on the special case of curves which are {\bf unicuspidal} curves, i.e. having a $(p,q)$ cusp and no other singularities.

According to \cite{fernandez2006classification}, the aforementioned curves constructed by Orevkov are the {\em only} index zero unicuspidal rational algebraic plane curves.
Before generalizing this result we must recall a bit more terminology.
Firstly, given $p,q \in \Z_{\geq 1}$ with $\gcd(p,q) = 1$ and $p > q$, let $\weight(p,q) = (m_1,\dots,m_L) \in \Z_{\geq 1}^L$
 denote the corresponding {\bf weight sequence}. 
As we recall in \S\ref{subsec:res_sing}, this is related to the continued fraction expansion of $p/q$ and controls (among other things) the resolution of the $(p,q)$ cusp singularity.

Given a closed symplectic four-manifold $M^4$, a homology class $B \in H_2(M)$ is {\bf exceptional} if we have $B \cdot B = -1$, $c_1(B) = 1$, and $B$ is represented by a symplectically embedded two-sphere.
We will say that $A \in H_2(M)$ is \textbf{$\mathbf{(p,q)}$-perfect exceptional} if $\wt{A} := A - \sum_{i=1}^Lm_1e_1 - \cdots - m_Le_L \in H_2(\wt{M})$ is an exceptional class, where $\weight(p,q) = (m_1,\dots,m_L)$.
Here $\wt{M}$ is the $L$-point blowup of $M$, and we have the identification $H_2(\wt{M}) \cong H_2(M) \oplus \langle e_1,\dots,e_L\rangle$, where $e_1,\dots,e_L$ are exceptional classes.

In \S\ref{subsec:deg_to_nongen_bl} we prove:
\begin{thmlet}\label{thmlet:per_exc}
Fix a $M^4$ a symplectic four-manifold and $A \in H_2(M)$ a homology class.
There is an index zero rational $(p,q)$-unicuspidal symplectic curve in $M$ in homology class $A$ if and only if $A$ is $(p,q)$-perfect exceptional.
Moreover, in this case we have $N_{M,A}\lll \CC^{(p,q)}\pt\rrr = 1$.
\end{thmlet}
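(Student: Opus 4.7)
My plan is to translate the question into one about smooth exceptional spheres in an iterated blowup of $M$. Let $(m_1,\dots,m_L)=\weight(p,q)$ and let $\wt{M}$ be the symplectic manifold produced from $M$ by an iterated symplectic blowup along the tower of infinitely near points that resolves the $(p,q)$ cusp (Section~\ref{subsec:res_sing}), so that $\wt{M}$ comes equipped with a chain of exceptional spheres $E_1,\dots,E_L$ whose classes are recursively expressible in the $e_i$'s from the splitting $H_2(\wt{M})\cong H_2(M)\oplus\langle e_1,\dots,e_L\rangle$. The goal is then to set up a bijection between rational $(p,q)$-unicuspidal symplectic curves in $M$ representing $A$ and smooth embedded symplectic spheres in $\wt{M}$ representing $\wt{A}=A-\sum m_i e_i$.

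For the forward direction I take such a curve $C\subset M$ and form its proper transform $\wt{C}$ under the cusp-resolution blowup; this is a smooth symplectic sphere in class $\wt{A}$. The standard weight-sequence identities $\sum m_i=p+q-1$ and $\sum m_i^2=pq$, together with the index-zero hypothesis $c_1(A)=p+q$, give $c_1(\wt{A})=1$, while adjunction for a smooth rational curve gives $\wt{A}\cdot\wt{A}=-1$. Hence $\wt{A}$ is exceptional, i.e.\ $A$ is $(p,q)$-perfect exceptional.

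For the reverse direction, assuming $\wt{A}$ is exceptional, I would arrange the capacities of the blowups making up $\wt{M}$ to be small enough that $[\omega_{\wt{M}}]\cdot\wt{A}>0$. By the standard result of McDuff/Taubes that every class in a symplectic four-manifold with $c_1=1$, self-intersection $-1$, and positive symplectic pairing is represented by an embedded symplectic sphere, we obtain a representative $S\subset\wt{M}$ of $\wt{A}$. Choosing a tame $J$ for which $S$ and $E_1,\dots,E_L$ are simultaneously $J$-holomorphic and embedded, positivity of intersections combined with the intersection numbers $S\cdot E_L=1$ and $S\cdot E_i=0$ for $i<L$ (immediate from $S\cdot e_j=m_j$ and the recursive descriptions of the $E_i$'s) forces $S$ to meet the exceptional configuration transversally at a single point lying on $E_L$ only. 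Contracting $E_L,\dots,E_1$ in reverse order then produces a rational symplectic curve in $M$ in class $A$ whose only singularity is, by construction of the blowup, of type $(p,q)$. The count $N_{M,A}\lll\CC^{(p,q)}\pt\rrr=1$ then drops out from Theorem~\ref{thmlet:multidir_counts} and Remark~\ref{rmk:multi=cusp_intro}: the invariant is a signed count of curves which, via the bijection above, matches a count of $J$-holomorphic representatives of the exceptional class $\wt{A}$, and an exceptional class has a unique such representative.

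I expect the main technical obstacle to lie in the reverse direction, specifically in producing a $J$ that makes $S$ and all of the $E_i$ simultaneously embedded $J$-holomorphic curves, and in ruling out the possibility that as $J$ is varied to accomplish this $S$ could degenerate into a stable map containing some $E_i$ as a component, which would wreck the blowdown argument. Both points should ultimately follow from standard automatic transversality for embedded index-zero symplectic spheres of square $-1$, together with a dimension count in the spirit of Proposition~\ref{proplet:hid_constr} excluding the offending bubble configurations.
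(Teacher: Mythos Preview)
Your overall strategy is the same as the paper's: pass to the iterated blowup $\wt{M}=\res_{(p,q)}(M)$, translate the problem into one about the class $\wt{A}=A-\sum m_ie_i$, and use that exceptional classes have a unique $J$-holomorphic representative. Your forward direction is correct and matches the paper (which phrases it via Theorem~\ref{thmlet:sing_symp_curve} and Proposition~\ref{prop:res_of_sing_bij} rather than by directly taking the proper transform, but the content is identical). You have also correctly isolated the crux of the reverse direction: one must rule out the possibility that the $\wt{J}$-holomorphic representative of $\wt{A}$ degenerates into a configuration containing some of the $\F_i$.

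Where your proposal diverges from the paper is in the method for that last step. You suggest ``automatic transversality \dots\ together with a dimension count in the spirit of Proposition~\ref{proplet:hid_constr}.'' This will not work as stated: the spheres $\F_1,\dots,\F_{L-1}$ have self-intersection $\leq -2$ and hence \emph{negative} index, so they are not regular and a naive dimension count cannot exclude bubbling onto them. Proposition~\ref{proplet:hid_constr} concerns degenerations of tangency constraints in $M$, not configurations in the blowup, and does not apply here. The paper instead proves the key Lemma~\ref{lem:per_exc_generic_J} by a positivity-of-intersections trick: assuming a reducible limit $C=C_1\cup C_2$ with $C_2$ supported on $\F_1\cup\cdots\cup\F_L$, one perturbs $\wt{J}$ to a nearby \emph{generic} $\wt{J}'$, deforms $C_1$ to $C_1'$, and compares $C_1'$ with the unique $\wt{J}'$-holomorphic sphere $C'$ in class $\wt{A}$. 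After checking (Lemma~\ref{lem:ce_i}) that no component of $C_1'$ lies in a multiple of $\wt{A}$, one obtains $0\leq [C_1']\cdot[C']=(\wt{A}-[C_2])\cdot\wt{A}=-1-[C_2]\cdot\wt{A}\leq -1$, a contradiction. This argument exploits $\wt{A}\cdot\wt{A}=-1$ directly rather than any index bound, and is what you should substitute for your proposed dimension count.
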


As an illustration of Theorem~\ref{thmlet:per_exc}, we note that the perfect exceptional homology classes in the first Hirzebruch surface $F_1$ were studied comprehensively in \cite{magill2022staircase} as part of their study of infinite staircases in the symplectic ellipsoid embedding functions of one-point blowups of $\CP^2$. 
Put $H_2(F_1) = \langle \ell,e\rangle$, where $\ell$ is the line class and $e$ is the exceptional divisor class.
Let $a_1,a_2,a_3,\dots$ be the sequence defined by the recursion $a_{j+6} = 6a_{j+3}-a_{j}$ with initial values $1,1,1,1,2,4$, and put
\begin{itemize}
  \item $t_j := \sqrt{a_{j+3}^2 - 6a_{j+3}a_j + a_j^2 + 8}$
  \item $d_j := \tfrac{1}{8}(3a_{j+3} + 3a_j + (-1)^{j+1}t_j)$
  \item $m_j := \tfrac{1}{8}(a_{j+3} + a_j + (-1)^{j+1}3t_j)$
\end{itemize}
for $j \in \Z_{\geq 1}$.
\begin{corlet}\label{corlet:F_1}
There is an index zero $(p,q)$-unicuspidal rational symplectic curve in $F_1$ in homology class $A = d\ell - me$ \text{and satisfying} $p/q < 3 + 2\sqrt{2}$ if and only if $(p,q,d,m) = (a_{j+3},a_j,d_j,m_j)$ for some $j \in \Z_{\geq 1}$.
\end{corlet}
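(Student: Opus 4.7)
My plan is to combine Theorem~\ref{thmlet:per_exc} with the classification of perfect exceptional classes already carried out in \cite{magill2022staircase}. By Theorem~\ref{thmlet:per_exc}, an index zero rational $(p,q)$-unicuspidal symplectic curve in $F_1$ representing the class $A = d\ell - me$ exists if and only if $A$ is $(p,q)$-perfect exceptional; that is, writing $\weight(p,q) = (m_1,\dots,m_L)$, the class $\wt{A} = A - \sum_{i=1}^L m_i e_i$ must be exceptional in the iterated blowup $\wt{F_1}$. Moreover in that case $N_{F_1,A}\lll \CC^{(p,q)}\pt\rrr = 1$, matching the uniqueness implicit in the corollary.

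Next I would translate the numerical half of the perfect exceptional condition into explicit Diophantine constraints on $(p,q,d,m)$. Using $c_1(F_1) = 3\ell - e$ together with the standard identities $\sum_i m_i = p+q-1$ and $\sum_i m_i^2 = pq$ for the weight sequence (which can be verified by induction along the Euclidean algorithm), the conditions $\wt{A}\cdot \wt{A} = -1$ and $c_1(\wt{A}) = 1$ reduce to
\begin{align*}
d^2 - m^2 = pq - 1 \qquad \text{and} \qquad 3d - m = p+q.
\end{align*}
Representability of $\wt{A}$ by a symplectically embedded sphere adds a further constraint beyond these numerical identities.

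I would then invoke the results of \cite{magill2022staircase}, where the perfect exceptional classes in $F_1$ are classified as part of their study of infinite staircases in the ellipsoid embedding function of the monotone one-point blowup of $\CP^2$. The accumulation point of the relevant staircase is $(1+\sqrt{2})^2 = 3 + 2\sqrt{2}$, which is reflected in the characteristic roots $3 \pm 2\sqrt{2}$ of the recursion $a_{j+6} = 6a_{j+3}-a_j$; their classification identifies the exceptional classes responsible for the staircase corners as precisely the tuples $(a_{j+3},a_j,d_j,m_j)$ for $j \geq 1$. Substituting this into Theorem~\ref{thmlet:per_exc} yields the claimed correspondence.

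The main obstacle is verifying exhaustiveness of the recursive family. The forward direction -- that each tuple in the recursion yields a perfect exceptional class -- can be confirmed inductively: the Diophantine identities above follow from the linear recursion, and $\wt{A}$ is realized by an embedded symplectic sphere by exhibiting an explicit Cremona/blowdown sequence reducing $\wt{A}$ to an exceptional curve class. The harder converse direction is to rule out any sporadic perfect exceptional class with $p/q < 3+2\sqrt{2}$ outside the listed family; this combines the Diophantine analysis above with the sphere representability constraint, and is exactly what \cite{magill2022staircase} establishes. The role of the bound $p/q < 3+2\sqrt{2}$ is to stay strictly below the accumulation point so that the classification reduces to the discrete recursive list rather than the continuum of classes that can appear at and beyond $3+2\sqrt{2}$.
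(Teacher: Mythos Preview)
Your proposal is correct and follows essentially the same approach as the paper: reduce via Theorem~\ref{thmlet:per_exc} to the classification of $(p,q)$-perfect exceptional classes in $F_1$, then invoke the results of \cite{magill2022staircase}. The paper states this corollary as a direct consequence without spelling out the Diophantine constraints $3d-m=p+q$ and $d^2-m^2=pq-1$ that you derive, so your writeup is in fact more detailed than the paper's, but the logical structure is the same.
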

\NI In fact, the constructions in \cite{CuspsStairs} show that each of these is realized by an algebraic curve.
This could be viewed as a version of symplectic isotopy problem for singular symplectic curves in the spirit of \cite{golla2019symplectic}.

Corollary~\ref{corlet:F_1} should be compared with (the index zero part of) \cite[Thm 1.1]{fernandez2006classification}, but with $\CP^2$ replaced by $F_1$.
The case $p/q > 3 + 2\sqrt{2}$ (sans algebraicity) can also be deduced from the results in \cite{magill2022staircase}, but it is considerably more complicated; we discuss this briefly in \S\ref{subsec:F_1}.

\acksection*{Acknowledgements}
K.S. benefited from many helpful discussions with Grisha Mikhalkin.



\section{Ellipsoidal superpotentials}\label{sec:robust}




The main technical result in this section is Theorem~\ref{thm:robust_counts}, which establishes conditions under which the ellipsoidal superpotential is {\em robust} (see Definition~\ref{def:robust}), which roughly means well-defined by classical transversality techniques and independent of any auxiliary choices.
Together with Corollary~\ref{cor:stab_obs}, this implies Theorem~\ref{thmlet:ell_sup}.

After some setting up some geometric preliminaries in \S\ref{subsec:small_ell} and \S\ref{subsec:moduli_spaces}, we state our main result and some of its consequences in \S\ref{subsec:main_rob_res}.
In \S\ref{subsec:pf_main_rob_res} we give the proof of Theorem~\ref{thm:robust_counts} assuming several lemmas about curves and their branched covers, which are proved in \S\ref{subsec:proofs_of_lemmas}.
In \S\ref{subsec:stab_inv_I} we give criteria under which moduli spaces are stabilization invariant.
Finally, in \S\ref{subsec:from_rob_to_emb}, we illustrate how (stable) symplectic embedding obstructions are naturally seen from this framework.


\subsection{Symplectic embeddings of small ellipsoids}\label{subsec:small_ell}

Let $M^{2n}$ be a closed symplectic manifold, and let $E(\veca)$ be the symplectic ellipsoid with area factors $\veca = (a_1,\dots,a_n) \in \R_{> 0}^n$.
For $\eps > 0$ sufficiently small, there exists a symplectic embedding $\iota$ of the scaled ellipsoid $E(\eps \veca) = E(\eps a_1,\dots,\eps a_n)$ into $M$.
In fact, up to further shrinking $\eps$, this embedding is unique up to Hamiltonian isotopy:
\begin{lemma}\label{lem:no_local_knotting}
Let $\iota_0,\iota_1: E(\eps\veca) \hooksymp M$ be symplectic embeddings.
Then for any $\eps' > 0$ sufficiently small there is a smooth family of symplectic embeddings $\{\iota'_t: E(\eps'\veca) \hooksymp M\;|\;t \in [0,1]\}$ such that $\iota'_i = \iota_i|_{E(\eps'\veca)}$ for $i =0,1$. 
\end{lemma}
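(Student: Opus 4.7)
The plan is to reduce the statement, in stages, to the fact that the linear symplectic group $\op{Sp}(2n,\R)$ is connected. First I would arrange a common basepoint: restricting to the connected component of $M$ containing the images, choose a smooth path in $M$ from $\iota_0(0)$ to $\iota_1(0)$ and cover it by a compactly supported Hamiltonian isotopy of $M$, then precompose $\iota_0$ with the time-one symplectomorphism. We may then assume $\iota_0(0) = \iota_1(0) = p$. Choosing a Darboux chart $\psi \colon (B^{2n}(R), \om_\std) \hooksymp (M,\om_M)$ centered at $p$ and shrinking $\eps$ if necessary, we further reduce to the case $M = B^{2n}(R) \subset \C^n$ with $\iota_0(0) = \iota_1(0) = 0$.

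The next step is to linearize each $\iota_i$ by rescaling. For $s \in (0,1]$ and $\eps' \leq \eps$, define
\begin{equation*}
\phi_i^s \colon E(\eps'\veca) \to \C^n, \qquad \phi_i^s(z) := s^{-1}\iota_i(sz).
\end{equation*}
Since $d\phi_i^s(z) = d\iota_i(sz)$ is a linear symplectic map, each $\phi_i^s$ is a symplectic embedding. Taylor expanding gives $\iota_i(z) = d\iota_i(0)z + r_i(z)$ with $|r_i(z)| = O(|z|^2)$, so $\phi_i^s(z) = d\iota_i(0)z + s^{-1}r_i(sz)$ extends smoothly to $s = 0$ with $\phi_i^0 = d\iota_i(0)$. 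Thus $s \mapsto \phi_i^s|_{E(\eps'\veca)}$ is a smooth path of symplectic embeddings from $\iota_i|_{E(\eps'\veca)}$ to $d\iota_i(0)|_{E(\eps'\veca)}$. Moreover, $\phi_i^s(E(\eps'\veca))$ is uniformly close to $d\iota_i(0)(E(\eps'\veca))$, so for $\eps'$ small the entire family lands inside $B^{2n}(R)$.

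Finally, since $\op{Sp}(2n,\R)$ is path-connected, I would pick a smooth path $\{\Phi_t\}_{t \in [0,1]} \subset \op{Sp}(2n,\R)$ from $d\iota_0(0)$ to $d\iota_1(0)$. The compact set $\bigcup_t \Phi_t(E(\eps'\veca))$ consists of linear images of an arbitrarily small ellipsoid, so it lies in $B^{2n}(R)$ once $\eps'$ is small enough. Concatenating (with smooth reparametrization at the joins) the three pieces---the rescaling isotopy for $\iota_0$, the linear path $\Phi_t$, and the reverse rescaling isotopy for $\iota_1$---and transporting back through the Darboux chart and the Hamiltonian isotopy yields the desired family $\iota'_t$.

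The main obstacle is verifying that the rescaling family $\phi_i^s$ is genuinely smooth in $s$ down to $s = 0$; this is the content of the Taylor-remainder estimate above and is the only place where the smoothness (rather than mere continuity) of $\iota_i$ is used. Everything else---the Hamiltonian isotopy merging the basepoints, the existence of Darboux charts, and the connectedness of $\op{Sp}(2n,\R)$---is standard, and the repeated freedom to shrink $\eps'$ is what lets each step absorb the bounded error introduced by the previous one.
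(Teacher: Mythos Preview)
Your argument is correct and in fact more elementary than the paper's. (One slip: you want to \emph{post}compose $\iota_0$ with the time-one map of the Hamiltonian isotopy, not precompose; the isotopy acts on $M$, not on the ellipsoid.) The Hadamard-type remainder argument does give smoothness of $(s,z)\mapsto s^{-1}\iota_i(sz)$ down to $s=0$, and the rest is routine.

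The paper proceeds differently: after reducing to a Darboux ball it invokes the ``extension after restriction'' principle to extend each $\iota_i'$ to a compactly supported Hamiltonian diffeomorphism of $\R^{2n}$, and then uses an Alexander-trick argument to arrange that the generating isotopy fixes the origin, so that after further shrinking $\eps'$ the whole isotopy stays inside the ball. Your approach avoids that black box entirely: the rescaling $\phi_i^s(z)=s^{-1}\iota_i(sz)$ linearizes each embedding directly, and connectedness of $\op{Sp}(2n,\R)$ finishes the job. The trade-off is that the paper's route, once one accepts extension after restriction, is a one-liner and immediately exhibits the isotopy as the restriction of an ambient Hamiltonian isotopy of $\R^{2n}$; your route is self-contained but only produces a family of embeddings (which, as the paper notes afterward, can anyway be upgraded to an ambient Hamiltonian isotopy since $H^1(E(\veca);\R)=0$). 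Both arguments are, at bottom, Alexander-type contractions toward the origin---you apply it to the embedding itself, the paper applies it to the extending Hamiltonian isotopy.
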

\begin{proof}
 After post-composing with a Hamiltonian isotopy of $M$ which sends $\iota_1(0)$ to $\iota_0(0)$, we can assume $\iota_0(0) = \iota_1(0) =: p$. 
We can then find $\eps' > 0$ sufficiently small so that the images of both 
$\iota_0' := \iota_0|_{E(\eps'\veca)}$ and $\iota_1' := \iota_1|_{E(\eps'\veca)}$
lie in a Darboux ball in $M$ centered at $p$.
We thereby view $\iota_0',\iota_1'$ as symplectic embeddings $E(\eps'\veca) \hooksymp B \subset \R^{2n}$, where $B$ is a $2n$-dimensional ball of some radius centered at the origin.
After further shrinking $\eps'$, we can assume that $B$ also contains the standard $E(\eps'\veca) \subset \R^{2n}$.
It suffices to find a family of symplectic embeddings $E(\eps'\veca) \hooksymp B$ interpolating between $\iota_0'$ and $\iota_1'$.

By the ``extension after restriction'' principle (see e.g. \cite[\S4.4]{Schlenk_old_and_new}), $\iota_0': E(\eps'\veca) \hooksymp B^{2n}$ extends to a Hamiltonian diffeomorphism of $\R^{2n}$.
In particular, there is a Hamiltonian isotopy $\{\phi_t \in \ham(\R^{2n})\;|\; t \in [0,1]\}$ with $\phi_0 = \mathbb{1}$ such that $\phi_1|_{E(\eps'\veca)} = \iota_0'$.
Note that $\phi_t|_{E(\eps'\veca)}$ has image in $B$ for $t = 0,1$, but not necessarily for all $t \in (0,1)$.
However, an inspection of the construction of the Hamiltonian isotopy via the Alexander trick as (c.f. the explanation in \cite[\S4.4]{Schlenk_old_and_new}) shows that we can further assume $\phi_t(0) = 0$ for all $t \in [0,1]$. 
Therefore, after further shrinking $\eps'$, we can arrange that $\phi_t|_{E(\eps'\veca)}$ has image in $B$ for all $t \in [0,1]$.
By concatening this family with (the time reversal of) the analogous one given by applying the same considerations to $\iota'_1$, this concludes the proof.
\end{proof}

\begin{rmk}
The proof above actually applies equally if we replace the ellipsoid $E(\veca)$ with any star-shaped domain in $\R^{2n}$.  
\end{rmk}

\begin{rmk}\label{rmk:symp_iso_implies_ham_iso}
Since $H^1(E(\veca);\R) = 0$, given any family of symplectic embeddings $\{\iota_t': E(\eps'\veca) \hooksymp M\}$ 
  there is a Hamiltonian isotopy $\{\phi_t \in \ham(M)\}$ such that $\phi_0 = \mathbb{1}$ and $\iota_t' = \phi_t \circ \iota'_0$.
  In particular, there is a symplectomorphism between $M \setminus \iota_0'(\intE(\eps'\veca))$ and $M \setminus \iota_1'(\intE(\eps'\veca))$. Here $\intE(\veca)$ denotes the open ellipsoid $\{\pi \sum_{i=1}^n |z_i|^2/a_i < 1\}$.
\end{rmk}

\subsection{Moduli spaces of punctured curves}\label{subsec:moduli_spaces} 
Here we briefly discuss some geometric preliminaries and relevant notions about pseudoholomorphic curves, taking the occasion to set notation and terminology.

\subsubsection{Spaces with negative ellipsoidal ends}
Fix a closed symplectic manifold $M^{2n}$, a homology class $A \in H_2(M)$, and a vector $\veca = (a_1,\dots,a_n) \in \R_{> 0}^n$ whose components are rationally independent.
\begin{notation}
We put $M_\veca := M \,\setminus\, \iota(\intE(\eps\veca))$, where $\iota:  E(\eps\veca) \hooksymp M$ is a symplectic embedding for some $\eps > 0$.
\end{notation}
\NI Such an embedding $\iota$ always exists for $\eps > 0$ sufficiently small, e.g. with image contained in a Darboux chart.
We denote the symplectic completion of $M_\veca$ by 
\begin{align*}
\wh{M}_\veca := M_\veca \cup (\R_{\leq 0} \times \bdy M_\veca).
\end{align*}

\subsubsection{Almost complex structures}

Given a contact manifold $Y$ (typically $Y = \bdy E(\veca)$), we denote by $\calJ(Y)$ the space of admissible almost complex structures on the symplectization $\R \times Y$ (see e.g. \cite[\S2.1.2]{mcduff2021symplectic}).
In particular, any $J \in \calJ(Y)$ is invariant under translations in the first factor of $\R \times Y$.

Similarly, given a compact symplectic cobordism $X$ with positive contact boundary $\bdy^+X$ and negative contact boundary $\bdy^-X$, we denote by $\calJ(X)$ the space of admissible almost complex structures on the symplectic completion $\wh{X}$ of $X$. 
It will be more convenient to require $J \in \calJ(X)$ to be only tame rather than compatible on the compact part $X \subset \wh{X}$, which suffices for all of our purposes.

Our main example is $X = M_\veca$, which has empty positive boundary and negative boundary $\bdy M_\veca \cong \bdy E(\eps \veca)$ for some small $\eps > 0$.
By definition $J \in \calJ(M_\veca)$ is an almost complex structure on $\wh{M}_\veca$ which is tame on the compact piece $M_\veca$ and whose restriction to the negative end $\R_{\leq 0} \times \bdy M_\veca$ agrees with the restriction of some admissible almost complex structure $J|_{M_\veca} \in \calJ(\bdy M_\veca)$ on $\R \times \bdy M_\veca$.

\subsubsection{Moduli spaces}
Given a collection of Reeb orbits $\Gamma = (\ga_1,\dots,\ga_k)$ in $\bdy E(\veca)$ and an almost complex structure $J \in \calJ(M_\veca)$, we denote by
$\calM^J_{M_\veca}(\Gamma)$ the moduli space of $J$-holomorphic maps $u$ from a $k$-punctured Riemann sphere
to $\wh{M}_\veca$, such that $u$ is asymptotic at the $i$th puncture to $\ga_i$ for $i = 1,\dots,k$.
Here the conformal structure on the domain (or equivalently the locations of the punctures) is arbitrary, and elements of $\calM^J_{M_\veca}(\Ga)$ (usually referred to as simply ``curves'') are taken modulo biholomorphic reparametrization.


Since $H^1(E(\veca);\R) = H^2(E(\veca);\R) = 0$, any curve $C \in \calM^J_{M_\veca}(\Ga)$ can be uniquely filled so as to give a well-defined homology class $[C] \in H_2(M)$.
Given $A \in H_2(M)$, we put
$\calM^J_{M_\veca,A}(\Ga) := \{C \in \calM^J_{M_\veca}(\Ga)\;|\; [C] = A\}.$
We will sometimes suppress the almost complex structure from the notation and write simply $\calM_{M_\veca,A}(\Ga)$ when the choice of $J$ is implicit or immaterial.
We write $\calM_{M_\veca,A}(\ga)$ when $\Ga$ consists of a single Reeb orbit $\ga$.

Similarly, given collections of Reeb orbits $\Gamma^+ = (\ga_1^+,\dots,\ga^+_{k^+})$ and $\Gamma^- = (\ga^-_1,\dots,\ga^-_{k^-})$ in $\bdy E(\veca)$ and an almost complex structure $J \in \calJ(\bdy E(\veca))$, we denote by $\calM^J_{\bdy E(\veca)}(\Gamma^+;\Gamma^-)$ the moduli space of maps $u$ from a Riemann sphere having $k^+$ positive punctures and $k^-$ negative punctures to the symplectization $\R \times \bdy E(\veca)$, such that $u$ is asymptotic to the respective orbits $\Gamma^+$ at the positive punctures and to $\Gamma^-$ at the negative punctures.
Since $J \in \bdy E(\veca)$ is translation invariant, this moduli space inherits an $\R$ action which translates in the first factor of the target space, and we denote the quotient space by $\calM^J_{\bdy E(\veca)}(\Gamma^+;\Gamma^-)/\R$.

\subsubsection{Index formulas}

Given a homology class $A \in H_2(M)$ and Reeb orbits $\Gamma = (\ga_1,\dots,\ga_k)$ in $\bdy E(\veca)$, recall that the expected dimension of the moduli space $\calM_{M_\veca,A}(\Gamma)$ is given by 
\begin{align*}
\ind\, \calM_{M_\veca,A}(\Gamma)= (n-3)(2-k) + 2c_1(A) - \sum_{i=1}^k\cz(\ga_i).
\end{align*}
Here $c_1(A)$ is the first Chern number of $A$,
and $\cz(\ga_i)$ is the Conley--Zehnder index of $\ga_i$ with respect to the (unique up to homotopy) global trivialization of the contact hyperplane distribution over $\bdy E(\veca)$.
In particular, for the Reeb orbit $\orb_j^\veca$ in $\bdy E(\veca)$ we have
 $\cz(\orb_j^\veca) = n-1+2j$ (see e.g. equation~\eqref{eq:CZind} below and \cite[\S2.1]{Gutt-Hu}).
As usual we put $\dim M = 2n$.

By definition the ellipsoidal superpotential $\Tcount_{M,A}^\veca$ counts planes, i.e. $k=1$, and we have $\ind\, \calM_{M_\veca,A}(\ga) = 0$ if and only if $\cz(\ga) = n-3+2c_1(A)$, i.e. $\ga = \orb^\veca_{c_1(A)-1}$.
Standard transversality results (see e.g. \cite[\S8]{wendl_SFT_notes}) show that for generic $J \in \calJ(M_\veca)$ the moduli space $\calM^J_{M_\veca,A}(\ga)$ is smooth and of the expected dimension near any simple curve $C$. 

Similarly, given Reeb orbits $\Gamma^+ = (\ga_1^+,\dots,\ga^+_{k^+})$ and $\Gamma^- = (\ga_1^-,\dots,\ga^-_{k^-})$, the index of the moduli space $\calM_{\bdy E(\veca)}(\Gamma^+;\Gamma^-)/\R$ is given by
\begin{align*}
\ind\, \calM_{\bdy E(\veca)}(\Gamma^+;\Gamma^-) - 1 = (n-3)(2-k^+ - k^-) + \sum_{i=1}^{k^+}\cz(\ga^+_i) - \sum_{j=1}^{k^-}\cz(\ga^-_j) - 1.
\end{align*}
Standard transversality results show that, for generic $J \in \calJ(\bdy E(\veca))$, the moduli space $\calM_{\bdy E(\veca)}(\Gamma^+;\Gamma^-)/\R$ is smooth and of the expected dimension near any simple curve $C$ which is not a trivial cylinder.

We will also need to consider transversality in generic one-parameter families.
Given a one-parameter family of almost complex structures $\{J_t \in \calJ(M_\veca)\;|\; t \in [0,1]\}$, we consider the associated the parametrized moduli space
\begin{align*}
\calM_{M_\veca,A}^{\{J_t\}}(\Gamma) = \{(u,t)\;|\; t \in [0,1],\; u \in \calM^{J_t}_{M_\veca,A}(\Gamma)\}.
\end{align*}
If the family $\{J_t\}$ is generic, this is a smooth manifold of dimension
$2n-5 + 2c_1(A) - \sum_{i=1}^k\cz(\ga_i)$ near any simple curve (i.e. a pair $(u,t)$ with $u$ simple).
Given a one-parameter family $\{J_t \in \calJ(\bdy E(\veca))\;|\; t \in [0,1]\}$, we define the parametrized moduli space $\calM_{\bdy E(\veca)}^{\{J_t\}}(\Gamma^+,\Gamma^-)$ in a similar manner.
We denote the quotient by target translations by $\calM_{\bdy E(\veca)}^{\{J_t\}}(\Gamma^+,\Gamma^-) / \R$. For a generic one-parameter family $\{J_t\}$, this is a smooth manifold of dimension 
$2n-6 + \sum_{i=1}^{k^+}\cz(\ga_i^+) - \sum_{k=1}^{k^-}\cz(\ga_j^-)$ near any simple curve.

\subsubsection{Formal curves}



Following the usage in \cite{mcduff2021symplectic}, the language of formal curves provides a convenient bookkeeping tool for making index arguments.
The basic idea is that, for certain combinatorial purposes, we can formally glue together several curve components in a pseudoholomorphic building in order to treat them at a single formal curve. 
In such contexts, it is usually irrelevant whether or not an actual analytic gluing exists. 
  
Here we give definitions specialized to the situations most relevant for us. 
\begin{definition}
A genus zero {\bf formal curve component $C$ in $M_\veca$} is a pair $(\Gamma,A)$, where
  \begin{itemize}
     \item $\Gamma = (\ga_1,\dots,\ga_k)$ is a tuple of Reeb orbits in $\bdy E(\veca)$
     \item $A \in H_2(M)$ is a homology class
     \item we require the energy 
     \begin{align*}
    \En(C) := \int_A\omega - \sum_{i=1}^k\calA(\ga_i)
     \end{align*}
      to be nonnegative. 
   \end{itemize}  
\end{definition}
\NI Recall that $\calA(\ga)$ denotes the action of $\ga$, and for the Reeb orbit $\ga = \orb^\veca_k$ in $\bdy E(\veca)$ we have $\calA(\orb^\veca_k) = \MM^\veca_k$, where $\MM^\veca_k$ is the $k$th smallest element of the multiset $\{ia_j\;|\; i \in \Z_{\geq 1}, 1 \leq j \leq n\}$, or equivalently
$$
\MM^\veca_k = \min\limits_{\substack{i_1,\dots,i_n \in \Z_{\geq 0},\\i_1+\cdots+i_n = k}}\;\max\limits_{1 \leq s \leq n} a_si_s
$$
 (see \cite[\S1.2]{Gutt-Hu}).
 The index of $C$ is defined to be
\begin{align}\label{eq:indC}
\ind(C) = (n-3)(2-k) + 2c_1(A) - \sum_{i=1}^k \cz(\ga_i),
\end{align}
with $ \cz(\ga_i)$ calculated as in \eqref{eq:CZind}.

\begin{definition}
Similarly, a genus zero {\bf formal curve component $C$ in $\bdy E(\veca)$} is a pair $(\Gamma^+,\Gamma^-)$, where
\begin{itemize}
  \item $\Gamma^+ = (\ga_1^+,\dots,\ga^+_{k^+})$ and $\Gamma^- = (\ga^-_1,\dots,\ga^-_{k^-})$ are tuples of Reeb orbits in $\bdy E(\veca)$
  \item we require the energy 
  \begin{align}\label{eq:formal_symp_energy}
  \En(C) := \sum_{i=1}^{k^+}\calA(\ga_i^+) - \sum_{j=1}^{k^-}\calA(\ga_j^-)
  \end{align}
   to be nonnegative.
\end{itemize}  
\end{definition}
The index of $C$ is given by
\begin{align}\label{eq:indCcob}
\ind(C) = (n-3)(2-k^+-k^-) + \sum_{i=1}^{k^+}\cz(\ga_i^+) - \sum_{j=1}^{k^-}\cz(\ga_j^-).
\end{align}
\NI A formal cylinder in $\bdy E(\veca)$ is a formal curve with one positive end and one negative end (i.e. $k^+ = k^- = 1$), and it is {\bf trivial} if $\Gamma^+ = \Gamma^-$.

\subsection{Main robustness result}\label{subsec:main_rob_res}


We now discuss notions of robustness for moduli spaces.
In the following, $M^{2n}$ is a closed symplectic manifold, $A \in H_2(M)$ is a homology class, $\veca = (a_1,\dots,a_n) \in \R_{>0}^n$ is a rationally independent tuple, and we put
$M_\veca = M \setminus \iota(\intE(\eps \veca))$, which implicitly depends on a choice of $\eps > 0$ and $\iota: E(\eps\veca) \hooksymp M$.
\begin{definition}\label{def:robust}
We will say that $\calM_{M_\veca,A}(\ga)$ is \textbf{robust} if $\calM_{M_\veca,A}^J(\ga)$ is finite and regular for any choice of $\eps,\iota$, and generic $J \in \calJ(M_\veca)$, and moreover the (signed) count $\# \calM_{M_\veca,A}^J(\ga)$ is independent of these choices.

We will say that $\calM_{M_\veca,A}(\ga)$ is \textbf{strongly robust} if it is robust and moreover we have $\ovl{\calM}_{M_\veca,A}^J(\ga) = \calM_{M_\veca,A}^J(\ga)$ for any choice of $\eps,\iota$, and generic $J \in \calJ(M_\veca)$.
\end{definition}
\NI Here $\ovl{\calM}_{M_\veca,A}^J(\ga)$ denotes the SFT compactification of $\calM_{M_\veca,A}^J(\ga)$ via pseudoholomorphic buildings \`a la \cite{BEHWZ}. Note that $\ovl{\calM}_{M_\veca,A}^J(\ga) = \calM_{M_\veca,A}^J(\ga)$ implies that $\calM_{M_\veca,A}^J(\ga)$ is already compact, and hence finite if it is regular with index zero.
If $\calM_{M_\veca,A}(\ga)$ is robust, we will write $\# \calM_{M_\veca,A}(\ga)$ to refer to the (signed) count $\# \calM_{M_\veca,A}^J(\ga)$ for any choice of generic $J \in \calJ(M_\veca)$.
We will say that a robust moduli space $\calM_{M_\veca,A}(\ga)$ is {\bf deformation invariant} if it remains robust under deformations of the symplectic form on $M$, and moreover the count $\# \calM_{M_\veca,A}(\ga)$ is unchanged under such deformations.
\begin{rmk}
Although robust moduli spaces are already well-suited for enumerative purposes, the added benefit of strong robustness is that it guarantees that the counts agree with their SFT counterparts, which a priori depend on the full SFT compactification (c.f. \cite[\S3.4]{mcduff2021symplectic}).
Strong robustness is also closely related to the notion of ``formal perturbation invariance'' utilized in \cite{mcduff2021symplectic}, but due to some technical differences in the setup we use a different term here to avoid confusion.
\end{rmk}

  Recall that a closed symplectic manifold $M^{2n}$ is {\bf semipositive} if any $A \in \pi_2(M)$ with positive symplectic area and $c_1(A) \geq 3-n$ satisfies $c_1(A) \geq 0$ (this is automatic if $\dim M \leq 6$).
The upshot is that, for generic $J$, all $J$-holomorphic spheres $C$ have $\ind(C) \geq 0$. Indeed, by standard transversality the underlying simple curve $\uvl{C}$ must have $\ind(\uvl{C}) \geq 0$, and hence $c_1(\uvl{C}) \geq 0$ by semipositivity, whence $\ind(C) \geq \ind(\uvl{C}) \geq 0$.

Before stating our main result on robust moduli spaces, we will need to make some numerical assumptions on the data $\veca$ and $c_1(A)$.

\begin{assumptionlet}\label{assump:A}
We have 
\begin{align*}
\MM^\veca_{i_1} + \cdots + \MM^\veca_{i_k} \leq \MM^\veca_{i_1+\cdots+i_k + k-1}
\end{align*}
for any $i_1,\dots,i_k \in \Z_{\geq 1}$ with $k \geq 2$ satisfying $\sum\limits_{s=1}^k i_s +  k-1 \leq c_1(A)-1$.
\end{assumptionlet}
\begin{assumptionlet}\label{assump:B}
We have 
\begin{align*}
\MM^\veca_{i_1} + \cdots + \MM^\veca_{i_k} < \MM^\veca_{c_1(A)-1}
\end{align*}
for any $i_1,\dots,i_k \in \Z_{\geq 1}$ with $k \geq 2$ satisfying $\sum\limits_{s=1}^k i_s +  k-1 = c_1(A)-1$.
\end{assumptionlet}
\begin{assumptionlet}\label{assump:C}
 The homology class $A$ and Reeb orbit $\orb^\veca_{c_1(A)-1}$ have no common divisibility. 
\end{assumptionlet} 

\begin{rmk}
 Assumption~\ref{assump:A} equivalently states that any index zero (rational) formal curve in $\bdy E(\veca)$ with one negative end $\orb^\veca_m$ must have nonpositive energy, provided that $m \leq c_1(A)-1$.
Incidentally, a formal curve with nonpositive energy necessarily has zero energy since by definition formal curves cannot have negative energy.
 Assumption~\ref{assump:B} equivalently states that there are no nontrivial index zero (rational) formal curves in $\bdy E(\veca)$ with one negative end $\orb^\veca_{c_1(A)-1}$.
 Note that for $\veca$ rationally independent a formal curve with one negative end can only have zero energy if all Reeb orbits involved are covers of the same underlying simple orbit.
\end{rmk}
\begin{rmk}\label{rmk:ECH_partitions}
Note that for $\veca$ rationally independent, 
 When $n=2$, Assumption~\ref{assump:B} is equivalent to the statement that $\orb^\veca_{c_1(A)-1}$ is maximal with respect to the Hutchings--Taubes partial order \cite[Def. 1.8]{HuT}, i.e. a negative end asymptotic to $\orb^\veca_{c_1(A)-1}$ satisfies the negative ECH partition condition (c.f. \cite[Ex. 3.14]{Hlect}). 
\end{rmk}

We can now state our main robustness result:
\begin{thm}\label{thm:robust_counts}
Let $M^{2n}$ be a semipositive closed symplectic manifold, $A \in H_2(M)$ a homology class, and $\veca = (a_1,\dots,a_n) \in \R_{>0}^n$ a rationally independent tuple such that Assumptions \ref{assump:A}, \ref{assump:B}, and \ref{assump:C} hold.
Then the moduli space $\calM_{M_\veca,A}(\orb^\veca_{c_1(A)-1})$ is strongly robust and deformation invariant.
\end{thm}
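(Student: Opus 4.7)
The plan is to establish three things separately: (i) regularity and finiteness of $\calM := \calM^J_{M_\veca,A}(\orb^\veca_{c_1(A)-1})$ for generic $J \in \calJ(M_\veca)$, (ii) the compactness identity $\overline{\calM} = \calM$, and (iii) independence of the resulting count under the choices of $(\eps, \iota, J)$ and under deformations of the symplectic form.

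For (i), Assumption~\ref{assump:C} directly forces every $u \in \calM$ to be somewhere injective: a $\kappa$-fold cover of a simple curve $\uvl{u}$ in class $B$ with negative end $\ga'$ would satisfy $A = \kappa B$ and $\orb^\veca_{c_1(A)-1} = (\ga')^\kappa$, contradicting non-divisibility. With all curves simple, the standard transversality package makes $\calM$ a smooth $0$-manifold for generic $J$, and compactness from item (ii) then renders it finite.

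For (ii), I would analyze a putative nontrivial SFT building $\mathbf{F} = (F_1,\ldots,F_\ell)$ in $\overline{\calM}$, with main level $F_1 \subset \wh{M}_\veca$ and symplectization levels $F_i \subset \R \times \bdy E(\veca)$ for $i \geq 2$. A direct computation using the Conley--Zehnder formula shows that a formal symplectization component with $k$ positive ends $\orb^\veca_{i_1},\ldots,\orb^\veca_{i_k}$ and a single negative end $\orb^\veca_j$ has index $2(\sum_s i_s + k - 1 - j)$, independent of the ambient dimension. Focus on the bottom-most non-trivial symplectization component $D$, whose negative end must be $\orb^\veca_{c_1(A)-1}$. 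If $D$ is a simple non-trivial cylinder ($k=1$), then $\ind(D) \geq 1$ forces $i_1 \geq c_1(A)$, but then the main-level component above --- with negative end $\orb^\veca_{i_1}$ and class $A$, hence index $2(c_1(A)-1-i_1) \leq -2$ --- violates transversality. If $D$ has $k \geq 2$ and $\ind(D) = 0$, Assumption~\ref{assump:B} produces the strict inequality $\sum_s \MM^\veca_{i_s} < \MM^\veca_{c_1(A)-1}$, so $\En(D) < 0$, again a contradiction. The remaining, most subtle case is $k \geq 2$ with $\ind(D) > 0$; here one moves up the building, iteratively invoking Assumption~\ref{assump:A} to each intermediate symplectization level (whose negative ends have index at most $c_1(A) - 1$ by action monotonicity) to conclude that these levels carry zero energy, and must therefore consist of branched covers of trivial cylinders --- and then closes with Assumption~\ref{assump:B} at the bottom level to derive the same strict energy deficit.

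The chief technical obstacle is the control of branched covers of trivial cylinders, which may carry arbitrary virtual index; this is precisely what Assumptions~\ref{assump:A} and~\ref{assump:B} are tailored to manage, and the detailed combinatorics is the content of the dedicated lemmas in \S\ref{subsec:proofs_of_lemmas}. Semipositivity plays its usual role in bounding multiple covers of main-level components and excluding closed ``ghost'' components. Finally, item (iii) follows from a standard cobordism argument: any two choices of $(\eps, \iota, J)$ are joined by a smooth path (using Lemma~\ref{lem:no_local_knotting} for the ellipsoidal data), yielding a parametrized moduli space that is a compact $1$-manifold --- compact by the same analysis as in (ii), now with the index budget shifted by one --- whose boundary matches the initial and final counts. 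Deformation invariance is analogous, since Assumptions~\ref{assump:A}, \ref{assump:B}, and \ref{assump:C} depend only on $\veca$ and $A$ (not on $\om$) and so persist along any generic $1$-parameter family of symplectic forms.
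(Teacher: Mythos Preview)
Your overall plan and items (i) and (iii) are sound and match the paper, but item (ii) has a genuine gap in how you structure the building analysis.

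The key problem is your treatment of the main level. In your $k=1$ case you write ``the main-level component above --- with negative end $\orb^\veca_{i_1}$ and class $A$, hence index $2(c_1(A)-1-i_1) \leq -2$ --- violates transversality.'' But nothing guarantees the main level is a \emph{single} plane in class $A$: it may split into several planes $C_1,\dots,C_k$ (each in some class $B_j$ with $\sum B_j + [\text{spheres}] = A$) together with closed sphere components $Q_1,\dots,Q_m$. Moreover, even a single main-level plane may be a multiple cover, so a negative index does not by itself violate transversality. The paper handles this by formally gluing each tree of components hanging off the bottom formal curve $C_0$ into a single formal plane $C_i$, and then proving (Condition~\ref{cond:A}) that such a glued plane has $\ind \geq 0$ even when its constituent $J$-curve in $\wh M_\veca$ is multiply covered; this is where Assumption~\ref{assump:A} is applied to the cover of the \emph{underlying simple curve}, not just to symplectization levels. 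Your level-by-level account never confronts this step.

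Your $k\geq 2$, $\ind(D)>0$ case is also not a valid argument as written. You invoke Assumption~\ref{assump:A} to force intermediate symplectization levels to have zero energy, but Assumption~\ref{assump:A} gives that conclusion only when the \emph{index} of the formal curve is zero; for $\ind > 0$ it says nothing. And you cannot then ``close with Assumption~\ref{assump:B} at the bottom level'' since Assumption~\ref{assump:B} again concerns only the index-zero situation. The paper's resolution is different: after the formal gluing above, one has $\ind(\calB) = \ind(C_0) + \sum \ind(C_i) + 2\sum c_1(Q_j) = 0$, with each summand $\geq 0$ (by Conditions~\ref{cond:A}, \ref{cond:B}, and Lemma~\ref{lem:pos_c1}), forcing $\ind(C_0)=0$; only then do Conditions~\ref{cond:B} and \ref{cond:C} (i.e.\ Assumptions~\ref{assump:A}, \ref{assump:B}) kick in to make $C_0$ the trivial cylinder. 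Finally, ruling out the spheres $Q_j$ requires an intersection-dimension argument using semipositivity, not just ``excluding ghosts.''
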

\NI Recall that we put $\Tcount_{M,A}^\veca := \# \calM_{M_\veca,A}(\orb^\veca_{c_1(A)-1})$.



\sss

Although we do not a priori have a uniform way to verify Assumptions~\ref{assump:A} and \ref{assump:B} in general, the next two lemmas give useful sufficient conditions. 
\begin{lemma}\label{lem:suff_for_assump_A}
If $a_3,\dots,a_n > \MM^{(a_1,a_2)}_{c_1(A)-1}$, then Assumption~\ref{assump:A} holds.  
\end{lemma}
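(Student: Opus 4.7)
The plan is to reduce Assumption~\ref{assump:A} to a purely two-dimensional statement using the size hypothesis on $a_3, \dots, a_n$, then collapse to the base case $k = 2$ by induction, and finally dispatch the base case via a direct lattice-point count.

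\emph{Reduction to $n = 2$.} Since $a_j > \MM^{(a_1, a_2)}_{c_1(A) - 1}$ for every $j \geq 3$, every positive integer multiple of such an $a_j$ strictly exceeds the $(c_1(A) - 1)$-st smallest element of the multiset $\{i a_1 : i \geq 1\} \cup \{i a_2 : i \geq 1\}$. Consequently the $c_1(A) - 1$ smallest elements of the full multiset $\bigcup_s \{i a_s : i \geq 1\}$ come entirely from multiples of $a_1, a_2$, giving $\MM^\veca_k = \MM^{(a_1, a_2)}_k$ for all $k \leq c_1(A) - 1$. Writing $M_k := \MM^{(a_1, a_2)}_k$, Assumption~\ref{assump:A} thus reduces to
\begin{align*}
M_{i_1} + \cdots + M_{i_k} \leq M_{i_1 + \cdots + i_k + k - 1}, \qquad k \geq 2, \quad i_1 + \cdots + i_k + k - 1 \leq c_1(A) - 1.
\end{align*}
Iterating yields $M_{i_1} + \cdots + M_{i_k} \leq M_{i_1 + \cdots + i_{k-1} + k - 2} + M_{i_k} \leq M_{i_1 + \cdots + i_k + k - 1}$, so it suffices to prove the base case $M_i + M_j \leq M_{i+j+1}$ for all $i, j \geq 1$.

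\emph{Base case via counting.} Set $L := M_i + M_j$, and let $N^-(L)$ denote the number of positive integer multiples of $a_1$ or $a_2$ strictly less than $L$. Then $M_{i+j+1} \geq L$ is equivalent to $N^-(L) \leq i + j$. By rational independence, each of $M_i, M_j$ is a multiple of exactly one of $a_1, a_2$; say $M_i = p_1 a_1$, and note that the identity $\lfloor M_i/a_1 \rfloor + \lfloor M_i/a_2 \rfloor = i$ forces $\lfloor p_1 a_1/a_2 \rfloor = i - p_1$. In the mixed case $M_j = q_2 a_2$, rational independence forces $L$ to be a multiple of neither $a_1$ nor $a_2$, and a direct calculation gives $N^-(L) = \lfloor L/a_1 \rfloor + \lfloor L/a_2 \rfloor = (p_1 + (j - q_2)) + ((i - p_1) + q_2) = i + j$. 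In the parallel case $M_j = p_2 a_1$, one has $L = (p_1 + p_2) a_1$, so $N^-(L) = (p_1 + p_2 - 1) + \lfloor (p_1 + p_2) a_1/a_2 \rfloor$, and the standard subadditivity $\lfloor x + y \rfloor \leq \lfloor x \rfloor + \lfloor y \rfloor + 1$ yields $N^-(L) \leq i + j$. The two remaining cases are symmetric.

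\emph{Main difficulty.} The heart of the argument is this tight bookkeeping in the base case: the ``$+1$'' loss inherent in floor-function subadditivity in the parallel case is compensated exactly by the ``$-1$'' gain coming from $L$ itself lying in the set of $a_1$-multiples, while the mixed case produces equality $N^-(L) = i + j$ with no slack at all. Rational independence of $a_1, a_2$ is essential to exclude coincidences (such as $L$ being simultaneously a multiple of both $a_1$ and $a_2$) that would invalidate this count; this aligns with the broader picture that integrality of $\Tcount^\veca_{M, A}$ depends delicately on the numerical conditions built into Assumption~(*).
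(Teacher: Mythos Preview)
Your proof is correct and takes a more elementary route than the paper. The paper reformulates Assumption~\ref{assump:A} in terms of formal curves in $\bdy E(\veca)$: the desired inequality $\sum \MM^\veca_{i_s} \leq \MM^\veca_m$ (with $m = \sum i_s + k - 1$) is precisely the statement that an index-zero formal curve with positive ends $\orb^\veca_{i_1},\dots,\orb^\veca_{i_k}$ and negative end $\orb^\veca_m$ has nonpositive energy. After the same reduction to $n=2$ (argued via the Conley--Zehnder index formula rather than by directly comparing $\MM$-values), the paper invokes Lemma~\ref{lem:4d_formal_symp_curves}, which shows that \emph{any} rational formal curve in $\bdy E(a_1,a_2)$ with strictly positive energy has index at least $2$; the proof there is a simultaneous ceiling/floor manipulation over all ends. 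Your approach --- inductively collapsing to $k=2$ and then doing explicit lattice-point bookkeeping --- is more self-contained and avoids the formal curve language entirely. The paper's route, while less direct, yields a reusable lemma (handling arbitrary numbers of positive and negative ends) that fits naturally into the index/energy framework used throughout \S\ref{sec:robust}.
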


\begin{lemma}\label{lem:suff_for_assump_B}
Fix $p,q \in \Z_{\geq 1}$ relatively prime such that $p+q = c_1(A)$, and suppose we have $\veca = (q,p\pm\delta,a_3,\dots,a_n)$ with $a_3,\dots,a_n > pq$ and $\delta \in \R_{>0}$ sufficiently small. Then Assumptions \ref{assump:A} and \ref{assump:B} hold.
\end{lemma}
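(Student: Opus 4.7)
The plan is to deduce Assumption~\ref{assump:A} from Lemma~\ref{lem:suff_for_assump_A} after a direct computation of $\MM^{(q,p\pm\delta)}_{p+q-1}$, and to prove Assumption~\ref{assump:B} by a combinatorial argument in the two-dimensional ellipsoid $\bdy E(q,p\pm\delta)$.

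For Assumption~\ref{assump:A}, I would count the multiples of $q$ and $p\pm\delta$ that lie at or below $pq$. Since $\gcd(p,q)=1$, for $\delta>0$ small no such multiples coincide except at the common value $pq$, and a direct count yields $\MM^{(q,p+\delta)}_{p+q-1}=pq$ (realized by $\simp_1^p$) and $\MM^{(q,p-\delta)}_{p+q-1}=q(p-\delta)=pq-q\delta$ (realized by $\simp_2^q$). In either case this value is at most $pq$, which is strictly less than each of $a_3,\dots,a_n$, so Lemma~\ref{lem:suff_for_assump_A} applies and gives Assumption~\ref{assump:A}.

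For Assumption~\ref{assump:B}, I would consider positive ends $\orb^\veca_{j_1},\dots,\orb^\veca_{j_k}$ with $k\geq 2$ and $\sum_s j_s = p+q-k$. Since each $j_s \leq p+q-2 < p+q-1$, each orbit lies in $\bdy E(q,p\pm\delta)$ and is of the form $\simp_1^{c_s}$ with $c_s\leq p-1$ or $\simp_2^{d_t}$ with $d_t\leq q-1$. For such $c,d$ and $\delta$ small, the index formulas reduce to $j(\simp_1^c)=c+\lfloor cq/p\rfloor$ and $j(\simp_2^d)=d+\lfloor dp/q\rfloor$. Using $\gcd(p,q)=1$, the remainders $r_s := c_sq \bmod p$ and $r'_t := d_tp \bmod q$ satisfy $1\leq r_s\leq p-1$ and $1\leq r'_t\leq q-1$. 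Setting $C=\sum_s c_s$, $D=\sum_t d_t$, $R_1=\sum_s r_s$, $R_2=\sum_t r'_t$, and letting $k_1,k_2$ denote the respective type counts with $k_1+k_2=k$, the index constraint $\sum_s j_s = p+q-k$ rearranges after clearing denominators to
\begin{align*}
(p+q)(qC+pD) \;=\; pq(p+q) - pqk + qR_1 + pR_2.
\end{align*}

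Substituting the upper bounds $R_1\leq(p-1)k_1$ and $R_2\leq(q-1)k_2$ and simplifying gives $qC+pD\leq pq - (qk_1+pk_2)/(p+q)$; since $k\geq 1$ the slack $(qk_1+pk_2)/(p+q)$ is positive and uniformly bounded below over the finitely many valid configurations. The total action of the positive ends is $qC+(p\pm\delta)D$, while $\MM^\veca_{c_1(A)-1}$ equals $pq$ in the $+\delta$ case and $pq-q\delta$ in the $-\delta$ case; taking $\delta$ smaller than an explicit constant depending on $p,q,c_1(A)$ absorbs the $\delta$-correction into this slack and yields $\sum_s \MM^\veca_{j_s} < \MM^\veca_{c_1(A)-1}$. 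The main obstacle I anticipate is the $-\delta$ case, where the target value itself drops by $q\delta$ and one must verify that the combinatorial slack uniformly beats $q\delta$; the finiteness of valid configurations for fixed $p,q,c_1(A)$ handles this, but some bookkeeping is required.
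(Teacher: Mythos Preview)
Your approach is correct, and for Assumption~\ref{assump:A} it is identical to the paper's: both compute $\MM^{(q,p\pm\delta)}_{p+q-1}$ explicitly and then invoke Lemma~\ref{lem:suff_for_assump_A}.

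For Assumption~\ref{assump:B} the two arguments genuinely diverge. The paper observes that, since Assumption~\ref{assump:A} already gives the weak inequality, one only needs to rule out equality, i.e.\ the case $\En(C)=0$. By rational independence of $q$ and $p\pm\delta$, zero energy forces every positive end to cover the \emph{same} simple orbit as the negative end (namely $\simp_1$ in the $+\delta$ case, $\simp_2$ in the $-\delta$ case). This reduces to a single family: positive ends $\simp_1^{p_1},\dots,\simp_1^{p_m}$ with $\sum p_i = p$, and a two-line Conley--Zehnder computation $\sum_i\lceil p_iq/p\rceil > q$ then gives $\ind(C)>0$, contradicting $\ind(C)=0$. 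Your argument instead treats all mixed configurations at once, extracting from the index constraint the identity $(p+q)(qC+pD)=pq(p+q)-pqk+qR_1+pR_2$ and bounding the remainders. What makes your route work cleanly is the observation (which you could make more explicit) that $qC+pD$ is an integer strictly below $pq$, hence $qC+pD\leq pq-1$; this integer gap of size $1$ uniformly dominates all the $O(\delta)$ corrections in both the $+\delta$ and $-\delta$ cases (e.g.\ in the $-\delta$ case one needs $\delta(q-D)<1$, which holds for $\delta<1/q$). The paper's route is shorter and more conceptual because the zero-energy reduction eliminates the mixed case entirely; your route is more hands-on but has the minor advantage of giving an explicit bound on how small $\delta$ must be.
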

\NI Note that Assumptions \ref{assump:A},\ref{assump:B},\ref{assump:C} depend only on $\veca$ up to scalar multiplication, i.e. if they hold for $\veca$ then they also hold for $\la  \veca$ for any $\la \in \R_{>0}$.

Deferring the proofs of these lemmas for the moment, let us consider the four-dimensional case $n=2$, or more generally $a_3,\dots,a_n \gg a_1,a_2$.
In this situation it is customary 
to take without loss of generality $(a_1,a_2) = (1,a)$ with $a > 1$,
and we refer to $\simp_1$ as the ``short orbit'' $\sht$ and $\simp_2$ as the ``long orbit'' $\lng$.
In the following, for $p/q \in \Q$ with $p>q$ and $\gcd(p,q)=1$ we put $\pqp := p/q+\delta$ and $\pqm := p/q -\delta$ for $\delta > 0$ arbitrarily small.
We will sometimes write $M_a$ in place of $M_{(1,a)}$, and $\orb^a_k$ in place of $\orb^{(1,a)}_k$, and so on.
In particular, assuming $p+q = c_1(A)$, note that if $a = \pqp$ then we have $\orb^a_{c_1(A)-1} = \sht^p$, while if $a = \pqm$ then we have $\orb^a_{c_1(A)-1} = \lng^q$.
We thus have:

\begin{cor}\label{cor:sht_and_lng_counts} Let $M^4$ be a closed symplectic four-manifold, and let $A \in H_2(M)$ be a homology class such that $c_1(A) = p+q$ for some $p> q \in \Z_{\geq 1}$ relatively prime.
\begin{enumerate}
  \item For $a = \pqp$, the moduli space $\calM_{M_a,A}(\sht^p)$ is strongly robust.
  \item For $a = \pqm$, the moduli space $\calM_{M_a,A}(\lng^q)$ is strongly robust.
\end{enumerate}
In either case, the signed count is a nonnegative integer.

More generally, the same is true if $M^{2n}$ is any semipositive closed symplectic manifold and we replace $M_a$ with $M_\veca$ for $\veca = (1,a,a_3,\dots,a_n)$ with $a_3,\dots,a_n > p$.
\end{cor}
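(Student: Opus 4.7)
The plan is to apply Theorem~\ref{thm:robust_counts} after verifying Assumptions~\ref{assump:A}, \ref{assump:B}, and \ref{assump:C}. Writing $\veca = (1, a, a_3, \dots, a_n)$ with $a = (p/q)^\pm$ and $a_3, \dots, a_n > p$ (the four-dimensional case is just $n = 2$, with no auxiliary factors), I first rescale by the positive scalar $q$; this leaves all three assumptions invariant and produces $q\veca = (q, p \pm q\delta, qa_3, \dots, qa_n)$ with $qa_i > qp = pq$ for each $i \geq 3$. This is precisely the configuration covered by Lemma~\ref{lem:suff_for_assump_B}, so Assumptions~\ref{assump:A} and \ref{assump:B} follow.

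Next I verify Assumption~\ref{assump:C}. The orbit $\orb^\veca_{c_1(A)-1}$ is $\sht^p$ when $a = (p/q)^+$ and $\lng^q$ when $a = (p/q)^-$, so its covering multiplicity equals $p$ or $q$. If some $\kappa \geq 2$ divided this multiplicity and also the divisibility of $A$ in $H_2(M)$, then $\kappa$ would divide $c_1(A) = p + q$ as well, forcing $\kappa \mid \gcd(p,q) = 1$---a contradiction. Hence Assumption~\ref{assump:C} holds for free, and Theorem~\ref{thm:robust_counts} gives strong robustness (and deformation invariance) in both cases.

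Strong robustness implies that for generic $J$ the moduli space is a finite, regular, oriented zero-dimensional manifold, so the signed count is automatically an integer. Nonnegativity is the genuinely delicate point, since it does not follow formally from Theorem~\ref{thm:robust_counts}, and I expect this to be the main obstacle. The strategy would be to show that each curve in the moduli space contributes with sign $+1$: in dimension four, this should follow by combining automatic transversality in the sense of Wendl with adjunction and positivity of intersections, which together force each somewhere-injective curve to be embedded and to carry its canonical complex orientation. For the higher-dimensional generalization one would either bootstrap from the 4D case by a neck-stretching reduction that uses the quantitative bound $a_3, \dots, a_n > p$ to confine the relevant curves near the $(a_1, a_2)$-plane of the ellipsoid, or invoke the correspondence with cuspidal curve counts developed later in the paper.
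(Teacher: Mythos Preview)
Your argument is correct and matches the paper's approach: strong robustness follows from Theorem~\ref{thm:robust_counts} once Assumptions~\ref{assump:A}, \ref{assump:B}, \ref{assump:C} are verified (the first two via Lemma~\ref{lem:suff_for_assump_B} after rescaling, the third via the $\gcd(p,q)=1$ observation), and nonnegativity in dimension four is obtained from Wendl's automatic transversality exactly as you suggest---the paper simply cites \cite{Wendl_aut} and \cite[\S5.2]{mcduff2021symplectic} for this step. Your caution about the higher-dimensional nonnegativity is well placed; the paper's one-line justification refers only to ``the last sentence of the first paragraph,'' i.e.\ the four-dimensional statement, and does not spell out the higher-dimensional case separately.
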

\NI The last sentence of the first paragraph is a standard consequence of automatic transversality as in \cite{Wendl_aut} -- see \cite[\S5.2]{mcduff2021symplectic}.

\begin{rmk}\label{rmk:orbit_family}
If we put $a = p/q$, $\bdy E(1,a)$ has a two-parameter family of Reeb orbits which becomes the two orbits $\sht^p,\lng^q$ after slightly perturbing $a$,
and the moduli spaces described in cases (1) and (2) of Corollary~\ref{cor:sht_and_lng_counts} can be viewed as two different ways of Morsifying this family.
\end{rmk}

We note that the above discussion gives sufficient but not necessary conditions for Assumptions \ref{assump:A} and \ref{assump:B} to hold, as the following simple example illustrates.

\begin{example}
Consider the case with $\veca = (8,13,22)$ (or a small perturbation thereof) and $c_1(A) = 5$.
Then we have $\orb^\veca_{c_1(A)-1} = \simp_3$ and $\calA(\orb^\veca_{c_1(A)-1}) = 22$, and one readily checks that Assumptions \ref{assump:A} and \ref{assump:B} hold, even though the hypotheses of Lemma~\ref{lem:suff_for_assump_A} and Lemma~\ref{lem:suff_for_assump_B} do not hold (even after permuting $\veca$).
\end{example}

\sss

We end this subsection by proving Lemmas \ref{lem:suff_for_assump_A} and \ref{lem:suff_for_assump_B}.

\begin{proof}[Proof of Lemma~\ref{lem:suff_for_assump_A}]
Fix $i_1,\dots,i_k \in \Z_{\geq 1}$ satisfying $m := \sum\limits_{s=1}^k i_s + k-1 \leq c_1(A)-1$.
Considering the contrapositive of Assumption~\ref{assump:A}, it suffices to show that any formal curve $C$ in $\bdy E(\veca)$ with strictly positive energy and with positive ends $\orb_{i_1}^\veca,\dots,\orb^\veca_{i_k}$ and negative end $\orb^\veca_m$ must satisfy $\ind(C) > 0$.

Note that for any $i \leq c_1(A)-1$, the Reeb orbit 
$\orb_i^\veca$ is a multiple of either $\simp_1$ or $\simp_2$.
Suppose that we have $\orb_i^\veca = \nu_1^j$ for some $j \in \Z_{\geq 1}$.
Then we have
\begin{align*}
ja_1 = \calA(\nu_1^j) = \calA(\orb^\veca_i) \leq \calA(\orb^\veca_{c_1(A)-1}) < a_3,\dots,a_n
\end{align*}
and hence
\begin{align}\label{eq:CZind}
\cz(\orb_i^\veca) = \cz(\simp_1^j) &= (n-1) + 2j + 2\lf a_1 j/a_2\rf + \cdots + \lf a_1j/a_n\rf \\ \notag
&=(n-1) + 2j + 2\lf a_1 j/a_2\rf,
\end{align}
i.e. $a_3,\dots,a_n$ are too large to contribute to the Conley--Zehnder index of $\orb_i^\veca$.
Similarly, if $\orb_i^\veca = \simp_2^j$ for some $j \in \Z_{\geq 1}$ then we have
\begin{align*}
\cz(\orb_i^\veca) = \cz(\simp_2^j) = (n-1) + 2j + 2\lf a_2 j / a_1\rf. 
\end{align*}
It follows that for index purposes we can assume $n = 2$, and since we are assuming $\En(C) > 0$ we then have $\ind(C) \geq 2$ by Lemma~\ref{lem:4d_formal_symp_curves} below.
\end{proof}

\begin{lemma}\label{lem:4d_formal_symp_curves}
Fix $a_1,a_2 \in \R_{>0}$ rationally independent, and let $C$ be a (rational) formal curve in $\bdy E(a_1,a_2)$. Then we have $\ind(C) \geq 0$, and in fact $\ind(C) \geq 2$ unless $\En(C) = 0$.
\end{lemma}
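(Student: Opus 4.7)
The plan is to reduce the statement to a purely combinatorial fact about the action sequence $A_k := \MM_k^\veca$. Using the index formula \eqref{eq:indCcob} together with $\cz(\orb^\veca_k) = 1+2k$ (the $n=2$ case of \eqref{eq:CZind}), for a formal curve $C$ in $\bdy E(a_1,a_2)$ with positive ends $\orb^\veca_{i_s^+}$ ($s=1,\dots,k^+$) and negative ends $\orb^\veca_{j_t^-}$ ($t=1,\dots,k^-$) one computes
\[
\ind(C) = 2\Bigl(k^+ - 1 + \textstyle\sum_s i_s^+ - \sum_t j_t^-\Bigr), \qquad \En(C) = \textstyle\sum_s A_{i_s^+} - \sum_t A_{j_t^-}.
\]
In particular $\ind(C)$ is even, and since $\En(C)>0$ forces $k^+ \geq 1$, the lemma reduces to showing that $\En(C) \geq 0$ implies $\sum_s i_s^+ + k^+ - 1 \geq \sum_t j_t^-$, with equality forcing $\En(C) = 0$.

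The heart of the argument is a pair of complementary inequalities for the sequence $(A_k)_{k \geq 1}$. \textbf{Sub-additivity with offset:} $A_{k_1} + \cdots + A_{k_m} \leq A_{k_1+\cdots+k_m+m-1}$. \textbf{Super-additivity:} $A_{k_1} + \cdots + A_{k_m} \geq A_{k_1+\cdots+k_m}$. Granting both, the chain
\[
A_{\sum_s i_s^+ + k^+ - 1} \;\geq\; \textstyle\sum_s A_{i_s^+} \;\geq\; \sum_t A_{j_t^-} \;\geq\; A_{\sum_t j_t^-},
\]
in which the middle inequality is precisely $\En(C) \geq 0$, together with monotonicity of $k \mapsto A_k$, gives the desired index bound; moreover if $\ind(C) = 0$ then all inequalities must collapse to equalities, forcing $\En(C) = 0$.

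Super-additivity is immediate from the min-max formula $A_k = \min_{i_1+i_2=k}\max(a_1 i_1, a_2 i_2)$: picking nonnegative integer pairs $(i_s^1, i_s^2)$ realizing each $A_{k_s}$, the pair $(\sum_s i_s^1, \sum_s i_s^2)$ is admissible for $A_{\sum_s k_s}$ and $\max(a_1 \sum_s i_s^1, a_2\sum_s i_s^2) \leq \sum_s \max(a_1 i_s^1, a_2 i_s^2) = \sum_s A_{k_s}$.

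The main obstacle is sub-additivity with offset, which I would prove by induction on $m$, reducing to the case $m=2$: $A_i + A_j \leq A_{i+j+1}$. The key observation is that for $c > 0$, the number of elements of $S := \{\ell a_1 : \ell \geq 1\} \cup \{\ell a_2 : \ell \geq 1\}$ that are $\leq c$ equals $\lfloor c/a_1\rfloor + \lfloor c/a_2\rfloor$, so the rank of $A_k$ in $S$ is exactly $k$. Writing $A_i = \alpha_i a_{\epsilon_i}$ and $A_j = \alpha_j a_{\epsilon_j}$, one then counts $|\{e \in S : e \leq A_i + A_j\}|$ in two cases: when $\epsilon_i = \epsilon_j$, superadditivity of the floor function bounds this count by $i+j+1$; when $\epsilon_i \neq \epsilon_j$, rational independence of $a_1, a_2$ guarantees $A_i + A_j \notin S$ and the count is exactly $i+j$, yielding the strict inequality $A_i + A_j < A_{i+j+1}$.
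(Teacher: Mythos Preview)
Your proof is correct and takes a genuinely different route from the paper's. The paper expands the Conley--Zehnder indices explicitly using $\cz(\nu_1^j) = 1 + 2j + 2\lfloor a_1 j/a_2\rfloor$ (and symmetrically for $\nu_2$), then rearranges the index into $-2$ plus two parenthesized integer terms, each of which is seen to be nonnegative directly from the energy inequality $\En(C) \geq 0$ combined with the obvious floor/ceiling bounds; irrationality of $a_2/a_1$ forces at least one term to be strictly positive, and $\En(C) > 0$ forces both to be.

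Your approach instead abstracts away the concrete CZ formula and works purely with the action sequence $A_k = \MM_k^\veca$, proving sub-additivity with offset $\sum A_{k_s} \leq A_{\sum k_s + m - 1}$ and super-additivity $\sum A_{k_s} \geq A_{\sum k_s}$, then sandwiching the energy between them. This is conceptually clean: the sub-additivity-with-offset statement is precisely the content of Assumption~\ref{assump:A} in dimension two (with no upper bound on the indices), so your argument effectively shows that Assumption~\ref{assump:A} holds unconditionally when $n=2$, which is a nice way to package the lemma. The paper's argument is shorter and more self-contained, but yours isolates a reusable combinatorial fact about the ellipsoidal action spectrum.

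One small point worth making explicit: your chain uses super-additivity on the negative ends, which tacitly requires $k^- \geq 1$ (or the convention $A_0 = 0$). The case $k^- = 0$ is trivial since then $\ind(C) = 2(k^+ - 1 + \sum i_s^+) \geq 2$ and $\En(C) > 0$, but you should mention it.
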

\begin{proof}
Up to permutation we can write the  positive asymptotics of $C$ as $\nu_1^{i_1^+},\dots,\nu_1^{i_{k^+}^+},\nu_2^{j_1^+},\dots,\nu_2^{j_{l^+}^+}$ and the negative asymptotics as $\nu_1^{i_1^-},\dots,\nu_1^{i_{k^-}^-},\nu_2^{j_1^-},\dots,\nu_2^{j_{l^-}^-}$.
We then have
\begin{align}\label{eq:energy_4d_symp}
\En(C) = \sum_{s=1}^{k^+} i_s^+a_1 + \sum_{s=1}^{l^+} j_s^+a_2 - \sum_{s=1}^{k^-} i_s^-a_1 - \sum_{s=1}^{l^-} j_s^-a_2 \geq 0. 
\end{align}

  Slightly rearranging the terms in the index formula, we have
\begin{align*}
\ind(C) = -2 + 2\left( \sum_{s=1}^{k^+}i_s^+ + \sum_{s=1}^{l^+}\lceil j_s^+a_2/a_1\rceil - \sum_{s=1}^{k^-}i_s^- - \sum_{s=1}^{l^-}\lfloor j_s^-a_2/a_1\rfloor \right) \nn\\+ 2\left( \sum_{s=1}^{k^+}\lceil i_s^+a_1/a_2\rceil + \sum_{s=1}^{l^+}j_s^+ - \sum_{s=1}^{k^-}\lfloor i_s^- a_1/a_2\rfloor - \sum_{s=1}^{l^-}j_s^- \right).
\end{align*}
By ~\ref{eq:energy_4d_symp}, each of the terms in parentheses must be nonnegative, and at least one of them must be strictly positive (and hence at least $1$ by integrality) since $a_2/a_1$ is irrational.
If $\En(C)$ is strictly positive then both of the terms in parentheses must be strictly positive.

\end{proof}

\begin{proof}[Proof of Lemma~\ref{lem:suff_for_assump_B}]
To verify Assumption~\ref{assump:A}, by Lemma~\ref{lem:suff_for_assump_A} it suffices to establish $\MM_{p+q-1}^{(q,p\pm \delta)} < a_3,\dots,a_n$.
Note that for $\delta > 0$ sufficiently small we have $\MM_{p+q-1}^{(q,p + \delta)} = pq$ and $\MM_{p+q-1}^{(q,p-\delta)} = q(p-\delta)$.

We will verify Assumption~\ref{assump:B} in the case $\veca = (q,p+\delta,a_3,\dots,a_n)$, the case $\veca = (q,p-\delta,a_3,\dots,a_n)$ being nearly identical.
It suffices to show that any nontrivial formal (rational) curve $C$ in $\bdy E(\veca)$ with negative end $\orb^\veca_{p+q-1}$ and $\En(C) = 0$ satisfies $\ind(C) \geq 2$.

As in the proof of Lemma~\ref{lem:suff_for_assump_A}, it suffices to consider the case $n=2$.
Since $\orb^\veca_{p+q-1} = \simp_1^p$ and $\En(C) = 0$, we can assume that the positive ends of $C$ are 
$\simp_1^{p_1},\dots,\simp_1^{p_m}$ for some $p_1,\dots,p_m \in \Z_{\geq 1}$ with $m \geq 2$ and $p_1 + \cdots + p_m = p$.
Then we have
\begin{align*}
\ind(C) = 2\sum_{i=1}^m \lc \tfrac{p_ia_1}{a_2} \rc - 2\lc \tfrac{pa_1}{a_2}\rc
 = 2\sum_{i=1}^m \lc \tfrac{p_i q}{p+\delta}\rc - 2q.
\end{align*}
Note that $p_iq/p$ is not an integer for $i = 1,\dots,m$, since we have $\gcd(p,q)=1$ and $1 \leq p_i < p$. Therefore, for $\delta > 0$ sufficiently small we have 
\begin{align*}
\ind(C) = 2\sum_{i=1}^m \lc \tfrac{p_i q}{p}\rc -2q > 2\sum_{i=1}^m \tfrac{p_i q}{p} - 2q = 0.
\end{align*}
This completes the proof.
\end{proof}

\subsection{Proof modulo lemmas}\label{subsec:pf_main_rob_res}
In this subsection we give the proof of Theorem~\ref{thm:robust_counts}, modulo several lemmas whose proofs are deferred to \S\ref{subsec:proofs_of_lemmas}.
Our goal is to establish the following:

\begin{enumerate}[label=(\Roman*)]
  \item for any choice of $\eps > 0$, $\iota: E(\eps\veca) \hooksymp M$, and generic $J \in \calJ(M_\veca)$, the moduli space $\calM^{J}_{M_\veca,A}(\orb^\veca_{c_1(A)-1})$ is regular and equal to its SFT compactification
  \item the count $\# \calM^{J}_{M_\veca,A}(\orb^\veca_{c_1(A)-1})$ is independent of $\iota,\eps$ and generic $J$, and is invariant under deformations of the symplectic form of $M$.
\end{enumerate}

By the following claim, we can work with fixed $\eps,\iota$ throughout the argument.
\begin{claim}
Independence of $\#\calM^J_{M_\veca,A}(\ga_0)$ of the choice of $\eps$ and $\iota$ follows from independence of the choice of generic $J \in \calJ(M_\veca)$.
\end{claim}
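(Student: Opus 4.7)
The plan is to reduce the desired independence to two separate subclaims: (i) for fixed $\iota$ and $\eps' < \eps$, the count using $(\eps, \iota)$ equals the count using $(\eps', \iota|_{E(\eps'\veca)})$; and (ii) for fixed small $\eps$ and two embeddings $\iota_0, \iota_1 : E(\eps\veca) \hooksymp M$, the counts agree. Both reductions amount to exhibiting a symplectomorphism between the two relevant completions $\wh{M}_\veca$ and transporting the admissible almost complex structure via pushforward, after which the hypothesized independence of generic $J$ finishes the job.

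For (i), the difference region $\iota\bigl(E(\eps\veca) \setminus \intE(\eps'\veca)\bigr)$ is a Liouville cobordism between concentric scaled ellipsoid boundaries, and hence is symplectomorphic to a finite piece of the symplectization $\R \times \bdy E(\eps'\veca)$ via the Liouville flow of $E(\veca)$. Grafting this onto the cylindrical end of the $\eps'$-completion produces a canonical symplectomorphism of completions which maps $\calJ(M_\veca)$-data for $\eps$ onto $\calJ(M_\veca)$-data for $\eps'$ and carries moduli spaces bijectively. For (ii), Lemma~\ref{lem:no_local_knotting} supplies some $\eps'' > 0$ and a smooth family of symplectic embeddings interpolating $\iota_0|_{E(\eps''\veca)}$ and $\iota_1|_{E(\eps''\veca)}$; by Remark~\ref{rmk:symp_iso_implies_ham_iso} this family arises from a Hamiltonian diffeomorphism $\phi \in \ham(M)$ with $\phi \circ \iota_0|_{E(\eps''\veca)} = \iota_1|_{E(\eps''\veca)}$, so $\phi$ restricts to a symplectomorphism between the two complements and extends naturally to the cylindrical completions.

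To assemble the full claim, I would take arbitrary $(\eps_0, \iota_0)$ and $(\eps_1, \iota_1)$, choose a common $\eps'' > 0$ small enough that both restrictions are defined and that Lemma~\ref{lem:no_local_knotting} applies, use (i) on each side to reduce to the pair $(\eps'', \iota_0|_{E(\eps''\veca)})$ and $(\eps'', \iota_1|_{E(\eps''\veca)})$, and then invoke (ii) to identify these two counts. The only mildly delicate point is that the pushforward of a $J$ belonging to whatever prescribed comeager ``generic'' subset may not land in the corresponding subset on the target side; however, since the generic subset is Baire and pushforward is a homeomorphism of the respective $\calJ$'s, a small further perturbation of the pushforward followed by the assumed $J$-independence on the target side resolves this. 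I do not anticipate any substantive obstacle here: the claim is essentially a functoriality check, with all the genuinely analytic content of the theorem lying in parts (I) and (II) of the subsequent argument.
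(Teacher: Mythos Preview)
Your proposal is correct and takes essentially the same approach as the paper: shrink $\eps$ using the natural identification of completions (your step (i), which the paper phrases as an inclusion $\calJ(M \setminus \iota_i(\intE(\eps_i\veca))) \subset \calJ(M \setminus \iota_i(\intE(\eps'\veca)))$), then use Lemma~\ref{lem:no_local_knotting} together with Remark~\ref{rmk:symp_iso_implies_ham_iso} to produce a symplectomorphism of the two complements (your step (ii)). The paper compresses these steps into a single paragraph and does not explicitly address the genericity-under-pushforward point you raise, but your handling of it is correct and adds no new ingredients.
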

\begin{proof}
For $i = 0,1$, consider symplectic embeddings $\iota_i:  E(\eps_i\veca) \hooksymp M$ for some $\eps_i > 0$, along with generic $J_i \in \calJ(M \setminus \iota_i(\intE(\eps_i \veca)))$.
By Lemma~\ref{lem:no_local_knotting}, we can find $\eps' > 0$ sufficiently small such that the restrictions $\iota_0' := \iota_0|_{E(\eps'\veca)}$ and $\iota_1' := \iota_1|_{E(\eps' \veca)}$ are isotopic through symplectic embeddings. 
Note that, for $i = 0,1$, the symplectic completions of $M \setminus\iota_i( \intE(\eps_i\veca))$ and $M \setminus \iota_i( \intE(\eps'\veca))$ are naturally identified such that we have an inclusion
$\calJ(M \setminus \iota_i( \intE(\eps_i\veca))) \subset \calJ(M \setminus \iota_i( \intE(\eps'\veca)))$.
Moreover, as in Remark~\ref{rmk:symp_iso_implies_ham_iso}, there is a symplectomorphism 
$M \setminus \iota_0( \intE(\eps'\veca)) \cong M \setminus \iota_1( \intE(\eps'\veca))$, and this sets up a bijection between $\calJ(M \setminus \iota_0( \intE(\eps'\veca)))$ and $\calJ(M \setminus \iota_1( \intE(\eps'\veca)))$.
Under these identifications, we can thus view $J_0,J_1$ as two generic almost complex structures on the symplectic completion of a fixed symplectic manifold $M_\veca$.
\end{proof}

Now fix a generic one-parameter family of almost complex structures $$\{J_t \in \calJ(M_\veca) \;|\; t \in [0,1]\},$$ and let
$$\{J_t^- \in \calJ(E(\veca))\;|\; t \in [0,1]\}$$
be the induced family given by restricting each $J_t$ to the negative end.
Here we could also allow a one-parameter family of symplectic forms on $M$, but to keep the exposition simpler we will assume the symplectic form on $M$ is fixed.

\MS

\begin{proof}[Proof of Theorem~\ref{thm:robust_counts}]

By Assumption~\ref{assump:C}, every curve in the parametrized moduli space
$\calM^{\{J_t\}}_{M_\veca,A}(\orb^\veca_{c_1(A)-1})$ is necessarily simple.
Therefore by genericity of the family $\{J_t\}$ we can assume that the moduli space $\calM^{\{J_t\}}_{M_\veca,A}(\orb^\veca_{c_1(A)-1})$ is regular and hence a smooth $1$-dimensional manifold.
Similarly, $\calM^{J_i}_{M_\veca,A}(\orb^\veca_{c_1(A)-1})$ is regular for $i = 0,1$.

Now consider the SFT compactification
\begin{align*}
\ovl{\calM}^{\{J_t\}}_{M_\veca,A}(\orb^\veca_{c_1(A)-1}) = \{(u,t)\;|\; t \in [0,1], u \in \ovl{\calM}^{J_t}_{M_\veca,A}(\orb^\veca_{c_1(A)-1})\}.
\end{align*}
To prove Theorem~\ref{thm:robust_counts}, it will suffice to show that 
$\ovl{\calM}^{\{J_t\}}_{M_\veca,A}(\orb^\veca_{c_1(A)-1}) = \calM^{\{J_t\}}_{M_\veca,A}(\orb^\veca_{c_1(A)-1})$, which implies that $\calM^{\{J_t\}}_{M_\veca,A}(\orb^\veca_{c_1(A)-1})$ is already compact.
In particular, this implies that $\ovl{\calM}^{J_i}_{M_\veca,A}(\orb^\veca_{c_1(A)-1})$ is equal to its SFT compactification for $i = 0,1$, which confirms (I).
Then $\calM^{\{J_t\}}_{M_\veca,A}(\orb^\veca_{c_1(A)-1})$ is a smooth $1$-dimensional compact cobordism between finite zero-dimensional manifolds $\calM^{J_0}_{M_\veca,A}(\orb^\veca_{c_1(A)-1})$ and $\calM^{J_1}_{M_\veca,A}(\orb^\veca_{c_1(A)-1})$, which confirms (II).

A priori, an element of 
$\ovl{\calM}^{\{J_t\}}_{M_\veca,A}(\orb^\veca_{c_1(A)-1})$ is a stable pseudoholomorphic building $\calB$ consisting of, for some $t_e \in [0,1]$,
\begin{itemize}
  \item a level in $\wh{M}_\veca$ consisting of one or more $J_\te$-holomorphic curve components
  whose total homology class is $A$
  \item some number (possibly zero) of symplectization levels $\R \times \bdy E(\veca)$, each consisting of one or more $J^-_\te$-holomorphic curve components, not all of which are trivial cylinders.
\end{itemize}
Our goal is to show that in fact there cannot be any symplectization levels, and that the level in $\wh{M}_{\veca}$ has a single component.
See Figure~\ref{fig:bldg_formal_gluing} for a cartoon.
\begin{figure}
    \centering
   \includegraphics[width=1\textwidth]{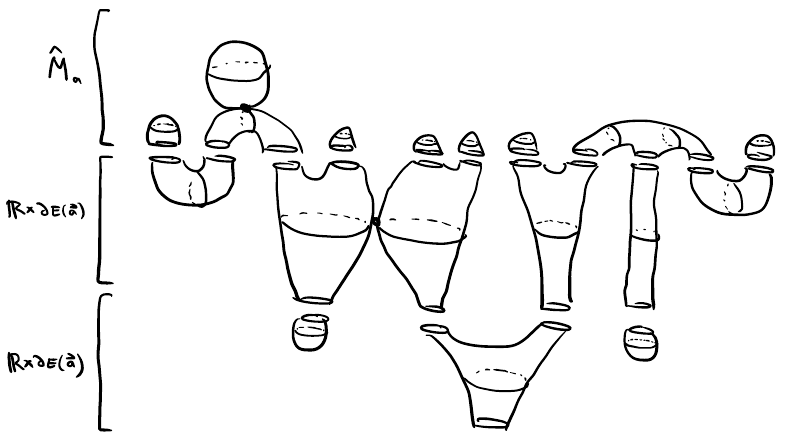}
    \caption{A potential building $\calB$ in $\ovl{\calM}^{\{J_t\}}_{M_\veca,A}(\orb^\veca_{c_1(A)-1})$.
After formally gluing as described above, $C_0$ has $4$ positive ends,
and there are $4$ planes $C_1,C_2,C_3,C_4$ and $2$  spheres $Q_1,Q_2$. Here one plane and
one sphere have components in all three levels, while the others
are entirely contained in $M_{\veca}$.
    }
    \label{fig:bldg_formal_gluing}
\end{figure}

After formally gluing various pairs of curve components in adjacent levels along shared Reeb orbits, we arrive at the following picture:
\begin{itemize}
  \item a formal curve component $C_0$ in $\bdy E(\veca)$ with $k$ positive ends and a single negative end $\orb^\veca_{c_1(A)-1}$, for some $k \in \Z_{\geq 1}$
  \item formal planes $C_1,\dots,C_k$ in $M_\veca$
  \item formal spheres $Q_1,\dots,Q_m$ in $M_\veca$, for some $m \in \Z_{\geq 0}$.
\end{itemize}
Here the positive ends of $C_0$ match up with the respective negative ends of $C_1,\dots,C_k$.
Note that each of $C_0,\dots,C_k,Q_1,\dots,Q_m$ is composed of some number of curve components of $\calB$.

\MS

In the next subsection we will prove that Assumptions \ref{assump:A} and \ref{assump:B} imply the following conditions:
\begin{conditionlet}\label{cond:A}
Let $J \in \calJ(M_\veca)$ be part of a generic one-parameter family.
 Let $C$ be a genus zero $J$-holomorphic curve component in $\wh{M}_\veca$ with $m$ negative ends, all but one of which bounds a formal plane in $E(\veca)$. Assume also that we have $c_1([C]) \leq c_1(A)$. Then, denoting the formal planes by $P_1,\dots,P_{m-1}$, we have
\begin{align*}
\ind(C) + \sum_{i=1}^{m-1}\ind(P_i) \geq 0,
\end{align*}
with equality only if $m=1$.
\end{conditionlet}
\begin{conditionlet}\label{cond:B}
Let $C$ be a genus zero formal curve in $\bdy E(\veca)$ with one negative end $\orb^\veca_{c_1(A)-1}$. 
Then we have $\ind(C) \geq 0$, with equality only if $\En(C) = 0$.
\end{conditionlet}
\begin{conditionlet}\label{cond:C}
Let $C$ be a genus zero formal curve in $\bdy E(\veca)$ with one negative end $\orb^\veca_{c_1(A)-1}$, and suppose that $\ind(C) = 0$ and $\En(C) = 0$. Then $C$ is a trivial cylinder.
\end{conditionlet}
\NI We also have the following elementary lemma, whose proof is deferred to the next subsection:
\begin{lemma}\label{lem:pos_c1}
Let $J \in \calJ(M_\veca)$ be part of a generic one-parameter family, and let $C$ be a genus zero $J$-holomorphic curve component in $\wh{M}_\veca$. Then we have $c_1(C) \geq 0$, and in fact $c_1(C) \geq 2$ if $C$ has any negative ends.
\end{lemma}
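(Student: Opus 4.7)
The plan is to reduce to the underlying simple curve and then combine the punctured-curve index formula with the semipositivity hypothesis and the explicit Conley--Zehnder indices for ellipsoidal Reeb orbits. First I would dispose of the trivial case where $C$ is a constant map (then $c_1(C)=0$ and there are no negative ends). Otherwise write $C = \uvl{C}\circ\phi$, where $\uvl{C}\colon\uvl{\Sigma}\to\wh{M}_\veca$ is the underlying simple curve and $\phi\colon\Sigma\to\uvl{\Sigma}$ is a branched cover of some degree $d\geq 1$. Since $E(\veca)$ is contractible, the capping disks used to define the class in $H_2(M)$ are homologically multiplicative in the covering multiplicity at each puncture, which gives $[C]=d[\uvl{C}]\in H_2(M)$ and hence $c_1(C)=d\cdot c_1(\uvl{C})$.

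Because $J$ belongs to a generic one-parameter family, standard transversality for simple punctured curves implies that the parametrized moduli space containing $\uvl{C}$ is cut out transversely, whence $\ind(\uvl{C})\geq -1$. I would then split into two cases depending on whether $C$ has any negative ends; by Riemann--Hurwitz applied to the genus zero cover $\phi$, this is equivalent to whether $\uvl{C}$ has any negative ends.

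Case 1 ($C$ is closed): Here $\uvl{C}$ is a nonconstant simple $J$-holomorphic sphere in $M$, and the closed index formula reads $\ind(\uvl{C}) = 2c_1(\uvl{C}) + 2(n-3)$. Combining $\ind(\uvl{C})\geq -1$ with integrality of $c_1$ gives $c_1(\uvl{C}) \geq 3-n$, and the semipositivity hypothesis on $M$ upgrades this to $c_1(\uvl{C}) \geq 0$ (this is exactly the argument recalled in the paragraph preceding Assumption~\ref{assump:A}). Hence $c_1(C) = d\cdot c_1(\uvl{C}) \geq 0$. Case 2 ($C$ has negative ends): Then $\uvl{C}$ has $k\geq 1$ negative ends at orbits $\orb^\veca_{j_1},\dots,\orb^\veca_{j_k}$ with $j_i\geq 1$, and $\cz(\orb^\veca_{j_i}) = n-1+2j_i \geq n+1$ by \eqref{eq:CZind}. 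Substituting into the punctured index formula and using $\ind(\uvl{C}) \geq -1$ yields the inequality
\[
2c_1(\uvl{C}) \;\geq\; -1 - (n-3)(2-k) + k(n+1) \;=\; 2k(n-1) + 5 - 2n.
\]
For $k=1$ this forces $c_1(\uvl{C})\geq 2$ after taking integer parts; for $k\geq 2$ it forces $c_1(\uvl{C}) \geq n+1 \geq 2$. Either way $c_1(C) = d\cdot c_1(\uvl{C}) \geq 2$.

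The only mildly subtle point is the identity $c_1(C) = d\cdot c_1(\uvl{C})$: although conceptually clear from contractibility of $E(\veca)$, it must be checked that the capping disks chosen for the (possibly split) preimages of a single end of $\uvl{C}$ under $\phi$ add up homologically to $d$ copies of the capping disk for that end. Everything else is routine: transversality for simple curves in generic one-parameter families, the Conley--Zehnder computation \eqref{eq:CZind}, and the semipositivity dichotomy.
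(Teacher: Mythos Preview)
Your argument is correct and follows essentially the same route as the paper's proof: reduce to the underlying simple curve via multiplicativity of $c_1$, use genericity of the one-parameter family to get $\ind(\uvl{C})\geq -1$, invoke semipositivity in the closed case, and in the punctured case plug the bound $\cz(\orb^\veca_j)\geq n+1$ into the index formula. The only cosmetic difference is that the paper first upgrades $\ind\geq -1$ to $\ind\geq 0$ using parity of the index and then bounds $c_1$, whereas you carry the $-1$ through and appeal to integrality of $c_1$ at the end; both yield $c_1(\uvl{C})\geq 2$ when $k\geq 1$.
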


Taking these for granted for the moment, we complete the proof of Theorem~\ref{thm:robust_counts} as follows.
\MS

For $i = 1,\dots,k$, we claim that $\ind(C_i) \geq 0$, with $\ind(C_i) \geq 2$ unless $C_i$ is composed of a single $J_{t_e}$-holomorphic plane in $\wh{M}_\veca$.
To see this, note that a priori $C_i$ is composed of some number $m_i$ of $J_{t_e}$-holomorphic curve components in $\wh{M}_\veca$ and some number of $J_{t_e}^-$-holomorphic curve components in $\R \times \bdy E(\veca)$.
If $m_i = 1$, then we can view $C_i$ as a $J_{t_e}$-holomorphic curve component in $\wh{M}_\veca$ such that all but one of its ends bounds a formal plane in $\bdy E(\veca)$, whence the claim follows directly by Condition~\ref{cond:A}.
A similar argument holds in the case $m_i \geq 2$, noting that by Lemma~\ref{lem:pos_c1} any extra components in $\wh{M}_\veca$ will only push up the total index of $C_i$.

Now let $\calB_0$ denote the sub-building of $\calB$ given by throwing away all components corresponding to $Q_1,\dots,Q_m$.
Since $c_1(Q_1),\dots,c_1(Q_m) \geq 0$ by Lemma~\ref{lem:pos_c1},
the total index of all curve components satisfies 
\begin{align*}
\ind(\calB_0) \leq \ind(\calB) = 0.
\end{align*}
Since $\ind(C_0) \geq 0$ by \ref{cond:B} and $\ind(C_1),\dots,\ind(C_k) \geq 0$ by the above discussion, we must have $\ind(C_0) = \cdots = \ind(C_k)$, and hence
\begin{itemize}
  \item $\En(C_0) = 0$
  \item $C_i$ is a $J_{t_e}$-holomorphic plane in $\wh{M}_\veca$ for $i = 1,\dots,k$.
\end{itemize}
Moreover, by Condition~\ref{cond:C} $C_0$ must be a trivial formal cylinder, and in particular $k = 1$.
It also follows that the total index of $\calB_0$ is zero, and hence $c_1(Q_1) = \cdots = c_1(Q_m) = 0$, so by Lemma~\ref{lem:pos_c1} each $Q_i$ is actually a $J_{t_e}$-holomorphic sphere in $\wh{M}_\veca$.

In principle $C_0$ could be composed of several $J_{t_e}^-$-holomorphic curve components in $\R \times \bdy E(\veca)$ which formally glue together to give a cylinder. However, since the total energy must be zero, $C_0$ can only consist of trivial cylinders over $\orb^\veca_{c_1(A)-1}$.
This contradicts stability of the pseudoholomorphic building $\calB$.

It remains that $\calB$ does not have any symplectization levels, and consists of a $J_{t_e}$-holomorphic plane $C_0$ and $J_{t_e}$-holomorphic spheres $Q_1,\dots,Q_m$.
By a standard argument using semipositivity, we must have $m = 0$, which completes the proof.
Indeed, letting $\uvl{Q}_i$ denote the underlying simple curve of $Q_i$ 
for $i = 1,\dots,m$, we have
\begin{align*}
(\ind(\uvl{Q}_i)+2) + (\ind(C_0) + 2) = 2n-2,
\end{align*}
so by genericity of $\{J_t\}$ and standard tranversality techniques
$\uvl{Q}_i$ and $C_0$ cannot intersect for codimension reasons.
\end{proof}

\begin{rmk}\label{rmk:highd_fail}
If it easy to find counterexamples to Condition~\ref{cond:B} if we relax the Assumption~\ref{assump:A}.
For instance, put $\veca = (1,1+\delta_1,1+\delta_2)$ with $0 < \delta_1 < \delta_2 \ll 1$, and let $C$ be the branched cover of the trivial cylinder over $\simp_1$ in $\R \times \bdy E(\veca)$ having negative end $\simp_1^k$ and $k$ positive ends on $\simp_1$, for some $k \in \Z_{\geq 2}$.
We have $\cz(\simp_1^k) = 6k-2$ and hence $\ind(C) = 2-2k < 0$.
\end{rmk}

\subsection{Proof of lemmas}\label{subsec:proofs_of_lemmas}

In this subsection we now prove that Assumptions ~\ref{assump:A} and \ref{assump:B} imply Conditions \ref{cond:A},\ref{cond:B},\ref{cond:C}, and we also give the proof of Lemma~\ref{lem:pos_c1}.

\begin{lemma}
  Condition \ref{cond:A} holds under Assumption~\ref{assump:A}.
\end{lemma}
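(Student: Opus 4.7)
The plan is to convert the index inequality via formal gluing into a single statement about $\ind(C)$, which can then be attacked through transversality and parity. First, I would formally glue $C$ with its capping planes $P_1,\dots,P_{m-1}$ along their shared Reeb orbits to produce a formal curve $\widetilde{C}$ in $\wh{M}_\veca$ with one remaining negative end $\orb^\veca_{j_0}$ and homology class $[C]$ (each $P_i$ contributes trivially since $H_2(E(\veca))=0$). At each shared end the $\pm\cz$ terms cancel and the $(n-3)(2-k)$ prefactor corrections telescope, so the index is additive under formal gluing: $\ind(\widetilde{C}) = \ind(C) + \sum_{i=1}^{m-1}\ind(P_i)$, where $\ind(P_i) = (n-3) + \cz(\orb^\veca_{j_i}) = 2n-4+2j_i$ for $\ga_i = \orb^\veca_{j_i}$.

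Plugging into the index formula yields $\ind(\widetilde{C}) = 2(c_1([C]) - j_0 - 1)$, and $\sum\ind(P_i) \geq 2(m-1)$ since $n \geq 2$ and $j_i \geq 1$. Hence $\ind(C) + \sum\ind(P_i) \geq \ind(C) + 2(m-1)$, so Condition~\ref{cond:A} reduces to showing $\ind(C) \geq 0$, with strict positivity for $m \geq 2$ coming automatically from the $P_i$ contributions (matching the equality-iff-$m=1$ clause). A direct parity check of the index formula shows $\ind(C)$ is always even, regardless of $n$. For simple $C$, generic one-parameter transversality gives $\ind(C) \geq -1$, hence $\ind(C) \geq 0$ by parity, completing the argument in this case.

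The hard part will be the case when $C$ is a nontrivial multiple cover of a simple curve $\uvl{C}$, where one must argue $\ind(C) \geq 0$ despite possible branching. Since Remark~\ref{rmk:highd_fail} shows that branched covers of trivial cylinders in $\R \times \bdy E(\veca)$ can have arbitrarily negative index, one needs to exploit the specific setting of $\wh{M}_\veca$ together with Assumption~\ref{assump:A}. Concretely, semipositivity of $M$ controls the $c_1$ contribution from interior multiple-cover effects, while the hypothesis $c_1([C]) \leq c_1(A)$ forces each $j_i \leq c_1(A)-1$, placing the ellipsoidal ends in the regime where Assumption~\ref{assump:A} applies. In its equivalent formulation, Assumption~\ref{assump:A} rules out nontrivial index-zero formal curves in $\bdy E(\veca)$ with a single negative end $\orb^\veca_m$ (for $m \leq c_1(A)-1$) having positive energy; I expect this to preclude the problematic branched-cover asymptotic behavior at the punctures of $C$ (by identifying the difference $\ind(C) - \ind(\uvl C)$ with the index of a formal curve in $\bdy E(\veca)$ to which Assumption~\ref{assump:A} applies), yielding $\ind(C) \geq \ind(\uvl{C}) \geq 0$ and thereby completing the proof of Condition~\ref{cond:A}.
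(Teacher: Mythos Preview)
Your opening moves are fine and in fact match the paper: formally gluing the caps gives a formal plane $\widetilde{C}$ in $\wh{M}_\veca$ with the single remaining end $\gamma = \orb^\veca_{j_0}$, and the goal becomes $\ind(\widetilde{C}) = n-3 + 2c_1([C]) - \cz(\gamma) \geq 0$, strict when $m\geq 2$. Your treatment of the simple case via transversality and parity is also correct.

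The gap is in your reduction for multiple covers. You propose to show $\ind(C)\geq 0$ (and more specifically $\ind(C)\geq \ind(\uvl{C})$), but this is \emph{false} in general under Assumption~\ref{assump:A}. Take $n=4$, a simple plane $\uvl{C}$ with one negative end $\orb^\veca_{\uvl{i}}$ and $\ind(\uvl{C})=0$ (so $\uvl{i}=c_1(\uvl{B})-1$), and let $C$ be the $2$-fold cover with two negative ends each equal to $\orb^\veca_{\uvl{i}}$. Then a direct computation gives $\ind(C) = (n-3)(2-2\kappa) = -2 < 0$, even when $\veca$ satisfies Assumption~\ref{assump:A} (e.g.\ via Lemma~\ref{lem:suff_for_assump_A}). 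So the inequality you are trying to prove does not hold, and your suggested mechanism of identifying $\ind(C)-\ind(\uvl{C})$ with the index of a formal curve in $\bdy E(\veca)$ cannot yield it. (Note that Condition~\ref{cond:A} \emph{does} hold here: $\ind(C)+\ind(P_1) = -2 + (2n-4+2\uvl{i}) > 0$; the problem is only with your chosen route.)

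The paper avoids this by never attempting to bound $\ind(C)$ itself. Instead, it works entirely at the level of the capped plane: letting $\uvl{\gamma}=\orb^\veca_{\uvl{i}}$ be the end of $\uvl{C}$ under $\gamma$, capping the other ends of $\uvl{C}$ and using $\ind(\uvl{C})\geq 0$ gives $\uvl{i}\leq c_1(\uvl{B})-1$. Then Assumption~\ref{assump:A}, applied with $k=\kappa$ and $i_1=\cdots=i_\kappa=\uvl{i}$, yields $\kappa\,\calA(\uvl{\gamma}) \leq \calA(\orb^\veca_{\kappa c_1(\uvl{B})-1})$; combined with $\calA(\gamma)\leq \kappa\,\calA(\uvl{\gamma})$ (since $\gamma$ is at most a $\kappa$-fold cover of $\uvl{\gamma}$), this gives $j_0 \leq c_1([C])-1$, i.e.\ $\ind(\widetilde{C})\geq 0$. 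The equality analysis then shows that equality forces $\uvl{C}$ to have one end and $\gamma$ to be the full $\kappa$-fold cover of $\uvl{\gamma}$, whence $C$ itself has exactly one end. So Assumption~\ref{assump:A} is used not to compare $\ind(C)$ with $\ind(\uvl{C})$, but to convert the action bound on $\uvl{\gamma}$ (coming from transversality for $\uvl{C}$) into an action bound on $\gamma$ itself.
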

\begin{proof}
Let $\ga$ be the negative end of $C$ which does not bound a formal plane, and put $B := [C] \in H_2(M)$. Our goal is to establish
\begin{align}\label{eq:capped_cob_curve}
n - 3 + 2c_1(B) - \cz(\ga) \geq 0,
\end{align}
with equality only if $C$ has precisely one negative end.
Note that the left hand side of \eqref{eq:capped_cob_curve} can be viewed as the index of $C$ after capping off all of its other negative ends by formal planes.

Let $\kappa$ be the covering index of $C$ over its underlying simple curve $\uvl{C}$, with $\uvl{B} := [\uvl{C}] \in H_2(M)$, and let $\uvl{\ga}$ denote the negative end of $\uvl{C}$ which is covered by $\ga$. 
Since $\uvl{C}$ is simple, by the genericity assumption on $J$ we have $\ind(\uvl{C}) \geq -1$ and hence $\ind(\uvl{C}) \geq 0$ by parity considerations.
Since all formal planes in $E(\veca)$ have positive index, we have
\begin{align}\label{eq:uvlC_ind}
n - 3 + 2c_1(\uvl{B}) - \cz(\uvl{\ga}) \geq \ind(\uvl{C}) \geq 0,
\end{align}
where the first inequality is strict unless $\uvl{C}$ has precisely one negative end (i.e. $m=1$).
We will assume $\kappa \geq 2$, since otherwise $C = \uvl{C}$ and we are done.

Put $\ga = \orb^\veca_i$ and $\uvl{\ga} = \orb^\veca_{\uvl{i}}$ for some $i,\uvl{i} \in \Z_{\geq 1}$.
Using \eqref{eq:uvlC_ind} we have
\begin{align*}
n-1 + 2\uvl{i} = \cz(\uvl{\ga}) \leq n-3 + 2c_1(\uvl{B}),
\end{align*}
i.e. $\uvl{i} \leq c_1(\uvl{B}) - 1$, and hence $\kappa \uvl{i} + \kappa -1 \leq \kappa c_1(\uvl{B}) - 1$, with strict inequality unless $m=1$.

We can therefore apply Assumption~\ref{assump:A} in the case $k = \kappa$ and $i_1 = \cdots = i_k = \uvl{i}$ to get
\begin{align*}
\kappa \MM_{\uvl{i}}^\veca \leq \MM^\veca_{\kappa \uvl{i}+\kappa-1} \leq \MM^\veca_{\kappa c_1(\uvl{B})-1}.
\end{align*}
Since $\ga$ is at most a $\kappa$-fold cover of $\uvl{\ga}$, we also have 
\begin{align}\label{eq:calA_ga}
\calA(\ga) = \calA(\orb^\veca_i) \leq \kappa \calA(\orb^\veca_{\uvl{i}}) \leq \calA(\orb^\veca_{\kappa c_1(\uvl{B})-1}),
\end{align}
with the first inequality strict unless $\ga$ is precisely a $\kappa$-fold cover of $\uvl{\ga}$ and the second inequality strict unless $m = 1$.
Note that $\calA(\ga) \leq \calA(\orb^\veca_{\kappa c_1(\uvl{B})-1})$ is equivalent to $i \leq \kappa c_1(\uvl{B})-1$.

Finally, note that \eqref{eq:capped_cob_curve} is equivalent to $i \leq c_1(B) - 1$, i.e. $i \leq \kappa c_1(\uvl{B}) - 1$.
This holds by the above, with strictly inequality only if $m=1$ and $\ga$ is a $\kappa$-fold cover of $\uvl{\ga}$, in which case $C$ also has precisely one negative end.
\end{proof}

\begin{lemma}
  Condition \ref{cond:B} holds under Assumption~\ref{assump:A}.
\end{lemma}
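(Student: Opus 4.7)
The plan is to reduce Condition \ref{cond:B} to a purely arithmetic comparison between the index and the energy, and then invoke Assumption~\ref{assump:A} directly. Write the positive ends of $C$ as $\orb^\veca_{i_1},\dots,\orb^\veca_{i_k}$ for some $k \geq 1$ with $i_1,\dots,i_k \in \Z_{\geq 1}$, and the (unique) negative end as $\orb^\veca_{c_1(A)-1}$. Substituting $\cz(\orb^\veca_j) = n-1+2j$ into \eqref{eq:indCcob}, the $(n-3)$-terms collapse and a short calculation yields
\[
\ind(C) \;=\; 2\Bigl(\sum_{s=1}^k i_s + k - c_1(A)\Bigr),
\]
so $\ind(C) \geq 0$ is equivalent to the arithmetic inequality $\sum_s i_s + k - 1 \geq c_1(A)-1$, and $\ind(C) = 0$ to equality in this bound. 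The energy inequality $\En(C) \geq 0$ translates, via $\calA(\orb^\veca_j) = \MM^\veca_j$, to $\sum_s \MM^\veca_{i_s} \geq \MM^\veca_{c_1(A)-1}$.

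The case $k=1$ is essentially free: the energy bound forces $i_1 \geq c_1(A)-1$ by monotonicity of $\MM^\veca_\bullet$, which is the desired arithmetic inequality, and rational independence of $\veca$ ensures equality holds if and only if $\En(C) = 0$.

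The main case is $k \geq 2$, and here I would argue by contradiction. Suppose $\sum_s i_s + k - 1 \leq c_1(A)-2$. Then the tuple $(i_1,\dots,i_k)$ lies in the range where Assumption~\ref{assump:A} applies, giving
\[
\sum_s \MM^\veca_{i_s} \;\leq\; \MM^\veca_{\sum_s i_s + k - 1} \;\leq\; \MM^\veca_{c_1(A)-2} \;<\; \MM^\veca_{c_1(A)-1},
\]
where the last inequality is strict by rational independence (which makes all $\MM^\veca_k$ distinct). This contradicts the energy bound, establishing $\ind(C) \geq 0$.

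Finally, for the equality case when $\ind(C)=0$ and $k \geq 2$, we have $\sum_s i_s + k-1 = c_1(A)-1$, still within the range of Assumption~\ref{assump:A}. Applying it once more produces $\sum_s \MM^\veca_{i_s} \leq \MM^\veca_{c_1(A)-1}$, which combined with the opposite energy inequality forces equality and hence $\En(C) = 0$. The only delicate point is that Assumption~\ref{assump:A} is formulated for the range $\sum_s i_s + k - 1 \leq c_1(A)-1$, which happens to cover exactly both the contradictory case and the equality case needed here; beyond this matching the argument is bookkeeping.
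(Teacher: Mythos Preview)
Your proof is correct and follows essentially the same approach as the paper: compute $\tfrac{1}{2}\ind(C) = \sum_s i_s + k - 1 - (c_1(A)-1)$, then derive a contradiction to $\En(C)\geq 0$ via Assumption~\ref{assump:A} when $\ind(C)<0$, and squeeze $\En(C)=0$ when $\ind(C)=0$. The only difference is that you explicitly separate out the $k=1$ case (where Assumption~\ref{assump:A} does not literally apply and one instead uses strict monotonicity of $\MM^\veca_\bullet$), whereas the paper's write-up glosses over this; your extra care there is harmless and arguably cleaner.
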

\begin{proof}
Since $C$ has positive ends $\orb^\veca_{i_1},\dots,\orb^\veca_{i_k}$ and negative end $\orb^\veca_{c_1(A)-1}$, we have
\begin{align}\label{eq:half_index_C}
\tfrac{1}{2}\ind(C) = \sum_{s=1}^k i_s + k -1 - (c_1(A)-1).
\end{align}
If $\ind(C) = 0$, then by Assumption~\ref{assump:A} we have $\En(C) \leq 0$, and hence $\En(C) = 0$ since the energy must be nonnegative.

Otherwise, suppose by contradiction that we have $\ind(C) < 0$. Then $\sum\limits_{s=1}^k i_s + k -1 < c_1(A)-1$, so by Assumption~\ref{assump:A} we have 
\begin{align*}
\MM_{i_1}^\veca + \cdots + \MM_{i_k}^\veca \leq \MM^\veca_{i_1+\cdots+ i_k + k -1} < \MM^\veca_{c_1(A)-1},
\end{align*}
and hence $\En(C) < 0$, which is impossible.
\end{proof}
\begin{lemma}
 Condition \ref{cond:C} holds under Assumption~\ref{assump:B}. 
\end{lemma}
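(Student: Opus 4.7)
The proof will be short, essentially a direct application of Assumption~\ref{assump:B} combined with the index computation already used in the proof of Condition~\ref{cond:B}. The plan is as follows.

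First, I would record the structure of $C$: since $C$ has a single negative end $\orb^\veca_{c_1(A)-1}$ and no other constraints have been placed, write its positive asymptotics as $\orb^\veca_{i_1},\dots,\orb^\veca_{i_k}$ for some $k \in \Z_{\geq 1}$ and $i_1,\dots,i_k \in \Z_{\geq 1}$. Using $\cz(\orb^\veca_j) = n-1+2j$ in the index formula \eqref{eq:indCcob} (exactly as in equation \eqref{eq:half_index_C} in the proof of Condition~\ref{cond:B}), I would obtain
\begin{align*}
\tfrac{1}{2}\ind(C) = \sum_{s=1}^k i_s + k - 1 - (c_1(A)-1).
\end{align*}
The hypothesis $\ind(C)=0$ thus forces $\sum_{s=1}^k i_s + k - 1 = c_1(A) - 1$.

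Next, I would split into cases on $k$. If $k \geq 2$, then $(i_1,\dots,i_k)$ satisfies exactly the hypothesis of Assumption~\ref{assump:B}, so $\MM^\veca_{i_1} + \cdots + \MM^\veca_{i_k} < \MM^\veca_{c_1(A)-1}$, which via \eqref{eq:formal_symp_energy} and $\calA(\orb^\veca_j) = \MM^\veca_j$ yields $\En(C) < 0$, contradicting the assumption $\En(C)=0$. So necessarily $k=1$, and then the constraint reduces to $i_1 = c_1(A)-1$, meaning the unique positive end of $C$ coincides with its negative end.

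Thus $C$ is a formal cylinder whose positive and negative asymptotics are both $\orb^\veca_{c_1(A)-1}$, i.e. $\Gamma^+ = \Gamma^-$, which is exactly the definition of a trivial formal cylinder. There is no real obstacle here; the only substantive step is the invocation of Assumption~\ref{assump:B}, and the subtlety (strict vs.\ non-strict inequality) is precisely why Assumption~\ref{assump:B} was separated from Assumption~\ref{assump:A} in the formulation.
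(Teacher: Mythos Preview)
Your proof is correct and follows essentially the same approach as the paper's: both use the index identity \eqref{eq:half_index_C} to reduce $\ind(C)=0$ to $\sum_s i_s + k - 1 = c_1(A)-1$, then invoke Assumption~\ref{assump:B} to force $\En(C)<0$ when $k\geq 2$, leaving only the trivial cylinder. The paper's version is slightly terser (it argues by contradiction from ``not a trivial cylinder'' and appeals to nonnegativity of energy for formal curves rather than the specific hypothesis $\En(C)=0$), but the content is identical.
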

\begin{proof}
If $C$ has index zero is not a trivial cylinder, then \eqref{eq:half_index_C} together with Assumption~\ref{assump:B} implies that $\En(C) < 0$, which is a contradiction since formal curves by definition have nonnegative energy.
\end{proof}

\begin{proof}[Proof of Lemma~\ref{lem:pos_c1}]
  
Since the first Chern number is multiplicative under taking covers, we can assume that $C$ is simple.  
Let $k \in \Z_{\geq 0}$ denote the number of negative ends of $C$, and let $\ga_1,\dots,\ga_k$ denote its asymptotic Reeb orbits.
By genericity we have $\ind(C) \geq -1$, and hence $\ind(C) \geq 0$ since the index
\begin{align*}
\ind(C) = (n-3)(2-k) + 2c_1(C) - \sum_{i=1}^k \cz(\ga_i)
\end{align*}
is necessarily even.
If $C$ is a closed curve, i.e. $k = 0$,
then we have $c_1(A) \geq 3-n$, and hence $c_1(A) \geq 0$ by semipositivity. Otherwise, we have $k \geq 1$ and
\begin{align*}
2c_1(A) &\geq \sum_{i=1}^k \cz(\ga_i) + (k-2)(n-3) 
\\&\geq k(n+1) + (k-2)(n-3) \\
&= 2n(k-1) -2k + 6 \\
&\geq 2(k-1) - 2k + 6 = 4.
\end{align*}
\end{proof}

\subsection{Stabilization invariance I}\label{subsec:stab_inv_I}

We now discuss the effect on the ellipsoidal superpotential of stabilizing the target space, i.e. multiplying it with a given closed symplectic manifold $Q^{2N}$. 
This will be used in the next subsection to obstruct {\em stabilized} symplectic embeddings 
as in Theorem~\ref{thmlet:ell_sup}(b).

\begin{thm}\label{thm:ell_sup_stab}
Let $M^{2n}$ be a closed symplectic manifold, $A \in H_2(M)$ a homology class, and $\veca = (a_1,\dots,a_n)$ a rationally independent tuple.
Let $Q^{2N}$ be another closed symplectic manifold and $\vecb = (b_1,\dots,b_N)$ another tuple satisfying $b_1,\dots,b_N > \MM^\veca_{c_1(A)-1}$.
Then we have
\begin{align}\label{eq:Tcount_stab}
\Tcount_{M,A}^\veca = \Tcount_{M \times Q,A \times [\pt]}^{(\veca,\vecb)},
\end{align}
provided that the moduli spaces underlying both sides of \eqref{eq:Tcount_stab} are robust and deformation invariant. 
\end{thm}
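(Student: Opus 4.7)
The plan is to reduce both counts in \eqref{eq:Tcount_stab} to the same underlying set of curves by choosing an almost complex structure on $\wh{(M \times Q)}_{(\veca,\vecb)}$ that is of product type on the compact part.

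First I would observe that since $b_j > \MM^\veca_{c_1(A)-1}$ for every $j$, for each $k \leq c_1(A) - 1$ the orbit $\orb^{(\veca,\vecb)}_k$ lies in the slice $S := \bdy E(\veca,\vecb) \cap \{w_1 = \cdots = w_N = 0\}$, which is canonically identified with $\bdy E(\veca)$; in particular $\orb^{(\veca,\vecb)}_{c_1(A)-1}$ is identified with $\orb^\veca_{c_1(A)-1}$. A direct Chern number computation also gives $c_1(A \times [\pt]) = c_1(A)$, so the expected dimensions on both sides of \eqref{eq:Tcount_stab} are zero, and the numerical assumptions needed to make sense of the right-hand side follow from those on the left-hand side.

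Next I would set up product-type geometric data. Fix points $p_0 \in M$ and $q_0 \in Q$ with Darboux charts, and for $\eps$ small enough use the inclusion $E(\eps(\veca,\vecb)) \subset E(\eps\veca) \times E(\eps\vecb) \subset \C^n \times \C^N$ to produce a symplectic embedding $\iota = \iota_M \times \iota_Q : E(\eps(\veca,\vecb)) \hooksymp M \times Q$. Then pick $J_M \in \calJ(M_\veca)$ and a compatible $J_Q$ on $Q$, both integrable near the respective basepoints, and construct an admissible $J$ on $\wh{(M \times Q)}_{(\veca,\vecb)}$ that restricts to $J_M \oplus J_Q$ on $(M \times Q) \setminus \iota(\intE(\eps(\veca,\vecb)))$ and whose cylindrical part on $\R_{\leq 0} \times \bdy E(\veca,\vecb)$ preserves the normal splitting along $\R_{\leq 0} \times S$. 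Such a $J$ exists because $S$ is a Reeb-invariant contact submanifold of $\bdy E(\veca,\vecb)$ with holomorphically trivializable normal bundle.

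The key step is a projection argument. Let $u: \C \to \wh{(M \times Q)}_{(\veca,\vecb)}$ contribute to $\Tcount_{M \times Q, A \times [\pt]}^{(\veca,\vecb)}$. On the compact piece, $u_Q := \pi_Q \circ u$ is $J_Q$-holomorphic, and the product structure near the negative end identifies the $Q$-projection there with the constant $q_0$ since the asymptotic orbit lies in $S$. By removable singularities $u_Q$ extends to a $J_Q$-holomorphic sphere $\bar u_Q : \CP^1 \to Q$ sending $\infty$ to $q_0$. Its homology class is $\pi_{Q*}(A \times [\pt]) = 0 \in H_2(Q)$, so $\bar u_Q$ has zero symplectic area and is constantly $q_0$. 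Thus $u$ factors through the slice $(M \times \{q_0\}) \cup (\R_{\leq 0} \times S) \subset \wh{(M\times Q)}_{(\veca,\vecb)}$, which the product data identifies canonically with $\wh{M}_\veca$, and $u$ corresponds to a $J_M$-holomorphic plane in $\calM^{J_M}_{M_\veca,A}(\orb^\veca_{c_1(A)-1})$. Conversely, every such plane $u_M$ gives $(u_M,q_0)$ on the right. Under the product splitting the linearized Cauchy--Riemann operator at $u$ is block-diagonal; the $M$-block is surjective by genericity of $J_M$, and the normal $Q$-block is a $\bar\partial$-operator on a trivial bundle over $\CP^1$ with trivial kernel and cokernel, so regularity and signs are preserved by the bijection. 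The main obstacle is the interface between the product structure on the compact part and the cylindrical structure on the symplectization: one must arrange $J$ so that the removable singularity argument and the factorization of $u_Q$ go through rigorously across the neck. Once this is in place, the hypothesized robustness and deformation invariance allow us to compute both sides of \eqref{eq:Tcount_stab} using this product-like $J$, and the bijection above yields the claimed equality.
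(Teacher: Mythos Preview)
Your projection strategy differs from the paper's. The paper instead views $\wh{M}_\veca$ as a $J^\stab$-holomorphic divisor inside $\wh{(M\times Q)}_{(\veca,\vecb)}$ and argues via positivity of intersections (together with Siefring-type winding estimates near the puncture) that any curve in class $A\times[\pt]$ must lie in the divisor; for general $Q^{2N}$ this is iterated along a Donaldson flag $\{q_0\}\subset Q_1\subset\cdots\subset Q_N=Q$. In fact the paper's complete argument routes through the closed-curve analogue Theorem~\ref{thm:N_stab} combined with the equivalence Theorem~\ref{thm:equivalence}, precisely because the punctured intersection theory is delicate.

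The obstacle you flag at the neck is a genuine gap, not a detail. The map $\pi_Q\circ u$ is $J_Q$-holomorphic only on $u^{-1}(\text{compact part})$; on the cylindrical end $\R_{\leq 0}\times\bdy E(\veca,\vecb)$ an admissible $J$ is never of split type, and merely requiring $J$ to preserve $\R\times S$ does not furnish a holomorphic projection to $Q$. Since $u^{-1}(\text{cylindrical end})$ need not be a single disk about $\infty$, your removable-singularities step for $u_Q$ is not justified, and the assertion that ``the $Q$-projection there is the constant $q_0$'' has no meaning on the cylindrical end as written. One way to repair this is to invoke the explicit diffeomorphism of Proposition~\ref{prop:Phi} (or its global form Proposition~\ref{prop:Q}), which identifies the negative end holomorphically with $\C^{n+N}\setminus\{0\}$ carrying $J_\std$ and hence a bona fide holomorphic projection to $\C^N$; but at that point you have essentially reduced to the closed-curve setting where the projection argument is clean (compare the proof of Lemma~\ref{lem:stab_II_inc_bij}). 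Your regularity claim also needs more care: the normal block is a Cauchy--Riemann operator over a plane with a negative puncture, not over $\CP^1$, and its surjectivity requires an argument such as that in \cite[\S A]{mcduff2021symplectic} rather than a bare assertion of trivial kernel and cokernel.
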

\NI Here $(\veca,\vecb)$ denotes the concatenated tuple $(a_1,\dots,a_n,b_1,\dots,b_N)$.

In the special case of Corollary~\ref{cor:sht_and_lng_counts} we have:
\begin{cor}
Let $M^4$ be a closed symplectic four-manifold, and let $A \in H_2(M)$ be a homology class such that $c_1(A) = p+q$ for some $p,q \in \Z_{\geq 1}$ relatively prime. 
Let $Q^{2N}$ be another closed symplectic manifold such that $M \times Q$ is semipositive.
Then for any $b_1,\dots,b_N > pq$ and $\delta > 0$ sufficiently small we have
\begin{align*}
\Tcount_{M \times Q,A\times[\pt]}^{(q,p\pm\delta,b_1,\dots,b_N)} = \Tcount_{M,A}^{(q,p\pm \delta)}.
\end{align*}
\end{cor}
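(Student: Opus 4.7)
The plan is to deduce this corollary by directly applying Theorem~\ref{thm:ell_sup_stab} with $\veca = (q,p\pm\delta)$, $\vecb = (b_1,\dots,b_N)$, and $A$ replaced by $A \times [\pt] \in H_2(M\times Q)$. The only content lies in verifying the two hypotheses of that theorem, namely that both moduli spaces appearing in \eqref{eq:Tcount_stab} are robust and deformation invariant.

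For the four-dimensional side, robustness and deformation invariance of $\calM_{M_\veca,A}(\orb^\veca_{c_1(A)-1})$ for $\veca = (q,p\pm\delta)$ follows immediately from Corollary~\ref{cor:sht_and_lng_counts}, since in dimension four semipositivity is automatic and the orbit $\orb^\veca_{c_1(A)-1}$ is either $\sht^p$ or $\lng^q$. For the stabilized side, I need to check the hypotheses of Theorem~\ref{thm:robust_counts} for the tuple $(\veca,\vecb) = (q,p\pm\delta,b_1,\dots,b_N)$ in $M \times Q$ with homology class $A \times [\pt]$. Semipositivity of $M \times Q$ is part of the hypothesis. Since $c_1(A\times[\pt]) = c_1(A) = p+q$ and $\MM^\veca_{p+q-1}$ is approximately $pq$, the hypothesis $b_1,\dots,b_N > pq$ ensures (for $\delta$ sufficiently small) that $b_1,\dots,b_N > \MM^{(q,p\pm\delta)}_{p+q-1}$. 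This is exactly the setup of Lemma~\ref{lem:suff_for_assump_B}, so Assumptions \ref{assump:A} and \ref{assump:B} hold.

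For Assumption \ref{assump:C}, note that any common divisibility $\kappa \geq 2$ of $A \times [\pt]$ and $\orb^{(\veca,\vecb)}_{c_1(A)-1}$ would force $\kappa \mid c_1(A \times [\pt]) = p+q$ (since $A \times [\pt] = \kappa B$ implies $c_1(A) = \kappa c_1(B)$) and $\kappa \mid \mult(\orb^{(\veca,\vecb)}_{c_1(A)-1})$, the latter being $p$ or $q$ depending on the sign in $p \pm \delta$. Either way, $\kappa$ divides $\gcd(p,q) = 1$, a contradiction. Hence Assumption \ref{assump:C} also holds, and Theorem~\ref{thm:robust_counts} gives strong robustness and deformation invariance of $\calM_{(M\times Q)_{(\veca,\vecb)},A \times [\pt]}(\orb^{(\veca,\vecb)}_{c_1(A)-1})$.

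With both moduli spaces confirmed to be robust and deformation invariant, Theorem~\ref{thm:ell_sup_stab} applies and yields the equality $\Tcount_{M,A}^{(q,p\pm\delta)} = \Tcount_{M\times Q, A\times[\pt]}^{(q,p\pm\delta,b_1,\dots,b_N)}$. There is no genuine obstacle here; the work is purely in matching hypotheses, and the only mildly delicate point is ensuring $b_i > \MM^\veca_{c_1(A)-1}$ rather than merely $b_i > pq$, which is handled by taking $\delta$ small enough so that $\MM^{(q,p\pm\delta)}_{p+q-1}$ remains strictly below $\min_i b_i$.
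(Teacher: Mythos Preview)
Your proof is correct and follows essentially the same approach as the paper, which presents this corollary as an immediate consequence of Theorem~\ref{thm:ell_sup_stab} together with Corollary~\ref{cor:sht_and_lng_counts}. You spell out explicitly the verification of Assumptions~\ref{assump:A}, \ref{assump:B}, \ref{assump:C} via Lemma~\ref{lem:suff_for_assump_B} and a direct divisibility argument, whereas the paper simply invokes the higher-dimensional clause of Corollary~\ref{cor:sht_and_lng_counts} (which packages the same verification); these are the same argument at different levels of citation.
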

\begin{rmk}\label{rmk:low_dim_or_monotone}
  Note that $M \times Q$ is semipositive if e.g. $\dim(M) = 4$ and $\dim(Q) = 2$, or if $M$ is monotone and $\pi_2(Q) = 0$.
\end{rmk}

We naturally view any Reeb orbit $\ga$ in $\bdy E(\veca)$ as lying also in $\bdy E(\veca,\vecb)$ via the inclusion $\bdy E(\veca) \subset \bdy E(\veca,\vecb)$.
For $b_1,\dots,b_N \gg a_1,\dots,a_n$, the Conley--Zehnder index of $\ga$ as an orbit of $\bdy E(\veca,\vecb)$ is $N$ greater than its Conley--Zehnder index as an orbit of $\bdy E(\veca)$. Using the shorthand $M^\stab_\veca := (M \times Q)_{(\veca,\vecb)}$, we have by
\eqref{eq:indC} and \eqref{eq:CZind} that
\begin{align}\label{eq:stab_ind}
\ind\,\calM_{M_\veca^\stab,A \times [\pt]}(\ga) = \ind\,\calM_{M_\veca,A}(\ga).
\end{align}
This simple observation goes back to \cite{HK}, and is the key starting point for obstructing stabilized symplectic embeddings. 
Here is it crucial that we count planes. Indeed, by constrast note that for $\Gamma = (\ga_1,\dots,\ga_k)$ with $k \geq 2$ we have
  \begin{align*}
  \ind\,\calM_{M_\veca^\stab,A \times [\pt]}(\Ga) = \ind\,\calM_{M_\veca,A}(\Ga) + 2N(1-k) < \ind\,\calM_{M_\veca,A}(\Ga).
  \end{align*}

\sss

If $Q$ is a symplectic surface, Theorem~\ref{thm:ell_sup_stab} can be proved by a straightforward adaptation of the technique in \cite[\S3.6]{mcduff2021symplectic}.
In brief, we can arrange that $\wh{M}_\veca$ sits inside $\wh{M}^\stab_\veca$ as a symplectic divisor, and we work with $J^\stab \in \calJ(M_\veca^\stab)$ which preserves this divisor and restricts to some $J \in \calJ(M_\veca)$.
Then we have a natural inclusion
\begin{align}\label{eq:stab_I_inclusion}
\calM^J_{M_\veca,A}(\orb^\veca_{c_1(A)-1}) \subset \calM^{J^\stab}_{M^\stab_\veca,A \times [\pt]}.
\end{align}
In fact, any curve $C \in \calM^{J^\stab}_{M^\stab_\veca,A \times [\pt]}$ must be entirely contained in $\wh{M}_\veca$, since otherwise its intersection number with the divisor $\wh{M}_\veca$ must be zero for homological reasons, but also positive due to positivity of intersections and winding number estimates.
Furthermore, by \cite[\S A]{mcduff2021symplectic} (see also \cite{pereira2022equivariant}), the inclusion \eqref{eq:stab_I_inclusion} preserves regularity, i.e. any regular curve $C \in \calM^J_{M_\veca,A}(\orb^\veca_{c_1(A)-1})$ is also regular in $\calM^{J^\stab}_{M^\stab_\veca,A \times [\pt]}$.
Therefore we have 
\begin{align*}
\Tcount_{M\times Q,A \times [\pt]}^{(\veca,\vecb)} = \# \calM_{M_\veca^\stab,A\times[\pt]}^{J^\stab}(\orb^\veca_{c_1(A)-1}) = \# \calM_{M_\veca,A}^{J}(\orb^\veca_{c_1(A)-1}) = \Tcount_{M,A}^\veca,
\end{align*}
as desired.

This argument extends to the case of general $Q^{2N}$ by first deforming the symplectic form to make it integral, and then applying Donaldson's theorem to find a full flag of smooth symplectic divisors 
\begin{align*}
\{\qo\}  = Q_0 \subset Q_1 \subset \cdots \subset Q_N = Q,
\end{align*}
with $\dim Q_i = 2i$ for $i = 0,\dots,N$.
For suitable $J^\stab \in \calJ(M_\veca^\stab)$, the above argument can be applied iteratively to show that that inclusion map \eqref{eq:stab_I_inclusion} is again a regularity-preserving bijection, whence $\Tcount_{M\times Q,A \times [\pt]}^{(\veca,\vecb)} = \Tcount_{M,A}^\veca$.

We will explain the closely analogous argument for closed curves in detail in \S\ref{subsec:stab_inv_II}.
The case of closed curves is slightly cleaner since it does not require intersection theory for punctured curves, and it also implies Theorem~\ref{thm:ell_sup_stab} via the equivalence discussed in \S\ref{subsec:global_equiv}.

\subsection{Symplectic embedding obstructions}\label{subsec:from_rob_to_emb}

We end this section by discussing the relationship between robust moduli spaces and (stable) symplectic embedding obstructions.
\begin{prop}\label{prop:from_Tcount_to_obs}
Let $M^{2n}$ be a closed symplectic manifold, $A \in H_2(M)$ a homology class, and $\veca = (a_1,\dots,a_n) \in \R_{>0}^n$ a rationally independent tuple.
Assume that the moduli space $\calM_{M_\veca,A}(\orb^\veca_{c_1(A)-1})$ is robust and we have $\Tcount_{M,A}^\veca \neq 0$.
Then given any symplectic embedding $E(c\veca) \hooksymp M$ we must have
$c \leq \frac{[\omega_M] \cdot A}{\calA(\orb^\veca_{c_1(A)-1})}$.
\end{prop}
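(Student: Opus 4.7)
The plan is to extract an honest $J$-holomorphic plane from the nonvanishing of $\Tcount_{M,A}^\veca$ and then feed its positive symplectic area into Stokes' theorem.

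Given a symplectic embedding $\psi \colon E(c\veca) \hooksymp M$, for each $\eps \in (0,c)$ we obtain an induced embedding $\iota_\eps \colon E(\eps\veca) \hooksymp M$ by precomposing $\psi$ with the radial rescaling $E(\eps\veca) \hookrightarrow E(c\veca)$. Form $M_\veca := M \setminus \iota_\eps(\intE(\eps\veca))$, which is a valid choice of excised domain in the sense of Definition~\ref{def:robust}. By the robustness hypothesis, for generic $J \in \calJ(M_\veca)$ the moduli space $\calM_{M_\veca, A}^J(\orb^\veca_{c_1(A)-1})$ is regular and finite, and its signed count equals the nonzero integer $\Tcount_{M,A}^\veca$. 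In particular it is non-empty, so we may pick a $J$-holomorphic plane $u \colon \C \to \wh{M}_\veca$ representing the class $A$ and asymptotic to $\orb^\veca_{c_1(A)-1}$.

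The second step is the area/Stokes' computation. Under our conventions, the asymptotic Reeb orbit lies on the rescaled boundary $\bdy E(\eps\veca)$ with action $\eps\,\calA(\orb^\veca_{c_1(A)-1})$. Truncating the cylindrical end of $\wh{M}_\veca$ at some level $\{s_0\}\times \bdy E(\eps\veca)$, applying Stokes' theorem on the truncated domain, and passing to the limit $s_0 \to -\infty$ using the exponential convergence of the asymptotics yields the standard identity
\[
\int_{\C} u^* \omega = [\omega_M]\cdot A - \eps\,\calA(\orb^\veca_{c_1(A)-1}).
\]
Since $J$ is tame on $M_\veca$ and $u$ is non-constant, the left-hand side is strictly positive, whence $\eps\,\calA(\orb^\veca_{c_1(A)-1}) < [\omega_M]\cdot A$. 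Letting $\eps \to c^{-}$ gives the claimed inequality.

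The argument has no technically hard step: the only routine delicacy is the SFT area formula for a punctured curve in a symplectic completion, which is standard. What does the real work is the robustness hypothesis, since it is precisely what upgrades the nonvanishing of the virtual count $\Tcount_{M,A}^\veca$ to the existence of a genuine pseudoholomorphic plane in $\wh{M}_\veca$. Without robustness one could only be guaranteed a multi-level SFT limit building, whose $\wh{M}_\veca$-level might carry strictly less than the full homology class $A$ and hence yield only a weaker area bound; robustness rules this scenario out entirely.
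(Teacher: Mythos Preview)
Your argument is correct and follows the same idea as the paper: robustness provides a genuine $J$-holomorphic plane, and nonnegativity of its energy gives the bound. The only difference is that the paper applies robustness directly with $\eps = c$ and $\iota = \psi$ (Definition~\ref{def:robust} allows \emph{any} choice of $\eps$ and $\iota$, not just small ones), obtaining $0 \leq \En(C) = [\omega_M]\cdot A - c\,\calA(\orb^\veca_{c_1(A)-1})$ in one stroke, whereas you work with $\eps < c$ and then let $\eps \to c^-$; the limit is harmless but unnecessary.
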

\begin{proof}
Let $\iota: E(c\veca) \hooksymp M$ be a symplectic embedding. By robustness of $\calM_{M_\veca,A}(\orb^\veca_{c_1(A)-1})$, for any generic $J \in \calJ(M \setminus \iota(\intE(c\veca))$ we have
\begin{align*}
\# \calM^J_{M\, \setminus\, \iota(\intE(c\veca)),A}(\orb^\veca_{c_1(A)-1}) = \#\calM_{M_\veca,A}(\orb^\veca_{c_1(A)-1}) \neq 0.
\end{align*}
Any curve $C \in \calM^J_{M \,\setminus\, \iota(\intE(c\veca)),A}(\Gamma)$ must have nonnegative energy, i.e. we have
\begin{align*}
0 \leq \En(C) = [\omega_M] \cdot A - c\calA(\orb^\veca_{c_1(A)-1}).
\end{align*}
\end{proof}

\sss

As for stable obstructions, we have:
\begin{cor}\label{cor:stab_obs}
Let $M^{2n}$ be a closed symplectic manifold, $A \in H_2(M)$ a homology class, and $\veca = (a_1,\dots,a_n) \in \R_{>0}^n$ a rationally independent tuple.
Let $Q^{2N}$ be another closed symplectic manifold and $\vecb = (b_1,\dots,b_N)$ another tuple satisfying $b_1,\dots,b_N > \calA(\orb^\veca_{c_1(A)-1})$.
Assume that the moduli spaces underlying $\Tcount_{M,A}^\veca$ and $\Tcount_{M \times Q, A \times [\pt]}^{(\veca,\vecb)}$ are both robustly defined and deformation invariant, and we have $\Tcount_{M,A}^\veca \neq 0$.
Then any symplectic embedding $E(c\veca,c\vecb) \hooksymp M \times Q$ must satisfy 
$c \leq \frac{[\omega_M] \cdot A}{\calA(\orb^\veca_{c_1(A)-1})}$.
\end{cor}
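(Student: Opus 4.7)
The plan is to combine Theorem~\ref{thm:ell_sup_stab} with Proposition~\ref{prop:from_Tcount_to_obs} applied to the target $(M\times Q,\,A\times[\pt],\,(\veca,\vecb))$. The first ingredient transfers the nonvanishing hypothesis from $\Tcount_{M,A}^{\veca}$ up to the stabilized count; the second then converts the stabilized count into the desired embedding obstruction.

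Concretely, I would first invoke Theorem~\ref{thm:ell_sup_stab} using the standing robustness and deformation-invariance assumptions together with the action bound $b_1,\dots,b_N>\MM^\veca_{c_1(A)-1}=\calA(\orb^\veca_{c_1(A)-1})$ to conclude
\[
\Tcount_{M\times Q,\,A\times[\pt]}^{(\veca,\vecb)}\;=\;\Tcount_{M,A}^{\veca}\;\neq\;0.
\]
I would then verify that Proposition~\ref{prop:from_Tcount_to_obs} is applicable to the stabilized data, which amounts to matching first Chern numbers, symplectic areas, and Reeb-orbit labels. Because $A\times[\pt]$ is represented by a surface in a fiber $M\times\{q_0\}$ on which $TQ$ restricts to a trivial bundle and $\omega_Q$ pulls back to zero, one has $c_1(A\times[\pt])=c_1(A)$ and $[\omega_{M\times Q}]\cdot(A\times[\pt])=[\omega_M]\cdot A$. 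Moreover, the hypothesis on $\vecb$ forces every positive integer multiple of any $b_i$ to exceed $\MM^\veca_{c_1(A)-1}$, so under the natural inclusion $\bdy E(\veca)\subset\bdy E(\veca,\vecb)$ the first $c_1(A)-1$ Reeb orbits coincide in order of increasing action; in particular $\orb^{(\veca,\vecb)}_{c_1(A)-1}=\orb^\veca_{c_1(A)-1}$ with unchanged action.

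With these identifications, applying Proposition~\ref{prop:from_Tcount_to_obs} to any given symplectic embedding $E(c\veca,c\vecb)\hooksymp M\times Q$ yields
\[
c\;\leq\;\frac{[\omega_{M\times Q}]\cdot(A\times[\pt])}{\calA(\orb^{(\veca,\vecb)}_{c_1(A)-1})}\;=\;\frac{[\omega_M]\cdot A}{\calA(\orb^\veca_{c_1(A)-1})},
\]
which is the claim. The argument is a direct composition of the two previously established results of this section, so there is no serious obstacle to surmount; the one subtlety worth recording is the bookkeeping check that stabilization alters neither the relevant Reeb orbit, its action, the first Chern number, nor the area pairing, and each of these facts follows at once from the action bound on $\vecb$ and from the Künneth representation of $A\times[\pt]$ as a surface in a single $M$-fiber.
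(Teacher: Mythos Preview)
Your proof is correct and follows essentially the same approach as the paper: apply Theorem~\ref{thm:ell_sup_stab} to transfer the nonvanishing to the stabilized count, then invoke Proposition~\ref{prop:from_Tcount_to_obs}. The paper's proof is terser and omits the bookkeeping checks (that $c_1$, area, and the relevant Reeb orbit are unchanged under stabilization), but these are exactly the verifications you spell out, and they are correct.
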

\begin{proof}
By Theorem~\ref{thm:ell_sup_stab} we have $\Tcount_{M \times Q,A \times [\pt]}^{(\veca,\vecb)} = \Tcount_{M,A}^\veca \neq 0$.
The result then follows by Proposition~\ref{prop:from_Tcount_to_obs}.
\end{proof}

Note that under the hypotheses of Corollary~\ref{cor:stab_obs}, any symplectic embedding of the form $\iota: E(c\veca) \times \C^N \hooksymp M \times \C^N$ must also satisfy $c \leq \frac{[\omega_M] \cdot A}{\calA(\orb^\veca_{c_1(A)-1})}$.
Indeed, we can restrict $\iota$ to an embedding of the form $E(c\veca,c\vecb) \hooksymp M \times \C^N$, and by compactifying the target space we get a symplectic embedding $E(c\veca,c\vecb) \hooksymp M \times Q$, after suitably scaling up the symplectic form on $Q$.
The claim then follows by applying Corollary~\ref{cor:stab_obs} to this embedding.

\section{Multidirectional tangency constraints}\label{sec:multi}

In this section we construct Gromov--Witten type invariants, denoted by  $N_{M,A}\lll \CC^\vecm \pt \rrr$, which count closed curves with local multidirectional tangency constraints.
Here $M^{2n}$ is a closed symplectic manifold, $A \in H_2(M)$ is a homology class, and $\vecm = (m_1,\dots,m_n) \in \Z_{\geq 1}$ is a tuple satisfying 
\begin{align}\label{eq:indmulti}
\sum_{s=1}^n m_s = c_1(A) + n - 2
\end{align}
 (so that we expect a finite count).
Theorem~\ref{thm:N_robust} establishes robustness of these counts under suitable assumptions, and Theorem~\ref{thm:equivalence} proves equivalence with the counts of the previous section.
In \S\ref{subsec:local_equiv} we import a technical tool from \cite[\S4]{SDEP} which will be used to relate multidirectional tangencies and ellipsoidal ends. In \S\ref{subsec:hc} we discuss ``hidden constraints'', essentially showing that multidirectional tangency constraints degenerate in the same way as ellipsoidal negative ends.
Using this, the proof of Theorem~\ref{thm:N_robust} is nearly identical mutatis mutandis to the proof of Theorem~\ref{thm:robust_counts}, after which Theorem~\ref{thm:equivalence} follows directly by making a special choice of almost complex structure.
Finally, in \S\ref{subsec:cusps_multi_sing} we discuss relations between multidirectional tangencies and singular curves, and in \S\ref{subsec:stab_inv_II} we establish invariance under stabilization.

\subsection{Multidirectional tangencies and ellipsoidal ends: local equivalence}\label{subsec:local_equiv}

We first discuss the local relationship between multidirectional tangency constraints and ellipsoidal negative ends.
This will be used in the next subsection to compare degenerations of multidirectional tangency constraints with degenerations of ellipsoidal ends.
In the following, we consider $\C^n$ with its standard integrable almost complex structure $J_\std$.
We take $\po = \vec{0} \in \C^n$, and the standard set of spanning local divisors at $\vec{0}$ given by $\vecD^\std = (\Ddiv_1^\std,\dots,\Ddiv_n^\std)$, with $\Ddiv_i^\std = \{z_i = 0\} \subset \C^n$ for $i = 1,\dots,n$. 
 \begin{notation}[\cite{SDEP}]
   For $\vecv = (v_1,\dots,v_n) \in \Z_{\geq 1}^n$, let $\orb^\veca_{-\vecv}$ denote the Reeb orbit in $\bdy E(\veca)$ given by $\simp_{i_m}^{v_{i_m}}$, where $i_m$ is the index $1 \leq i \leq n$ for which $a_iv_i$ is minimal.
 \end{notation}

The following proposition shows that $J_\std$ becomes SFT admissible under a suitable diffeomorphism, and for a $J_\std$-holomorphic curve we can explicitly describe the resulting Reeb orbit asymptotics.
Let $\D := \{|z| < 1\}$ denote the open unit disk in $\C$, and let $\Dpunc := \D \setminus \{0\}$ denote the result after puncturing the origin.

\begin{prop}[{\cite[\S4]{SDEP}}]\label{prop:Phi}
For each $\veca \in \R_{>0}^n$ rationally independent, there is an explicit diffeomorphism
\begin{align*}
\Phi_\veca: \R \times \bdy E(\veca) \ra \C^n \setminus \{\vec{0}\}
\end{align*}
such that $J_{\bdy E(\veca)} := (\Phi_\veca)^* J_\std$ lies in $\calJ(\bdy E(\veca))$.

Moreover, suppose that $u: \D \ra \C^n$ is a $J_\std$-holomorphic map which strictly satisfies $\lll \CC_{\vecD^\std}^\vecm \vec{0} \rrr$ at $0 \in \D$ for some $\vecm \in \Z_{\geq 1}^n$, and is otherwise disjoint from $\vec{0}$.
Then the $J_{\bdy E(\veca)}$-holomorphic map
\begin{align*}
\Phi_\veca^{-1} \circ u|_{\Dpunc}: \Dpunc \ra \R \times \bdy E(\veca)
\end{align*}
is negatively asymptotic to the Reeb orbit $\orb^\veca_{-\vecm}$ in $\bdy E(\veca)$ at the puncture $0$ .
\end{prop}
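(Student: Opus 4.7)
The plan is to define $\Phi_\veca$ by an explicit coordinate-wise stretch, verify SFT admissibility by direct calculation, and read off the asymptotic Reeb orbit from the leading-order Taylor data of $u$.

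First I would set $\Phi_\veca(t, z_1,\dots,z_n) := (e^{2\pi t/a_1}z_1, \dots, e^{2\pi t/a_n}z_n)$ for $(t,z) \in \R \times \bdy E(\veca)$. This is a diffeomorphism onto $\C^n \setminus \{\vec{0}\}$: given $w \neq \vec{0}$, the equation $\pi\sum_i |w_i|^2 e^{-4\pi t/a_i}/a_i = 1$ has a unique solution $t \in \R$ since the left side is strictly decreasing in $t$ and exhausts $(0,\infty)$, and then $z_i := e^{-2\pi t/a_i}w_i$ lies on $\bdy E(\veca)$. By construction $\Phi_\veca$ intertwines $\R$-translation with the real flow of the holomorphic vector field $V := \sum_i (2\pi/a_i)\,z_i\partial_{z_i}$ on $\C^n \setminus \{\vec{0}\}$, so $\Phi_\veca^*J_\std$ is automatically $\R$-translation invariant.

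To confirm admissibility I compute $\Phi_{\veca,*}\partial_t = \sum_i (2\pi/a_i)(x_i\partial_{x_i} + y_i\partial_{y_i})$, which in polar coordinates $z_i = r_ie^{i\theta_i}$ becomes $\sum_i (2\pi/a_i)\,r_i\partial_{r_i}$; applying $J_\std$ gives $\sum_i (2\pi/a_i)\partial_{\theta_i}$. On the other hand a short computation with $\alpha = \tfrac{1}{2}\sum r_i^2 d\theta_i$ (solving $\alpha(R)=1$ and $\iota_R d\alpha \in \langle d(\sum r_i^2/a_i)\rangle$) shows the Reeb vector field of $\bdy E(\veca)$ is precisely $R_\veca = \sum_i (2\pi/a_i)\partial_{\theta_i}$. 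Hence $(\Phi_\veca^*J_\std)\partial_t = R_\veca$, and preservation of the contact distribution follows because the $d\alpha$-orthogonal complement of the span of $\partial_t$ and $R_\veca$ is a $J_\std$-invariant complex subspace. This gives $\Phi_\veca^*J_\std \in \calJ(\bdy E(\veca))$.

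For the asymptotic claim I would write the strict multidirectional tangency data at the origin as $u_i(z) = c_iz^{m_i} + o(|z|^{m_i})$ with $c_i \neq 0$, substitute into the defining equation for the symplectization coordinate $t(z)$ of $\Phi_\veca^{-1}(u(z))$, and extract the leading behavior as $z \to 0$. The dominant term in $\pi \sum_i |u_i(z)|^2 e^{-4\pi t/a_i}/a_i = 1$ is indexed by $i_m := \op{argmin}_i\, a_im_i$, which is well-defined by rational independence of $\veca$, yielding $t(z) = \tfrac{a_{i_m}m_{i_m}}{2\pi}\log|z| + O(1)$. Substituting back, $|y_i(z)| \to 0$ polynomially for $i \neq i_m$, while $y_{i_m}(z)$ approaches the simple orbit $\simp_{i_m} = \bdy E(\veca) \cap \{z_j = 0 : j \neq i_m\}$; writing $z = re^{i\phi}$ one finds $\arg y_{i_m}(z) = m_{i_m}\phi + O(1)$, so the projection wraps $m_{i_m}$ times as $\phi$ sweeps once around the puncture. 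This identifies the limit as $\simp_{i_m}^{m_{i_m}} = \orb^\veca_{-\vecm}$.

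The main obstacle will be upgrading this formal leading-order analysis to genuine exponential convergence in the SFT sense. Since $\veca$ is rationally independent all Reeb orbits in $\bdy E(\veca)$ are nondegenerate, so the standard asymptotic theory of Hofer--Wysocki--Zehnder (cf.\ \cite{BEHWZ}) applies to the punctured $J_{\bdy E(\veca)}$-holomorphic map $\Phi_\veca^{-1}\circ u|_{\Dpunc}$: near the puncture it decays exponentially to some asymptotic Reeb orbit along an eigenvector of the associated asymptotic operator, and the explicit leading-order calculation above is what pins down precisely which orbit and which covering multiplicity appears.
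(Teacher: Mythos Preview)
Your explicit map $\Phi_\veca(t,z) = (e^{2\pi t/a_1}z_1,\dots,e^{2\pi t/a_n}z_n)$ does \emph{not} give an admissible almost complex structure: the pulled-back $J_\std$ fails to preserve the contact distribution $\xi = \ker(\lambda_\std|_{\bdy E(\veca)})$. Your justification (``the $d\alpha$-orthogonal complement of $\op{span}(\partial_t,R_\veca)$ is $J_\std$-invariant'') conflates the contact distribution with the maximal $J_\std$-complex subspace of $T\bdy E(\veca)$, and these coincide only for the round ball. Concretely, at $t=0$ your map is just the inclusion $\bdy E(\veca) \hookrightarrow \C^n$, so $(\Phi_\veca^*J_\std)|_{\xi}$ is literally $J_\std|_{\xi}$; but $\xi = \ker\alpha \cap \ker df$ where $\alpha = \tfrac{1}{2}\sum(x_idy_i-y_idx_i)$ and $df = 2\pi\sum a_i^{-1}(x_idx_i+y_idy_i)$, and one checks directly (e.g.\ at a point of $\bdy E(1,2)$ with both coordinates nonzero) that a vector $v\in\xi$ can have $J_\std v \in \ker df$ yet $\alpha(J_\std v) \neq 0$, so $J_\std v$ acquires a Reeb component. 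The paper's formula \eqref{eq:Phiveca} contains the extra factors $\sqrt{a_i}$ and the normalization $1/\sqrt{\pi\sum|z_j|^2}$ precisely to correct this; the resulting map is no longer the raw flow of the holomorphic Liouville-type field, and the paper records that its inverse projected to $\bdy E(\veca)$ is the flow postcomposed with an auxiliary diffeomorphism $G$ of $\bdy E(\veca)$.

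Your asymptotic analysis in the second half is on the right track and matches the paper's sketch: the dominant coordinate is indeed the one minimizing $a_im_i$, which singles out $\simp_{i_m}^{m_{i_m}} = \orb^\veca_{-\vecm}$, and the appeal to nondegeneracy plus Hofer--Wysocki--Zehnder asymptotics is the standard way to upgrade the formal limit to genuine SFT convergence. But this part only makes sense once you have an admissible $J$, so you first need to replace your $\Phi_\veca$ by the one in \eqref{eq:Phiveca} (or another map engineered so that the contact hyperplanes pull back to a $J_\std$-complex subspace).
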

\NI Explicitly, we have
\begin{align}\label{eq:Phiveca}
\Phi_\veca(r,z_1,\dots,z_n) = \frac{(e^{2\pi r/a_1}\sqrt{a_1}z_1,\dots,e^{2\pi r/a_n}\sqrt{a_n}z_n)}{\sqrt{\pi \sum_{i=1}^n |z_i|^2}},
\end{align}
where $r$ is the coordinate on the first factor of $\R \times \bdy E(\veca)$ and $(z_1,\dots,z_n)$ are coordinates on $\C^n$.
The second statement in Proposition~\ref{prop:Phi} can be seen by analyzing the asymptotics as $|z| \ra 0$ 
of $(\Phi_\veca^{-1} \circ u)(z)$, with $u$ of the form $u(z) = (C_1z^{m_1},\dots,C_nz^{m_n})$.

For completeness let us elaborate on how the Reeb orbit $\orb^\veca_{-\vecm}$ arises in Proposition~\ref{prop:Phi}, refering the reader to \cite[\S4]{SDEP} for full details.\footnote{Here the seemingly superfluous minus sign in the notation $\orb^\veca_{-\vecm}$ is inherited from \cite{SDEP}, where the discussion applies uniformly to both positive and negative ellipsoidal ends, whereas in this paper we only consider negative ellipsoidal ends.}
 If $\pi:\R\times \bdy E(\veca)\to
\bdy E(\veca)$ is the projection, one
 can show that the map
$$
\Dpunc \ra  \bdy E(\veca),\quad 
z\mapsto (\pi\circ \Phi_\veca^{-1} \circ u)(z)
$$ 
is a composite  $z\mapsto G\circ {\rm fl}^{\rho_{\veca}(u(z))}(u(z))$, where
$G: \bdy E(\veca)\to  \bdy E(\veca)$ is a diffeomorphism, ${\rm fl}^t: \C^n\to \C^n, t\in \R,$ is the flow
$\vec z \mapsto 
(e^{2\pi t/a_1} z_1,\dots,  e^{2\pi t/a_n} z_n)$, and $\rho_{\veca}(w)$ is the unique real number such that ${\rm fl}^{\rho_{\veca}(w)}(w) \in \bdy E(\veca)$. It turns out that if we are only interested in the asymptotics of
$(\Phi_\veca^{-1} \circ u)(z)$ as $|z| \ra 0$ then we can 
 ignore $G$ and assume that $u(z) = (z^{m_1},\dots, z^{m_n})$, so that
   $\rho_{\veca}(u(s e^{i\theta}))$ depends only on the absolute values $s^{m_1}, \dots, s^{m_n}$ as $s\to 0$.  When $n=1$ we have $\rho_{\veca}(u(s e^{i\theta})) = r_1$ where $e^{2\pi r_1/a_1}s^{m_1} = \sqrt{a_1/\pi}$; that is, $2\pi r_1/a_1 = m_1|\log (s)|$ as $s \ra 0$, i.e. 
   $r_1 = \tfrac{a_1m_1}{2\pi} |\log(s)|$.    
   When $n>1$ the flow 
   is a product of the flows in each factor $\C_i$, and the time taken to flow the circle $({\op{im}}\, u)\cap \C_i$ back to $\bdy E(\veca)
   \cap \C_i$  is approximately $\tfrac{a_im_i}{2\pi}|\log(s)|$ for $s \approx 0$.  Thus 
   $\rho_{\veca}(u(z)) \approx \min\limits_{1 \leq i \leq n} \tfrac{a_im_i}{2\pi} |\log(s)|$, and, if this minimum is attained for $i=i_0$,  the limiting orbit is an $m_{i_0}$-fold cover of the circle $\bdy E(\veca)\cap \C_{i_0}$.

Proposition~\ref{prop:Phi} also has a more global analogue which applies to any closed symplectic manifold $M$ and which will form the basis of our proof of Theorem~\ref{thm:equivalence}. However, we must first establish Theorem~\ref{thm:N_robust}, namely that the counts $N_{M,A}\lll \CC^\vecm_\vecD \po\rrr$ are robustly defined.

\subsection{Hidden constraints for cuspidal degenerations}\label{subsec:hc}

The basic strategy for establishing robustness of moduli spaces $\calM_{M,A}\lll \CC^\vecm\po \rrr$ will closely parallel the argument we used for $\calM_{M_\veca,A}(\orb^\veca_{c_1(A)-1})$ in \S\ref{sec:robust}, i.e. we seek to show that no bad degenerations can occur for a generic one-parameter family of almost complex structures.
For punctured curves in $M_\veca$ we considered the SFT compactification by pseudoholomorphic buildings, and we ultimately showed that this agrees with the uncompactified moduli space.
Similarly, for closed curves in $M$ with multidirectional tangency constraints we will consider a compatification by stable maps, and seek to show that this in fact agrees with the uncompactified moduli space.

As a warmup, recall that the multidirectional tangency constraint $\lll \CC_\vecD^\vecm \po \rrr$ reduces to a unidirectional tangency constraint $\lll \T^{(m)}_{D_1}\po\rrr$ in the case $\vecm = (m,1,\dots,1)$, and robustness of the moduli spaces
$\calM_{M,A}\lll \T_{D_1}^{(m)}\po \rrr$ was established in \cite[\S2]{McDuffSiegel_counting} for $M^{2n}$ semipositive (here $m = c_1(A)-1$).
The main subtlety in establishing robustness comes from the possibility of ghost degenerations, since strictly speaking a marked point on a constant curve component is tangent to any local divisor through its image to infinite order.
This issue is resolved by observing that the nearby nonconstant curve components satisfy tangency conditions which collectively ``remember'' the initial constraint $\lll \T_{D_1}^{(m)}\po \rrr$.

The naive extension of this argument is actually insufficient to rule out ghost degenerations for multidirectional tangency constraints, as we now explain.
Fix a closed symplectic manifold $M^{2n}$, a homology class $A \in H_2(M)$, and $\vecm = (m_1,\dots,m_n) \in \Z_{\geq 1}^n$ with $\sum_{s=1}^m m_s = c_1(A) + n -2$.
Fix also a point $\po \in M$, a set $\vecD = (\Ddiv_1,\dots,\Ddiv_n)$ of spanning local divisors at $\po$, and an almost complex structure $J \in \calJ(M,\vecD)$.\footnote{More generally we could take a sequence $J_1,J_2,J_3,\dots \in \calJ(M,\vecD)$ converging to some $J_\infty \in \calJ(M,\vecD)$, but we will suppress this to keep the notation simpler.}
Let $C_1,C_2,C_3,\dots \in \calM_{M,A}^J\lll \CC_{\vecD}^\vecm\rrr$ be a sequence of curves which converges to some stable map $C_\infty \in \ovl{\calM}_{M,A}^J\lll \po \rrr$.
Let $Q_0$ be the component of $C_\infty$ which carries the constraint $\lll \CC_{\vecD}^\vecm\rrr$,
and suppose that $Q_0$ is a ghost component.
Let $G$ be the maximal ghost tree containing $Q_0$, i.e. the set of all ghost components in $C_\infty$ which are connected to $Q_0$ through ghost components.
Let $Q_1,\dots,Q_k$ (for some $k \in \Z_{\geq 2}$) denote the nonconstant curve components of $C_\infty$ which are nodally adjacent to a curve component of $G$, and let $z_1,\dots,z_k$ denote the corresponding special points of $Q_1,\dots,Q_k$ respectively.
For $i = 1,\dots,k$, $z_i$ strictly carries a constraint $\lll \CC_\vecD^{\vecm_i}\po\rrr$ for some $\vecm_i = (m^i_1,\dots,m^i_n) \in \Z_{\geq 1}^n$.

\begin{lemma}[{\cite[Lem. 7.2]{CM1}}]\label{lem:CM_ineq}
In the above situation we have
\begin{align}\label{eq:CM_ineq}
\sum_{i=1}^k m_s^i \geq m_s
\end{align}
for $s = 1,\dots,n$.
\end{lemma}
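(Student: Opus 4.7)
The plan is to reduce the multidirectional inequality to $n$ separate unidirectional tangency inequalities, each of which is a direct application of the Cieliebak--Mohnke ghost-degeneration result. The key observation is that the constraint $\lll \CC_\vecD^\vecm \po \rrr$ decouples cleanly across the divisors $\Ddiv_1,\dots,\Ddiv_n$.

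First, I would unpack the definition: for $J \in \calJ(M,\vecD)$, a curve $u$ strictly satisfies $\lll \CC_\vecD^\vecm \po \rrr$ at a marked point $z$ exactly when $u(z) = \po$ together with the individual contact order bound $u \cdot_z \Ddiv_s \geq m_s$ for each $s = 1,\dots,n$. Because $J$ preserves each $\Ddiv_s$, each $\Ddiv_s$ is a $J$-holomorphic local hypersurface and the contact order $u \cdot_z \Ddiv_s$ is a well-defined local intersection multiplicity depending only on the pair $(u,\Ddiv_s)$ near $z$, not on any other divisor in the tuple $\vecD$. In particular, the statement ``$u$ strictly satisfies $\lll \CC_\vecD^\vecm \po \rrr$'' is equivalent to the conjunction, over $s = 1,\dots,n$, of the unidirectional tangency statements ``$u$ strictly satisfies $\lll \T_{\Ddiv_s}^{(m_s)} \po \rrr$.''

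Given this decoupling, the strategy is to fix $s \in \{1,\dots,n\}$ and apply \cite[Lem. 7.2]{CM1} (the ghost-tree degeneration lemma for a single local divisor) with target divisor $\Ddiv_s$. The input is our sequence $C_j$, each carrying a $\Ddiv_s$-tangency of order $\geq m_s$ at a marked point converging into the ghost tree $G$; the output is the inequality $\sum_{i=1}^k m_s^i \geq m_s$, where $m_s^i$ is the contact order with $\Ddiv_s$ of the nonconstant component $Q_i$ at its special point $z_i$. But $m_s^i$ is by definition the $s$-th entry of $\vecm_i$, so this is exactly the $s$-th inequality of \eqref{eq:CM_ineq}. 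Varying $s$ over $\{1,\dots,n\}$ yields the full result.

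The main point requiring care is the verification that the multidirectional setting really does reduce to the unidirectional one per divisor — i.e., that nothing about the joint constraint is lost when we project onto a single direction. Once that is confirmed (which follows from the fact that $\vecD$ is spanning and $J$ preserves each $\Ddiv_s$ separately, so local intersection multiplicities with the individual $\Ddiv_s$'s are genuinely independent quantities), no further obstacle remains, since \cite[Lem. 7.2]{CM1} already handles the intricate positivity-of-intersections and local-intersection-multiplicity bookkeeping on the ghost tree $G$. In the language of the Cieliebak--Mohnke argument, the local intersection number of $C_j$ with $\Ddiv_s$ inside a small neighborhood $U$ of $\po$ is $\geq m_s$ for all large $j$ and equals the same number for the limit $C_\infty$; in $C_\infty$ this total is accounted for by contributions of multiplicity $m_s^i$ from each $Q_i$ at $z_i$, plus possibly further nonnegative contributions from other intersections with $\Ddiv_s$ inside $U$, giving $\sum_{i=1}^k m_s^i \geq m_s$.
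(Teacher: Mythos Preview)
Your proposal is correct. The paper does not give its own proof of this lemma---it simply cites \cite[Lem.~7.2]{CM1}---and your reduction to $n$ independent applications of the unidirectional Cieliebak--Mohnke result is exactly the argument implicit in that citation: since $J \in \calJ(M,\vecD)$ preserves each $\Ddiv_s$ separately, the multidirectional constraint $\lll \CC_\vecD^\vecm \po\rrr$ is just the conjunction of $n$ unidirectional tangency constraints $\lll \T_{\Ddiv_s}^{(m_s)}\po\rrr$, to each of which \cite[Lem.~7.2]{CM1} applies verbatim.

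One small remark: the paper also observes (in the remark following Proposition~\ref{prop:hid_constr}) that \eqref{eq:CM_ineq} follows from the stronger inequality \eqref{eq:hc} by taking $a_s = 1$ and $a_j \gg 1$ for $j \neq s$. That is a genuinely different route---it goes through the SFT-style energy argument with the diffeomorphism $\Phi_\veca$ rather than through positivity of intersections with a single divisor---but it is circuitous for the purpose of proving Lemma~\ref{lem:CM_ineq} alone. Your direct approach is both simpler and closer to the original source; the paper's alternative derivation is really a sanity check that Proposition~\ref{prop:hid_constr} subsumes the classical inequality.
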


\begin{example}\label{ex:Q_cusp_deg}
 Suppose that $C_\infty$ consists precisely of the curve components $Q_0,\dots,Q_k$, with $k \in \Z_{\geq 2}$.
Using $\eqref{eq:CM_ineq}$ and the index zero assumption $\sum_{s=1}^n m_s = c_1(A) + n-2$, we have
\begin{align*}
\sum_{i=1}^k \ind(Q_i) &= \sum_{i=1}^k \left( 2c_1([Q_i] + 2n-4 - 2\sum_{s=1}^n m_s^i) \right)
\\&= 2c_1(A) + k(2n-4) - 2\sum_{i=1}^k\sum_{s=1}^n m_s^i
\\&\leq 2c_1(A) + k(2n-4) - 2\sum_{s=1}^n m_s
\\ &= (2n-4)(k-1).
\end{align*}
Even if we assume $\ind(Q_i) \geq 0$ for $i = 1,\dots,k$ (e.g. if each $Q_i$ is simple and $J \in \calJ(M,\vecD)$ is generic), this does not immediately give any contradiction.
\end{example}
\begin{example}
  Let us further specialize the previous example by taking $\vecm = (m,1,\dots,1)$ (i.e. the case of local tangency constraints).
Since each $m_s^i$ is at least $1$, for $s = 2,\dots,n$ we have $\sum_{i=1}^k m_s^i \geq k \geq 2 > 1 = m_s$,
and hence 
\begin{align*}
\sum_{i=1}^k\sum_{s=1}^n m_s^i \geq m + k(n-1).
\end{align*}
Using $m = c_1(A)-1$, this gives
\begin{align*}
\sum_{i=1}^k \ind(Q_i) \leq 2 - 2k \leq -2
\end{align*}
which {\em is} a contradiction at least if $\ind(Q_1),\dots,\ind(Q_k) \geq 0$.
\end{example}

Although Lemma~\ref{lem:CM_ineq} is generally insufficient for ruling out bad degenerations, the following proposition puts stronger restrictions on degenerations of multidirectional tangency constraints.
\begin{prop}\label{prop:hid_constr}
In the same situation as Lemma~\ref{lem:CM_ineq}, we have
\begin{align}\label{eq:hc} 
\sum_{i=1}^k \min\limits_{1 \leq s \leq n} a_s m^i_s \geq \min\limits_{1 \leq s \leq n}a_sm_s
\end{align}
for any choice of $\veca = (a_1,\dots,a_n) \in \R_{> 0}^n$.
\end{prop}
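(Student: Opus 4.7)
The plan is to interpret the ghost-tree degeneration at $\po$ as a pseudoholomorphic building in the completion $\wh M_\veca$ via the identification in Proposition~\ref{prop:Phi}, and then to derive \eqref{eq:hc} from the nonnegativity of the formal SFT energy of the symplectization piece corresponding to $G$. Since both sides of \eqref{eq:hc} are continuous in $\veca$, it suffices to establish the inequality when $\veca \in \R_{>0}^n$ is rationally independent; we fix such an $\veca$ below.

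First I would normalize the local picture: after a symplectic change of coordinates near $\po$ and a deformation of $J$ inside $\calJ(M,\vecD)$, we may assume that $(M,J,\vecD)$ agrees with $(\C^n, J_\std, \vecD^\std)$ on a neighborhood of $\po$. Perform a neck stretching at $\po$ to realize $M \setminus \{\po\}$ as an open dense subset of the completion $\wh M_\veca$, with the identification on the neck given by the diffeomorphism $\Phi_\veca$ of Proposition~\ref{prop:Phi} (as in the setup of \cite{SDEP}). Proposition~\ref{prop:Phi} then tells us that each $C_n$ corresponds to a $J$-holomorphic curve in $\wh M_\veca$ with a single negative end asymptotic to $\orb^\veca_{-\vecm}$, and each $Q_i$, punctured at $z_i$, becomes a $J$-holomorphic curve in $\wh M_\veca$ with a negative end asymptotic to $\orb^\veca_{-\vecm_i}$.

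Applying SFT compactness to the sequence of punctured curves in $\wh M_\veca$, we may pass to a subsequence converging to a pseudoholomorphic building. Its top level in $\wh M_\veca$ consists of the punctured nonconstant components of $C_\infty$, while the symplectization levels in $\R \times \bdy E(\veca)$ below collectively form a formal cobordism from the top-level negative ends down to the single bottom end $\orb^\veca_{-\vecm}$. The ghost tree $G$ corresponds to a single connected sub-building in the symplectization, whose positive ends match the negative ends of the punctured $Q_1,\dots,Q_k$ and whose unique negative end is $\orb^\veca_{-\vecm}$; components of $C_\infty$ passing through $\po$ but not nodally adjacent to $G$ are geometrically separated from $G$ at $\po$ and hence correspond to disjoint sub-buildings. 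Formally gluing the symplectization components of the sub-building for $G$ yields a formal curve $F$ in $\R \times \bdy E(\veca)$ with positive ends $\orb^\veca_{-\vecm_1},\dots,\orb^\veca_{-\vecm_k}$ and negative end $\orb^\veca_{-\vecm}$. Nonnegativity of the formal energy then gives
\[
0 \le \En(F) = \sum_{i=1}^k \calA(\orb^\veca_{-\vecm_i}) - \calA(\orb^\veca_{-\vecm}) = \sum_{i=1}^k \min_{1\le s \le n} a_s m_s^i - \min_{1\le s\le n} a_s m_s,
\]
which is exactly \eqref{eq:hc}.

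The main obstacle is the connectivity claim: the sub-building in the symplectization corresponding to $G$ must be a single connected formal curve with exactly $k$ positive ends, one for each $Q_i$ nodally adjacent to $G$, and disjoint from the sub-buildings produced by other components of $C_\infty$ through $\po$. This requires carefully tracking how the combinatorics of the stable map $C_\infty$ translates into the SFT building produced by compactness at $\po$. An alternative route that bypasses this bookkeeping is to perform an anisotropic rescaling at $\po$ with weights $\veca$ via the flow $\op{fl}^t$ from the discussion following Proposition~\ref{prop:Phi}; this zooms in only on $G$ and its adjacent components $Q_1,\dots,Q_k$, producing the formal curve $F$ directly as a rescaling limit and avoiding interference from other components of $C_\infty$ through $\po$.
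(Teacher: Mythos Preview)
Your core intuition is right and matches the paper: the inequality \eqref{eq:hc} is an energy (Stokes) inequality in the symplectization model near $\po$, with the actions of $\orb^\veca_{-\vecm_i}$ and $\orb^\veca_{-\vecm}$ entering via Proposition~\ref{prop:Phi}. But routing this through SFT compactness introduces a genuine gap that you correctly identify and do not close. You need the SFT limit of the punctured $C_j$ to contain a connected symplectization sub-building with exactly the positive ends $\orb^\veca_{-\vecm_1},\dots,\orb^\veca_{-\vecm_k}$ and the single negative end $\orb^\veca_{-\vecm}$, mirroring the ghost tree $G$ in the Gromov limit. There is no off-the-shelf comparison theorem between the Gromov and SFT compactifications that hands you this; establishing it would essentially amount to re-deriving the proposition by other means. (A smaller wrinkle: $C_j$ may hit $\po$ at unmarked points as well, producing extra negative ends in the punctured model and further muddying the building combinatorics.) Your anisotropic-rescaling alternative is a reasonable heuristic but is not a proof as written.

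The paper's argument sidesteps all of this by never passing to an SFT limit: it works with a single $C_j$ for large finite $j$. Since $J$ is already integrable near $\po$, one picks holomorphic coordinates with $\Ddiv_s = \{z_s = 0\}$ and uses $\Phi_\veca$ to cut out of $C_j$ the piece lying in a slab $\{R' \le r \le R\}$ of the cylindrical model. Gromov convergence to $C_\infty$ ensures that for $j$ large this piece is a surface with $k$ outer boundary loops $C^0$-close to the loops produced by the $Q_i$ near their nodes with $G$ (hence with $\int \alpha_\veca$ within $2\eps$ of $\min_s a_s m_s^i$), together with one interior puncture --- the marked point of $C_j$ --- which is itself asymptotic to $\orb^\veca_{-\vecm}$, giving an inner boundary loop at level $r = R'$ with $\int \alpha_\veca$ within $\eps$ of $\min_s a_s m_s$. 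Stokes' theorem and nonnegativity of the $d\alpha_\veca$-energy on this finite surface then yield \eqref{eq:hc} up to an error $(k+1)\eps$, and $\eps$ was arbitrary. This direct finite-$j$ Stokes argument is the elementary substitute for your formal-energy step, and it needs no statement about the structure of any limiting building.
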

\begin{rmk}
Observe that \eqref{eq:hc} imples \eqref{eq:CM_ineq}.
Indeed by choosing $\veca = (a_1,\dots,a_n)$ so that $a_j = 1$ and $a_s \gg 1$ for $s \neq j$, we get $\min\limits_{1 \leq s \leq n}a_s m_s^i = m_j^i$ and $\min\limits_{1 \leq s \leq n}a_sm_s = m_j$.
\end{rmk}

\begin{proof}[Proof of Proposition~\ref{prop:hid_constr}]
  Recall that $J$ is assumed to be integrable near $\po$. 
  Pick local complex coordinates $z_1,\dots,z_n$ for $(M,J)$ centered at $\po$ and defined in some open neighborhood $U \subset M$ such that 
  $\Ddiv_s = \{z_s = 0\}$ for $s = 1,\dots,n$.
  Let $\zeta: U \ra \C^n$ denote the corresponding holomorphic chart.

For $i = 1,\dots,k$, the restriction of $Q_i$ to a small neighborhood of $z_i$ has image contained in $U$, with only $z_i$ mapping to $\po$.
By choosing a complex coordinate near $z_i$ we view this as a holomorphic map $\mr{Q}_i:
\Dpunc \ra U \setminus \{\po\}$.
Recalling the diffeomorphism $\Phi_\veca: \R \times \bdy E(\veca) \ra \C^n \setm \{\vec{0}\}$ from Proposition~\ref{prop:Phi}, we consider the $J_{\bdy E(\veca)}$-holomorphic composition
\begin{align*}
\eta_i :=  \Phi_\veca^{-1} \circ \zeta \circ \mr{Q}_i: \Dpunc \ra \R \times \bdy E(\veca),
\end{align*}
By Proposition~\ref{prop:Phi}, $\eta_i$ is negatively asymptotic at the puncture $0 \in \Dpunc$ to the Reeb orbit $\orb^\veca_{-\vecm_i}$ in $\bdy E(\veca)$, which has action $\calA(\orb^\veca_{-\vecm_i}) = \min\limits_{1 \leq s \leq n} a_s m^i_s$. 
In particular, for any $\eps > 0$ we can find $R \ll 0$ so that the loop $\ga_i$ given by restricting $\eta_i$ to the preimage of $\{r = R\}$ satisfies
\begin{align*}
\left|\int (\ga_i)^* \alpha_\veca - \min\limits_{1 \leq s \leq n}a_s m_s^i\right| < \eps.
\end{align*}
Here $\alpha_\veca$ denotes the standard contact one-form on $\bdy E(\veca)$, i.e. the restriction of the Liouville one-form $\la_\std = \tfrac{1}{2}\sum_{s=1}^n(x_idy_i - y_idx_i)$ on $\C^n$.

For $j \in \Z_{\geq 1}$ sufficiently large, we can restrict the curve $C_j$ to the preimage of $U \setminus \{\po\}$, then postcompose with $\Phi_\veca^{-1}\circ \zeta$, and finally restrict to the preimage of $\{r \leq R\}$ to obtain a $J_{\bdy E(\veca)}$-holomorphic map $\mu_j: \Sigma_j \ra \R \times \bdy E(\veca)$. Here $\Sigma_j$ is a Riemann surface with $k$ boundary circles $c^j_1,\dots,c^j_k$ and one interior puncture, such that
for $i = 1,\dots,k$ we have
\begin{align*}
\left|\int (\ga^j_i)^* \alpha_\veca - \min\limits_{1 \leq s \leq n}a_s m_s^i\right| < 2\eps,
\end{align*}
where $\ga^j_i$ denotes the restriction of $\mu_j$ to $c^j_i$.
Since by Proposition~\ref{prop:Phi} $\mu_j$ is negatively asymptotic at its interior puncture to the Reeb orbit $\orb^\veca_{-\vecm}$ in $\bdy E(\veca)$ of action
$\min\limits_{1 \leq s \leq n} a_sm_s$, we can find $R' \ll R$ so that the loop
$\rho_j$ given by restricting $\mu_j$ to $\{r = R'\}$ satisfies 
\begin{align*}
\left|\int \rho_j^* \alpha_\veca - \min\limits_{1 \leq s \leq n}a_s m_s\right| < \eps.
\end{align*}

Now put $(\Sigma_j)^R_{R'} := \mu_j^{-1}(\{R' \leq r \leq R\})$.
This is a Riemann surface with boundary circles $c_1^j,\dots,c_k^j$ mapping to $\{r = R\}$ and an additional boundary circle $c^j_0$ which maps to $\{r = R'\}$.
By Stokes' theorem and nonnegativity of energy, we have
\begin{align*}
0 \leq \int_{(\Sigma_j)^R_{R'}} \mu_j^*d\alpha_\veca &= \sum_{i=1}^k \int(\ga^j_i)^*\alpha_\veca - \int \rho_j^*\alpha_\veca 
\\&\leq \sum_{i=1}^k\min\limits_{1 \leq s \leq n}a_s m_s^i - \min\limits_{1 \leq  s \leq n}a_sm_s + (k+1)\eps.
\end{align*}
By $\eps > 0$ was arbitrarily small, this gives \eqref{eq:hc}.
\end{proof}

\begin{rmk}\label{rmk:formal_curve_from_cusp_deg}
  We can view Proposition~\ref{prop:hid_constr} as saying that whenever a constraint $\lll \CC^{\vecm}\pt\rrr$ degenerates into constraints $\lll \CC^{\vecm_1}\pt\rrr,\dots,\lll \CC^{\vecm_k}\pt\rrr$ there must exist a formal rational curve in $\bdy E(\veca)$ with positive ends $\orb^\veca_{-\vecm_1},\dots,\orb^\veca_{-\vecm_k}$ and negative end $\orb^\veca_{-\vecm}$, for any choice of (rationally independent) $\veca \in \R_{>0}^n$.
Recall that Conditions \ref{cond:B} and \ref{cond:C} put restrictions on formal curves in $\bdy E(\veca)$, and we have seen that these conditions hold e.g. under the assumptions of Lemma~\ref{lem:suff_for_assump_B}.
\end{rmk}

Another way to infer hidden constraints is via iterated blowups. Since the numerics become rather complicated we just illustrate this idea with a simple example.

\begin{example}
We will show that the constraint $\lll \CC^{(3,2)}\pt\rrr$ cannot degenerate into $\lll \CC^{(2,1)}\pt\rrr$ and $\lll \CC^{(1,1)}\pt\rrr$.
More precisely, consider a sequence of curves $C_1,C_2,C_3,\dots \in \calM_{M,B}^J\lll \CC^{(3,2)}_{(\Ddiv_1,\Ddiv_2)}\po \rrr$, each of which strictly satisfies the constraint, and suppose by contradiction that these converge to a curve $C_\infty$ consisting of two irreducible components $C_\infty^1,C_\infty^2$ which strictly satisfy the constraints $\lll \CC^{(3,2)}_{(\Ddiv_1,\Ddiv_2)}\po \rrr$ and $\lll \CC^{(3,2)}_{(\Ddiv_1,\Ddiv_1)}\po \rrr$ respectively.
We assume also that all of these curves are disjoint from $\po$ away from the main constraints.

After blowing up at $\po$, the proper transforms $\wt{C}_1,\wt{C}_2,\wt{C}_3,\dots$ all lie in homology class $B - 2e$, where $e = [\exc] \in H_2(\bl^1 M)$ is the homology class of the exceptional divisor $\exc$. 
After possibly passing to a subsequence, these converge to a curve $\wt{C}_\infty$ which projects to $C_\infty$, and hence consists of the proper transforms $\wt{C}_\infty^1,\wt{C}_\infty^2$ of $C_\infty^1,C_\infty^2$ respectively.
A priori $\wt{C}_\infty$ could also have some additional components which are covers of $\exc$, but this is ruled out since $[\wt{C}_\infty^i] = [C_\infty^i] - e$ for $i = 1,2$ and hence $[\wt{C}_\infty^1] + [\wt{C}_\infty^2] = B-2e$.
But this is a contradiction, since $\wt{C}_\infty^1$ and $\wt{C}_\infty^2$ are disjoint near $\exc$.

Suppose by contradiction that $C$ is a curve in homology class $B$ which strictly satisfies $\lll \CC^{(3,2)}_{(\Ddiv_1,\Ddiv_2)}\po\rrr$, and which degenerates into curves $C_1$ and $C_2$ which strictly satisfy $\lll \CC^{(2,1)}_{(\Ddiv_1,\Ddiv_2)}\po\rrr$ and $\lll \CC^{(1,1)}_{(\Ddiv_1,\Ddiv_2)}\po\rrr$.
Blowing up at $\po$, the proper transform $\wt{C}$ of $C$ lies in homology class $B - 2e$, while the proper transforms of $\wt{C}_1$ and $\wt{C}_2$ and $C_1$ and $C_2$ lie in homology classes $B_1 - e$ and $B_2 - e$ respectively.
Then $\wt{C}$ degenerates into $\wt{C}_1\cup \wt{C}_2$ along with some number of copies of (covers of) $\exc$.
However this is not possible since $\wt{C}_1$ and $\wt{C}_2$ are locally disjoint.
\end{example}


\subsection{Counting curves with multidirectional tangency constraints}

As before, let $M^{2n}$ be a closed symplectic manifold, $A \in H_2(M)$ a homology class, and $\vecm = (m_1,\dots,m_n) \in \Z_{\geq 1}^n$ a tuple satisfying $\sum_{s=1}^n m_s = c_1(A)+n-2$.
As usual, $\vecD = (\Ddiv_1,\dots,\Ddiv_n)$ denotes a collection of smooth local symplectic divisors which span at a point $\po \in M$.

\begin{definition}
 We will say that $\calM_{M,A}\lll \CC^\vecm \pt \rrr$ is {\bf robust} if $\calM_{M,A}^J\lll \CC^\vecm_\vecD \po\rrr$ is finite and regular for any choice of $\po,\vecD$, and generic $J \in \calJ(M,\vecD)$, and moreover the (signed) count 
 $\#\calM_{M,A}^J\lll \CC^\vecm_\vecD \po\rrr$ is independent of these choices. 
\end{definition}
\NI We will say a robust moduli space $\calM_{M,A}\lll \CC^\vecm \pt \rrr$ is {\bf deformation invariant} if it remains robust under deformations of the symplectic form on $M$, and moreover the count $\# \calM_{M,A}\lll \CC^\vecm \pt \rrr$ is unchanged under such deformations.

Given a tuple $\veca \in \R_{>0}^n$, recall that we have the lattice path $\Da^\veca_1,\Da^\veca_2,\Da^\veca_3,\dots \in \Z_{\geq 1}^n$ from Definition~\ref{def:Da}.
\begin{thm}\label{thm:N_robust}
Let $M^{2n}$ be a semipositive closed symplectic manifold, $A \in H_2(M)$ a homology class, $\vecD = (\Ddiv_1,\dots\Ddiv_n)$ a collection of spanning local divisors at a point $\po \in M$, 
and $\vecm = (m_1,\dots,m_n) \in \Z_{\geq 1}^n$ a tuple satisfying $\sum_{s=1}^n m_s = c_1(A) + n-2$.
Assume that there exists a tuple $\veca \in \R_{>0}^n$ satisfying Assumptions \ref{assump:A}, \ref{assump:B}, and \ref{assump:C}, and such that $\Da^\veca_{c_1(A)-1} = \vecm$.
Then the moduli space $\calM_{M,A}\lll \CC^{\vecm}\pt\rrr$ is robust and deformation invariant.
\end{thm}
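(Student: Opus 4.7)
The plan is to mirror the proof of Theorem~\ref{thm:robust_counts} mutatis mutandis, with Proposition~\ref{prop:hid_constr} providing the bridge from ghost degenerations of multidirectional tangencies to formal curves in $\bdy E(\veca)$, where $\veca$ is the tuple supplied by the hypothesis. Fix auxiliary data $\po$, $\vecD$, and a generic one-parameter family $\{J_t \in \calJ(M,\vecD) \;|\; t \in [0,1]\}$, and form the parametrized moduli space $\calM^{\{J_t\}}_{M,A}\lll \CC^{\vecm}_{\vecD}\po\rrr$. First I would observe that Assumption~\ref{assump:C}, together with $\Da^\veca_{c_1(A)-1} = \vecm$, forces every curve in sight to be simple: any $\kappa$-fold cover would give rise to an underlying simple curve whose class $A/\kappa$ and whose formally associated asymptotic orbit $\orb^\veca_{-\vecm/\kappa}$ are jointly $\kappa$-divisible into $(A, \orb^\veca_{c_1(A)-1})$, contradicting Assumption~\ref{assump:C}. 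By standard transversality adapted to $\calJ(M,\vecD)$ (which constrains $J$ to be integrable near $\po$ and to preserve each $\Ddiv_i$), the parametrized moduli space is then a smooth $1$-manifold cobordism between $\calM^{J_0}_{M,A}\lll \CC^{\vecm}_{\vecD}\po\rrr$ and $\calM^{J_1}_{M,A}\lll \CC^{\vecm}_{\vecD}\po\rrr$.

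The core task is to show that its stable map compactification is already compact, i.e.\ no bubbling actually occurs. A priori a limiting stable map consists of a distinguished component $Q_0$ carrying the constraint (possibly a ghost), a maximal ghost tree containing $Q_0$, nonconstant components $Q_1,\dots,Q_k$ nodally adjacent to this tree each strictly satisfying some constraint $\lll \CC^{\vecm_i}_{\vecD}\po\rrr$, together with extra closed spherical bubbles. Applying Proposition~\ref{prop:hid_constr} with the hypothesized $\veca$, and invoking Remark~\ref{rmk:formal_curve_from_cusp_deg}, the inequality \eqref{eq:hc} produces a genus zero formal curve $C_0$ in $\bdy E(\veca)$ with positive ends $\orb^\veca_{-\vecm_1},\dots,\orb^\veca_{-\vecm_k}$ and single negative end $\orb^\veca_{-\vecm} = \orb^\veca_{c_1(A)-1}$. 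This puts us in precisely the formal-building framework of \S\ref{subsec:pf_main_rob_res}, so Conditions~\ref{cond:A}, \ref{cond:B}, and \ref{cond:C}, applied respectively to each $Q_i$ (with its other ends formally capped by planes in $E(\veca)$), to the formal curve $C_0$, and to exclude a nontrivial $C_0$ of zero energy, together force $k = 1$ and $C_0$ to be a trivial formal cylinder. The extra closed bubbles are then ruled out exactly as in the proof of Theorem~\ref{thm:robust_counts}, using Lemma~\ref{lem:pos_c1} and the dimension-counting argument from semipositivity.

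The main obstacle I anticipate is justifying that the formal-building bookkeeping of \S\ref{subsec:pf_main_rob_res} transfers verbatim to the present closed-curve setting: each $Q_i$ is a genuine $J_\te$-holomorphic sphere in $M$ rather than a punctured curve in $\wh{M}_\veca$, so one must verify that the Fredholm index of $Q_i$ as a closed curve satisfying $\lll \CC^{\vecm_i}_{\vecD}\po\rrr$ matches the index of its would-be punctured counterpart in $\wh{M}_\veca$ with negative end $\orb^\veca_{-\vecm_i}$. This is essentially the content of \eqref{eq:indmulti} compared with \eqref{eq:indC} together with the Conley--Zehnder formula \eqref{eq:CZind}, but it must be checked uniformly across all possible degeneration patterns $(\vecm_1,\dots,\vecm_k)$ so that Conditions~\ref{cond:A}--\ref{cond:C} apply literally to the formal building thus assembled. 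Once this bookkeeping is in place, deformation invariance follows by running the same one-parameter argument for a simultaneous path of symplectic forms on $M$ and admissible $J_t \in \calJ(M,\vecD)$, since Assumptions~\ref{assump:A}--\ref{assump:C} are purely numerical conditions on $\veca$ and $c_1(A)$ and are therefore insensitive to the deformation.
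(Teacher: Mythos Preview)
Your proposal is correct and follows essentially the same strategy as the paper's proof sketch. One clarification on the index bookkeeping you flag as the main obstacle: the Fredholm index of $Q_i$ as a closed curve with constraint $\lll \CC^{\vecm_i}_{\vecD}\po\rrr$ does not in general \emph{match} that of its formal punctured counterpart $\wt{Q}_i$ with negative end $\orb^\veca_{-\vecm_i}$, but rather satisfies the inequality $\ind(\wt{Q}_i) \geq \ind(Q_i)$ (because $\orb^\veca_{-\vecm_i} = \orb^\veca_j$ with $j \leq \sum_s m_s^i - n + 1$). The paper makes precisely this inequality explicit, and it is the inequality---not equality---that is needed: genericity gives $\ind(\uvl{Q}_i) \geq 0$ for the underlying simple closed curve, an Assumption~\ref{assump:A}-style argument as in \S\ref{subsec:proofs_of_lemmas} lifts this through covers, and then the formal gluing identity $\ind(C_0) + \sum_i \ind(\wt{Q}_i) = 0$ combined with Conditions~\ref{cond:B} and \ref{cond:C} forces $C_0$ to be a trivial cylinder. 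In particular, Condition~\ref{cond:A} is not applied literally to $Q_i$ (which, being a closed sphere, has no ends to cap); the transversality input comes from the closed-curve side and is then transferred to the formal punctured picture via the inequality.
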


\begin{rmk}\label{rmk:N_robust_implies_thm_B}
Theorem~\ref{thm:N_robust} implies Theorem~\ref{thmlet:multidir_counts} as follows. 
Taking $\veca = (p,q^+,a_3,\dots,a_n)$ with $a_3,\dots,a_n > pq$, we have $\Delta^\veca_{p+q-1} = (p,q,1,\dots,1)$ whence Assumptions \ref{assump:A} and \ref{assump:B} hold by Lemma~\ref{lem:suff_for_assump_B}. 
\end{rmk}


\begin{proof}[Proof sketch of Theorem~\ref{thm:N_robust}]

Given $J \in \calJ(M,\vecD)$, we denote by $\ovl{\calM}^J_{M,A}\lll \CC_\vecD^\vecm \po \rrr$ the closure of $\calM^J_{M,A}\lll \CC_\vecD^\vecm \po \rrr$ in the compactified moduli space $\ovl{\calM}_{M,A}^J\lll \po \rrr$ (i.e. the space of maps passing through $\po$).
Similarly, given a one-parameter family $\{J_t \in \calJ(M,\vecD)\;|\; t \in [0,1]\}$, we define $\ovl{\calM}_{M,A}^{\{J_t\}} \lll \CC_\vecD^\vecm \po \rrr$ to be the closure of $\calM_{M,A}^{\{J_t\}} \lll \CC_\vecD^\vecm \po \rrr$ in $\ovl{\calM}_{M,A}^{\{J_t\}} \lll \po \rrr$.
The main task is to establish
$\ovl{\calM}_{M,A}^{\{J_t\}} \lll \CC_\vecD^\vecm \po \rrr = \calM_{M,A}^{\{J_t\}} \lll \CC_\vecD^\vecm \po \rrr$ for generic $\{J_t\}$.

The most interesting point is to rule out degenerations as in Example~\ref{ex:Q_cusp_deg}, i.e. with $C_\infty \in \ovl{\calM}_{M,A}^{\{J_t\}} \lll \CC_\vecD^\vecm \po \rrr$ composed of a ghost component $Q_0$ and nonconstant components $Q_1,\dots,Q_n$ which carry respective constraints $\vecm_1,\dots,\vecm_k$.
Given such a degeneration, by Proposition~\ref{prop:hid_constr} there exists a formal curve in $\bdy E(\veca)$ with positive ends $\orb^\veca_{-\vecm_1},\dots,\orb^\veca_{-\vecm_k}$ and negative end $\orb^\veca_{-\vecm}$, 
where by assumption $\veca \in \R_{>0}^n$ satisfies Assumptions \ref{assump:A} and \ref{assump:B} and we have $\Da^\veca_{c_1(A)-1} = \vecm$ (c.f. Remark~\ref{rmk:formal_curve_from_cusp_deg}).

For the purposes of index calculations, we could view $Q_1,\dots,Q_k$ as curve components $\wt{Q}_1,\dots,\wt{Q}_k$ in $\wh{M}_\veca$, where $\wt{Q}_i$ has negative end $\orb^\veca_{-\vecm_i}$ for $i =1,\dots,k$.
On a purely formal level, note that the index of $\calM_{M_\veca,[Q_i]}(\orb^\veca_{-\vecm_i})$ is at least that of $\calM_{M,[Q_i]}\lll \CC^{\vecm_i} \pt\rrr$ for $i = 1,\dots,k$.
The upshot is that any index lower bound for the closed curve $Q_i$ is a fortiori true for its punctured counterpart $\wt{Q}_i$.
Also, similar to the arguments in \S\ref{subsec:proofs_of_lemmas}, we can assume by genericity of $\{J_t\}$ and standard transversality techniques that the underlying simple curves of $Q_1,\dots,Q_k$ have nonnegative  indices.
But now the existence of the configuration $\wt{Q}_0,\dots,\wt{Q}_k$ is ruled out essentially by the argument in \S\ref{subsec:pf_main_rob_res}.

The rest of the proof also closely follows that of Theorem~\ref{thm:robust_counts}, after formally trading closed curves with multidirectional tangency constraints in $M$ for punctured curves in $\wh{M}_\veca$, and ghost trees in $M$ for formal curves in $\bdy E(\veca)$. We leave the details to the reader.
\end{proof}

\subsection{Multidirectional tangencies and ellipsoidal ends: global equivalence}\label{subsec:global_equiv}

The following is our main result equating closed curves with multidirectional tangency constraints and punctured curves with ellipsoidal ends.

\begin{thm}\label{thm:equivalence}
Let $M^{2n}$ be a semipositive closed symplectic manifold, $A \in H_2(M)$ a homology class, and $\veca \in \R_{>0}^n$ a rationally independent tuple satisfying Assumptions \ref{assump:A}, \ref{assump:B}, and \ref{assump:C}.
Put $\vecm := \Da^\veca_{c_1(A)-1} \in \Z_{\geq 1}^n$.
Then we have
\begin{align*}
\Tcount_{M,A}^\veca = N_{M,A}\lll \CC^{\vecm}\pt\rrr.
\end{align*}
\end{thm}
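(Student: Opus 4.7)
The plan is to exploit the robustness of both sides (Theorems~\ref{thm:robust_counts} and~\ref{thm:N_robust}) to reduce the equality to an essentially tautological bijection of moduli spaces for a single, carefully chosen pair of almost complex structures, built using the local model $\Phi_\veca$ of Proposition~\ref{prop:Phi}. Before starting, I would record the combinatorial identity $\orb^\veca_{-\vecm} = \orb^\veca_{c_1(A)-1}$ for $\vecm := \Da^\veca_{c_1(A)-1}$: the orbit on the left is by definition $\simp_{i_0}^{m_{i_0}}$, where $i_0$ minimizes $a_s m_s$, of action $\min_s a_s m_s$; the maximizing characterization of $\Da^\veca_{c_1(A)-1}$ subject to $\sum_s m_s = n + c_1(A) - 2$ (Definition~\ref{def:Da}) forces this action to equal $\MM^\veca_{c_1(A)-1}$, the action of $\orb^\veca_{c_1(A)-1}$, so the two orbits coincide.

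Next I would construct a compatible pair $(J, J_\veca)$. Pick $\po \in M$, a Darboux chart $U \ni \po$ with holomorphic coordinates $(z_1,\dots,z_n)$, and the standard spanning divisors $\vecD^\std = (\{z_1=0\},\dots,\{z_n=0\})$. Choose $J \in \calJ(M,\vecD^\std)$ that restricts to $J_\std$ on $U$. For $\eps > 0$ sufficiently small, take the standard inclusion $\iota: E(\eps\veca) \hookrightarrow U$ and set $M_\veca := M \setminus \iota(\intE(\eps\veca))$. The diffeomorphism $\Phi_\veca$ identifies $\R \times \bdy E(\veca)$ with $\C^n \setminus \{\vec{0}\}$ intertwining $J_{\bdy E(\veca)}$ and $J_\std$, so setting $J_\veca$ equal to $J$ on the compact part of $M_\veca$ and to $J_{\bdy E(\veca)}$ on the negative end yields a well-defined $J_\veca \in \calJ(M_\veca)$.

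The core of the argument is the bijection
\[
\calM^J_{M,A}\lll \CC^\vecm_{\vecD^\std} \po\rrr \;\longleftrightarrow\; \calM^{J_\veca}_{M_\veca,A}(\orb^\veca_{c_1(A)-1}).
\]
Forward: given a closed curve $C$ satisfying the tangency constraint, restrict to $C \setminus \{\po\}$ and apply $\Phi_\veca^{-1}$ on $U \setminus \{\po\}$; the second part of Proposition~\ref{prop:Phi} identifies the resulting punctured curve as negatively asymptotic to $\orb^\veca_{-\vecm} = \orb^\veca_{c_1(A)-1}$. Reverse: given a $J_\veca$-holomorphic plane $\tilde{C}$ asymptotic to $\orb^\veca_{c_1(A)-1}$, apply $\Phi_\veca$ on the negative end to obtain a $J_\std$-holomorphic map into $U \setminus \{\po\}$, and use removable singularities to extend across the puncture into $U$; the extension is $J$-holomorphic with a local expansion $(C_1 z^{m'_1},\dots,C_n z^{m'_n}) + O(\text{higher})$ for some tuple $\vecm' \in \Z_{\geq 1}^n$ recorded by the winding of $\tilde{C}$ around the divisors.

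The main obstacle will be pinning down $\vecm' = \vecm$. The asymptotic to $\orb^\veca_{c_1(A)-1} = \simp_{i_0}^{k_0}$ only forces $m'_{i_0} = k_0$ together with $a_s m'_s \geq a_{i_0} k_0$ for $s \neq i_0$, so a priori $\vecm'$ need not match $\vecm$. However, a short combinatorial check shows that for $\veca$ rationally independent the tuple $\vecm = \Da^\veca_{c_1(A)-1}$ satisfies $m_s = \lceil a_{i_0}k_0 / a_s \rceil$ for $s \neq i_0$, so any $\vecm'$ consistent with the orbit necessarily has $m'_s \geq m_s$ componentwise, with strict inequality somewhere unless $\vecm' = \vecm$. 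The expected dimension of $\calM^J_{M,A}\lll \CC^{\vecm'}_{\vecD^\std} \po\rrr$ equals $2(c_1(A)+n-2 - \sum_s m'_s)$, which is strictly negative for any such $\vecm' \neq \vecm$. Perturbing $J$ generically within $\calJ(M,\vecD^\std)$ away from $\po$ induces a compatible perturbation of $J_\veca$ away from the negative end which preserves the bijection; together with semipositivity and Assumption~\ref{assump:C}, standard transversality then rules out all strata with $\vecm' \neq \vecm$ as well as all non-simple curves. The bijection restricts to an orientation-preserving bijection of regularized zero-dimensional moduli spaces, and robustness of both sides upgrades this to the equality $\Tcount_{M,A}^\veca = N_{M,A}\lll \CC^\vecm \pt \rrr$.
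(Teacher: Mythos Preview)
Your proposal is correct and follows essentially the same route as the paper. Your construction of the compatible pair $(J, J_\veca)$ from a complex Darboux chart via $\Phi_\veca$ is precisely what the paper packages as Proposition~\ref{prop:Q}, and your index argument ruling out $\vecm' \neq \vecm$ (via the explicit formula $m_s = \lceil a_{i_0}k_0/a_s\rceil$ for $s \neq i_0$) is a slightly more concrete version of the paper's observation that $\vecm = \Da^\veca_{c_1(A)-1}$ is the unique minimizer of $\sum_s m'_s$ subject to $\orb^\veca_{-\vecm'} = \orb^\veca_{c_1(A)-1}$.
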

\begin{rmk}
  Similar to Remark~\ref{rmk:N_robust_implies_thm_B}, Theorem~\ref{thm:equivalence} implies Theorem~\ref{thmlet:cusp=ell} via Lemma~\ref{lem:suff_for_assump_B}. 
\end{rmk}

One can imagine proving Theorem~\ref{thmlet:cusp=ell} by surrounding the constraint $\lll \CC^\vecm \pt \rrr$ with an ellipsoid of shape $\veca$ and stretching the neck. 
This would a detailed understanding of curves with local multidirectional tangency constraints in $\wh{E}(\veca)$ and their gluings. Instead, we will give a more direct correspondence
by establishing an explicit bijection
\begin{align*}
\calM_{M_\veca,A}^{J_E}(\orb^\veca_{c_1(A)-1}) \cong \calM_{M,A}^{J_C} \lll \CC^{\vecm}_{\vecD} \po\rrr 
\end{align*}
for certain special choices of $J_E \in \calJ(M_\veca)$ and $J_C \in \calJ(M,\vecD)$.

In the following, by {\bf complex Darboux coordinates} we mean local coordinates which simultaneously trivialize a symplectic form and an almost complex structure.
For a K\"ahler manifold, such coordinates exist if and only if the K\"ahler metric is flat, e.g. $\mathbb{T}^{2n}$ works but not $\CP^n$.

\begin{prop}[{\cite[\S4]{SDEP}}]\label{prop:Q}
Let $M^{2n}$ be a closed symplectic manifold and $J \in \calJ(M)$ a tame almost complex structure such that there exist complex Darboux coordinates $z_1,\dots,z_n$ centered at $\po$.
Then, for any given $\veca \in \R_{>0}^n$ rationally independent, there exists an explicit diffeomorphism
\begin{align*}
Q: \wh{M}_\veca \ra M \setminus \{\po\} 
\end{align*}
such that $Q^*J$ lies in $\calJ(M_\veca)$.

Moreover, put $\vecD = (\Ddiv_1,\dots,\Ddiv_n)$, with $\Ddiv_i = \{z_i = 0\}$ for $i = 1,\dots,n$,
suppose that $u: \CP^1 \ra M$ is a $J$-holomorphic map which 
strictly satisfies $\lll \CC_{\vecD}^\vecm \po \rrr$ at a point $z_0 \in \CP^1$ for some $\vecm \in \Z_{\geq 1}^n$, and is otherwise disjoint from $\po$.
Then the $Q^*J$-holomorphic map
\begin{align*}
\Phi_\veca^{-1} \circ u|_{\CP^1 \setminus \{z_0\}}: \CP^1 \setminus \{z_0\} \ra \wh{M}_\veca
\end{align*}
is negatively asymptotic to the Reeb orbit $\orb^\veca_{-\vecm}$ in $\bdy E(\veca)$ at the puncture $z_0$.
\end{prop}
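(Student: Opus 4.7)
The plan is to construct $Q$ by gluing the local model $\Phi_\veca$ from Proposition~\ref{prop:Phi} (transported through the complex Darboux chart near $\po$) to the identity map far from $\po$, and then to read off the asymptotics for cuspidal curves directly from the corresponding statement in Proposition~\ref{prop:Phi}. First I would fix the complex Darboux coordinates $\zeta : U \to V \subset \C^n$ centered at $\po$ provided by the hypothesis, so that $\zeta^*\omega_{\std} = \omega_M|_U$, $\zeta^* J_{\std} = J|_U$, and $\Ddiv_i$ corresponds to $\Ddiv_i^\std = \{z_i = 0\}$. By shrinking $V$ we may assume $V \supset \ovl{E(2\eps\veca)}$ for some small $\eps > 0$; we then take the symplectic embedding $\iota := \zeta^{-1}|_{E(\eps\veca)}$ as the one defining $M_\veca = M \setm \iota(\intE(\eps\veca))$, so that $\bdy M_\veca$ is canonically identified with $\bdy E(\eps\veca)$.

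Next I would define $Q$ in two overlapping pieces and interpolate. On an inner region consisting of the cylindrical end $\R_{\leq 0}\times \bdy E(\eps\veca) \subset \wh{M}_\veca$ together with a small collar of $\bdy M_\veca$ inside $M_\veca$ extending out to $\iota(\bdy E((3\eps/2)\veca))$, use $\zeta^{-1}\circ \Phi_\veca$, after identifying the end plus collar with a half-infinite piece $(-\infty, R_1]\times \bdy E(\veca)$ via the natural rescaling $\bdy E(\eps\veca) \cong \bdy E(\veca)$ and a translation in $r$; by \eqref{eq:Phiveca}, the image lies in $\ovl{E(2\eps\veca)}\setm\{\vec 0\} \subset V \setm\{\vec 0\}$, so post-composing with $\zeta^{-1}$ yields a diffeomorphism into $U \setm\{\po\}$. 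On the outer region $M_\veca \setm \iota(E(\eps\veca))$, let $Q$ be the tautological inclusion into $M$. Both definitions take values in $M\setm\{\po\}$ and differ by a diffeomorphism on the overlap annulus, which can be smoothly interpolated to the identity using a cutoff in the $\C^n$-coordinates, yielding a diffeomorphism $Q : \wh{M}_\veca \to M \setm \{\po\}$. By construction $Q^* J$ equals $\Phi_\veca^* J_\std = J_{\bdy E(\veca)}$ on the cylindrical end (admissible by Proposition~\ref{prop:Phi}) and is tame with respect to $Q^*\omega_M$ throughout $M_\veca$. The main technical obstacle I anticipate is ensuring that $Q^*\omega_M$ coincides with the standard symplectization form $d(e^r\alpha_\veca)$ on the cylindrical end, so that $Q^*J$ genuinely lies in $\calJ(M_\veca)$; this requires a Moser-type adjustment of $Q$ inside the collar, which is carried out explicitly in \cite{SDEP}.

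Finally, for the asymptotics statement, given a $J$-holomorphic $u : \CP^1 \to M$ which strictly satisfies $\lll\CC^\vecm_\vecD \po\rrr$ at $z_0$ and is otherwise disjoint from $\po$, restrict $u$ to a small punctured disk $\Dpunc \subset \CP^1$ around $z_0$ whose image lies in $U \setm \{\po\}$. Then $\zeta \circ u|_{\Dpunc} : \Dpunc \to \C^n \setm \{\vec 0\}$ is $J_\std$-holomorphic and strictly satisfies $\lll\CC^\vecm_{\vecD^\std}\vec 0\rrr$ at the origin, since the Darboux chart sends $\Ddiv_i$ to $\Ddiv_i^\std$. The second part of Proposition~\ref{prop:Phi} then gives that $\Phi_\veca^{-1}\circ\zeta\circ u|_{\Dpunc}$ is negatively asymptotic to $\orb^\veca_{-\vecm}$ at the puncture; and since $Q^{-1}\circ u$ agrees with this composition on a smaller punctured neighborhood of $z_0$ (by construction of $Q$ on the inner region), the same asymptotic behavior holds for $Q^{-1}\circ u : \CP^1 \setm \{z_0\} \to \wh{M}_\veca$, completing the proof.
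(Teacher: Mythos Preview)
Your proposal is correct and follows exactly the approach sketched in the paper: the paper does not give a full proof but only remarks that ``the map $Q$ is given by the formula in \eqref{eq:Phiveca} near the negative end of $\wh{M}_\veca$ and is slowly deformed to the identity away from this end,'' deferring the details to \cite{SDEP}. Your writeup fleshes out precisely this construction and correctly reduces the asymptotics statement to Proposition~\ref{prop:Phi}; you also rightly flag the Moser-type adjustment as the one nontrivial technical point, and correctly interpret the displayed map in the statement as $Q^{-1}\circ u$ rather than the (apparently misprinted) $\Phi_\veca^{-1}\circ u$.
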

\NI Here the map $Q$ is given by the formula in \eqref{eq:Phiveca} near the negative end of 
$\wh{M}_\veca$ and is slowly deformed to the identity away from this end.

We now complete the proof of Theorem~\ref{thm:equivalence}.
\begin{proof}[Proof of Theorem~\ref{thm:equivalence}]

Pick $J \in \calJ(M)$ as in Proposition~\ref{prop:Q}, which we can assume is generic away from $\po \in M$. 
Then Proposition~\ref{prop:Q} sets up a bijection
\begin{align}\label{eq:bij_from_Q}
\bigcup\limits_{\vecm'}\calM_{M,A}^J \lll \CC_{\vecD}^{\vecm'} \po \rrr \cong \calM^{Q^*J}_{M_\veca,A}(\orb^\veca_{c_1(A)-1}),
\end{align}
where the union is over all $\vecm' \in \Z_{\geq 1}^n$ satisfying $\orb^\veca_{-\vecm'} = \orb^\veca_{c_1(A)-1}$.
By 
definition, $\vecm = \Da^\veca_{c_1(A)-1}$ is the 
unique
tuple $\vecm' = (m_1',\dots,m_n')$ which minimizes $\sum_{s=1}^n m_s'$ subject to $\orb^\veca_{-\vecm'} = \orb^\veca_{c_1(A)-1}$.
It follows from \eqref{eq:indmulti} that
 for any other $\vecm'$ satisfying $\orb^\veca_{-\vecm'} = \orb^\veca_{c_1(A)-1}$ we have 
\begin{align*}
\ind\, \calM_{M,A}^J\lll \CC_\vecD^{\vecm'}\po\rrr \;\,<\;\, \ind\, \calM_{M,A}^J\lll \CC_{\vecD}^\vecm\po\rrr = 0,
\end{align*}
and hence $\calM_{M,A}\lll \CC_\vecD^{\vecm'}\po\rrr = \nil$ by our genericity assumption on $J$.
It follows that \eqref{eq:bij_from_Q} is in fact a bijection
\begin{align*}
\calM_{M,A}^J \lll \CC_{\vecD}^{\vecm} \po \rrr \cong \calM^{Q^*J}_{M_\veca,A}(\orb^\veca_{c_1(A)-1}),
\end{align*}
and hence we have
\begin{align*}
\Tcount_{M,A}^\veca = \#\calM_{M_\veca,A}^{Q^*J}(\orb^\veca_{c_1(A)-1}) = \#\calM_{M,A}^J\lll \CC_{\vecD}^{\vecm}\po \rrr = N_{M,A}\lll \CC^{\vecm}\pt\rrr.
\end{align*}

\end{proof}

\subsection{Cusps, multidirectional tangencies, and singular symplectic curves}\label{subsec:cusps_multi_sing}



We now elaborate on the relationship between cusp singularities and multidirectional tangency constraints.
Let $C \subset \C^2$ be an algebraic curve. 
A singular point $\po \in C$ is a {\bf $\mathbf{(p,q)}$ cusp} if its link is the $(p,q)$ torus knot, i.e. if for $\eps > 0$ sufficiently small there is a diffeomorphism
\begin{align*}
(S^3_\eps,C \cap S^3_\eps) \cong (S^3, \{x^p + y^q = 0\} \cap S^3),
\end{align*}
where $S^3_\eps \subset \C^2$ denotes the sphere of radius $\eps$ centered at $\po$.
More generally, any singularity of an algebraic curve in $\C^2$ has a link which is an iterated torus link (see \cite{eisenbud1985three}).

The singular curves most relevant to the ellipsoidal superpotential are those having one main singularity which is a $(p,q)$ cusp, and possibly some additional singularities which are ordinary double points (i.e. modeled on $\{x^2  + y^2 = 0\}$).
\begin{definition}\label{def:sesqsui_symp}
Given a symplectic four-manifold $M^4$ and $p,q \in \Z_{\geq 1}$ with $\gcd(p,q)=1$, a {\bf $\mathbf{(p,q)}$-sesquicuspidal symplectic curve} is a subset $C \subset M$ such that
\begin{itemize}
    \item there is a point $\po \in C$ with an open neighborhood $U \subset M$ such that $(U,C \cap U)$ is symplectomorphic to $(U',C' \cap U')$, where $C'$ is an algebraic curve in $\C^2$ having a $(p,q)$ cusp at $\po' \in C'$ and $U' \subset \C^2$ is an open neighborhood of $\po'$
    \item $C \setminus \{\po\}$ is a positively immersed symplectic submanifold of $M$.
  \end{itemize}    
\end{definition}

\begin{rmk}
Every sesquicuspidal curve is in particular a singular symplectic curve in the sense of \cite[Def. 2.5]{golla2019symplectic}.  
\end{rmk}

\begin{lemma}\label{lem:cusp_and_multi_equiv}
Let $C \subset \C^2$ be an irreducible local algebraic curve near a point $\po \in \C^2$, and fix $p,q \in \Z_{\geq 1}$ relatively prime.
\begin{enumerate}[label=(\alph*)]
  \item Suppose that $C$ strictly satisfies the constraint $\lll \CC^{(p,q)}_{(\Ddiv_1,\Ddiv_2)} \po\rrr$ for some smooth local holomorphic divisors $\Ddiv_1,\Ddiv_2$ which span at $\po$. Then $C$ has a $(p,q)$ cusp at $\po$.
  \item Conversely, suppose that $C$ has a $(p,q)$ cusp at $\po$. Then we can find a smooth local divisors $(\Ddiv_1,\Ddiv_2)$ intersecting transversely at $\po$ so that $C$ strictly satisfies the constraint
$\lll \CC^{(p,q)}_{(\Ddiv_1,\Ddiv_2)} \po\rrr$.
\end{enumerate}
\end{lemma}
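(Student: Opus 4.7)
My plan for (a) is to translate the constraint into a statement about a local parameterization and then invoke Puiseux theory. I would first pick local holomorphic coordinates $(x,y)$ at $\po$ in which $\Ddiv_1 = \{x=0\}$ and $\Ddiv_2 = \{y=0\}$; this is possible because $\Ddiv_1$ and $\Ddiv_2$ are smooth and transverse. Since $C$ is irreducible, it has a local holomorphic normalization $\phi(t) = (x(t),y(t))$ with $\phi(0) = \po$, and strict satisfaction of the constraint $\lll \CC^{(p,q)}_{(\Ddiv_1,\Ddiv_2)}\po\rrr$ means $\op{ord}_t x(t) = p$ and $\op{ord}_t y(t) = q$ exactly. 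Because $x(t)/t^p$ is a holomorphic unit near $0$, it admits a holomorphic $p$th root, so after reparameterizing $t$ by a unit I may arrange $x(t) = t^p$ on the nose while still having $y(t) = b_q t^q + O(t^{q+1})$ with $b_q \neq 0$. It then remains to observe that such a unibranch parameterization, combined with $\gcd(p,q)=1$, gives a single Puiseux characteristic pair $(p,q)$ and hence by Zariski's equisingularity theorem (see e.g.~\cite{eisenbud1985three}) a link equal to the $(p,q)$ torus knot. Alternatively, for $|t|=\delta$ small the image traces out a curve on a torus in a small sphere around $\po$ with winding numbers $(p,q)$, and a $\mu$-constant homotopy from the leading-order truncation $(t^p, b_q t^q)$ shows that higher-order terms do not affect the isotopy class of the link.

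For (b) I would work in the opposite direction, using the topological type to locate the correct tangent line and then reading off contact orders. By symmetry of torus knots I may assume $p \geq q$ (the other case follows by exchanging the roles of $\Ddiv_1$ and $\Ddiv_2$). The case $p = q = 1$ is trivial: $C$ is smooth at $\po$ and any pair of transverse smooth holomorphic divisors through $\po$, each transverse to $T_\po C$, gives contact orders $(1,1)$. Otherwise $p > q \geq 1$ and the $(p,q)$ cusp has multiplicity $q$ and a well-defined tangent line $\ell \subset T_\po \C^2$ (the reduced tangent cone). I would then pick local holomorphic coordinates $(x,y)$ with $\ell = \{y=0\}$ and let $\phi(t) = (x(t),y(t))$ parameterize the normalization; the topological type of the $(p,q)$ cusp forces $\op{ord}_t x(t) = q$ (the multiplicity) and $\op{ord}_t y(t) = p$ (the first characteristic Puiseux exponent). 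Setting $\Ddiv_1 = \{y=0\}$ and $\Ddiv_2 = \{x=0\}$ then produces a transverse pair of smooth holomorphic divisors with respect to which $C$ strictly satisfies $\lll \CC^{(p,q)}_{(\Ddiv_1,\Ddiv_2)}\po\rrr$.

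The main substantive input in both directions is the classical equisingularity dictionary: the topological type of an irreducible plane curve singularity is determined by, and determines, its Puiseux characteristic pairs, and for a $(p,q)$ cusp with $\gcd(p,q)=1$ this amounts to the single pair $(p,q)$, with multiplicity $\min(p,q)$ and leading characteristic exponent $\max(p,q)/\min(p,q)$. Modulo this classical fact the argument is routine bookkeeping in local holomorphic coordinates, together with the normal-form reduction $(x(t),y(t)) \leadsto (t^p, b_q t^q + \mathrm{h.o.t.})$ via a unit reparameterization of the source.
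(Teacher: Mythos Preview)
Your argument for (a) is fine and essentially matches the paper's: both reduce to the observation that a parametrization with coprime contact orders forces a single Puiseux characteristic pair, hence a $(p,q)$ torus knot link.

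Your proof of (b), however, has a genuine gap. You claim that after choosing coordinates with the tangent line $\ell = \{y=0\}$, the topological type forces $\op{ord}_t y(t) = p$. This is false: the first characteristic Puiseux exponent is $p$, but $\op{ord}_t y(t)$ can be strictly smaller. Concretely, the branch $t \mapsto (t^2, t^4 + t^5)$ is a $(5,2)$ cusp (the coordinate change $y' = y - x^2$ brings it to $(t^2,t^5)$), its tangent line is $\{y=0\}$, yet $\op{ord}_t y(t) = 4 \neq 5$. The point is that the Puiseux expansion may contain \emph{non-characteristic} terms $c_j t^{r_j q}$ with integer exponent $r_j < p/q$ before the characteristic term $c_k t^p$; these do not affect the link but they do lower the contact order with $\{y=0\}$.

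The paper handles exactly this: writing the parametrization as $t \mapsto (t^q,\, c_1 t^{r_1 q} + \cdots + c_{k-1} t^{r_{k-1} q} + c_k t^{p} + \cdots)$, one must take $\Ddiv_1 := \{\,y - c_1 x^{r_1} - \cdots - c_{k-1} x^{r_{k-1}} = 0\,\}$ rather than the naive $\{y=0\}$, so that the non-characteristic terms are absorbed and $(\Ddiv_1 \cdot C)_{\po} = p$. Equivalently, one must allow $\Ddiv_1$ to be a curved smooth divisor, not just a line with the correct tangent direction. Once you make this correction your approach goes through.
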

\begin{rmk}  
If $C$ has a $(p,q)$ cusp at $\po$, then the multiplicity of $C$ at $\po$ is $\min \{p,q\}$, i.e. we have $(\Ddiv \cdot C)_{\po} \geq \min \{p,q\}$ for any divisor $\Ddiv$ passing through $\po$.
If say $p > q$ and we have $(\Ddiv_1 \cdot C)_{\po} = p$, then we have $(\Ddiv_2 \cdot C)_{\po} = q$ for {\em any} choice of smooth local divisor $\Ddiv_2$ which intersects $\Ddiv_1$ transversely at $\po$. In other words, the choice of $\Ddiv_2$ is essentially irrelevant.
\end{rmk}

\begin{proof}
To prove (a), let $x,y$ be complex coordinates for $\C^2$ centered at the origin such that 
  $\Ddiv_1 = \{x = 0\}$ and $\Ddiv_2 = \{y = 0\}$.
Recall that $C$ has a Newton--Puiseux parametrization of the form 
\begin{align*}
  y = c_1 x^{\frac{r_1}{p_1}} + c_2 x^{\frac{r_2}{p_1p_2}} + c_3 x^{\frac{r_3}{p_1p_2p_3}} + \cdots,
\end{align*}
 where
\begin{itemize}
  \item $c_i \in \C^*$ for $i \in \Z_{\geq 1}$
  \item the Puiseux pairs $(p_i,r_i) \in \Z_{\geq 1}^2$ are relatively prime for all $i \in \Z_{\geq 1}$
  \item there exists $L \in \Z_{\geq 1}$ such that $p_i = 1$ for all $i > L$
  \item we have increasing exponents $\tfrac{r_1}{p_1} < \tfrac{r_2}{p_1p_2} < \tfrac{r_3}{p_1p_2p_3} < \cdots$
\end{itemize}
(here we follow the conventions of \cite{neumann2017topology}).
The link of $C$ at $\po$ is then an iterated torus knot, with cabling parameters $(p_i,s_i)$ determined by $s_1 = r_1$ and $s_{i} = r_{i} - r_{i-1}p_{i}+p_{i-1}p_{i}s_{i-1}$ for $i \geq 2$.
This gives a parametrization of $C$ of the form
\begin{align}\label{eq:t_param}
t \mapsto (t^{p_1\cdots p_L},c_1 t^{r_1p_2\cdots p_L} + c_2t^{r_2p_3\cdots p_L} + \cdots).
\end{align}
and in particular we have intersection multiplicities
$p = C \cdot \Ddiv_1 = p_1\cdots p_L$ and $q = C \cdot \Ddiv_2 = r_1p_2 \cdots p_L$.
Since $p$ and $q$ are relatively prime, we must have $p_2 = \cdots = p_L = 1$, i.e. there is just a single cabling parameter $(p_1,s_1) = (p_1,r_1) = (p,q)$. 

To prove (b), let $x,y$ be complex coordinates for $\C^2$ and consider the parameterization of $C$ as in \eqref{eq:t_param}.
Observe that if $C$ has a $(p,q)$ cusp at $\po$ then there is at most one topologically nontrivial cabling parameter $(p_i,s_i)$, i.e for some $k \in \Z_{\geq 1}$ we have $p_i = 1$ for all $i \neq k$.
Then the parameterization \eqref{eq:t_param} takes the form
\begin{align}\label{eq:param_part_b}
t \mapsto (t^{p_1},c_1 t^{r_1p_1} + \cdots + c_{k-1}t^{r_{k-1}p_1} + c_kt^{r_k} + c_{k+1}t^{r_{k+1}} + \cdots ).
\end{align}
Assuming $p > q$ without loss of generality, 
then the multiplicity of $C$ at $\po$ is $q$, so 
we must have $p_1 = q$.
Note that $s_i = r_i$ for $i = 1,\dots,k$,
so have $r_k = s_k = p$, and \eqref{eq:param_part_b} becomes
\begin{align}\label{eq:param_part_b_simp}
t \mapsto (t^{q},c_1 t^{r_1q} + \cdots + c_{k-1}t^{r_{k-1}q} + c_kt^{p} +  \cdots ).
\end{align}
Then the polynomial
\begin{align*}
F(x,y) := y - c_1x^{r_1} - \cdots - c_{k-1}x^{r_{k-1}},
\end{align*}
has lowest degree term $t^p$ when composed with  the parameterization \eqref{eq:param_part_b_simp},
which means that $\Ddiv_1 := \{F(x,y) = 0\}$ satisfies $(\Ddiv_1 \cdot C)_{\po} = p$.
\end{proof}

\sss

We are now ready to complete the proof of Theorem~\ref{thmlet:sing_symp_curve}.

\begin{proof}[Proof of Theorem~\ref{thmlet:sing_symp_curve}]
Suppose first that $N_{M,A}\lll \CC^{(p,q)}\pt\rrr$ is nonzero.
 which means that for any choice of $\vecD = (\Ddiv_1,\Ddiv_2)$ and generic $J \in \calJ(M,\vecD)$ the moduli space $\calM_{M,A}^J \lll \CC_\vecD^{(p,q)} \po \rrr$ is nonempty.
By genericity any curve $C \in \calM_{M,A}^J \lll \CC_\vecD^{(p,q)} \po \rrr$ satisfies the constraint $\lll \CC_\vecD^{(p,q)}\po\rrr$ strictly, 
and hence has a $(p,q)$ cusp by Lemma~\ref{lem:cusp_and_multi_equiv}(a).  
After a small perturbation we can further assume that all singularities of $C$ away from the main cusp are positive ordinary double points.
   
Conversely, suppose that there is a $(p,q)$-sesquicuspidal symplectic curve $C$ in $M$ lying in homology class $A$ which is positively immersed away from the cusp point $\po$. 
Let $J$ be a tame almost complex structure on $M$ which is integrable near $\po$ and preserves $C$.
By Lemma~\ref{lem:cusp_and_multi_equiv}(b) we can find $\vecD = (\Ddiv_1,\Ddiv_2)$ such that $C$ strictly satisfies $\lll \CC_{\vecD}^{(p,q)}\po\rrr$.
Finally, by Proposition~\ref{prop:sesqui_lower_bd} below we have $N_{M,A}\lll \CC^{(p,q)}\po \rrr \geq 1$.
\end{proof}

Note that by combining Theorem~\ref{thmlet:sing_symp_curve} and the symplectic deformation invariance part of Theorem~\ref{thm:N_robust}, we have: 

\begin{cor}\label{cor:uni_symp_def_invt}
Let $M^4$ be a closed symplectic four-manifold, and suppose that $C$ is an index zero $(p,q)$-sesquicuspidal rational symplectic curve in $M$.
Then for any symplectic deformation $M'$ of $M$ there exists an index zero $(p,q)$-unicuspidal rational symplectic curve $C'$ in $M'$ satisfying $[C'] = [C]$.
\end{cor}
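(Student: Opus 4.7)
The plan is a direct combination of the two ingredients, as hinted in the sentence preceding the corollary. Set $A := [C] \in H_2(M)$, so that the index zero assumption gives $c_1(A) = p+q$. Applying the forward direction of Theorem~\ref{thmlet:sing_symp_curve} to the sesquicuspidal curve $C$ immediately yields $N_{M,A}\lll \CC^{(p,q)}\pt\rrr \geq 1$.

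A symplectic deformation from $M$ to $M'$ is a smooth path of symplectic forms on the fixed smooth four-manifold, and so canonically identifies $H_2(M) \cong H_2(M')$ while preserving $c_1$ and the intersection form. Under this identification $A$ corresponds to a class $A'$ with $c_1(A') = p+q$ and $A' \cdot A' = A \cdot A$. Since semipositivity is automatic in dimension four, and since the numerical Assumptions~\ref{assump:A}--\ref{assump:C} of Theorem~\ref{thm:N_robust} can be arranged via Lemma~\ref{lem:suff_for_assump_B} by choosing an auxiliary tuple $\veca = (q, p^+, a_3, \dots, a_n)$ with $a_3, \dots, a_n \gg pq$ (and depend only on $\veca$, $c_1(A)$, and the primitivity of $A$, all deformation-invariant), the deformation invariance clause of Theorem~\ref{thm:N_robust} yields $N_{M',A'}\lll \CC^{(p,q)}\pt\rrr = N_{M,A}\lll \CC^{(p,q)}\pt\rrr \geq 1$. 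The converse direction of Theorem~\ref{thmlet:sing_symp_curve}, applied in $M'$, then produces an index zero $(p,q)$-sesquicuspidal rational symplectic curve $C'$ in $M'$ with $[C'] = A'$.

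To promote this to a \emph{unicuspidal} $C'$, I would invoke the adjunction identity recorded immediately before Theorem~\ref{thmlet:sing_symp_curve}, which forces the number of extra ordinary double points of any $(p,q)$-sesquicuspidal rational curve in class $A'$ to equal $\tfrac{1}{2}(2 + A' \cdot A' - c_1(A') - (p-1)(q-1))$. This quantity is purely topological and hence agrees with the corresponding count for $C$ in $M$; in particular, if $C$ is unicuspidal (equivalently $A \cdot A = pq - 1$), then so is $C'$. The only real technical checkpoint is the invariance of the hypotheses of Theorem~\ref{thm:N_robust} under the deformation, which as noted above is immediate from their topological and combinatorial character; no substantive obstacle is expected, as the argument is essentially formal once the two ingredient theorems are in hand.
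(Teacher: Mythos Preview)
Your argument is correct and matches the paper's approach exactly: the paper gives no explicit proof, simply noting that the corollary follows by combining Theorem~\ref{thmlet:sing_symp_curve} with the deformation invariance clause of Theorem~\ref{thm:N_robust}. You in fact go further than the paper by making the adjunction step explicit, and you correctly observe that the \emph{unicuspidal} conclusion for $C'$ only follows when $C$ itself is unicuspidal; the corollary as printed (with $C$ merely sesquicuspidal but $C'$ unicuspidal) appears to contain a typo in either the hypothesis or the conclusion, and your proof handles both the intended reading and the sesquicuspidal-to-sesquicuspidal version.

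One minor inaccuracy: Assumption~\ref{assump:C} does not hinge on primitivity of $A$. It is automatic from $\gcd(p,q)=1$, since any $\kappa\geq 2$ dividing both $A$ and the orbit multiplicity (which is $p$ or $q$) would divide $c_1(A)=p+q$ and hence both $p$ and $q$.
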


Lastly, the following Proposition~\ref{prop:sesqui_lower_bd} allows us to give lower bounds for the counts $N_{M,A}\lll \CC^\vecm \pt \rrr$ when $M$ is a smooth complex projective surface by constructing sesquicuspidal algebraic curves, even if the natural K\"ahler form on $M$ is not flat (e.g. $M = \CP^2$).

\begin{prop}\label{prop:sesqui_lower_bd}
Let $M^4$ be a closed four-dimensional symplectic manifold and $A \in H_2(M)$ a homology class such that $p+q = c_1(A)$ for some $p,q \in \Z_{\geq 1}$ relatively prime.
Let $\vecD = (\Ddiv_1,\Ddiv_2)$ be spanning local divisors at $\po \in M$, and let $J \in \calJ(M,\vecD)$ be any almost complex structure.
Suppose that there exist, for some $k \in \Z_{\geq 1}$, curves $C_1,\dots,C_k \in \calM_{M,A}^J\lll \CC_\vecD^{(p,q)} \po \rrr$ which are immersed away from the constraint.
Then we have $N_{M,A}\lll \CC^\vecm \pt \rrr \geq k$.
\end{prop}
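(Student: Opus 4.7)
The plan is to show that each $C_i$ is a regular, isolated point of an appropriate moduli space with positive contribution $+1$ to the signed count, so that $k$ distinct input curves yield $\#\calM \geq k$ in the count defining $N_{M,A}\lll \CC^{(p,q)}\pt\rrr$.

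First I would transfer the problem to a punctured setting via the equivalence of \S\ref{subsec:global_equiv}. Choose $\veca = (q, p+\delta) \in \R_{>0}^2$ with $\delta > 0$ small, so that by Lemma~\ref{lem:suff_for_assump_B} Assumptions \ref{assump:A}--\ref{assump:C} hold and $\Delta^\veca_{c_1(A)-1} = (p,q)$. After a small perturbation of $J$ supported near $\po$ which puts it into complex Darboux form there (achievable because $J$ is already integrable near $\po$, via a relative Moser argument flattening the symplectic form against a $J$-holomorphic chart; small enough perturbations preserve each $C_i$ up to a correspondingly small correction by the implicit function theorem applied to the immersion-with-prescribed-tangency problem), one can apply Proposition~\ref{prop:Q} to convert each $C_i$ into a $J_E$-holomorphic punctured plane $\wt{C}_i \in \calM^{J_E}_{M_\veca, A}(\orb^\veca_{c_1(A)-1})$ in $\wh{M}_\veca$, immersed away from the puncture and negatively asymptotic to $\orb^\veca_{c_1(A)-1}$.

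Next, I would invoke automatic transversality in dimension four to show each $\wt{C}_i$ is Fredholm regular and isolated. The unique negative asymptote has odd Conley--Zehnder index $2c_1(A)-1$ (using $\cz(\orb^\veca_k) = 1 + 2k$ for $n=2$), the curve has genus zero and index zero, and its number of double points is determined by the adjunction formula applied to the sesquicuspidal model. Wendl's automatic transversality theorem, in its version for immersed punctured genus zero curves in four-dimensional cobordisms, applies under these numerical conditions, so $\wt{C}_i$ is a regular isolated point of the moduli space contributing $+1$ to the signed count via the canonical complex orientation. Since $C_1,\dots,C_k$ are distinct as subsets of $M$, so are $\wt{C}_1,\dots,\wt{C}_k$. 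Perturbing $J_E$ to a generic $J' \in \calJ(M_\veca)$ sufficiently close to $J_E$, each $\wt{C}_i$ persists to a nearby regular element of $\calM^{J'}_{M_\veca, A}(\orb^\veca_{c_1(A)-1})$ still contributing $+1$, so $\Tcount^\veca_{M,A} \geq k$, and Theorem~\ref{thm:equivalence} then yields $N_{M,A}\lll \CC^{(p,q)}\pt\rrr = \Tcount^\veca_{M,A} \geq k$.

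The main obstacle will be verifying Wendl's automatic transversality criterion for the immersed planes $\wt{C}_i$ in the presence of their double points: one must carefully bookkeep the normal Chern number, accounting for the asymptotic winding contribution at the odd-Conley--Zehnder puncture, and check that it dominates the double-point count by the amount required. A secondary issue is ensuring the local perturbation of $J$ bringing it into complex Darboux form preserves all $k$ input curves, which follows from a standard openness argument for immersed pseudoholomorphic curves carrying prescribed tangency constraints. Once these two technical points are in place the proof is essentially formal, chaining together Proposition~\ref{prop:Q}, automatic transversality, deformation invariance from Theorem~\ref{thm:robust_counts}, and the equivalence of Theorem~\ref{thm:equivalence}.
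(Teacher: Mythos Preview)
Your approach is viable but takes a longer route than the paper. The paper stays entirely in the closed-curve picture: it invokes automatic transversality for closed rational curves carrying a prescribed cusp (citing Barraud) directly to conclude that each $C_i$ is regular, perturbs $J$ to a nearby generic $J' \in \calJ(M,\vecD)$ so that the $C_i$ deform to curves $C_i'$, and then observes (again via automatic transversality in dimension four) that \emph{every} curve in $\calM^{J'}_{M,A}\lll \CC^{(p,q)}_\vecD \po\rrr$ counts with sign $+1$, whence the signed count is at least $k$. No passage to punctured curves, no complex Darboux coordinates, no appeal to Theorem~\ref{thm:equivalence}.

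Two points in your version deserve tightening. First, your handling of the complex Darboux step is muddled: you speak of ``a small perturbation of $J$'' but then invoke Moser on the symplectic form, and justify persistence of the $C_i$ by the implicit function theorem, which would be circular since regularity is precisely what is at stake. The clean way is to deform $\omega$ near $\po$ (keeping $J$ fixed) so that it becomes standard in a given $J$-holomorphic chart; the $C_i$ remain $J$-holomorphic tautologically, $J$ still tames the new form, and the invariant is unchanged by deformation invariance. Second, you only track that the $k$ given curves each contribute $+1$; to conclude a lower bound on the \emph{signed} count you must also rule out negative contributions from other curves, which is exactly the ``all curves count positively'' step the paper makes explicit and which follows from the same automatic transversality applied to the generic $J'$.
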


\begin{proof}
We do not assume that $J$ is generic, but by a version of automatic transversality (see \cite{barraud2000courbes}) the curves $C_1,\dots,C_k$ are regular.
Then for $J' \in \calM(M,\vecD)$ any sufficiently small generic perturbation of $J$, the curves $C_1,\dots,C_k$ deform to curves 
$C_1',\dots,C_k' \in \calM^{J'}_{M,A}\lll \CC_\vecD^\vecm \po\rrr$.
Again by automatic transversality, all curves in $\calM^{J'}_{M,A}\lll \CC_\vecD^\vecm \po\rrr$ are regular and count positively (c.f. \cite[\S5.2]{mcduff2021symplectic}).
Since $J'$ is generic, we have by definition $N_{M,A}\lll \CC^\vecm \pt \rrr = \#\calM^{J'}_{M,A}\lll \CC_\vecD^\vecm \po\rrr \geq k$.
\end{proof}

\subsection{Stabilization invariance II}\label{subsec:stab_inv_II}

We end this section by discussing the closed curve counterpart of Theorem~\ref{thm:ell_sup_stab}:
\begin{thm}\label{thm:N_stab}
Let $M^{2n}$ be a closed symplectic manifold, $A \in H_2(M)$ a homology class, and $\vecm = (m_1,\dots,m_n) \in \Z_{\geq 1}$ a tuple such that $\sum_{s=1}^n m_s = c_1(A)+n-2$.
For $Q^{2N}$ another closed symplectic manifold, we have
\begin{align}\label{eq:N_stab}
N_{M\times Q,A\times [\pt]}\lll \CC^{(\vecm,\mathbb{1})} \pt \rrr = N_{M,A}\lll \CC^\vecm \pt \rrr,
\end{align}
provided that the moduli spaces underlying both sides of \eqref{eq:N_stab} are robust and deformation invariant.
\end{thm}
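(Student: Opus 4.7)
The plan is to closely follow the strategy used for Theorem~\ref{thm:ell_sup_stab} in \S\ref{subsec:stab_inv_I}, adapted to closed curves with multidirectional tangency constraints. The key idea is to choose spanning local divisors and an almost complex structure on $M\times Q$ under which pseudoholomorphic curves satisfying the constraint are forced by positivity of intersections to lie in a fixed copy of $M$; regularity then transfers along this reduction, and we appeal to the robustness hypotheses.

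For the setup, I would first perturb $\omega_Q$ slightly (harmless by deformation invariance) so that $[\omega_Q]$ is integral, and invoke Donaldson's theorem to produce a flag of closed smooth symplectic submanifolds
$$\{\qo\}\;=\;Q_0 \;\subset\; Q_1 \;\subset\; \cdots \;\subset\; Q_N \;=\; Q$$
with $\dim_\R Q_j = 2j$. Fix $\po \in M$ together with spanning local divisors $\vecD^M=(\Ddiv_1^M,\dots,\Ddiv_n^M)$ at $\po$, and choose local symplectic divisors $H_1,\dots,H_N \subset Q$ through $\qo$ whose complex tangent spaces at $\qo$ are in general position. Setting
$$\Ddiv_i := \Ddiv_i^M \times Q \;\;(1 \leq i \leq n), \qquad \Ddiv_{n+j} := M \times H_j \;\;(1 \leq j \leq N)$$
yields a spanning collection $\vecD = (\Ddiv_1,\dots,\Ddiv_{n+N})$ of local divisors at $(\po,\qo)$, and I would pick $J^\stab \in \calJ(M\times Q, \vecD)$ that additionally preserves each $M\times Q_j$ and restricts on $M\times\{\qo\}$ to some $J \in \calJ(M, \vecD^M)$.

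The central step is to show that any $J^\stab$-holomorphic sphere $u:\CP^1 \to M\times Q$ of class $A\times[\pt]$ satisfying $\lll \CC^{(\vecm,\mathbb{1})}_\vecD(\po,\qo)\rrr$ must be contained in $M\times\{\qo\}$. I would argue by descending induction on $j$: given $u \subset M \times Q_j$ (trivial for $j=N$), view $M \times Q_{j-1}$ as a $J^\stab$-complex codimension-two symplectic submanifold of $M \times Q_j$. A K\"unneth computation via Poincar\'e duality gives
$$(A\times[\pt])\cdot[M\times Q_{j-1}]_{M\times Q_j} \;=\; 0,$$
since $\op{PD}([\pt]_{Q_j})\smile\op{PD}(Q_{j-1}) \in H^{2j+2}(Q_j) = 0$. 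On the other hand, $u$ passes through $(\po,\qo) \in M\times Q_{j-1}$, contributing a strictly positive local intersection by positivity of intersections. Hence $u\subset M\times Q_{j-1}$, and iterating yields $u \subset M \times \{\qo\}$. Projecting to $M$ produces a $J$-holomorphic sphere of class $A$ strictly satisfying $\lll \CC^\vecm_{\vecD^M}\po\rrr$; conversely every such $M$-curve lifts trivially to a curve in $M\times\{\qo\} \subset M\times Q$ satisfying the stabilized constraint, giving a bijection
$$\calM^{J^\stab}_{M\times Q, A\times[\pt]}\lll \CC^{(\vecm,\mathbb{1})}_\vecD(\po,\qo)\rrr \;\cong\; \calM^J_{M,A}\lll \CC^\vecm_{\vecD^M}\po\rrr.$$
To promote this to equality of counts I would verify that regularity transfers: along $u \subset M\times\{\qo\}$, the normal bundle in $M\times Q$ splits holomorphically as the normal bundle of $u$ in $M$ direct-summed with the trivial bundle $u^*T_{\qo}Q$, and the $N$ additional order-one tangency conditions to the $\Ddiv_{n+j}$ cut out exactly the $N$-dimensional space of constant holomorphic sections of that trivial bundle (by the general-position hypothesis on the $T_{\qo}H_j$), so the constrained linearized Cauchy--Riemann operator splits compatibly with the decomposition. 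This is the closed-curve analogue of the regularity-preservation calculation from \cite[\S A]{mcduff2021symplectic} cited in \S\ref{subsec:stab_inv_I}. Combining this with the robustness hypotheses on both sides of \eqref{eq:N_stab}, a standard cobordism argument through a generic one-parameter family in $\calJ(M\times Q,\vecD)$ identifies the signed count at our specific $J^\stab$ with $N_{M\times Q, A\times[\pt]}\lll \CC^{(\vecm,\mathbb{1})}\pt\rrr$, completing the proof.

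The main obstacle I anticipate is the compatibility bookkeeping: the chosen $J^\stab$ must be simultaneously integrable near $(\po,\qo)$, admissible in the sense of preserving each local divisor $\Ddiv_i$, and preserve the entire tower $\{M\times Q_j\}$ globally. A clean approach is to pick complex Darboux coordinates near $(\po,\qo)$ in which all of the $\Ddiv_i$ and $M\times Q_j$ appear as coordinate subspaces, prescribe $J^\stab$ to be the standard complex structure there, and then extend via a cutoff to a globally admissible $J^\stab$ on $M\times Q$; once this model is in place, the K\"unneth/positivity-of-intersections argument and the regularity comparison proceed as in the punctured case.
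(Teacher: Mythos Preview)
Your proposal is correct and follows essentially the same approach as the paper: deform $\omega_Q$ to be integral, take a Donaldson flag $Q_0\subset\cdots\subset Q_N$, choose an admissible almost complex structure preserving each $M\times Q_j$, use positivity of intersections together with the vanishing of $(A\times[\pt])\cdot[M\times Q_{j-1}]$ to force curves into $M\times\{\qo\}$, and transfer regularity via the normal bundle splitting argument from \cite[\S A]{mcduff2021symplectic}. The only cosmetic differences are that the paper packages the descending induction as a chain of inclusions of intermediate moduli spaces on $M\times Q_i$ (and accordingly chooses the auxiliary divisors $\Ddiv_{n+i}$ adapted to $M\times Q_i$ so that these intermediate moduli spaces are well-defined), and that the paper takes its $J_N$ to be ``otherwise generic'' rather than invoking an explicit final cobordism to a generic almost complex structure as you do.
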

\NI Here $(\vecm,\mathbb{1})$ denotes the tuple $(m_1,\dots,m_n,\underbrace{1,\dots,1}_N)$ given by padding $\vecm$ with ones.
Together with the equivalence from \S\ref{subsec:global_equiv}, this imples Theorem~\ref{thm:ell_sup_stab}. 

\sss

To prove Theorem~\ref{thm:N_stab}, observe that, by the deformation invariance assumption, we can assume without loss of generality that the symplectic form on $Q$ is integral.
Then by Donaldson's theorem we can find a full flag of smooth symplectic divisors:
\begin{align*}
\{\qo\} = Q_0 \subset Q_1 \subset \cdots \subset Q_N = Q,
\end{align*}
where $\dim Q_i = 2i$ for $i = 0,\dots,N$.
Here $\qo$ is simply a point in $Q$, and we pick also a preferred basepoint $\po \in M$.

Let $\vecD = (\Ddiv_1,\dots,\Ddiv_{n+N})$ be a collection of spanning smooth local symplectic divisors at the point $(\po,\qo) \in M \times Q$, such that
$\Ddiv_1,\dots,\Ddiv_n$ are tangent to $M \times \{\qo\}$, and $\Ddiv_{n+i}$ is tangent to $M \times Q_i$ for $i = 1,\dots,N$.
As a shorthand, we put $\vecD_i = (\Ddiv_1,\dots,\Ddiv_{n+i})$ for $i = 0,\dots,N$.

Choose an admissible almost complex structure $J_N \in \calJ(M\times Q,\vecD)$ which preserves $M \times Q_i$ for $i = 0,\dots,N$ and is otherwise generic.
Then for $i = 0,\dots,N-1$ we have a natural inclusion map
\begin{align}\label{eq:stab_II_inc}
\calM_{M \times Q_i,A \times [\pt]}^{J_i}\lll \CC_{\vecD_i}^{(\vecm,\mathbb{1})} (\po,\qo) \rrr \subset \calM_{M \times Q_{i+1},A \times [\pt]}^{J_{i+1}} \lll \CC_{\vecD_{i+1}}^{(\vecm,\mathbb{1})} (\po,\qo)\rrr.
\end{align}
\NI Here $(\vecm,\mathbb{1})$ is shorthand for $(m_1,\dots,m_n,\underbrace{1,\dots,1}_i)$ when appearing in a constraint on a space of dimension $2n+2i$.

\begin{lemma}\label{lem:stab_II_inc_bij}
For $i = 0,\dots,N-1$, the inclusion map \eqref{eq:stab_II_inc} is a regularity-preserving bijection.
\end{lemma}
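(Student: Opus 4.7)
The strategy is to mirror the template used for punctured curves (sketched after Theorem~\ref{thm:ell_sup_stab}), replacing intersection theory for punctured curves with classical positivity of intersections for closed $J_{i+1}$-holomorphic curves. Two geometric inputs drive the argument. First, $M \times Q_i$ is a $J_{i+1}$-holomorphic codimension-two symplectic submanifold of $M \times Q_{i+1}$, since $J_{i+1}$ was chosen to preserve it. Second, the algebraic intersection number $(A \times [\pt]) \cdot [M \times Q_i]$ vanishes in $M \times Q_{i+1}$: by K\"unneth, the Poincar\'e dual of $[M \times Q_i]$ is of the form $1 \otimes \op{PD}_{Q_{i+1}}[Q_i] \in H^2(M \times Q_{i+1})$, and pairing this with $[A] \otimes [\pt]$ picks up the factor $\langle 1, [A]\rangle = 0$.

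Given any $u \in \calM_{M \times Q_{i+1}, A \times [\pt]}^{J_{i+1}}\lll \CC_{\vecD_{i+1}}^{(\vecm, \mathbb{1})}(\po, \qo)\rrr$, I would first argue that $u(\CP^1) \subseteq M \times Q_i$. By positivity of intersections applied to the $J_{i+1}$-holomorphic divisor $M \times Q_i$, either $u(\CP^1) \subseteq M \times Q_i$ or the preimage of $M \times Q_i$ is a finite set, each point contributing a strictly positive integer to $u \cdot [M \times Q_i]$. Since $u([0:0:1]) = (\po, \qo) \in M \times Q_i$, the second alternative forces $u \cdot [M \times Q_i] \geq 1$, contradicting the vanishing above. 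With $u(\CP^1) \subseteq M \times Q_i$ established, one then checks that $u$, now viewed as a $J_i$-holomorphic map into $M \times Q_i$ (using $J_{i+1}|_{M \times Q_i} = J_i$), satisfies the smaller constraint $\lll \CC_{\vecD_i}^{(\vecm, \mathbb{1})}(\po, \qo)\rrr$. The constraint against $\Ddiv_{n+i+1}$ that is dropped in this restriction is a contact-order-one condition, which is automatic from the fact that $u$ passes through $(\po, \qo) \in \Ddiv_{n+i+1}$.

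For regularity preservation, I would split the linearized Cauchy--Riemann operator $D_u$ acting on sections of $u^*T(M \times Q_{i+1})$ according to the $J_{i+1}$-invariant splitting
\[
u^*T(M \times Q_{i+1}) \;=\; u^*T(M \times Q_i) \;\oplus\; u^*N,
\]
where $N := N_{M \times Q_i / M \times Q_{i+1}}$ is a complex line bundle. The tangential piece reproduces the linearized operator of $u$ in $M \times Q_i$, which is surjective by assumption. The normal piece is a Cauchy--Riemann operator on $u^*N$, whose degree equals $[u] \cdot [M \times Q_i] = 0$; hence $u^*N$ is holomorphically trivial on $\CP^1$, so the normal operator is surjective with one-complex-dimensional kernel given by the constant sections. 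The extra piece of the constraint imposed in the larger moduli problem, namely vanishing of the normal component of the evaluation at the marked point $[0:0:1]$, cuts precisely this kernel to zero, since a nonzero constant section of a trivial line bundle on $\CP^1$ vanishes nowhere. The total constrained linearized operator is therefore surjective, giving regularity of $u$ in the larger moduli space.

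The main step that requires care is the bookkeeping of constraints during restriction and extension: one must verify that the contact-order-$m_s$ conditions against $\Ddiv_s$ for $s = 1,\dots,n$ are equivalent for $u$ read in $M \times Q_i$ versus in $M \times Q_{i+1}$, and that the contact-order-one conditions against $\Ddiv_{n+1},\dots,\Ddiv_{n+i+1}$ become automatic once $u(\CP^1) \subseteq M \times Q_i$. This is a local computation that follows from the positioning of $\vecD$ relative to the flag $M \times Q_\bullet$ prescribed in the setup, and directly parallels the verification carried out for the punctured analogue in \cite[\S A]{mcduff2021symplectic} (see also \cite{pereira2022equivariant}); the closed-curve version is cleaner precisely because one can invoke classical positivity of intersections rather than its punctured counterpart.
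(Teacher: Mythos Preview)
Your proof is correct and follows the same approach as the paper. The bijection argument via positivity of intersections against the $J_{i+1}$-holomorphic divisor $M \times Q_i$ is identical to the paper's; for regularity preservation the paper simply defers to \cite[\S A]{mcduff2021symplectic}, whereas you spell out the tangential/normal splitting argument that this reference contains (with the minor caveat that $D_u$ is only block upper-triangular and the normal kernel need not literally consist of ``constant sections'', though the evaluation map is still injective on it for degree reasons).
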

\begin{proof}
Suppose by contradiction that there is some curve 
\begin{align*}
C \in \calM_{M \times Q_{i+1},A \times [\pt]}^{J_{i+1}}\lll \CC_{\vecD_{i+1}}^{(\vecm,\mathbb{1})} (\po,\qo) \rrr \setminus \calM_{M \times Q_{i},A \times [\pt]}^{J_{i}}\lll \CC_{\vecD_{i}}^{(\vecm,\mathbb{1})} (\po,\qo) \rrr.
\end{align*}
Viewing $C$ as a curve in $M \times Q_{i+1}$, note that by positivity of intersections $C$ must have nonnegative homological intersection number with the $J_{i+1}$-holomorphic divisor $M \times Q_i \subset M \times Q_{i+1}$, and in fact this is positive since $C$ passes through the point $(\po,\qo) \in M \times Q_i$.
On the other hand, the classes $[C] = A \times [\pt] \in H_2(M \times Q_{i+1})$ and $[M \times Q_i] \in H_{2n+2i}(M \times Q_{i+1})$ evidently have trivial homological intersection number, so this is a contradiction.

It  
follows 
 that any curve $C \in \calM_{M \times Q_{i+1},A \times [\pt]}^{J_{i+1}}\lll \CC_{\vecD_{i+1}}^{(\vecm,\mathbb{1})} (\po,\qo) \rrr$ must be entirely contained in $M \times Q_i$, which shows that the inclusion \eqref{eq:stab_II_inc} is a bijection.
The fact that a regular curve in $\calM_{M \times Q_{i},A \times [\pt]}^{J_{i}}\lll \CC_{\vecD_{i}}^{(\vecm,\mathbb{1})} (\po,\qo) \rrr$ is also regular in $\calM_{M \times Q_{i+1},A \times [\pt]}^{J_{i+1}}\lll \CC_{\vecD_{i+1}}^{(\vecm,\mathbb{1})} (\po,\qo) \rrr$ follows by a straightforward adaptation of the argument in \cite[\S A]{mcduff2021symplectic}.
\end{proof}

\begin{proof}[Proof of Theorem~\ref{thm:N_stab}]
By applying Lemma~\ref{lem:stab_II_inc_bij} $N$ times, we find that the natural inclusion map
\begin{align*}
  \calM_{M,A}^{J_0}\lll \CC^{\vecm}_{\vecD_0} \po  \rrr\subset \calM_{M \times Q,A\times [\pt]}^{J_N}\lll \CC^{(\vecm,\mathbb{1})}_{\vecD_N} (\po,\qo)\rrr
\end{align*}
is a regularity-preserving bijection, whence we have
\begin{align*}
N_{M\times Q,A \times [\pt]}\lll \CC^{(\vecm,\mathbb{1})}\pt \rrr &= \# \calM^{J_N}_{M \times Q,A \times [\pt]}\lll \CC_{\vecD_N}^{(\vecm,\mathbb{1})}(\po,\qo) \rrr \\
&   = \# \calM^{J_0}_{M,A}\lll \CC_{\vecD_0}^{\vecm}\po \rrr = N_{M,A}\lll \CC^\vecm \pt \rrr.
\end{align*}
This completes the proof.
\end{proof}

\section{From perfect exceptional classes to unicuspidal curves}

We first discuss resolution of singularities in \S\ref{subsec:res_sing} from the point of view of the box diagram. In \S\ref{subsec:role_of_div} we elaborate on the role of the local divisor in guiding the resolution, with a view towards enumerative problems.
In \S\ref{subsec:rel_GW} we prove a correspondence theorem relating multidirectional tangency counts with relative Gromov--Witten invariants. In \S\ref{subsec:deg_to_nongen_bl} to prove Theorem~\ref{thmlet:per_exc} by a degeneration argument involving generic and nongeneric almost complex structures on blowups. Finally, in \S\ref{subsec:F_1} we apply these techniques to classify rigid unicuspidal curves in the first Hirzebruch surface.

\subsection{Resolution of singularities and the box diagram}\label{subsec:res_sing}

We begin by discussing some aspects of embedded resolution of singularities for curve singularities. In brief, blowing up a $(p,q)$ cusp results in a $(p-q,q)$ cusp,
and by repeatedly blowing up we arrive at a smooth resolution $\wt{C}$. The numerics of the  resulting chain of divisors can be neatly encoded in a device called the ``box diagram'', which also manifests connections with ellipsoidal ends.

In more detail, let $M^4$ be a closed symplectic four-manifold and $J \in \calJ(M)$ a tame almost complex structure which is integrable near $\po \in M$.
Let $C \subset M$ be a $J$-holomorphic curve with a $(p,q)$ cusp singularity at $\po$ for some $p > q \geq 2$ relatively prime.
We denote by $\bl^1 M$ the (complex analytic) blowup of $M$ at $\po$, and by $C^1$ the proper transform of $C$.
Let $\Ddiv_1,\Ddiv_2$ be smooth local $J$-holomorphic divisors in $M$ which intersect transversely at $\po$ such that $C$ strictly satisfies the constraint $\lll \CC_{(\Ddiv_1,\Ddiv_2)}^{(p,q)}\po\rrr$ (these exist by Lemma~\ref{lem:cusp_and_multi_equiv}(b)). 
Since $C$ has multiplicity $q$ at $\po$, $C^1$ has contact order $p-q$ with the proper transform $\Ddiv_1^1 \subset \bl^1M$ of $\Ddiv_1$, while $C^1$ is disjoint from the proper transform of $\Ddiv_2$.
We take $\Ddiv_2^1 := \exc \subset \bl^1 M$ to be the exceptional divisor resulting from the blowup $\bl^1M \ra M$.
Then $C^1$ strictly satisfies the constraint $\lll \CC_{(\Ddiv_1^1,\Ddiv_2^1)}^{(p^1,q^1)}\pp_1 \rrr$, where we put $\pp_1 := \Ddiv_1^1 \cap \Ddiv_2^1$ and $(p^1,q^1) := (p-q,q)$.
After possibly swapping $\Ddiv_1^1$ and $\Ddiv_2^1$, we can assume that $p^1 \geq q^1$. 

\begin{rmk}
  Note that $\{x^q =  y^p \} \subset \C^2$ has the parametrization $t\mapsto (t^p,t^q)$, and its blowup at the origin has the parametrization
$$
t\mapsto 
\bigl([t^p,t^q],t^p,t^q\bigr) = \bigl([t^{p-q}:1],t^p,t^q\bigr) =  \bigl([x(t):1],x(t)y(t),y(t)\bigr),
$$
where $(x(t),y(t)) = (t^{p-q},t^q)$, i.e. the blowup has a $(p-q,q)$ cusp at $([0:1],0,0)$.
\end{rmk}

We proceed by blowing up $\bl^1 M$ at $\pp_1$, giving a proper transform $C^2 \subset \bl^2 M$ which strictly satisfies $\lll \CC_{\Ddiv_1^2,\Ddiv_2^2}^{(p^2,q^2)}\pp_2 \rrr$, with $\pp_2 := \Ddiv_1^2 \cap \Ddiv_2^2$ and $(p^2,q^2) := (p^1-q^1,q^1)$.
Continuing in this manner, we eventually arrive, after say $K \in \Z_{\geq 1}$ blowups, at the {\bf minimal resolution}
$C^{K} \subset \bl^K M$, which is smooth.
After some additional blowups, say $L$ overall, we arrive at the {\bf normal crossings resolution} $C^L \subset \bl^L M$, which further satisfies that the total transform of $C$ in $\bl^LM$ (i.e. the preimage of $C$ under the blowup map $\bl^LM \ra M$) is a normal crossings divisor.

For $j = 1,\dots,L$, let $\F_j^j \subset \bl^j M$ denote the exceptional divisor which is the preimage of $\pp_{j-1}$ under the blowup map $\bl^j M \ra \bl^{j-1}M$, and let $\F_j^k$ denote its proper transform in $\bl^k M$ for $k = j+1,\dots,L$.
We also put $\F_i := \F_i^L$ for $i = 1,\dots, L$.
Note that each blowup adds a $\Z$ summand to the second homology, and we have a natural identification 
\begin{align*}
H_2(\bl^L M) \cong H_2(M) \oplus \Z\langle e_1,\dots,e_L\rangle,
\end{align*}
 with $e_i \cdot e_i = -1$ and $A \cdot e_i = e_j \cdot e_i = 0$ for all  $1 \leq i < j\leq L$ and $A \in H_2(M)$.

\sss

In the study of symplectic embeddings of four-dimensional ellipsoids $E(q,p)$, a key role is played by the weight sequence $\weight(p,q)$ \cite[\S1]{McDuff-Schlenk_embedding}, which is a sequence of positive integers associated to $(p,q)$.
The following pictorial perspective will be useful:
\begin{definition}
  Given $p,q \in \Z_{\geq 1}$ relatively prime, the {\bf box diagram} $\bx(p,q)$ is the unique decomposition into squares of the rectangle $\rect(p,q)$ with horizontonal length $p$ and vertical length $q$, subject to the following rules:
\begin{enumerate}
  \item for $p = q = 1$, we have $\bx(1,1) = \rect(1,1)$ (i.e. we take the trivial decomposition)
  \item if $p > q$, then $\bx(p,q)$ consists of the square $\rect(q,q)$ flush with the left side of $\rect(p,q)$, along with the decomposition of the remainder according to $\bx(p-q,q)$
  \item if $q > p$, then $\bx(p,q)$ consists of the square $\rect(p,p)$ flush with the bottom side of $\rect(p,q)$, along with the decomposition of the remainder according to $\bx(p,q-p)$.
\end{enumerate}
\end{definition}

\begin{figure}
    \centering
   \includegraphics[width=.825\textwidth]{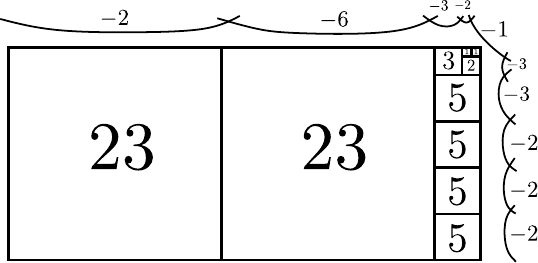} 

    \caption{Below: the box diagram $\bx(51,23)$. Above: the spheres $\F_1,\dots,\F_L$ (labeled by self-intersection numbers) arising in the resolution of the $(51,23)$ cusp singularity.}
    \label{fig:box}
\end{figure}

The squares in $\bx(p,q)$ are totally ordered by the rule that a square comes before any other square which lies to its right or above it, and we view the $i$th square as representing $\F_i$ for $i = 1,\dots,L$.
The {\bf weight sequence} of $(p,q)$ is then $\weight(p,q) := (m_1,\dots,m_L)$, where $m_i$ is the side length of the $i$th square in $\bx(p,q)$ for $i = 1,\dots,L$.
Note that each square in $\bx(p,q)$ except for the last one touches either the top side or the right side of $\rect(p,q)$, and we denote the corresponding spheres by $\F_1^\hor,\dots,\F_{L_\hor}^\hor$ and $\F_1^\ver,\dots,\F_{L_\ver}^\ver$ respectively.
Thus the preimage of $\po$ under the blowup map $\bl^L M \ra M$ is
$$\F_1 \cup \cdots \cup \F_L  = (\F_1^\hor \cup \cdots \cup \F_{L_\hor}^\hor) \cup \left(\F_1^\ver \cup \cdots \cup \F_{L_\ver}^\ver\right) \cup \F_L.$$

The following properties of the box diagram $\bx(p,q)$ are elementary to verify and show that it encodes the numerics of the resolution of a $(p,q)$ cusp.
\begin{lemma}(see also \cite[\S A]{McDuff-Schlenk_embedding} and \cite[Fig. 3.4]{mcduff2009symplectic})
\hfill
\begin{enumerate}[label=(\alph*)]
  \item For $i = 1,\dots,l$ we have $[\F_i] = e_i - c_{i+1}^i e_{i+1} - \cdots - c_L^i e_L$, where $c^i_j = 1$ if the $j$th square in $\bx(p,q)$ lies either immediately to the right of or immediately above the $i$th square, and $c^i_j = 0$ otherwise.
  \item For $1 \leq i < j \leq L_\hor$, $\F_i^\hor$ and $\F_j^\hor$ meet in a single point if $|j-i| = 1$, and $\F_i^\hor \cap \F_j^\hor = \nil$ otherwise. Similarly, for $1 \leq i < j \leq L_\ver$, $\F_i^\ver$ and $\F_j^\ver$ meet in a single point if $|j-i| = 1$, and $\F_i^\ver \cap \F_j^\ver = \nil$ otherwise.
  \item $\F_{L}$ meets each of $\F^\hor_{L_\hor},\F^\ver_{L_\ver}$ in a single point and is disjoint from $\F_1^\hor,\dots,\F^\hor_{L_\hor-1}$ and $\F_1^\ver,\dots,\F^\ver_{L_\ver-1}$
  \item The normal crossing resolution $C^L$ intersects $\F_L$ in one point and is disjoint from $\F_1,\dots,\F_{L-1}$.
\end{enumerate}
\end{lemma}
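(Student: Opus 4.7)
My plan is to prove all four parts together by induction on the blowup sequence $M \leftarrow \bl^1 M \leftarrow \cdots \leftarrow \bl^L M$, tracking the proper transforms of the exceptional divisors step by step. The fundamental intersection-theoretic identity is, for $i < j$,
\begin{align*}
[\F_i^j] = [\F_i^{j-1}] - m_{i,j}\, e_j,
\end{align*}
where $m_{i,j} = \mult_{\pp_{j-1}}(\F_i^{j-1}) \in \{0,1\}$ equals $1$ precisely when $\pp_{j-1} \in \F_i^{j-1}$, since $\F_i^{j-1}$ is smooth. Because $\pp_{j-1}$ is by definition the transverse intersection $\Ddiv_1^{j-1} \cap \Ddiv_2^{j-1}$, we have $\pp_{j-1} \in \F_i^{j-1}$ iff $\F_i^{j-1}$ coincides with one of the two current divisors $\Ddiv_1^{j-1}, \Ddiv_2^{j-1}$. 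Part (a) thus reduces to identifying combinatorially, for each $i$, the set of indices $j > i$ for which $\F_i$ is a current divisor at step $j-1$.

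To carry this out, I would track $(\Ddiv_1^k, \Ddiv_2^k)$ through the blowup chain. A direct calculation in Puiseux coordinates near $\pp_{k-1}$ (writing $C^{k-1}$ locally as $t \mapsto (t^{p^{k-1}}, t^{q^{k-1}})$ with $\Ddiv_1^{k-1} = \{x = 0\}$, $\Ddiv_2^{k-1} = \{y = 0\}$) shows that at blowup $k$ the proper transform of $\Ddiv_2^{k-1}$ (the divisor of lower contact order $q^{k-1}$) fails to pass through the new cusp $\pp_k$, while the proper transform of $\Ddiv_1^{k-1}$ does. Hence (possibly after the swap to enforce $p^k \geq q^k$) the two current divisors at step $k$ are $\F_k^k$ and the proper transform of $\Ddiv_1^{k-1}$. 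Matching against the recursive definition of $\bx(p,q)$: the concave inner corner of the staircase where square $k+1$ is peeled is bounded by two edges, each of which is either a portion of $\bdy \rect(p,q)$ (corresponding to an original $\Ddiv_1$ or $\Ddiv_2$) or shared with some earlier square $\F_i$. It follows that $\F_i$ is current at step $j-1$ iff square $i$ shares the left or the bottom edge of square $j$, i.e.\ iff square $j$ lies immediately to the right of or above square $i$, which is $c_j^i = 1$. This establishes (a).

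Parts (b) and (c) then follow from the structural features of the box diagram. The recursion peels horizontal squares (those touching the top of $\rect(p,q)$) in left-to-right order, because each horizontal peel removes the leftmost $q^k \times q^k$ block of the current subrectangle; verticals are peeled bottom-to-top symmetrically. Consecutive horizontal squares thus share a common vertical edge and no other horizontal squares, proving (b) for the $\F_i^\hor$'s, and symmetrically for the $\F_i^\ver$'s. For (c), the last square $\F_L$ occupies the terminal inner corner of the staircase, with its left edge shared with $\F^\hor_{L_\hor}$ and its bottom edge with $\F^\ver_{L_\ver}$; no other adjacencies are possible, so $\F_L$ meets precisely these two among $\F_1, \ldots, \F_{L-1}$. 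Part (d) follows because the additional blowups at steps $K+1, \ldots, L$ each resolve a triple point of $C^{k-1} \cup \Ddiv_1^{k-1} \cup \Ddiv_2^{k-1}$, so that after blowup $L$ the proper transform $C^L$ meets only the most recently created $\F_L$, transversely in a single point.

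The main obstacle will be the bookkeeping around the swap $(p^{k-1}, q^{k-1}) \leftrightarrow (q^{k-1}, p^{k-1} - q^{k-1})$ occurring when $p^{k-1} < 2 q^{k-1}$, which is the geometric manifestation of a ``turn'' from horizontal to vertical peeling (or vice versa) in the staircase of $\bx(p,q)$. An arguably cleaner alternative is to run the induction on $L$ directly: deduce the statement for $(p,q)$ from the corresponding statement for $(p-q, q)$ applied to the $(p-q, q)$ cusp obtained after the first blowup, using that $\bx(p,q)$ is built from $\bx(p-q, q)$ by prepending a $q \times q$ square on the left.
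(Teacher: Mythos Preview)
The paper does not actually give a proof of this lemma: it states that ``the following properties of the box diagram $\bx(p,q)$ are elementary to verify'' and refers to \cite[\S A]{McDuff-Schlenk_embedding} and \cite[Fig.~3.4]{mcduff2009symplectic}. So there is no argument in the paper to compare yours against.

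Your approach is correct and is the standard one. The key mechanism --- tracking which $\F_i^{j-1}$ pass through the next blowup point $\pp_{j-1}$ and identifying this with the pair of ``current divisors'' $(\Ddiv_1^{j-1},\Ddiv_2^{j-1})$ --- is exactly right, and the translation to box diagram adjacency is accurate. Two places deserve slightly more care. First, the biconditional ``$\pp_{j-1}\in\F_i^{j-1}$ iff $\F_i^{j-1}$ is a current divisor'' needs the inductive observation that every discarded divisor is separated from the new cusp by the exceptional curve just created, so it never re-enters. Second, for (b) and (c) you argue via box diagram adjacency of squares, but adjacency $c_j^i=1$ does not by itself give $\F_i\cap\F_j\neq\nil$ in $\bl^L M$: the intersection point $\F_i^j\cap\F_j^j$ can itself be blown up at a later step (precisely when both $\F_i$ and $\F_j$ remain current, i.e.\ $c_{j+1}^i=c_{j+1}^j=1$). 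The clean fix is to use (a) to compute, for $i<j$,
\[
[\F_i]\cdot[\F_j] \;=\; c_j^i \;-\; \sum_{k>j} c_k^i\,c_k^j,
\]
and then check combinatorially that for consecutive horizontal (resp.\ vertical) squares this equals $1$ (no later square is adjacent to both, since both touch the top, resp.\ right, edge), while for non-consecutive ones and for $\F_L$ versus non-terminal $\F_i^\hor,\F_j^\ver$ it vanishes. Your alternative induction on $L$ (prepending one square to pass from $\bx(p-q,q)$ to $\bx(p,q)$) would also work and avoids some of this bookkeeping.
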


\NI In particular, the self-intersection number $[\F_i] \cdot [\F_i]$ is $-1-k$, where $k$ is the number of squares in $\bx(p,q)$ which lie immediately to the right of or immediately above the $i$th square, and we have $[\F_L] \cdot [\F_L] = -1$.
\begin{example}
  Figure~\ref{fig:box} shows the box diagram for $(51,23)$, which has weight sequence is $23,23,5,5,5,5,3,2,1,1$.
  The curves $\F_1^\hor,\dots,\F_4^\hor$ appear at the top, with respective self-intersection numbers $-2,-6,-3,-2$, and the curves $\F_1^\ver,\dots,\F_5^\ver$ appear at the right, with respective self-intersection numbers $-2,-2,-2,-3,-3$.
\end{example}

\begin{rmk}\label{rmk:wt_seq_cfe}
We can also write the weight sequence of $(p,q)$ without repetitions in the form $(w_1^{\times r_1},\dots,w_\ell^{\times r_\ell})$, with $w_1 > \cdots > w_\ell$. Then $(r_1,\dots,r_\ell)$ is precisely the continued fraction expansion coefficients of $p/q$, i.e. we have
\begin{align*}
p/q = [r_1,\dots,r_\ell] := 
r_1+\cfrac{1}{r_2 + \cfrac{1}{\ddots + \cfrac{1}{r_\ell}}}.
\end{align*}
\end{rmk}

\subsection{The role of the local divisor}\label{subsec:role_of_div}

In the above description of $\bl^L M$, the blowup points $\po,\pp_1,\dots,\pp_{L-1}$ all depend on the initial curve $C$, so a priori another curve $C'$ with a $(p,q)$ cusp at $\po$ would have a resolution living in a different blowup on $M$, thereby complicating our counting efforts.
The next lemma shows that in fact $\bl^L M$ depends only on 
a certain jet of the divisor $\Ddiv_1$.
As before we put $\weight(p,q) = (m_1,\dots,m_L) = (w_1^{\times r_1},\dots,w_\ell^{\times r_\ell})$ and assume $p \geq q$.
\begin{lemma} 
Let $\Ddiv,\Ddiv' \subset M$ be smooth local $J$-holomorphic divisors passing through $\po$ and having the same $r_1$-jet at $\po$.
Let $C,C' \subset M$ be $J$-holomorphic curves, each having a $(p,q)$ cusp at $\po$, and such that $(C \cdot \Ddiv)_{\po} = (C' \cdot \Ddiv')_{\po} = p$.
Then the sequence of blowups of $M$ which achieves the normal crossing resolution of $C$ is the same as that for $C'$.
\end{lemma}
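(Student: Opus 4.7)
The plan is to track the blowup points $\pp_1, \dots, \pp_L$ explicitly and show that each one is determined by the $r_1$-jet of $\Ddiv$ together with the topological type $(p,q)$ of the cusp, independently of the specific curve $C$. The argument splits naturally into two stages: an initial \emph{horizontal} phase of $r_1$ blowups in which the blowup points follow the successive proper transforms of $\Ddiv$, followed by a \emph{combinatorial} phase in which the remaining $L-r_1$ blowup points are forced to lie at intersections of exceptional divisors according to the adjacencies in the box diagram $\bx(p,q)$. Since the first stage depends on $\Ddiv$ only through its $r_1$-jet and the second stage depends only on the structure produced in the first stage, the sequences for $(C,\Ddiv)$ and $(C',\Ddiv')$ must agree.

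For the first stage, I would choose local Newton--Puiseux coordinates $(x,y)$ at $\po$ and write $\Ddiv = \{y = \phi(x)\}$ with $\phi(0)=0$, and parameterize $C$ by $t \mapsto (x(t),y(t))$ with $\mathrm{ord}\,x(t) = q$ and $\mathrm{ord}(y(t)-\phi(x(t)))=p$. Because $p > q$, the tangent line to $C$ at $\po$ coincides with that of $\Ddiv$, so after one blowup both $C^1$ and $\Ddiv^1$ meet $\exc_1$ at a common point $\pp_2 = \Ddiv^1 \cap \exc_1$, whose position is determined by the linear coefficient of $\phi$. Inductively, for $i = 2,\dots,r_1$ I would verify in a standard blowup chart that $C^{i-1}$ has cusp type $(p-(i-1)q,q)$ at $\pp_{i-1}$ and that $p-(i-1)q > q$ (using $r_1 = \lfloor p/q\rfloor$ and $\gcd(p,q)=1$), so that its tangent remains aligned with $\Ddiv^{i-1}$. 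This forces $\pp_i = \Ddiv^{i-1}\cap \exc_{i-1}$, whose position depends precisely on the $i$-th Taylor coefficient of $\phi$. Consequently the first $r_1$ blowup points are read off from the $r_1$-jet of $\phi$ alone, and match those for $\Ddiv'$ under the jet hypothesis.

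For the second stage, after the $r_1$-th blowup the cusp $C^{r_1}$ has type $(q, p\bmod q)$ and its tangent direction has \emph{switched} to lie along $\exc_{r_1}$, since now $(C^{r_1}\cdot\exc_{r_1})_{\pp_{r_1}} = q > p\bmod q = (C^{r_1}\cdot \Ddiv^{r_1})_{\pp_{r_1}}$. Therefore the next blowup point $\pp_{r_1+1}$ lies at the intersection $\exc_{r_1}^{r_1+1}\cap \exc_{r_1+1}$, a location determined purely by the blowup structure produced in the first stage. Iterating this analysis phase-by-phase along the continued fraction expansion $p/q = [r_1,\dots,r_\ell]$ --- each phase corresponding to $r_j$ consecutive blowups travelling along the current ``dominant'' exceptional divisor and then swapping --- one finds that each $\pp_i$ for $i > r_1$ is the intersection of two previously constructed exceptional divisors adjacent in $\bx(p,q)$. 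The main delicate point, which I expect to require the most care, is the bookkeeping at each phase transition: the higher Puiseux coefficients of $C$ genuinely influence the final position of $C^L$ on $\exc_L$, but one must verify that these coefficients do \emph{not} affect the intermediate blowup points, so that the combinatorial recipe is truly independent of $C$. Once this is pinned down in charts, applying the same analysis to $(C',\Ddiv')$ produces the identical sequence, proving the lemma.
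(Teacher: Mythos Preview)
Your two-stage strategy is sound and equivalent in content to the paper's argument, but the paper organizes it differently: it proceeds by induction on $p+q$, handling a single blowup per step. In the inductive step one checks that $\wt C,\wt C'$ meet the exceptional divisor $\exc$ at the same point $\pp_1$ (since the $r_1$-jet hypothesis forces the same tangent), and then invokes the inductive hypothesis in one of two ways: if $r_1=1$ the role of the divisor passes to $\exc$ itself (so no jet hypothesis is needed beyond this point), while if $r_1\geq 2$ the proper transforms $\wt\Ddiv,\wt\Ddiv'$ share the same $(r_1-1)$-jet at $\pp_1$ and the hypothesis applies directly. This recursion absorbs your entire second-stage bookkeeping --- once $r_1$ drops to $1$, the exceptional divisor takes over and the remaining centers are automatically determined --- so the ``delicate'' phase-transition tracking you flag never has to be done by hand.

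Your direct route also works, but there is an off-by-one at the handoff that you should fix. With your indexing, the center of the $(r_1{+}1)$st blowup is \emph{not} yet an intersection of two exceptionals: it is still $\Ddiv^{r_1}\cap\exc_{r_1}$, which in the standard chart sits at the point with coordinate $a_{r_1}$, the $r_1$th Taylor coefficient of $\phi$. (This is exactly why the lemma needs the $r_1$-jet and not merely the $(r_1{-}1)$-jet.) The purely combinatorial regime begins one step later, at the $(r_1{+}2)$nd center, which is $\exc_{r_1}^{r_1+1}\cap\exc_{r_1+1}$. Relatedly, your claim that $\pp_i$ depends on the $i$th coefficient should read $(i{-}1)$st. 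Once these indices are corrected your argument goes through; the paper's induction simply sidesteps this accounting altogether.
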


\begin{proof}
We proceed by induction on $p+q$.
Here we allow the case $q = 1$, which corresponds to $C,C'$ being smooth but having contact order $p$ with $\Ddiv,\Ddiv'$ respectively.
For the base case we suppose $p = q = 1$, so that $C$ and $C'$ are both smooth and pass through $\po$ with multiplicity $1$.
In this case by convention the normal crossing resolution for both $C$ and $C'$ is obtained by blowing up $M$ once at $\po$.

For the inductive step, note that the first blowup for both $C$ and $C'$ occurs at $\po$. Let $\exc$ denote the resulting exceptional divisor, and let
$\wt{C}, \wt{C}', \wt{\Ddiv},  \wt{\Ddiv}'$ denote the respective proper transforms of $C, C', \Ddiv, \Ddiv'$.  
Since $C$ and $C'$ are tangent at $\po$, $\wt{C}$ and $ \wt{C}'$ intersect $\exc$ at the same point, say $\pp_1$. 

Suppose first that $p< 2q$, or equivalently $r_1 =1$, so that $\wt{C}$ and $ \wt{C}'$ both have a $(q,p - q)$ cusp at $\pp_1$,
and we have $\wt{C}\cdot \exc = \wt{C}'\cdot \exc  = q$.  
Then by the inductive hypothesis, with
$\wt{C}_1,\wt{C}_2$ playing the respective roles of $C,C'$ and $\exc$ playing the roles of both
$\Ddiv$ and $\Ddiv'$, 
 the remaining blowups for $\wt{C}$ and $\wt{C}'$ coincide.

Now suppose that $p>2q$, so that  $\wt{C}$ and $\wt{C}'$ each has a $(p-q,q)$ cusp at $\pp_1$,
and the weight sequence of $(p-q,q)$ is $(w_1^{\times (r_1-1)},\dots,w_\ell^{\times r_\ell})$. 
Since $C$ has multiplicity $q$ at $\po$, we have
\begin{align*}
 (\wt{C} \cdot \wt{\Ddiv})_{\pp_1} = (C \cdot \Ddiv)_{\pp_0} - q = p-q, 
\end{align*}
and similarly $\wt{C}' \cdot \wt{\Ddiv}' = p-q$.
Note also that $\wt{\Ddiv}$ and $\wt{\Ddiv}'$ have the same $(r_1-1)$-jet at $\pp_1$.
Therefore we may again apply the inductive hypothesis
to conclude that the remaining blowups for $\wt{C}$ and $\wt{C}'$ coincide.
\end{proof}

In light of the above lemma, the following notation is well-defined, i.e. independent of the choice of $C$.
As above we put $\weight(p,q) = (m_1,\dots,m_L)$, with $p > q$ relatively prime positive integers,
and let $M^4$ be a closed symplectic manifold and $J \in \calJ(M)$ a tame almost complex structure which is integrable near a point $\po \in M$.
\begin{notation}\label{not:res_p_q}
Put $\res_{(p,q)}^{\Ddiv,\po}(M,J) := \bl^LM$, where $\bl^LM$ is the iterated (complex analytic) blowup of $M$ which achieves the normal crossing resolution for a local $J$-holomorphic curve $C$ which has a $(p,q)$ cusp at $\po$ and satisfies $(C \cdot \Ddiv)_{\po} = p$.
\end{notation}
\NI We will sometimes use the shorthand $\res_{(p,q)}(M) := \res_{(p,q)}^{\Ddiv,\po}(M,J)$ when the data $\Ddiv,\po,J$ is implicit or immaterial.

Note that by construction $\res_{(p,q)}^{\Ddiv,\po}(M,J)$ inherits an almost complex structure $\wt{J}$ which is integrable near the spheres $\F_1,\dots,\F_L$ and preserves each of them.
In fact, we can also identify $\res_{(p,q)}^{\Ddiv,\po}(M,J)$ diffeomorphically with the corresponding symplectic blowup using small Darboux balls, and we thereby equip $\res_{(p,q)}^{\Ddiv,\po}(M,J)$ with a symplectic form which tames $\wt{J}$ (and is uniquely defined up to symplectic deformation).

\subsection{Relationship with relative Gromov--Witten theory}\label{subsec:rel_GW}

Following Notation~\ref{not:res_p_q}, put $\wt{M} := \res_{(p,q)}^{\Ddiv,\po}(M,J)$,
with almost complex structure $\wt{J}$.
Put also $\wt{A} = A - m_1e_1 - \cdots - m_Le_L \in H_2(\wt{M})$, where $\weight(p,q) = (m_1,\dots,m_L)$.
Note that we have $\wt{A} \cdot [\F_L] = 1$ and $\wt{A} \cdot [\F_L] = 0$ for $i =1,\dots,L-1$, so by positivity of intersections any $\wt{J}$-holomorphic curve in $\wt{M}$ in homology class $\wt{A}$ intersects $\F_L$ in one point and is disjoint from $\F_1,\dots,\F_{L-1}$.

Put $\Ddiv_1 = \Ddiv$, and let $\Ddiv_2$ be any smooth local $J$-holomorphic divisor in $M$ which passes through $\po$ and intersects $\Ddiv$ transversely.
By the above discussion, since any $J$-holomorphic curve in $M$ has a proper transform in $\wt{M}$ and conversely any $\wt{J}$-holomorphic curve in $\wt{M}$ can be projected to a curve in $M$, we have:
\begin{prop}\label{prop:res_of_sing_bij}
There is a natural bijective correspondence 
\begin{align*}
\calM_{M,A}^J \lll \CC^{(p,q)}_{(\Ddiv_1,\Ddiv_2)}\po\rrr \cong \calM_{\wt{M},\wt{A}}^{\wt{J}}.
\end{align*}
\end{prop}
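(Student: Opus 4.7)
The plan is to construct the bijection explicitly using the iterated blowup $\pi: \wt{M} = \bl^L M \to M$ that realizes the normal crossing resolution of a $(p,q)$-cusp, as described in \S\ref{subsec:res_sing}. By Lemma~\ref{lem:cusp_and_multi_equiv}(a), any curve $C \in \calM_{M,A}^J \lll \CC^{(p,q)}_{(\Ddiv_1,\Ddiv_2)}\po\rrr$ that strictly satisfies the constraint has a $(p,q)$-cusp at $\po$ with $(C\cdot \Ddiv_1)_{\po} = p$; and by integrality of contact order, strict satisfaction is automatic for all $C$ in this moduli space (since the local intersection indices cannot exceed the prescribed ones without bumping up to the next integer, which is ruled out by $c_1(A)$).

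For the forward map, I take $C \in \calM_{M,A}^J\lll \CC^{(p,q)}_{(\Ddiv_1,\Ddiv_2)}\po\rrr$ and form its successive proper transforms $C = C^0, C^1, \dots, C^L = \wt{C}$ under the resolution blowups. Because $J$ is integrable on an open neighborhood of $\po$ and the entire sequence of blowup centers $\po, \pp_1, \dots, \pp_{L-1}$ lies over $\po$, each proper transform $C^i$ is holomorphic with respect to the lifted integrable structure there, and globally $\wt{C}$ is $\wt{J}$-holomorphic. The homology class transforms as $[C^i] = [C^{i-1}] - \mu_i e_i$, where $\mu_i$ is the multiplicity of $C^{i-1}$ at $\pp_{i-1}$; the inductive Euclidean-algorithm description of the cusp type given in \S\ref{subsec:res_sing} shows that this multiplicity is precisely the $i$th weight $m_i$ of $\weight(p,q)$. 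Hence $[\wt{C}] = A - m_1 e_1 - \cdots - m_L e_L = \wt{A}$, and $\wt{C} \in \calM^{\wt{J}}_{\wt{M},\wt{A}}$.

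For the reverse map, I take $\wt{C} \in \calM^{\wt{J}}_{\wt{M},\wt{A}}$ and project via $\pi$. As already noted before the proposition, positivity of intersections together with $\wt{A}\cdot [\F_i] = 0$ for $i<L$ and $\wt{A}\cdot [\F_L] = 1$ forces $\wt{C}$ to be disjoint from $\F_1,\dots,\F_{L-1}$ and transverse to $\F_L$ at a single point. Collapsing one exceptional divisor at a time, the resulting curves $C^{L-1}, C^{L-2}, \dots, C^0 = C$ are $J$-holomorphic at each stage (the integrable structure descends through each blowdown in a neighborhood of the relevant exceptional fiber), and acquire at the image of the collapsed divisor the singularity dictated by reversing the blowup locally. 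Iterating back, the inductive step appearing in the lemma preceding Notation~\ref{not:res_p_q} produces a $(p,q)$-cusp at $\po$ and records that $(C \cdot \Ddiv_1)_{\po} = p$, so by Lemma~\ref{lem:cusp_and_multi_equiv}(a) we have $C \in \calM^J_{M,A}\lll \CC^{(p,q)}_{(\Ddiv_1,\Ddiv_2)}\po\rrr$. The two maps are inverse to each other by the uniqueness of proper transforms and projections under blowups.

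The main obstacle is the reverse direction: certifying that the inductive blowdown produces a genuine $(p,q)$-cusp (as opposed to some other iterated torus singularity) and, crucially, that the resulting curve has contact order exactly $p$ with the specific divisor $\Ddiv_1 = \Ddiv$ used to build $\wt{M}$. This is precisely where the jet-uniqueness lemma preceding Notation~\ref{not:res_p_q} is used: the blowup tower $\wt{M}$ depends only on the $r_1$-jet of $\Ddiv$ at $\po$, and the inverse process recovers a curve whose $r_1$-jet of tangency with any divisor having this same $r_1$-jet is the prescribed one. Combined with the (classical) fact that the backwards blowdown of a smooth transverse curve through the normal-crossing exceptional chain $\F_1,\dots,\F_L$ associated to $\weight(p,q)$ produces exactly a $(p,q)$-cusp, this secures the bijection.
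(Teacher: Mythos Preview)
Your proof is correct and follows exactly the approach the paper intends: proper transform and blowdown projection are the mutually inverse maps, and the paper's own justification is literally the single sentence preceding the proposition. Your expansion of the details (tracking the multiplicities via the weight sequence, invoking positivity of intersections for the reverse direction, and appealing to the jet-dependence lemma for well-definedness of $\wt{M}$) is sound and more explicit than what the paper provides.

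One small wrinkle: your parenthetical that strict satisfaction is ``automatic\dots ruled out by $c_1(A)$'' is not quite right as written. Contact orders are already integers, and $c_1(A)$ alone does not forbid a curve from having contact order $(p',q')$ with $p' > p$ or $q' > q$; such a curve would simply have its proper transform land in a homology class other than $\wt{A}$. What actually makes the bijection work is that the reverse map always produces curves strictly satisfying $\lll\CC^{(p,q)}\rrr$ (this is the ``classical fact'' you cite about blowing down the normal-crossing chain), and the forward map restricted to strictly-satisfying curves is its inverse. For generic $J$---which is the only case used downstream---every curve in $\calM_{M,A}^J\lll\CC^{(p,q)}_{(\Ddiv_1,\Ddiv_2)}\po\rrr$ satisfies the constraint strictly by a dimension count, so the distinction disappears. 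The paper's one-line proof glosses over exactly the same point.
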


\begin{cor}\label{cor:N_is_rel_GW}
 Let $M^4$ be a closed symplectic four-manifold and $A \in H_2(M)$ a homology class such that $c_1(A) = p+q$ for some $p,q\in \Z_{\geq 1}$ relatively prime. 
 Then the multidirectional tangency count $N_{M,A}\lll \CC^{(p,q)}\pt\rrr$ agrees with the genus zero symplectic Gromov--Witten invariant of $\wt{M}$ in homology class $\wt{A}$ relative to the norming crossing divisor $\F_1 \cup \cdots \cup \F_L$, with specified intersection pattern as above. 
\end{cor}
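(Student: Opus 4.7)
The approach is to combine the bijection of Proposition~\ref{prop:res_of_sing_bij} with positivity of intersections: since the intersection pattern of a curve in class $\wt{A}$ with the chain $\F_1 \cup \cdots \cup \F_L$ is completely determined by homology, the relative Gromov--Witten invariant with the specified tangency profile reduces to a ``free'' count of $\wt{J}$-holomorphic curves in $\wt{M}$ lying in class $\wt{A}$.

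Concretely, first I would fix $J \in \calJ(M,\vecD)$ integrable near $\po$ with $\vecD = (\Ddiv_1,\Ddiv_2)$ chosen as in \S\ref{subsec:res_sing}, so that $\wt{M} = \res_{(p,q)}^{\Ddiv_1,\po}(M,J)$ carries an induced tamed almost complex structure $\wt{J}$ preserving each $\F_i$. Taking $J$ also generic away from $\po$ and applying Theorem~\ref{thm:N_robust} together with Proposition~\ref{prop:res_of_sing_bij}, one obtains
\[
N_{M,A}\lll \CC^{(p,q)}\pt\rrr \;=\; \#\calM^J_{M,A}\lll \CC^{(p,q)}_{(\Ddiv_1,\Ddiv_2)}\po\rrr \;=\; \#\calM^{\wt{J}}_{\wt{M},\wt{A}}.
\]
Next, positivity of intersections with each $\wt{J}$-holomorphic sphere $\F_i$ forces every $C \in \calM^{\wt{J}}_{\wt{M},\wt{A}}$ to realize exactly the homological intersection numbers $C\cdot \F_L = \wt{A}\cdot [\F_L] = 1$ and $C \cdot \F_i = 0$ for $i < L$. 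Thus every such curve is disjoint from $\F_1 \cup \cdots \cup \F_{L-1}$ and meets the smooth locus of the normal crossing divisor transversely at a single point on $\F_L$, which is precisely the intersection pattern appearing in the statement.

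Finally, to identify $\#\calM^{\wt{J}}_{\wt{M},\wt{A}}$ with the symplectic relative Gromov--Witten invariant itself, I would invoke: (a) regularity of $\calM^{\wt{J}}_{\wt{M},\wt{A}}$, which is inherited from regularity of the moduli space in $M$ via the bijection; (b) deformation invariance of the relative GW invariant under perturbations among admissible almost complex structures preserving the chain $\F_1 \cup \cdots \cup \F_L$, together with path-connectedness of that space; and (c) the fact that $\wt{A}\cdot \F_L = 1$ together with semipositivity of $\wt{M}$ (inherited from semipositivity of $M$) rules out stable map degenerations into covers of the $\F_i$ or other bad configurations. The main obstacle is the technically routine but careful verification that the ``tautological'' count $\#\calM^{\wt{J}}_{\wt{M},\wt{A}}$ matches whichever standard symplectic definition of the relative invariant one adopts; this proceeds along lines essentially parallel to the flag-of-divisors argument in \S\ref{subsec:stab_inv_II}, with the role of the symplectic flag now played by the exceptional chain arising from the resolution.
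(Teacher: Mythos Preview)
Your approach is correct and matches the paper's: the corollary is stated there without a separate proof, as an immediate consequence of Proposition~\ref{prop:res_of_sing_bij} together with the positivity-of-intersections observation (recorded just before that proposition) that any $\wt{J}$-holomorphic curve in class $\wt{A}$ is forced to meet $\F_L$ once transversely and avoid $\F_1,\dots,\F_{L-1}$. Your points (a)--(c) spell out technical verifications that the paper leaves implicit, but the underlying strategy is the same.
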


\begin{example}
  It is important to note that the relative Gromov--Witten invariant in Corollary~\ref{cor:N_is_rel_GW} is {\em not} generally equal to the absolute Gromov--Witten invariant $N_{\wt{M},\wt{A}}$ in the same homology class
   (although we see in \S\ref{subsec:deg_to_nongen_bl} that they do agree when $A$ is a perfect exceptional class, in which case both counts are equal to $1$).
As a simple example, $N_{\CP^2,3[L]}\lll\CC^{(8,1)}\pt\rrr$ coincides with the local tangency invariant $N_{\CP^2,3[L]}\lll \T^{(8)}\pt \rrr = 4$ from \cite{McDuffSiegel_counting}, whereas the corresponding absolute Gromov--Witten invariant is
$N_{\bl^8 \CP^2,3[L]-e_1-\cdots-e_8} = 12$.
Incidentally, the ``combining constraints'' formula in \cite[\S4.2]{McDuffSiegel_counting} 
describes precisely how the $12$ curves in $\calM_{\bl^8\CP^2,3[L]-e_1-\dots-e_8}^{\wt{J}'}$ degenerate as we deform a generic $\wt{J}' \in \calJ(\bl^8\CP^2)$ to the nongeneric blowup almost complex structure $\wt{J} \in \calJ(\bl^8\CP^2)$, with $4$ of them landing in $\calM_{\bl^8\CP^2,3[L]-e_1-\dots-e_8}^{\wt{J}}$ and the remaining $8$ limiting to reducible configurations in $\ovl{\calM}_{\bl^8\CP^2,3[L]-e_1-\dots-e_8}^{\wt{J}}$.
For a different approach to this question via curves with negative ellipsoidal ends see 
\cite[Prop.3.5.1]{Ghost}.
\end{example}

\begin{rmk}
Let $m_1^\hor,\dots,m_{L_\hor}^\hor$ and $m_1^\ver,\dots,m_{L_\ver}^\ver$ denote the self-intersection numbers of $\F_1^\hor,\dots,\F_{L_\hor}^\hor$ and $\F_1^\ver,\dots,\F_{L_\ver}^\ver$ respectively. As explained in \cite[\S2.1]{golla2019symplectic}, these give the  {\em negative} (a.k.a. Hirzebruch--Jung) continued fraction expansions of $\tfrac{p}{p-q}$ and $\tfrac{q}{q-r(p,q)}$ respectively, where $r(p,q)$ is the remainder of the division of $p$ by $q$. 
Namely, we have $\tfrac{p}{p-q} = [m_1^\hor,\dots,m_{L_\hor}^\hor]^-$ and $\tfrac{q}{q-r(p,q)} = [m_1^\ver,\dots,m_{L_\ver}^\ver]^-$, 
where in general we put
\begin{align*}
[c_1,\dots,c_k]^- := c_1-\cfrac{1}{c_2 - \cfrac{1}{\ddots - \cfrac{1}{c_k}}}.
\end{align*}
If $p/q = [c_1,\dots,c_k]^-$, then the cyclic quotient surface singularity\footnote{This is modeled on the quotient of $\C^2$ by the action $(z_1,z_2) \mapsto (e^{2\pi i/p}z_1,e^{2\pi i q/p}z_2)$ of the group of $p$th roots of unity.} $\tfrac{1}{p}(1,q)$ has a resolution which introduces a chain of spheres with self-intersection numbers $-c_1,\dots,-c_k$.
\end{rmk}
\begin{example}\label{ex:cyc_quo_res}
Let $\ell \subset \R^2$ denote the line passing through $(q,0)$ and $(0,p)$, and let $\Omega$ denote the set of points in the first quadrant $\R_{\geq 0}^2$ which lie on or above $\ell$.
Consider the noncompact toric surface $X_{(q,p)}$ with moment map polytope $\Omega$.
Then $X_{(q,p)}$ is a weighted blow up of $\C^2$ at the origin, and it has two cyclic quotient singularities $\tfrac{1}{p}(1,p-q)$ and $\tfrac{1}{q}(1,q-r(p,q))$.
We can resolve both of these by toric blowups, giving a smooth surface $\wt{X}_{(q,p)}$ having two chains of spheres with self-intersection numbers $(-m_1^\hor,\dots,-m_{L_\hor}^\hor)$ and $(-m_1^\ver,\dots,-m_{L_\ver}^\ver)$ respectively.
These chains are joined by the $(-1)$-sphere in $\wt{X}_{(q,p)}$ which is the proper transform of the toric divisor in $X_{(q,p)}$ lying over the slant edge in $\Omega$.
\end{example}

Observe that in Example~\ref{ex:cyc_quo_res} the resolved surface $\wt{X}_{(q,p)}$ bears striking resemblance to the result of embedded resolution of singularities for a curve with a $(p,q)$ cusp at the origin.
This offers the following perspective on Theorem~\ref{thmlet:cusp=ell}.
Put $\Ddiv_1^\std = \{z_1 = 0\}$ and $\Ddiv_2^\std = \{z_2 = 0\}$.
Given a local curve $C \subset \C^2$ which strictly satisfies $\lll \CC_{(\Ddiv_1^\std,\Ddiv_2^\std)}^{(p,q)}(0,0) \rrr$,
by iteratively blowing up we arrive at the normal crossing resolution $\wt{C} \subset \bl^L \C^2 \cong \wt{X}_{(q,p)}$,
which intersects the $(-1)$-sphere $\F_L$ in one point and is disjoint from the other spheres $\F_1,\dots,\F_{L-1}$.
Then after blowing down $\F_1,\dots,\F_{L-1}$ we get a smooth curve $C'$ in $X_{(q,p)}$ which is disjoint from the two orbifold points.
At least heuristically, this is akin to a curve in the negative symplectic completion of $\C^2 \setminus \intE(q,p)$ with negative end asymptotic to one of the Reeb orbits in the family from Remark~\ref{rmk:orbit_family}. 
This picture easily extends to any closed symplectic manifold $M^4$, noting that all relevant blowups can taken in a small neighborhood of the image of an ellipsoid embedding $E(\eps q,\eps p) \hooksymp M$ for $\eps > 0$ sufficiently small.

\subsection{Degenerating to the nongeneric blowup}\label{subsec:deg_to_nongen_bl}

The goal of this subsection is to prove Theorem~\ref{thmlet:per_exc}. As before, let $M^4$ be a closed symplectic manifold, and let $J \in \calJ(M)$ be a tame almost complex structure which is integrable near a point $\po \in M$.
Let $p > q$ be relatively prime positive integers, and put $\weight(p,q) = (m_1,\dots,m_L)$.
Let $\Ddiv \subset M$ be a smooth local $J$-holomorphic divisor passing through $\po$, and put
$\res_{(p,q)}(M) := \res_{(p,q)}^{\Ddiv,\po}(M,J)$.

Recall that we have the identification
\begin{align*}
H_2(\res_{(p,q)}(M)) \cong H_2(M) \oplus \mathbb{Z} \langle e_1,\dots,e_L\rangle.
\end{align*}
In particular, note that $H_2(\res_{(p,q)}(M))$ depends only on $H_2(M)$ and $(p,q)$ and not on the data $J,\Ddiv,\po$.
A general element in $H_2(\res_{(p,q)}(M))$ takes the form $A = B - k_1e_1 - \cdots - k_Le_L$ for some $k_1,\dots,k_L \in \Z$, with first Chern number $c_1(A) = c_1(B) - k_1 - \cdots - k_L$ and self-intersection number $A \cdot A = B \cdot B - k_1^2 - \cdots - k_L^2$.

\begin{definition}\label{def:exc_sphere}
A homology class $B \in H_2(M)$ is {\bf exceptional} if it satisfies $c_1(B) = 1$ and $B \cdot B = -1$, and $B$ is represented by a symplectically embedded two-sphere in $M$.
\end{definition}

\NI A priori the last condition in Definition~\ref{def:exc_sphere} is nontrivial to check, but for blowups of $\CP^2$ there is a purely combinatorial characterization in terms of elementary Cremona transformations (see \cite[Prop. 1.2.12]{McDuff-Schlenk_embedding}).

\begin{definition}
A homology class $A \in H_2(M)$ is {\bf $\mathbf{(p,q)}$-perfect exceptional}\footnote{This usage differs slightly from other sources.} if 
$\wt{A} := A - m_1e_1 - \cdots - m_Le_L \in H_2(\res_{(p,q)}(M))$ is an exceptional class.
\end{definition}
\NI Note that a $(p,q)$-perfect exceptional class $A$ satisfies $c_1(A) = p + q$.

A standard fact about exceptional homology classes $B \in H_2(M)$ is that the corresponding Gromov--Witten invariant $N_{M,B}$ is equal to $1$.
\begin{lemma}\label{lem:exc_GW_1}
Let $M^4$ be a closed symplectic four-manifold, and suppose that $B \in H_2(M)$ is an exceptional class. Then we have $N_{M,B} = 1$, and in particular $\ovl{\calM}_{M,B}^J \neq \nil$ for any $J \in \calJ(M)$. 
\end{lemma}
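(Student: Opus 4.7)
The plan is to exploit the four-dimensional setting in two crucial ways: the embedded symplectic sphere $S \subset M$ representing $B$ lets us construct a specific tame almost complex structure for which $S$ is holomorphic, while positivity of intersections together with automatic transversality pins down the entire moduli space.

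First I would pick a tame $J_0 \in \calJ(M)$ which preserves $S$, so that $S$ becomes a $J_0$-holomorphic curve. Since $n = 2$ and $c_1(B) = 1$, the expected dimension of $\calM_{M,B}$ is $2c_1(B) + 2(n-3) = 0$. By automatic transversality for embedded spheres in four-manifolds (see e.g. \cite{Wendl_aut}), $S$ is regular at $J_0$. For any other curve $C \in \calM_{M,B}^{J_0}$, positivity of intersections forces $[C] \cdot [S] \geq 0$ whenever $C$ and $S$ have distinct images; but $[C] \cdot [S] = B \cdot B = -1$, so $C$ and $S$ share an image. Writing $[C] = d[S] = dB$ with $d \geq 1$ and using $[C] = B$ then forces $d = 1$, i.e. $C = S$. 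The same homological argument rules out stable-map degenerations in class $B$: some component would have strictly negative intersection with $S$, forcing it to share the image of $S$ and leaving no positive symplectic area for the remaining components. Thus $\ovl{\calM}_{M,B}^{J_0} = \{S\}$, and $S$ contributes $+1$ to the signed count via its complex orientation.

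To deduce $N_{M,B} = 1$, I would connect $J_0$ to a generic $J_1 \in \calJ(M)$ by a generic path $\{J_t\}_{t \in [0,1]}$. Semipositivity is automatic in dimension four, so along a generic path every simple $J_t$-holomorphic sphere satisfies $c_1 \geq 1$. A stable-map limit in class $B$ with $k \geq 2$ components in classes $B_1,\dots,B_k$ would have $\sum c_1(B_i) = c_1(B) = 1$ with each $c_1(B_i) \geq 1$, which is impossible. Thus the parametrized moduli space $\calM_{M,B}^{\{J_t\}}$ is a compact smooth cobordism between $\{S\}$ and $\calM_{M,B}^{J_1}$, giving $N_{M,B} = 1$.

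Finally, for arbitrary $J \in \calJ(M)$ the nonemptiness of $\ovl{\calM}_{M,B}^J$ follows by approximating $J$ by generic $J_n$, selecting curves $C_n \in \calM_{M,B}^{J_n}$ (guaranteed by $N_{M,B} = 1$), and invoking Gromov compactness with the uniform energy bound $\En(C_n) = \omega_M \cdot B$. The main technical hurdle is the compactness of the parametrized cobordism in the previous step, which is precisely where semipositivity in dimension four and the simple-curve index count do the essential work; in higher dimensions the analogous statement would require genuine stabilization and cannot be proved by this elementary route.
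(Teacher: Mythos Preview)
Your proof is correct and follows essentially the same approach as the paper, which only provides a brief sketch after the lemma (positivity of intersections plus $B\cdot B=-1$ forces uniqueness, and the symplectically embedded representative furnishes a $J_0$ making $S$ holomorphic). You have filled in the details the paper omits: automatic transversality at $J_0$, the cobordism to a generic $J_1$ ruling out bubbling via $\sum c_1(B_i)=1$ with each $c_1(B_i)\geq 1$, and the Gromov compactness argument for arbitrary $J$.
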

\NI The basic idea is that using $B \cdot B = -1$ and positivity of intersections there cannot be two irreducible $J$-holomorphic spheres in the same exceptional class. Meanwhile, using the fact that $B$ has a symplectically embedded representative we can engineer a choice of almost complex structure $J$ which preserves it.
Also, any irreducible pseudoholomorphic sphere in an exceptional class $B$ is necessarily nonsingular by the adjunction formula.
Note that we have $\ovl{\calM}_{M,B}^J = \calM_{M,B}^J$ for generic $J \in \calJ(M)$ (but not necessarily for arbitrary $J$).

\sss

Observe that the almost complex structure $\wt{J}$ on $\res_{(p,q)}(M)$ is {\em not} generic, since it preserves the spheres $\F_1,\dots,\F_{L-1}$ which all have negative index.
Therefore a priori an exceptional class $\wt{A} \in H_2(\res_{(p,q)}(M))$ might be represented by a degenerate configuration, in which case it does not necessarily blow down to a nice curve in $M$.
The next lemma, which is the final  ingredient to prove Theorem~\ref{thmlet:per_exc}, shows that this cannot occur provided that $J$ is generic away from $\po$.
\begin{lemma}\label{lem:per_exc_generic_J}
  Let $M^4$ be a closed symplectic four-manifold, and let $A \in H_2(M)$ be an $(p,q)$-perfect exceptional class for some $p > q$ relatively prime.
Let $\Ddiv$ be smooth local symplectic divisor passing through $\po \in M$, and suppose that $J \in \calJ(M)$ is integrable near $\po$, preserves $\Ddiv$, and is otherwise generic.
Put $\wt{M} := \res_{(p,q)}^{\Ddiv,\po}(M,J)$ with its induced almost complex structure $\wt{J} \in \calJ(\wt{M})$, and put $\wt{A} := A - m_1 e_1 - \cdots - m_Le_L$, where $\weight(p,q) = (m_1,\dots,m_L)$.
Then we have $\ovl{\calM}_{\wt{M},\wt{A}}^{\wt{J}} = \calM_{\wt{M},\wt{A}}^{\wt{J}}$.
\end{lemma}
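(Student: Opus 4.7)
The plan is to argue by contradiction. Suppose $\wt{C}_\infty \in \ovl{\calM}_{\wt{M},\wt{A}}^{\wt{J}}$ is a stable $\wt{J}$-holomorphic map in class $\wt{A}$ that is not a single embedded sphere representing $\wt{A}$. Decompose $\wt{C}_\infty$ into underlying simple components $D_1,\dots,D_l$ with multiplicities $\kappa_1,\dots,\kappa_l$, so that $\sum_i \kappa_i [D_i] = \wt{A}$ in $H_2(\wt{M})$. I classify each $D_i$ as either (i) a \emph{resolution component}, meaning $D_i = \F_j$ for some $j$, or (ii) a \emph{principal component}, whose image is not contained in the exceptional chain $\F_1 \cup \cdots \cup \F_L$. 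Using the relations $\wt{A} \cdot [\F_L] = 1$ and $\wt{A} \cdot [\F_j] = 0$ for $j < L$, combined with positivity of intersections, I extract strong constraints on how these components can appear: in particular, exactly one principal component $D_1$ meets $\F_L$, and does so with $\kappa_1 = 1$ at a single point transversely.

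Next, I project $\wt{C}_\infty$ down to a stable $J$-holomorphic map in $M$ of total class $A$. The projection $\pi(D_1)$ is a rational curve passing through $\po$ which strictly satisfies some tangency constraint $\lll \CC^{\vecm_1}_{(\Ddiv_1,\Ddiv_2)} \po \rrr$ with $\vecm_1 \in \Z_{\geq 1}^2$, $\vecm_1 \leq (p,q)$ componentwise, while each other principal image $\pi(D_i)$ is either disjoint from $\po$ or passes through $\po$ with its own tangency constraint $\vecm_i$. By genericity of $J$ on $M \setminus \{\po\}$ together with automatic transversality for curves through $\po$, the simple projected components disjoint from $\po$ are regular with $c_1 \geq 1$, while any principal components passing through $\po$ correspond to a nontrivial decomposition of the original constraint $\lll \CC^{(p,q)}\po\rrr$. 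Proposition~\ref{prop:hid_constr} then supplies the hidden-constraint inequality $\sum_i \min(a m^i_1, b m^i_2) \geq \min(ap,bq)$ for every $a,b > 0$. Combining this with $c_1(\wt{A}) = 1$, $c_1(A) = p+q$, and the weight-sequence identities $\sum_j m_j = p+q-1$ and $\sum_j m_j^2 = pq$ forces the decomposition to collapse to the single component $D_1$, which is then embedded and in class $\wt{A}$ by adjunction. In effect, the argument mirrors the proof of Theorem~\ref{thm:N_robust} after pulling it back through the bijection of Proposition~\ref{prop:res_of_sing_bij}.

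The main obstacle is controlling configurations in which some components coincide with the resolution spheres $\F_j$ for $j < L$, since these have $c_1([\F_j]) = 2 + [\F_j] \cdot [\F_j] \leq 0$ and could in principle offset positive index contributions elsewhere. Here the key input is the combinatorial rigidity of the box diagram $\bx(p,q)$: the coefficients of the $e_j$'s in $\wt{A}$ are precisely the weights $m_j$, and positivity of intersections between $\F_j$-covers and the principal components forces the multiplicities of the resolution components in any alleged decomposition to be uniquely determined by how the principal components intersect each $\F_j$. A case analysis proceeding inductively along the chain $\F_1,\dots,\F_L$ as prescribed by $\bx(p,q)$ reduces to the situation in which no resolution components are present, completing the contradiction. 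The hardest technical step is verifying that this combinatorial matching indeed rules out every potential "extra" resolution contribution, since the negative first Chern numbers of the $\F_j$ for $j < L$ mean that naive index counting alone is insufficient and one must use positivity of intersections in a delicate iterative fashion.
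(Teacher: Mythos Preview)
Your plan takes a quite different route from the paper, and while the ingredients you invoke are reasonable, the argument as sketched has real gaps and is significantly harder than necessary.

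The paper's proof avoids projection to $M$, hidden constraints, and box-diagram combinatorics entirely. Instead it exploits directly that $\wt{A}$ is an exceptional class with $\wt{A}\cdot\wt{A}=-1$. Given a reducible $C\in\ovl{\calM}^{\wt{J}}_{\wt{M},\wt{A}}$, write $C=C_1\cup C_2$ where $C_2$ collects the components covering $\F_1,\dots,\F_L$ and $C_1$ the rest. One checks $[C_2]\cdot\wt{A}\geq 0$ since each $[\F_i]\cdot\wt{A}\geq 0$. Now perturb $\wt{J}$ to a generic $\wt{J}'$: the components of $C_1$ (whose underlying simple curves are regular since $\wt{J}$ is generic away from the chain) deform to a $\wt{J}'$-curve $C_1'$, and separately there is an embedded $\wt{J}'$-sphere $C'$ in class $\wt{A}$ by Lemma~\ref{lem:exc_GW_1}. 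After checking that no component of $C_1'$ has the same image as $C'$, positivity of intersections gives
\[
0\leq [C_1']\cdot[C']=(\wt{A}-[C_2])\cdot\wt{A}=-1-[C_2]\cdot\wt{A}\leq -1,
\]
a contradiction. That is the whole argument.

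Regarding your plan, two concrete problems. First, your assertion that ``exactly one principal component $D_1$ meets $\F_L$, with $\kappa_1=1$ transversely'' does not follow from $\wt{A}\cdot[\F_L]=1$: resolution components also contribute to this intersection number, with $[\F_L]\cdot[\F_L]=-1$ and $[\F_j]\cdot[\F_L]=+1$ for each $\F_j$ adjacent to $\F_L$ in the chain, so the bookkeeping is not as clean as you suggest. Second, the entire weight of your argument rests on the ``case analysis proceeding inductively along the chain'' which you acknowledge is the hardest step but leave completely unspecified; because the $\F_j$ for $j<L$ have negative first Chern number, index arguments alone genuinely do not suffice, and it is not clear your inductive scheme can be made to terminate without essentially reproving the intersection-theoretic fact the paper uses directly. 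The paper's perturbation trick sidesteps all of this by trading the nongeneric $\wt{J}$ for a generic $\wt{J}'$ where the exceptional class is represented by an honest embedded sphere and $\wt{A}^2=-1$ does the work.
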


Deferring the proof for the moment, we complete the proof of Theorem~\ref{thmlet:per_exc}.
\begin{proof}[Proof of Theorem~\ref{thmlet:per_exc}]
Pick any smooth local divisor divisors $\vecD = (\Ddiv_1,\Ddiv_2)$ which span at a point $\po$, and generic $J \in \calJ(M,\vecD)$.
We assume $p \geq q$ without loss of generality, and 
put $\wt{M} := \res_{(p,q)}^{\Ddiv_1,\po}(M,J)$ with its induced almost complex structure $\wt{J}$.

Suppose first that there is an index zero rational $(p,q)$-unicuspidal symplectic curve $C$ in $M$ in homology class $A$. By Proposition~\ref{prop:res_of_sing_bij} and Theorem~\ref{thmlet:sing_symp_curve} we have 
$$\#\calM^{\wt{J}}_{\wt{M},\wt{A}} = \# \calM_{M,A}^J \lll \CC^{(p,q)}_{\vecD}\po\rrr = N_{M,A}\lll \CC^{(p,q)}\pt\rrr > 0.$$
In particular, $\wt{A}$ is represented by a symplectically embedded sphere in $\wt{M}$.
The fact that $C$ has index zero translates into $c_1(\wt{A}) = 1$, while $C$ being $(p,q)$-unicuspidal translates into $\wt{A} \cdot \wt{A} = -1$. 

Conversely, suppose that $A \in H_2(M)$ is a $(p,q)$-perfect exceptional, i.e. 
$\wt{A} = A - m_1e_1 - \cdots - m_Le_L \in H_2(\wt{M})$ is an exceptional class.
By Lemma~\ref{lem:per_exc_generic_J} we have $\ovl{\calM}_{\wt{M},\wt{A}}^{\wt{J}} = \calM_{\wt{M},\wt{A}}^{\wt{J}}$, so by Lemma~\ref{lem:exc_GW_1} we have
\begin{align*}
\#\calM_{\wt{M},\wt{A}}^{\wt{J}} =  N_{\wt{M},\wt{A}} = 1.
\end{align*}
By Proposition~\ref{prop:res_of_sing_bij} and genericity of $J$ we thus have
\begin{align*}
N_{M,A}\lll \CC^{(p,q)}\pt \rrr = \# \calM_{M,A}^J \lll \CC_{\vecD}^{(p,q)}\po\rrr = \# \calM_{\wt{M},\wt{A}}^{\wt{J}} = 1.
\end{align*}
Finally, by Theorem~\ref{thmlet:sing_symp_curve} there exists a rational $(p,q)$-sesquicuspidal symplectic curve in $M$ lying in homology class $A$, and this is necessarily unicuspidal by the adjunction formula.

\end{proof}

\begin{rmk}
If $M^4$ is a smooth complex projective surface, it is not a priori clear whether the analogue of Lemma~\ref{lem:per_exc_generic_J} holds for its prefered integrable almost complex structure, which is not necessarily generic. However, if it does, then the argument in the proof of Theorem~\ref{thmlet:per_exc} shows that any perfect exceptional class corresponds to an {\em algebraic} unicuspidal curve.
\end{rmk}

\begin{proof}[Proof of Lemma~\ref{lem:per_exc_generic_J}]
As in \S\ref{subsec:res_sing}, let $\F_1,\dots,\F_L$
denote the $\wt{J}$-holomorphic spheres in $\wt{M}$ which project to $\po$ under the blowup map $\wt{M} \ra M$, with
$[\F_i] = e_i -c^i_{i+1}e_{i+1} - \cdots - c_L^i e_L$ with $c_j^i \in \{0,1\}$ for $1 \leq i < j \leq L$.

Given $C \in \ovl{\calM}_{\wt{M},A}^{\wt{J}}$, it suffices to show that $C$ is irreducible. 
Suppose by contradiction that $C$ has more than one component.
Since $\wt{J}$ is generic outside of a small neighborhood of $\F_1 \cup \cdots \cup \F_L$, by index considerations we can assume that at least one component of $C$ covers one of $\F_1,\dots,\F_L$.
Put $C = C_1 \cup C_2$, where no component of $C_1$ is contained in $\F_1 \cup \cdots \cup \F_L$ and each component of $C_2$ is contained in $\F_1 \cup \cdots \cup \F_L$, with $C_1,C_2$ both nonempty.

\begin{claim}
  We have $[C_2] \cdot \wt{A} \geq 0$.
\end{claim}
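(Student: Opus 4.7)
The plan is to compute $[C_2]\cdot\wt{A}$ directly, using the structure of $C_2$ as a union of components contained in the exceptional configuration $\F_1\cup\cdots\cup\F_L$, together with the known intersection numbers of $\wt{A}$ with these spheres that were already recorded at the start of \S\ref{subsec:rel_GW}.

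First I would observe that every irreducible component of $C_2$ must be a branched cover of a single $\F_i$. Indeed, the $\F_i$ are the distinct irreducible components of the (reduced) analytic variety $\F_1\cup\cdots\cup\F_L$, meeting transversally at isolated points according to the box diagram. Any nonconstant $\wt{J}$-holomorphic map from an irreducible Riemann surface into $\F_1\cup\cdots\cup\F_L$ has connected, hence irreducible, image, so its image is contained in some single $\F_i$. Collecting the total covering multiplicities onto each $\F_i$, we may therefore write
\begin{align*}
[C_2] \;=\; \sum_{i=1}^L k_i\,[\F_i] \quad\text{with}\quad k_i\in\Z_{\geq 0}.
\end{align*}

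Next I would invoke the intersection numbers $\wt{A}\cdot[\F_L]=1$ and $\wt{A}\cdot[\F_i]=0$ for $i=1,\dots,L-1$, which hold because $\wt{A}=A-m_1e_1-\cdots-m_Le_L$ is, by construction of $\res_{(p,q)}(M)$, the homology class of the proper transform of a curve with a $(p,q)$ cusp at $\po$ and prescribed tangency to $\Ddiv$; these intersection numbers were recorded at the beginning of \S\ref{subsec:rel_GW}. Combining with the previous step yields
\begin{align*}
[C_2]\cdot\wt{A} \;=\; \sum_{i=1}^L k_i\,[\F_i]\cdot\wt{A} \;=\; k_L \;\geq\; 0,
\end{align*}
which is the claim. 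There is no real obstacle here; the only point worth checking is that the decomposition of $[C_2]$ has nonnegative coefficients, which is automatic since the $k_i$ are covering multiplicities of honest $\wt{J}$-holomorphic covers of the $\F_i$'s.
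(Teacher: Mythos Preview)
Your proof is correct and follows essentially the same approach as the paper: both arguments reduce to the intersection numbers $[\F_i]\cdot\wt{A}=0$ for $i<L$ and $[\F_L]\cdot\wt{A}=1$, after observing that $[C_2]$ is a nonnegative combination of the $[\F_i]$. The paper's version is terser, leaving the decomposition of $[C_2]$ implicit, while you spell it out explicitly.
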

\begin{proof}
  It suffices to show that $[\F_i] \cdot \wt{A} \geq 0$ for $i = 1,\dots,L$.
Recall that we have $[\F_i] \cdot \wt{A} = 0$ for $i = 1,\dots,L-1$ and $[\F_L] \cdot \wt{A} = 1$.
\end{proof}

Let $\wt{J}' \in \calJ(\wt{M})$ be a sufficiently small generic perturbation of $\wt{J}$ (note that $\F_1,\dots,\F_L$ are {\em not} $\wt{J}'$-holomorphic).
Then we can deform $C_1$ to a $\wt{J}'$-holomorphic curve $C_1'$ with $[C_1'] = [C_1] \in H_2(\wt{M})$.
Indeed, since $\wt{J}$ is generic outside of a small neighborhood of $\F_1\cup\cdots\cup \F_m$, we can assume that the underlying simple curve of each component $C_1$ is regular, and hence unobstructed under small perturbations of $\wt{J}$.

Moreover, since $\wt{A} \in H_2(\wt{M})$ is an exceptional class, by Lemma~\ref{lem:exc_GW_1} and genericity of $\wt{J}'$ there exists a $\wt{J}'$-holomorphic sphere $C' \in \calM_{\wt{M},\wt{A}}^{\wt{J}'}$.
\begin{claim}
$C_1'$ must intersect $C'$ in isolated points, i.e. $C_1'$ does not have any component with the same image as $C'$.  
\end{claim}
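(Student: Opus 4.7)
The plan is to derive a contradiction from the assumption that some component $D$ of the $\wt{J}'$-holomorphic stable map $C_1'$ has image equal to $C'$, using a simple area comparison. The crucial input from the preceding setup is that $C_1'$ and $C$ have the same total homology class up to $[C_2]$, namely $[C_1']=[C_1]=\wt{A}-[C_2]$.

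First I would note that since $\wt{A}\cdot\wt{A}=-1$, any pseudoholomorphic $\kappa$-fold cover of a simple curve in a primitive sub-class would have self-intersection $-\kappa^2$, forcing $\kappa=1$; equivalently, $\wt{A}$ is primitive in $H_2(\wt{M})$. Combined with the adjunction formula and Lemma~\ref{lem:exc_GW_1}, for generic $\wt{J}'$ the curve $C'$ is a simply covered embedded sphere. Consequently the hypothetical $D\subseteq C_1'$ must be a $\kappa$-fold branched cover of $C'$ for some $\kappa\in\Z_{\geq 1}$, so $[D]=\kappa\wt{A}$ in $H_2(\wt{M})$.

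The main step is the area comparison. Let $\omega$ denote the symplectic form on $\wt{M}$. Since $D$ is one component of the $\wt{J}'$-holomorphic stable map $C_1'$, any other components contribute non-negative $\omega$-area, so
\begin{align*}
\kappa\,\omega\cdot\wt{A} \;=\; \omega\cdot[D] \;\leq\; \omega\cdot[C_1'] \;=\; \omega\cdot\wt{A} - \omega\cdot[C_2].
\end{align*}
Rearranging gives $(\kappa-1)\,\omega\cdot\wt{A} + \omega\cdot[C_2] \leq 0$. But $\omega\cdot\wt{A}>0$ because $\wt{A}$ is represented by a symplectically embedded sphere (Definition~\ref{def:exc_sphere}), and $\omega\cdot[C_2]>0$ because the non-empty $\wt{J}$-holomorphic stable map $C_2$ has each component a positive-multiplicity cover of one of the symplectic spheres $\F_1,\dots,\F_L$, each of which has strictly positive $\omega$-area. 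Since $\kappa\geq 1$, the left-hand side is strictly positive, which is absurd.

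The main delicate point is checking that $C'$ is simply covered and embedded; this is essentially automatic from $\wt{A}\cdot\wt{A}=-1$ together with adjunction, as indicated above. Everything else is bookkeeping of symplectic areas, and the argument does not need to track where $C'$ Gromov-converges as $\wt{J}'\to\wt{J}$, nor the precise list of other components of $C_1'$.
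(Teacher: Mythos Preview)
Your proof is correct, and it takes a genuinely simpler route than the paper's. The paper argues by first invoking Lemma~\ref{lem:ce_i} to classify the possible homology classes of components of $C_1'$ (either $ce_i$ or a class projecting to $A'\in H_2(M)$ with $c_1(A')\geq 1$), then rules out the $ce_i$ case via positivity of intersections with $\F_i$, and finally derives a contradiction by projecting to $H_2(M)$ and comparing first Chern numbers. Your argument bypasses all of this: once you observe that such a component $D$ would represent $\kappa\wt{A}$, the symplectic area inequality $\kappa\,\omega\cdot\wt{A}\leq \omega\cdot\wt{A}-\omega\cdot[C_2]$ immediately yields a contradiction, since $\omega$ tames both $\wt{J}$ and (being a small perturbation) $\wt{J}'$, so every term on the left is nonnegative and $\omega\cdot[C_2]>0$. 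Your approach is more elementary and does not require the auxiliary Lemma~\ref{lem:ce_i}; the paper's approach, on the other hand, extracts a bit more structural information about the components of $C_1'$ along the way, though this is not actually needed for the present claim.
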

\begin{proof}
We will argue using Lemma~\ref{lem:ce_i} below that no component of $C_1'$ can lie in a homology class which is a nonzero multiple of $\wt{A}$.
Firstly, note that no component of $C_1'$ can lie in a homology class of the form $ce_i$ for some $c \in \Z_{\geq 1}$ and $i \in \{1,\dots,L\}$. Indeed, by the way $C_1'$ was constructed we would then have a component $Q$ of $C_1$ of the same form, but
then we have 
\begin{align*}
[Q] \cdot [\F_i] = ce_i \cdot (e_i - c_{i+1}^ie_{i+1} - \cdots - c_L^ie_i) = -c,
\end{align*}
which violates positivity of intersections.

Now suppose by contradiction that $C_1'$ has a component $Q'$ such that $[Q'] = k\wt{A}$ for some nonzero $k \in \Z$, and let $B$ denote the total homology class of the remaining components of $C_1'$, so that we have
$[C_1'] = k\wt{A} + B$.
Note that the natural projection map $\pi: H_2(\wt{M}) \ra H_2(M)$ satisfies $\pi(\wt{A}) = A$ and $\pi([C_2]) = 0$ and hence $\pi([C_1']) = \pi([C_1]) = A$, so we have
\begin{align*}
c_1(A) = c_1(\pi([C_1'])) = kc_1(A) + c_1(\pi(B)).
\end{align*}
By Lemma~\ref{lem:ce_i} we have $c_1(A),c_1(\pi(B)),c_1(kA) \geq 1$, and hence $k \geq 1$, which is a contradiction.
\end{proof}

Finally, by positivity of intersections we have $[C_1] \cdot [C_2] \geq 0$ and $[C_1'] \cdot [C'] \geq 0$, and hence 
\begin{align*}
0 \leq [C_1'] \cdot [C'] = (\wt{A} - [C_2]) \cdot \wt{A} = -1 - [C_2] \cdot \wt{A}  \leq -1,
\end{align*}
which is a contradiction.
\end{proof}
\begin{lemma}\label{lem:ce_i}
  Any $\wt{J}'$-holomorphic curve in $\wt{M}$ represents a homology class which is either of the form (i) $ce_i$ for some $c \in \Z_{\geq 1}$ and $i \in \{1,\dots,L\}$, or else it is of the form (ii) $A' - k_1e_1 - \cdots - k_Le_L$, where $k_1,\dots,k_L \in \Z_{\geq 0}$ and $A' \in H_2(M)$ satisfies $c_1(A') \geq 1$.
\end{lemma}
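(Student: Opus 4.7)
The plan is to apply positivity of intersections against each of the classes $e_1,\ldots,e_L$, for which the first task is to show that each $e_i$ is represented by a $\wt{J}'$-holomorphic sphere. Everything then follows from pairing $C'$ with these representatives.

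The first step is to verify that each $e_i \in H_2(\wt{M})$ is an exceptional class in the sense of Definition~\ref{def:exc_sphere}. Numerically $e_i \cdot e_i = -1$ and $c_1(e_i) = 1$, so it suffices to check $\omega(e_i) > 0$ and then produce an embedded symplectic representative. Since each $\F_i$ is a symplectic sphere we have $\omega(\F_i) > 0$, and the identity $[\F_i] = e_i - \sum_{j>i} c_j^i e_j$ rearranges to $\omega(e_i) = \omega(\F_i) + \sum_{j>i} c_j^i \omega(e_j)$. A downward induction starting from $i=L$ (where $[\F_L] = e_L$) then yields $\omega(e_i) > 0$ for every $i$, after which representability by a symplectically embedded sphere follows from the standard existence theorem for numerical exceptional classes in symplectic four-manifolds (via Taubes SW$=$Gr). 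Lemma~\ref{lem:exc_GW_1} combined with genericity of $\wt{J}'$ then produces, for each $i$, a unique $\wt{J}'$-holomorphic sphere $E_i' \subset \wt{M}$ of class $e_i$.

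The second step is to split on whether the image of $C'$ coincides with some $E_i'$. If so, $C'$ is a branched cover of $E_i'$ and $[C'] = c\,e_i$ for some $c \geq 1$, giving conclusion (i). Otherwise, $C'$ has image distinct from each $E_i'$, and positivity of intersections applied to the pair of $\wt{J}'$-holomorphic curves $C'$ and $E_i'$ yields $k_i = [C'] \cdot e_i \geq 0$ for every $i$.

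To finish conclusion (ii) it remains to rule out $A' = 0$ and to establish $c_1(A') \geq 1$. If $A' = 0$, then $[C'] = -\sum k_i e_i$ with $k_i \geq 0$ and at least one $k_i > 0$ (since $C'$ is nonconstant); the symplectic area $-\sum k_i \omega(e_i)$ is then nonpositive, contradicting $\omega([C']) > 0$. For the Chern number bound, recall that for generic $\wt{J}'$ any simple $\wt{J}'$-holomorphic sphere has nonnegative index, hence first Chern number at least $1$ in real dimension four, and multiple covers preserve this bound; writing $c_1([C']) = c_1(A') - \sum k_i \geq 1$ and using $\sum k_i \geq 0$ gives $c_1(A') \geq 1$. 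The main technical point is the representability of each $e_i$ by an embedded symplectic sphere in the presence of infinitely-near blowup centers, which is handled uniformly by the area induction in the first step.
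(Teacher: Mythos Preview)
Your argument is correct and follows essentially the same route as the paper: produce $\wt{J}'$-holomorphic spheres $E_i'$ in each class $e_i$ via Lemma~\ref{lem:exc_GW_1}, then use positivity of intersections to force $k_i \geq 0$ and the generic index bound to get $c_1(A') \geq 1$.

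The only substantive difference is that you work hard to establish that each $e_i$ is an exceptional class (area induction plus an appeal to SW$=$Gr), whereas the paper simply takes this as given.  In the situation at hand this is immediate from the construction: $\wt{M}$ is an iterated symplectic blowup, so after a symplectic deformation to $L$ disjoint small blowups each $e_i$ is visibly represented by an embedded symplectic exceptional sphere.  Invoking SW$=$Gr for this is overkill (and in full generality that existence statement requires hypotheses like $b_2^+ = 1$ that you have not checked).  Your separate step ruling out $A' = 0$ is also redundant, since $c_1(A') \geq 1$ already forces $A' \neq 0$.
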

\begin{proof}
Let $C_0$ be a $\wt{J}'$-holomorphic curve in $\wt{M}$, and put $[C_0] = A' - k_1e_1 - \cdots - k_Le_L$ for some $A' \in H_2(M)$ and $k_1,\dots,k_L \in \Z$. Evidently it suffices to prove the lemma in the case that $C_0$ is simple, whence $\ind(C_0) \geq 0$.
 Since $\wt{J}'$ is generic and $e_1,\dots,e_n \in H_2(\wt{M})$ are exceptional classes, there is a curve component $\exc_i \in \calM_{\wt{M},e_i}^{\wt{J}'}$ for $i = 1,\dots,L$.
If $C_0$ does not cover any of $\exc_1,\dots,\exc_L$ then by positivity of intersections we have
$0 \leq [C_0] \cdot [\exc_i] = k_i$ for $i=1,\dots,L$,
 so we have
\begin{align*}
0 \leq \ind(C_0) = -2 + 2c_1(A') - 2k_1 - \cdots - 2k_L,
\end{align*}
and hence $c_1(A') \geq 1 + k_1 + \cdots + k_L \geq 1$.
\end{proof}

\subsection{The case of the first Hirzebruch surface}\label{subsec:F_1}

Let $F_1$ denote the first Hirzebruch surface, i.e. the one-point blowup of $\CP^2$.
We can also view this as the toric K\"ahler manifold whose moment polygon has vertices $(0,0),(3,0),(1,2),(0,2)$. This choice of normalization makes the symplectic form monotone, although in light of Corollary~\ref{cor:uni_symp_def_invt} we will only be concerned with the symplectic form up to symplectic deformation.

Let $\ell = [L] \in H_2(F_1)$ denote the line class and $e = [\exc] \in H_2(F_1)$ the exceptional class, so that any $A \in H_2(F_1)$ takes the form $d\ell - me$ for some $d,m \in \Z$.
By Theorem~\ref{thmlet:per_exc}, index zero rational unicuspidal symplectic curves in $F_1$ are in bijective correspondence with perfect exceptional classes in $H_2(F_1)$.
Putting
\begin{align*}
\perf(F_1) := \{(p,q,d,m) \;|\; A = d\ell - me \;\text{is a}\;\text{$(p,q)$-perfect exceptional class}\},
\end{align*}
we can reformulate this as: 
\begin{cor}\label{cor:F_1_uni}
There is an index zero $(p,q)$-unicuspidal symplectic curve in $F_1$ in homology class $A = d\ell - me$ if and only if we have $(p,q,d,m) \in \perf(F_1)$.
\end{cor}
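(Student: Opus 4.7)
The plan is to deduce this corollary directly from Theorem~\ref{thmlet:per_exc} applied to $M = F_1$. First I would note that $H_2(F_1) \cong \Z\langle \ell, e\rangle$, so every homology class takes the form $A = d\ell - me$ for a unique pair $(d,m) \in \Z^2$. By construction, the set $\perf(F_1)$ is precisely the set of tuples $(p,q,d,m)$ for which $d\ell - me$ is $(p,q)$-perfect exceptional. Substituting this parametrization into the biconditional of Theorem~\ref{thmlet:per_exc} then immediately yields the asserted equivalence.

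The one point I would address explicitly is the absence of the qualifier ``rational'' in the corollary relative to Theorem~\ref{thmlet:per_exc}. For any $(p,q)$-unicuspidal symplectic curve in class $A$, the adjunction formula yields geometric genus
\[
g = \tfrac{1}{2}(A \cdot A - c_1(A) + 2) - \tfrac{(p-1)(q-1)}{2}.
\]
The index zero condition $c_1(A) = p+q$ reduces this to $g = \tfrac{1}{2}(A \cdot A - pq + 1)$. Meanwhile, the weight sequence $\weight(p,q) = (m_1,\dots,m_L)$ satisfies $\sum_i m_i = p+q-1$ and $\sum_i m_i^2 = pq$ (the latter being visible from the box diagram as the area of $\rect(p,q)$). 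Together these show that $\wt{A} = A - \sum_i m_i e_i$ being exceptional is equivalent to $c_1(A) = p+q$ and $A \cdot A = pq - 1$, and hence to the curve being rational. Thus perfect exceptionality captures exactly the rational unicuspidal case, and the ``only if'' direction in the corollary feeds directly into the corresponding direction in Theorem~\ref{thmlet:per_exc}.

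In summary, the proof will consist of three short steps: invoke Theorem~\ref{thmlet:per_exc} with $M = F_1$; identify $H_2(F_1)$ with $\Z\langle \ell, e\rangle$; and unfold the definition of $\perf(F_1)$. I do not anticipate any substantive obstacle here. The role of this corollary is to repackage the general equivalence into the concrete form on which the subsequent explicit classification (Corollary~\ref{corlet:F_1}) is built.
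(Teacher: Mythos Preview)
Your core approach is correct and matches the paper's: the corollary is simply Theorem~\ref{thmlet:per_exc} specialized to $M = F_1$, with $H_2(F_1) = \Z\langle \ell, e\rangle$ and the definition of $\perf(F_1)$ unfolded. The paper presents it exactly this way, as a direct reformulation.

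Your paragraph about the missing word ``rational'' is misaimed, though. Read literally without the rationality hypothesis, the ``only if'' direction of the corollary would be false: by your own adjunction computation, an index zero $(p,q)$-unicuspidal symplectic curve of genus $g > 0$ in class $A$ has $A \cdot A = pq - 1 + 2g$, so $\wt{A} \cdot \wt{A} = 2g - 1 \neq -1$ and $A$ is \emph{not} $(p,q)$-perfect exceptional. So your argument does not show that the non-rational statement feeds into Theorem~\ref{thmlet:per_exc}; it shows the opposite. The omission of ``rational'' in the corollary is not a strengthening to be justified but simply the paper's standing convention (stated in the introduction) that all curves are genus zero. You should read ``rational'' as implicit and drop that paragraph.
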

Put also
\begin{align*}
\perfbar(F_1) := \{(p,q)\;|\; (p,q,d,m) \in \perf(F_1) \text{ for some } d,m\}.
\end{align*}

A comprehensive description of $\perf(F_1)$ is given in \cite{magill2022staircase}, so together with Corollary~\ref{cor:F_1_uni} this describes all index zero $(p,q)$-unicuspidal rational symplectic curves in $F_1$.
Here we make just a few remarks about $\perf(F_1)$, referring the reader to loc. cit. for more details.
\begin{thm}[\cite{magillmcd2021,magill2022staircase}]\hfill
  \begin{itemize}
  \item The forgetful map $\perf(F_1) \ra \perfbar(F_1)$ is injective, i.e. for each $(p,q)$ there is at most one $(p,q,d,m) \in \perf(F)$. 
  More precisely, for any $(p,q,d,m) \in \perf(F_1)$ we must have
  $d = \tfrac{1}{8}(3p+3q+\eps t)$ and $m = \tfrac{1}{8}(p+q+3\eps t)$, where $t = \sqrt{p^2-6pq + q^2 + 8}$ and $\eps \in \{-1,1\}$ (and one can check that $d,m$ are integers for at most one value of $\eps$).
  \item If $(p,q) \in \perfbar(F_1)$, then we also have $S(p,q) := (6p-q,p) \in \perfbar(F_1)$.
  \item If $(p,q) \in \perfbar(F_1)$ with $p/q > 6$, then we also have $R(p,q) := (6p-35q,p-6q) \in \perfbar(F_1)$.
\end{itemize}
\end{thm}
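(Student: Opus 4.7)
The plan is to establish the three bullets in turn, with the first being essentially algebraic and the latter two requiring geometric input from Cremona theory.

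For the first bullet, I translate the defining conditions $c_1(\wt{A}) = 1$ and $\wt{A} \cdot \wt{A} = -1$ into equations in $p, q, d, m$, using the Chern class formula $c_1 = 3\ell^* - e^* - \sum_{i=1}^L e_i^*$ on $\res_{(p,q)}(F_1)$ together with two elementary weight-sequence identities: $\sum_i m_i = p+q-1$ and $\sum_i m_i^2 = pq$. The first follows by induction on the box-diagram recursion $\weight(p,q) = (q) \cup \weight(p-q, q)$, while the second is simply area-counting, since the squares of side length $m_i$ tile the $p \times q$ rectangle $\rect(p,q)$. The two conditions then become the linear equation $3d - m = p + q$ and the quadratic equation $d^2 - m^2 = pq - 1$. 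Eliminating $m$ yields $8d^2 - 6(p+q)d + (p+q)^2 + pq - 1 = 0$, whose discriminant works out to $4(p^2 - 6pq + q^2 + 8) = 4t^2$; the quadratic formula then gives exactly the stated expressions $d = \tfrac{1}{8}(3(p+q) + \eps t)$ and $m = \tfrac{1}{8}((p+q) + 3\eps t)$. Injectivity of the forgetful map follows from the observation that at most one choice of $\eps \in \{\pm 1\}$ can make both $d$ and $m$ integers, by a case analysis of $(p+q) \bmod 8$ together with the parity of $t$.

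For the $S$ and $R$ bullets, the strategy is to construct, from a perfect exceptional class witnessing $(p, q) \in \perfbar(F_1)$, a new perfect exceptional class witnessing $S(p,q)$ or $R(p,q) \in \perfbar(F_1)$ via explicit Cremona-type automorphisms of $H_2(\bl^N F_1)$ for $N$ sufficiently large. A direct calculation confirms that the quantity $t = \sqrt{p^2 - 6pq + q^2 + 8}$ is invariant under both $S$ and $R$, so by the first bullet the transformed pair $(p', q')$ has candidate values $d', m'$ satisfying the correct numerics; the geometric content is that the new integer homology class is actually represented by an embedded symplectic sphere. Noting that $R$ is an involution (one checks $R^2 = \mathrm{id}$), it should correspond to a single Cremona-type reflection in a suitable root of the lattice, while $S$ (which has determinant $1$ and infinite order) should arise from a specific composition of Cremona moves and reindexings of exceptional divisors.

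The main obstacle will be the bookkeeping required to verify that the Cremona-transformed class has exceptional divisor coefficients that match, up to permutation, the weight sequence of $S(p,q)$ or $R(p,q)$. This amounts to a combinatorial statement about how the continued fraction expansion of $p/q$ (Remark~\ref{rmk:wt_seq_cfe}) transforms under the maps $(p,q) \mapsto (6p-q, p)$ and $(p,q) \mapsto (6p-35q, p-6q)$, or equivalently how the box diagram $\bx(p,q)$ is transmuted into $\bx(S(p,q))$ or $\bx(R(p,q))$. Once this compatibility is established, closure of $\perfbar(F_1)$ under $S$ and $R$ follows immediately, since Cremona transformations preserve the set of exceptional classes in the sense of Definition~\ref{def:exc_sphere}, and the first bullet guarantees that the transformed quadruple $(p', q', d', m')$ is integer-valued.
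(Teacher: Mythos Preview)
The paper does not prove this theorem; it is quoted verbatim from \cite{magillmcd2021,magill2022staircase} with no argument supplied. So there is no ``paper's own proof'' to compare against, and your task was really to reconstruct an argument from scratch.

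Your treatment of the first bullet is correct and complete. The weight-sequence identities $\sum m_i = p+q-1$ and $\sum m_i^2 = pq$ are standard, the elimination leading to the quadratic in $d$ is right, and the discriminant computation checks out. The integrality claim (that at most one sign $\eps$ yields integer $d,m$) is indeed a finite case analysis; you might note more explicitly that if both signs worked then $t/4$ and $3(p+q)/4$ would both be integers, and chase the resulting congruences against $t^2 \equiv (p+q)^2 \pmod 8$, but this is routine.

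For the $S$ and $R$ bullets, however, you have a plan rather than a proof. Your observations that $t$ is invariant under both maps and that $R$ is an involution are correct and useful, but the substantive step---exhibiting an explicit sequence of Cremona moves that carries the weight sequence $\weight(p,q)$ (together with the $d,m$ coefficients) to a permutation of $\weight(S(p,q))$ or $\weight(R(p,q))$---is precisely the content of the cited papers, and you have not carried it out. You yourself flag this as ``the main obstacle,'' which is honest, but it means the argument is incomplete. In particular, the compatibility of continued-fraction expansions under $(p,q) \mapsto (6p-q,p)$ is not automatic: the box diagram of $(6p-q,p)$ begins with five $p\times p$ squares followed by $\bx(p,p-q)$, and relating $\weight(p,p-q)$ to $\weight(p,q)$ already requires a nontrivial recursion. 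The actual proofs in the cited references proceed via a detailed study of ``mutations'' and staircase families rather than a single clean Cremona identity, so do not expect the bookkeeping to collapse to something short.
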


\begin{rmk}
As we show in \cite{CuspsStairs}, the symmetry $S(p,q)$ has a natural geometric interpretation in terms of the generalized Orevkov twist, which is a birational transformation $F_1 \dashrightarrow F_1$. We currently do not of any geometric interpretation of the symmetry $R(p,q)$. It is not currently clear whether every index zero rational $(p,q)$-cuspidal symplectic curve in $F_1$ is realized by an algebraic curve.  
\end{rmk}

\bibliographystyle{math}
\bibliography{biblio}

\newcommand{\etalchar}[1]{$^{#1}$}
\begin{thebibliography}{FdBLMHN}

\bibitem[Bar]{barraud2000courbes}
Jean-Fran{\c{c}}ois Barraud.
\newblock {Courbes pseudo-holomorphes {\'e}quisingulieres en dimension 4}.
\newblock {\em Bulletin de la Soci{\'e}t{\'e} Math{\'e}matique de France} {\bf
  128}(2000), 179--206.

\bibitem[BEH{\etalchar{+}}]{BEHWZ}
Fr{\'e}d{\'e}ric Bourgeois, Yakov Eliashberg, Helmut Hofer, Krzysztof Wysocki,
  and Eduard Zehnder.
\newblock {Compactness results in symplectic field theory}.
\newblock {\em Geom. Topol.} {\bf 7}(2003), 799--888.

\bibitem[CH]{CaH}
Lucia Caporaso and Joe Harris.
\newblock {Counting plane curves of any genus}.
\newblock {\em Invent. Math.} {\bf 131}(1998), 345--392.

\bibitem[CM1]{CM1}
Kai Cieliebak and Klaus Mohnke.
\newblock {Symplectic hypersurfaces and transversality in {G}romov-{W}itten
  theory}.
\newblock {\em J. Symplectic Geom.} {\bf 5}(2007), 281--356.

\bibitem[CM2]{CM2}
Kai Cieliebak and Klaus Mohnke.
\newblock {Punctured holomorphic curves and {L}agrangian embeddings}.
\newblock {\em Invent. Math.} {\bf 212}(2018), 213--295.

\bibitem[CGHMP]{cristofaro2020infinite}
Dan Cristofaro-Gardiner, Tara~S Holm, Alessia Mandini, and Ana~Rita Pires.
\newblock {On infinite staircases in toric symplectic four-manifolds}.
\newblock {\em arXiv:2004.13062} (2020).

\bibitem[CGH]{CGH}
Daniel Cristofaro-Gardiner and Richard Hind.
\newblock {Symplectic embeddings of products}.
\newblock {\em Comment. Math. Helv.} {\bf 93}(2018), 1--32.

\bibitem[CGHM]{Ghost}
Daniel Cristofaro-Gardiner, Richard Hind, and Dusa McDuff.
\newblock {The ghost stairs stabilize to sharp symplectic embedding
  obstructions}.
\newblock {\em J. Topol.} {\bf 11}(2018), 309--378.

\bibitem[EN]{eisenbud1985three}
David Eisenbud and Walter~D Neumann.
\newblock {\em Three-dimensional link theory and invariants of plane curve
  singularities}.
\newblock Princeton University Press, 1985.

\bibitem[EG]{etnyre2020symplectic}
John~B Etnyre and Marco Golla.
\newblock {Symplectic hats}.
\newblock {\em arXiv:2001.08978} (2020).

\bibitem[FdBLMHN]{fernandez2006classification}
Javier Fern{\'a}ndez~de Bobadilla, Ignacio Luengo, Alejandro
  Melle~Hern{\'a}ndez, and Andras N{\'e}methi.
\newblock {Classification of rational unicuspidal projective curves whose
  singularities have one Puiseux pair}.
\newblock pages 31--45. Springer, 2006.

\bibitem[GS]{golla2019symplectic}
Marco Golla and Laura Starkston.
\newblock {The symplectic isotopy problem for rational cuspidal curves}.
\newblock {\em Compositio Mathematica} {\bf 158}(2022), 1595--1682.

\bibitem[GH]{Gutt-Hu}
Jean Gutt and Michael Hutchings.
\newblock {Symplectic capacities from positive {$S^1$}-equivariant symplectic
  homology}.
\newblock {\em Algebr. Geom. Topol.} {\bf 18}(2018), 3537--3600.

\bibitem[HK]{HK}
Richard Hind and Ely Kerman.
\newblock {New obstructions to symplectic embeddings}.
\newblock {\em Invent. Math.} {\bf 196}(2014), 383--452.

\bibitem[Hut1]{Hutchings_quantitative_ECH}
Michael Hutchings.
\newblock {Quantitative embedded contact homology}.
\newblock {\em J. Differential Geom.} {\bf 88}(2011), 231--266.

\bibitem[Hut2]{Hlect}
Michael Hutchings.
\newblock {Lecture notes on embedded contact homology}.
\newblock pages 389--484. Springer, 2014.

\bibitem[HT]{HuT}
Michael Hutchings and Clifford~Henry Taubes.
\newblock {Gluing pseudoholomorphic curves along branched covered cylinders.
  {I}}.
\newblock {\em J. Symplectic Geom.} {\bf 5}(2007), 43--137.

\bibitem[KV]{KV}
Joachim Kock and Israel Vainsencher.
\newblock {\em An invitation to quantum cohomology: Kontsevich's formula for
  rational plane curves}.
\newblock Springer Science \& Business Media, 2007.

\bibitem[KM]{kontsevich1994gromov}
Maxim Kontsevich and Yu~Manin.
\newblock {Gromov-Witten classes, quantum cohomology, and enumerative
  geometry}.
\newblock {\em Communications in Mathematical Physics} {\bf 164}(1994),
  525--562.

\bibitem[MM]{magillmcd2021}
Nicki Magill and Dusa McDuff.
\newblock {Staircase symmetries in Hirzebruch surfaces}.
\newblock {\em Algebr. Geom. Topol.} (To appear).

\bibitem[MMW]{magill2022staircase}
Nicki Magill, Dusa McDuff, and Morgan Weiler.
\newblock {Staircase patterns in {H}irzebruch surfaces}.
\newblock {\em arXiv:2203.06453} (2022).

\bibitem[McD1]{mcduff2009symplectic}
Dusa McDuff.
\newblock {Symplectic embeddings of 4-dimensional ellipsoids}.
\newblock {\em Journal of Topology} {\bf 2}(2009), 1--22.

\bibitem[McD2]{Mint}
Dusa McDuff.
\newblock {A remark on the stabilized symplectic embedding problem for
  ellipsoids}.
\newblock {\em Eur. J. Math.} {\bf 4}(2018), 356--371.

\bibitem[MS1]{JHOL}
Dusa McDuff and Dietmar Salamon.
\newblock {\em J-holomorphic curves and symplectic topology}.
\newblock American Mathematical Soc., 2012.

\bibitem[MS2]{McDuff-Schlenk_embedding}
Dusa McDuff and Felix Schlenk.
\newblock {The embedding capacity of 4-dimensional symplectic ellipsoids}.
\newblock {\em Ann. of Math. (2)} {\bf 175}(2012), 1191--1282.

\bibitem[MS3]{McDuffSiegel_counting}
Dusa McDuff and Kyler Siegel.
\newblock {Counting curves with local tangency constraints}.
\newblock {\em Journal of Topology} {\bf 14}(2021), 1176--1242.

\bibitem[MS4]{CuspsStairs}
Dusa McDuff and Kyler Siegel.
\newblock {Rational cuspidal curves and symplectic staircases}.
\newblock (In preparation).

\bibitem[MS5]{mcduff2021symplectic}
Dusa McDuff and Kyler Siegel.
\newblock {Symplectic capacities, unperturbed curves, and convex toric
  domains}.
\newblock {\em Geometry and topology} (To appear).

\bibitem[MS6]{SDEP}
Grigory Mikhalkin and Kyler Siegel.
\newblock {Ellipsoidal superpotentials and stationary descendants}.
\newblock {\em arXiv:2307.13252} (2023).

\bibitem[Neu]{neumann2017topology}
Walter~D Neumann.
\newblock {Topology of hypersurface singularities}.
\newblock {\em arXiv:1706.04386} (2017).

\bibitem[Ore]{orevkov2002rational}
S~Yu Orevkov.
\newblock {On rational cuspidal curves: I. {S}harp estimate for degree via
  multiplicities}.
\newblock {\em Mathematische Annalen} {\bf 324}(2002), 657--673.

\bibitem[Per]{pereira2022equivariant}
Miguel Pereira.
\newblock {Equivariant symplectic homology, linearized contact homology and the
  {L}agrangian capacity}.
\newblock {\em arXiv:2205.13381} (2022).

\bibitem[Sch]{Schlenk_old_and_new}
Felix Schlenk.
\newblock {Symplectic embedding problems, old and new}.
\newblock {\em Bull. Amer. Math. Soc. (N.S.)} {\bf 55}(2018), 139--182.

\bibitem[Ton]{tonk}
Dmitry Tonkonog.
\newblock {String topology with gravitational descendants, and periods of
  {L}andau-{G}inzburg potentials}.
\newblock {\em arXiv:1801.06921} (2018).

\bibitem[Wen1]{Wendl_aut}
Chris Wendl.
\newblock {Automatic transversality and orbifolds of punctured holomorphic
  curves in dimension four}.
\newblock {\em Comment. Math. Helv.} {\bf 85}(2010), 347--407.

\bibitem[Wen2]{wendl_SFT_notes}
Chris Wendl.
\newblock {Lectures on Symplectic Field Theory}.
\newblock
  \url{https://www.mathematik.hu-berlin.de/~wendl/Sommer2020/SFT/lecturenotes.pdf},
  2020.

\end{thebibliography}

\end{document}